\newcommand\cube{\begin{tikzpicture}[scale=2.3]
    \coordinate (A1) at (0, 0);
    \coordinate (A2) at (0, 0.1);
    \coordinate (A3) at (0.1, 0.1);
    \coordinate (A4) at (0.1, 0);
    \coordinate (B1) at (0.03, 0.03);
    \coordinate (B2) at (0.03, 0.13);
    \coordinate (B3) at (0.13, 0.13);
    \coordinate (B4) at (0.13, 0.03);

    \draw (A1) -- (A2);
    \draw (A2) -- (A3);
    \draw (A3) -- (A4);
    \draw (A4) -- (A1);
    \draw[densely dotted] (A1) -- (B1);
    \draw[densely dotted] (B1) -- (B2);
    \draw (A2) -- (B2);
    \draw (B2) -- (B3);
    \draw (A3) -- (B3);
    \draw (A4) -- (B4);
    \draw (B4) -- (B3);
    \draw[densely dotted] (B1) -- (B4);
\end{tikzpicture}}
\newcommand\tinycube{\begin{tikzpicture}[scale=1.3]
    \coordinate (A1) at (0, 0);
    \coordinate (A2) at (0, 0.1);
    \coordinate (A3) at (0.1, 0.1);
    \coordinate (A4) at (0.1, 0);
    \coordinate (B1) at (0.03, 0.03);
    \coordinate (B2) at (0.03, 0.13);
    \coordinate (B3) at (0.13, 0.13);
    \coordinate (B4) at (0.13, 0.03);

    \draw (A1) -- (A2);
    \draw (A2) -- (A3);
    \draw (A3) -- (A4);
    \draw (A4) -- (A1);
    \draw[densely dotted] (A1) -- (B1);
    \draw[densely dotted] (B1) -- (B2);
    \draw (A2) -- (B2);
    \draw (B2) -- (B3);
    \draw (A3) -- (B3);
    \draw (A4) -- (B4);
    \draw (B4) -- (B3);
    \draw[densely dotted] (B1) -- (B4);
\end{tikzpicture}}
\newcommand{\F}{\mathbb F}
\newcommand{\Z}{\mathbb Z}
\newcommand{\Q}{\mathbb Q}
\newcommand{\R}{\mathbb R}
\newcommand{\C}{\mathbb C}
\newcommand{\kommentar}[1]{}
\newcommand{\cF}{\chi_{F_1} \overline{\chi_{F_2}}}
\newcommand{\cFeval}[1]{\chi_{F_1}(#1)\overline{\chi_{F_2}}(#1)}
\newcommand{\cFbeval}[1]{\overline{\chi_{F_1}}(#1){\chi_{F_2}}(#1)}
\DeclareMathOperator{\tr}{tr}
\DeclareMathOperator{\Res}{Res}
\DeclareMathOperator{\re}{Re}
\renewcommand{\pmod}[1]{\,(\mathrm{mod}\,#1)}
\definecolor{pink}{rgb}{1,.2,.6}
\definecolor{orange}{rgb}{0.7,0.3,0}
\definecolor{blue}{rgb}{.2,.6,.75}
\definecolor{green}{rgb}{.4,.7,.4}
\definecolor{purple}{RGB}{127,0,255}
\newcommand{\ccom}[1]{{\color{pink}{CD: #1}} }
\newcommand{\mcom}[1]{{\color{orange}{Matilde: #1}} }
\newcommand{\acom}[1]{{\color{blue}{Alexandra: #1}} }
\newtheorem{lem}{Lemma}[section]
\newtheorem{prop}[lem]{Proposition}
\newtheorem{thm}[lem]{Theorem}
\newtheorem{cor}[lem]{Corollary}
\theoremstyle{definition}
\newtheorem{rem}[lem]{Remark}
\author{Chantal David}
\address{Department of Mathematics and Statistics Concordia University, 1455 de Maisonneuve West Montr\'eal, Qu\'ebec, Canada H3G 1M8}
\email{chantal.david@concordia.ca}
\author{Alexandra Florea}
\address{Columbia University, Mathematics Department, Rm 606, MC 4417, 2990 Broadway, New York NY 10027, USA}
\email{aflorea@math.columbia.edu}
\author{Matilde Lalin}
\address{Universit\'e de Montr\'eal, Pavillon Andr\'e-Aisenstadt, D\'epartement de math\'ematiques et de statistique, CP 6128, succ. Centre-ville, Montr\'eal, Qu\'ebec, Canada H3C 3J7}
\email{mlalin@dms.umontreal.ca}
\begin{document}

\title{The mean values of cubic $L$-functions over function fields}

\begin{abstract}
We obtain an asymptotic formula for the mean value of $L$--functions associated to cubic characters over $\F_q[T]$. We solve this problem in the non-Kummer setting when $q \equiv 2 \pmod 3$ and in the Kummer case when $q \equiv 1 \pmod 3$. The proofs rely on obtaining precise asymptotics for averages of cubic Gauss sums over function fields, which can be studied using the theory of metaplectic Eisenstein series. In the non-Kummer setting we display some explicit cancellation between the main term and the dual term coming from the approximate functional equation of the $L$--functions.
\end{abstract}

\subjclass[2010]{11M06, 11M38, 11R16, 11R58}
\keywords{Moments over function fields, cubic twists, non-vanishing.}

\maketitle

\tableofcontents{}
 
\section{Introduction}

The problem we consider in this paper is that of computing the mean value of Dirichlet $L$--functions $L_q(s,\chi)$ evaluated at the critical point $s=1/2$ as $\chi$ varies over the primitive cubic Dirichlet characters of $\F_q[T]$. We will solve this problem in two different settings: when the base field $\F_q$ contains the cubic roots of unity (or equivalently when $q \equiv 1 \pmod 3$; we call this the Kummer setting) and when $\F_q$ does not contain the cubic roots of unity (when $q \equiv 2 \pmod 3$; we call this the non-Kummer setting.) 

There are few papers in literature about moments of cubic Dirichlet twists over number fields, especially compared to the abundance of papers on quadratic twists.  For the case of quadratic characters over $\Q$,  
 the first moment was computed by Jutila \cite{Jutila}, and the second and third moments by Soundararajan \cite{Sound}. 
 For the case of quadratic characters over $\F_q[T]$, the first 4 moments were computed by the second author of this paper \cite{F17c, F17b, F17}. In particular, the improvement of the error term for the first moment in \cite{F17c} showed the existence of a secondary term (of size approximately the cube root of the main term) which was not predicted by any heuristic.
 A secondary term of size $X^{3/4}$ was explicitly computed by Diaconu and Whitehead in the number field setting \cite{DI2} for the cubic moment of quadratic $L$--functions and by Diaconu in the function field setting \cite{DI1}.

For the case of cubic characters, Baier and Young \cite{BY} considered the cubic Dirichlet characters over $\Q$ and obtained for the smoothed first moment that
\begin{equation} \label{BY}
\sum_{(q, 3) = 1} \sum_{\substack{\chi \;\text{primitive mod} \; q \\\chi^3 = \chi_0}} L({1}/{2}, \chi ) w \left(\frac{q}{Q} \right) = c \hat{w}(0) Q + O \left( Q^{37/38 + \varepsilon} \right),
\end{equation} 
with an explicit constant $c$. Using an upper bound for higher moments of $L$--functions, Baier and Young also show that the number of primitive Dirichlet characters $\chi$ of order $3$ with conductor less than or equal to $Q$ for which $L(1/2,\chi) \neq 0$ is bounded below by $Q^{\frac{6}{7}-\varepsilon}$.

Another result related to \cite{BY} is that of Cho and Park \cite{CP}, where the authors consider the $1$--level density of zeros in the same family as that of Baier and Young. They compute the $1$--level density when the support of the Fourier transform of the test function is in $(-1,1)$ and show agreement with the prediction coming from the Ratios Conjecture.

The first moment of the cubic Dirichlet twists over $\Q(\xi_3)$  was considered by Luo in \cite{Luo}, and his main term has the same size as the first moment over $\Q$, because the author considers only a thin subsets of the cubic characters, namely those given by the cubic residue symbols $\chi_c$ where $c \in \Z[\xi_3]$ is square-free. This does not count the 
conjugate characters $\chi_c^2 = \chi_{c^2}$, and in particular, the first moment of \cite{Luo} is not real.

The problem of computing the mean value of cubic $L$--functions over function fields was considered by Rosen in \cite{Rosen-cyclic}, where he averages over all monic polynomials of a given degree. This problem is different than the one we consider, since the  counting is not done by genus and obtaining an asymptotic formula relies on using a combinatorial identity.

Before stating our results, we first introduce some notation. Let $q$ be an odd prime power, and let $\F_q[T]$ be the set of polynomials over the finite field $\F_q$. A Dirichlet character $\chi$ of modulus $m \in \F_q[T]$  is a multiplicative function from $(\F_q[T]/(m))^*$ to $\C^*$, extended to  $ \F_q[T]$ by periodicity if $(a,m)=1$, and defined by $\chi(a) = 0$ if
$(a,m) \neq 1$. A cubic Dirichlet character is such that $\chi^3$ equals the principal character $\chi_0$, and it takes values in $\mu_3$, the cubic roots of 1 in $\C^*$.
The smallest period of $\chi$ is called the conductor of the character. We say that $\chi$ is a primitive character of modulus $m$ when $m$ is the smallest period. 

We denote by $L_q(s,\chi)$ the $L$--function attached to the character $\chi$ of $\F_q[T]$. We keep the index $q$ in the notation to avoid confusion, as we will also work over the quadratic extension $\mathbb{F}_{q^2}$ of $\F_q$.

The set of cubic characters differs when $\F_q$ contains the third roots of unity or not.
 If  $q \equiv 1 \pmod 3$, $\F_q$ contains the third roots of unity,
and the number of primitive cubic Dirichlet characters with conductor of degree $d$ is asymptotic to $B_{\mathrm{K},1} d q^d + B_{\mathrm{K},2} q^d$ for some explicit constants $B_{\mathrm{K},1}, B_{\mathrm{K},2}$ (see Lemma 
\ref{count-kummer}). If $q \equiv 2 \pmod 3$, $\F_q$ does not contain the third roots of unity,  and the number of primitive cubic Dirichlet characters with conductor of degree $d$
is asymptotic to $B_{\mathrm{nK}} q^d$ for some explicit constant $B_{\mathrm{nK}}$ (see Lemma \ref{count-non-kummer}). 

We will count primitive  cubic characters ordering them by the degree of their conductor, or equivalently by the genus $g$ of the cyclic cubic field extension of $\F_q[T]$ associated to such a character (see formula \eqref{genus}). 

We compute the first moment of cubic $L$--functions for the two settings. In the non-Kummer case, we have the following.
 \begin{thm} \label{first-moment-non-Kummer} \label{thm-non-Kummer}
 Let $q$ be an odd prime power such that  $q \equiv 2 \pmod 3$. Then
 \begin{align*}
\sum_{\substack{\chi  \; \mathrm{primitive \; cubic} \\ \mathrm{genus}(\chi) = g}} L_q(1/2, \chi)
=\frac{\zeta_q(3/2)}{\zeta_q(3)}  \mathcal{A}_{\mathrm{nK}} \left (\frac{1}{q^2}, \frac{1}{q^{3/2}} \right )  q^{g+2} +O(q^{\frac{7g}{8}+\varepsilon g}),
  \end{align*}
  with $\mathcal{A}_{\mathrm{nK}}(q^{-2},q^{-3/2})$ given in Lemma \ref{mt_expression_nk}.
 \end{thm}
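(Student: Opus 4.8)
The plan is to begin with the approximate functional equation for $L_q(1/2,\chi)$. Since $L_q(s,\chi)$ is a polynomial in $q^{-s}$ whose degree is controlled by the conductor of $\chi$, hence by the genus $g$ via \eqref{genus}, this is essentially an exact identity writing $L_q(1/2,\chi)$ as a \emph{principal sum} $\sum_{f} \chi(f)\, q^{-\deg f/2}$ over monic $f$ of degree up to roughly $g/2$, plus a \emph{dual sum} of the same shape running over $\bar\chi=\chi^2$ and weighted by the root number $\varepsilon(\chi)$. Because $\varepsilon(\chi)$ is itself a normalised cubic Gauss sum attached to $\chi$, cubic Gauss sums will enter in two places: through the dual sum directly, and through the off-diagonal terms (those with $f$ not a perfect cube) once the order of summation in the principal sum is reversed.

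Next I would interchange summation in the principal sum and isolate the main term, which amounts to evaluating $\sum_{\chi}\chi(f)$ as $\chi$ runs over primitive cubic characters of genus $g$. In the non-Kummer case this is done by passing to the quadratic constant field extension $\F_{q^2}$, which contains the cube roots of unity; there each such $\chi$ is given by a cubic residue symbol subject to a Frobenius-invariance condition, and a M\"obius sieve over the cube-full part rewrites the character count in terms of the quantities from Lemma \ref{count-non-kummer}. The terms with $f=h^3$ coprime to the conductor survive this sieve, and summing $q^{-\deg f/2}=q^{-3\deg h/2}$ over monic $h$ together with the arithmetic factors from the sieve produces $\zeta_q(3/2)/\zeta_q(3)$ times a main term of size $B_{\mathrm{nK}}\, q^{g+2}$.

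The core of the argument is then the analysis of the remaining cubic Gauss sum averages: the off-diagonal part of the principal sum and the entirety of the dual sum. Here I would invoke the precise asymptotics for averages of cubic Gauss sums over $\F_q[T]$ established earlier via metaplectic Eisenstein series. The dual sum, once its Gauss-sum average is evaluated, contributes a secondary term of the \emph{same} order $q^{g+2}$; combining it with the main term of the principal sum, and exhibiting the explicit cancellation advertised in the introduction, is what assembles the combined constant $\tfrac{\zeta_q(3/2)}{\zeta_q(3)}\mathcal{A}_{\mathrm{nK}}(q^{-2},q^{-3/2})$ with $\mathcal{A}_{\mathrm{nK}}$ as in Lemma \ref{mt_expression_nk}. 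The genuinely oscillatory contributions on both sides are controlled by the same Gauss-sum estimates, and balancing the cutoff in the functional equation against these bounds yields the error term $O(q^{7g/8+\varepsilon g})$.

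The hardest step, and the one where the metaplectic input is indispensable, is securing \emph{asymptotics} rather than mere upper bounds for the weighted averages $\sum_{\chi}\varepsilon(\chi)\,\bar\chi(f)$ over the family: an upper bound would recover only the principal main term, whereas here one needs the secondary term accurately enough to detect its cancellation against the diagonal, and with an error that survives both the division by $q^{\deg f/2}$ and the subsequent sum over $f$. Reconciling the Frobenius-invariance condition over $\F_{q^2}$ with the metaplectic analysis, and tracking the resulting arithmetic factors so that they collapse into $\mathcal{A}_{\mathrm{nK}}(q^{-2},q^{-3/2})$, is the principal bookkeeping obstacle.
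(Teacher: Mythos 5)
Your outline goes wrong at the structural level: you place the leading constant in a cancellation between the principal main term and a dual contribution "of the same order $q^{g+2}$", and this is not what the computation produces. Once the approximate functional equation is split at a cutoff $A>0$, the dual sum runs over $f$ of degree at most $g-A-1$; writing $\omega(\chi_F)\overline{\chi_F}(f)$ as a shifted Gauss sum $G_{q^2}(f,F)$ and applying the Gauss-sum average (Proposition \ref{big-F-tilde-corrected}) shows its leading contribution has size $q^{g+2-A/6}$, strictly below $q^{g+2}$ for any positive $A$ (see Lemma \ref{lemma_cube+dual} and \eqref{final-dual}). The full constant $\frac{\zeta_q(3/2)}{\zeta_q(3)}\mathcal{A}_{\mathrm{nK}}(q^{-2},q^{-3/2})$ comes entirely from the diagonal ($f$ a cube) part of the principal sum: the coprimality condition $(F,k)=1$ generates the Euler product $\mathcal{A}_{\mathrm{nK}}$, and the two residues at $u=q^{-3/2}$ and $u=q^{-1}$ give the $\zeta_q(3/2)$ main term and a secondary term of size $q^{g+2-A/6}$ (Lemma \ref{mt_expression_nk}); in particular the diagonal constant is not $B_{\mathrm{nK}}$. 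The cancellation advertised in the introduction is between the dual main term and this \emph{secondary} residue of the diagonal; it never touches the leading order. So the "hardest step" you identify — asymptotics for $\sum_\chi \omega(\chi)\overline{\chi}(f)$ accurate enough to cancel against the diagonal at order $q^{g+2}$ — is not what is needed, and pursuing it would not assemble the stated constant.

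The error analysis is also underspecified in a way that matters for the exponent $7/8$. The off-diagonal (non-cube) part of the principal sum is not controlled here by Gauss-sum estimates but by the Lindel\"of bound for $\mathcal{L}_{q^2}(u,\chi_f)$ together with a lower bound for $\mathcal{L}_{q^2}(u^2,\overline{\chi_f})$, giving $S_{1,\neq\cube}\ll q^{(g+A)/2+\varepsilon g}$ (Lemma \ref{non-cubes-2}); the dual remainder beyond the pole of $\tilde{\Psi}_{q^2}(fD,u)$ requires the convexity bound of Lemma \ref{our-psi} and is of size $q^{\frac{3g}{2}-(2-\sigma)A}$ with $7/6\le\sigma<4/3$. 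Balancing these forces $\sigma=7/6$ and $A=3[g/4]$; with your symmetric split "degree up to roughly $g/2$" the dual remainder would be about $q^{13g/12}$, exceeding the main term. So the cutoff must be strongly asymmetric and the specific bound on $\tilde{\Psi}_q(f,u)$ is indispensable; invoking "the same Gauss-sum estimates" and a generic balancing does not yield $O(q^{7g/8+\varepsilon g})$.
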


In the Kummer case, we have the following.
\begin{thm} \label{first-moment-Kummer} Let $q$ be an odd prime power such that $q \equiv 1 \pmod 3$, and let $\chi_3$ be the cubic character on $\F_q^*$ given by \eqref{def-chi3}. Then,
$$
\sum_{\substack{\chi  \; \mathrm{ primitive \; cubic} \\ \mathrm{genus}(\chi) = g\\ \chi \vert_{\F_q^*} = \chi_3}} L_q(1/2, \chi)
= C_{\mathrm{K},1} g q^{g+1} + C_{\mathrm{K},2} q^{g+1} + O \left( q^{g\frac{1+\sqrt{7}}{4}+\varepsilon g} \right),$$
where $C_{\mathrm{K},1}$ and $C_{\mathrm{K},2}$ are given by equations \eqref{c-1} and \eqref{c-2} respectively.
\end{thm}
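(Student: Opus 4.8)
The plan is to follow the standard recipe for first-moment computations over function fields: expand the $L$-function via an approximate functional equation, interchange the order of summation, and analyze the resulting character sum using properties of cubic Gauss sums. Concretely, I would write $L_q(1/2,\chi)$ as a sum over monic polynomials $f$ of degree up to roughly $g$ (the genus), weighted by $\chi(f)q^{-\deg f/2}$, plus a dual sum produced by the functional equation. In the Kummer case the relevant cubic characters are the residue symbols $\chi_c$ with $c$ square-free and $\chi_c|_{\F_q^*}=\chi_3$; summing $\chi_c(f)=\left(\tfrac{f}{c}\right)_3$ over such $c$ and detecting the square-free condition via a Möbius-type sum over $\F_q[T]$, one is led to sums of the shape $\sum_{c} \left(\tfrac{f}{c}\right)_3$, i.e.\ averages of cubic Gauss sums $g(f,c)$ after an application of cubic reciprocity. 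These Gauss sum averages are exactly the quantities controlled by the metaplectic Eisenstein series input referenced in the abstract.

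Next I would split the $f$-sum into the contribution of cube polynomials $f=\square^3$ (more precisely $f$ a perfect cube times a unit), which produces the main term, and the contribution of non-cube $f$, which must be shown to be an error term. For $f$ a perfect cube the inner character sum over $c$ becomes (essentially) a sum of $1$ over square-free $c$ coprime to $f$, and after reinstating the genus/degree constraint and carefully tracking the Euler factors, this yields a polynomial in $g$ of degree $1$: the main term $C_{\mathrm{K},1}gq^{g+1}+C_{\mathrm{K},2}q^{g+1}$. The linear-in-$g$ growth is consistent with the count of characters $\sim B_{\mathrm{K},1}dq^d$ from Lemma~\ref{count-kummer}, so the average $L$-value is bounded. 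Extracting the two constants $C_{\mathrm{K},1}$, $C_{\mathrm{K},2}$ is a matter of a (lengthy but routine) residue/contour computation: one writes the generating Dirichlet series in the variable $q^{-s}$, identifies its poles — a double pole at the edge of the critical strip giving the $gq^{g+1}$ term, a simple pole giving $q^{g+1}$ — and reads off \eqref{c-1} and \eqref{c-2}. The dual sum from the functional equation is handled symmetrically and, together with the off-diagonal $f$, is absorbed into the error.

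The main obstacle, and the technical heart of the argument, is bounding the non-cube contribution: for $f$ not a perfect cube one needs a genuine cancellation estimate for $\sum_{\deg c \le X}\left(\tfrac{f}{c}\right)_3$ (equivalently for averages of cubic Gauss sums twisted by $f$) that beats the trivial bound by enough to produce the error exponent $g\frac{1+\sqrt 7}{4}$. This is where the metaplectic Eisenstein series machinery enters: the double Dirichlet series $\sum_f \sum_c \left(\tfrac{f}{c}\right)_3 |f|^{-s}|c|^{-w}$ has meromorphic continuation coming from the theory of cubic metaplectic forms over $\F_q(T)$, and its analytic properties (location of poles, polynomial growth on vertical lines) yield the needed bound. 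The exponent $\frac{1+\sqrt 7}{4}$ arises from optimizing the length $X$ of the $c$-sum against the length of the $f$-sum, balancing the trivial bound $q^{X/2}$ on short $f$-sums against the square-root-type cancellation $q^{(g-X)\cdot(\text{something})}$ from the Gauss sum estimate — the quadratic that one solves to equate the two exponents is what produces $\sqrt 7$. I would set up this optimization carefully, invoking the Eisenstein-series bounds as a black box, and verify that both the off-diagonal terms and the dual-sum contribution fall below $q^{g\frac{1+\sqrt7}{4}+\varepsilon g}$.

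Finally, I would reconcile the two sums in the approximate functional equation: unlike the non-Kummer case (Theorem~\ref{thm-non-Kummer}), where the excerpt promises explicit cancellation between the main term and the dual term, here in the Kummer setting I expect the dual term to contribute only to the error, so the assembly is more straightforward — add the main-term contribution from $f$ a cube, add the (error-sized) dual and off-diagonal contributions, and collect. The only delicate bookkeeping is ensuring primitivity: the sum is over \emph{primitive} cubic characters with $\chi|_{\F_q^*}=\chi_3$, so one must sieve out imprimitive $\chi_c$ (those with $c$ not square-free, already excluded) and correctly normalize by the conductor-degree/genus relation \eqref{genus}; this affects the constants $C_{\mathrm{K},1}, C_{\mathrm{K},2}$ but not the shape of the result.
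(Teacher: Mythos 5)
Your overall recipe (approximate functional equation, splitting off the cube $f$'s, contour/residue computation for the main term) matches the paper's, but there is a concrete gap in how you parametrize the family, and it is not cosmetic. The theorem sums over \emph{all} primitive cubic characters of genus $g$ with $\chi\vert_{\F_q^*}=\chi_3$; by Lemma \ref{kummer-desc} these are the characters $\chi_{F_1}\overline{\chi_{F_2}}$ indexed by \emph{pairs} of coprime square-free monic polynomials with $\deg F_1+\deg F_2=g+1$ and $d_1+2d_2\equiv 1\pmod 3$, i.e.\ $2^{\omega(F)}$ characters for each square-free conductor $F=F_1F_2$ — not just the $\chi_c$ with $c$ square-free that you propose (that is exactly the Luo-type thin subfamily the introduction contrasts with). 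The sum over the $\asymp g/3$ admissible splittings $(d_1,d_2)$ is precisely what produces the factor $g$ in $C_{\mathrm{K},1}gq^{g+1}$: with your one-parameter family (of size $\asymp q^{g+1}$ rather than $\asymp gq^{g+1}$, cf.\ Lemma \ref{count-kummer}), the cube-$f$ computation yields a main term of size $q^{g+1}$ with no linear-in-$g$ term, contradicting your own appeal to the character count. The two-variable structure also forces the genuinely two-dimensional sieve for coprime square-free pairs (Lemma \ref{sieve}, Corollary \ref{Alexandra-sieve}) and a three-variable generating function in the main-term analysis; none of this is a matter of "bookkeeping affecting only the constants."

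Second, your allocation of the hard analytic input is different from what actually makes the error term work, and as described it does not produce the exponent $\tfrac{1+\sqrt7}{4}$. In the paper the non-cube principal contribution needs no metaplectic input at all: after Poisson summation (Proposition \ref{prop-Poisson}) in the $F_1,F_2$ variables and writing $f=E^3B^2C$, it is bounded by the Lindel\"of bound for the primitive $L$-functions $\mathcal{L}_q(u,\chi_{B^2C})$, giving $q^{(g+A)/2+\varepsilon g}$ (Lemma \ref{lemma-non-cubes}). The Hoffstein--Patterson/metaplectic machinery enters in the \emph{dual} term, whose sign $\omega(\chi_{F_1}\overline{\chi_{F_2}})$ is a product of cubic Gauss sums, via the residue formula of Proposition \ref{big-F-tilde-corrected} and the convexity bound for $\tilde{\Psi}_q(f,u)$ (Lemma \ref{our-psi}); the dual term is then only \emph{bounded} (Lemma \ref{lemma-Kummer3}), not "handled symmetrically," and the exponent $\tfrac{1+\sqrt7}{4}$ arises from optimizing $A$ and the convexity parameter $\sigma\in[7/6,4/3)$ between $q^{(g+A)/2}$ and the dual bounds $q^{(\frac{23}{12}-\frac{\sigma}{2})g-(\frac{13}{12}-\frac{\sigma}{2})A}+q^{\frac{3g}{2}-(2-\sigma)A}$. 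Invoking "Eisenstein-series bounds as a black box" for cancellation in $\sum_c\left(\frac{f}{c}\right)_3$ with $f$ non-cube, as you propose, leaves you with no mechanism generating that specific optimization, and still leaves the dual term — the place where the Gauss-sum averages are genuinely unavoidable — untreated.
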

The hypothesis that $\chi$ restricts to the character $\chi_3$ on $\F_q$ is not important, but simplifies the computations by ensuring that  the $L$--functions have the same functional equation. It is analogous to the restriction in the case of quadratic characters to those with conductor of degree either $2g$ or $2g+1$.

Since $L$-functions satisfy the Lindel\"of hypothesis over function fields (see Lemma \ref{lindelof}), one can easily bound the second moment, and we get the following corollary.
\begin{cor} Let $q$ be an odd prime power. Then, 
$$
\# \left\{ \chi \, \mbox{cubic, primitive of genus $g$} \;:\;  L_q(1/2, \chi) \neq 0 \right\} \gg q^{(1-\varepsilon) g}.
$$
\end{cor}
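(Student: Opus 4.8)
This corollary follows formally from the first-moment asymptotics, the Cauchy--Schwarz inequality and the Lindel\"of bound, so I will only sketch the steps. Write $\mathcal{C}_g$ for the family of primitive cubic characters summed over in Theorem~\ref{first-moment-non-Kummer} in the non-Kummer case (resp.\ in Theorem~\ref{first-moment-Kummer} in the Kummer case), and put $N_g = \#\{\chi \in \mathcal{C}_g : L_q(1/2,\chi) \neq 0\}$. By Cauchy--Schwarz,
\begin{equation*}
\Bigl| \sum_{\chi \in \mathcal{C}_g} L_q(1/2,\chi) \Bigr|^{2} \;\leq\; N_g \sum_{\chi \in \mathcal{C}_g} \bigl| L_q(1/2,\chi) \bigr|^{2},
\end{equation*}
so it suffices to bound the left-hand side from below using the first-moment theorems, and the sum on the right from above using Lemma~\ref{lindelof} together with the counting lemmas.

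For the lower bound, one checks from the explicit expressions in Lemma~\ref{mt_expression_nk} (resp.\ from equation~\eqref{c-1}) that the leading constant of the first moment is nonzero; since the error terms in Theorems~\ref{first-moment-non-Kummer} and~\ref{first-moment-Kummer} are of strictly smaller order than the respective main terms, the left-hand side above is $\gg q^{2g+4}$ in the non-Kummer case and $\gg g^{2} q^{2g+2}$ in the Kummer case. For the upper bound, recall that $L_q(1/2,\chi)$ is a polynomial in $q^{-1/2}$ all of whose inverse roots lie on the circle $|z| = q^{1/2}$ (Lemma~\ref{lindelof}); hence $|L_q(1/2,\chi)| \ll q^{\varepsilon\, \deg(\mathrm{cond}\,\chi)}$, and by the genus formula~\eqref{genus} the conductor degree of any $\chi \in \mathcal{C}_g$ is linear in $g$, so $|L_q(1/2,\chi)| \ll q^{\varepsilon g}$. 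Since there are at most $q^{d}$ polynomials of degree $d$, the counting Lemmas~\ref{count-kummer} and~\ref{count-non-kummer} give $\#\mathcal{C}_g \ll q^{(1+\varepsilon)g}$, and therefore
\begin{equation*}
\sum_{\chi \in \mathcal{C}_g} \bigl| L_q(1/2,\chi) \bigr|^{2} \;\ll\; q^{\varepsilon g}\, \# \mathcal{C}_g \;\ll\; q^{(1+\varepsilon) g}.
\end{equation*}
Feeding these two estimates into the Cauchy--Schwarz inequality gives $N_g \gg q^{2g+4}/q^{(1+\varepsilon)g} \gg q^{(1-\varepsilon)g}$ in the non-Kummer case, and, since the factor $g^{2}$ on the numerator only helps, the same bound in the Kummer case; after renaming $\varepsilon$ this is the assertion of the corollary.

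There is no genuine obstacle in this argument --- the real work is entirely contained in Theorems~\ref{first-moment-non-Kummer} and~\ref{first-moment-Kummer}. The two points that need a little care are verifying that the leading constants $\mathcal{A}_{\mathrm{nK}}(q^{-2},q^{-3/2})$ and $C_{\mathrm{K},1}$ do not vanish, and tracking the linear relation between the genus and the conductor degree so that the Lindel\"of estimate genuinely has the shape $q^{\varepsilon g}$. In fact, since $\#\mathcal{C}_g = q^{(1+o(1))g}$, the same computation shows $N_g/\#\mathcal{C}_g \gg q^{-\varepsilon g}$, i.e.\ the non-vanishing proportion is only a factor $q^{\varepsilon g}$ away from full; we content ourselves with the cleaner bound stated above.
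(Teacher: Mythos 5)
Your argument is exactly the one the paper has in mind: it deduces the corollary from the first-moment asymptotics together with a second-moment bound obtained from the Lindel\"of estimate of Lemma \ref{lindelof}, combined via Cauchy--Schwarz, and your verification of the nonvanishing of the leading constants and of the linear relation between genus and conductor degree covers the only points requiring care. The proposal is correct and essentially identical in approach to the paper's (unwritten but indicated) proof.
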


Translating from the function field to the number field setting, we associate $q^g$ with $Q$. Note that Theorem \ref{first-moment-non-Kummer} is the function field analog of \eqref{BY}, and the proof of our Theorem  \ref{first-moment-non-Kummer} has many similarities with the work of  \cite{BY}.
The better quality of our error term can be explained in part by the fact that we can use the Riemann Hypothesis to bound the error term.
In the number field case, the same quality  of error term  can be obtained without the  Riemann Hypothesis for some  families using the appropriate version of the large sieve (for example in the case of the family of quadratic characters, with the quadratic large sieve due to Heath-Brown \cite{HB95}). However the cubic large sieve, also due to Heath-Brown \cite{HB00}, provides a weaker upper bound. There is  also an asymmetry between the sum over the cubic characters, which is naturally a sum over $\Q(\xi_3)$, and the truncated Dirichlet series of the $L$-function, which is a sum over $\Z$.
The asymmetry of the sums also exists in the function field setting. 

Another difference from the work of Baier and Young is that we explicitly exhibit cancellation between the main term and the dual term coming from using the approximate functional equation for the $L$--functions. In their work Baier and Young \cite{BY} prove an upper bound for the dual term without obtaining an asymptotic formula for it, which is what we do in the function field case.

The first steps of our proofs are the usual ones, using the approximate functional equation to write the special value 
\begin{eqnarray} \label{L-value}
L_q(1/2, \chi) &=& \sum_{f \in \mathcal{M}_q} \frac{\chi(f)}{|f|_q^{1/2}}, \end{eqnarray}
as a sum of two terms (the principal sum and the dual sum), where for a polynomial $f \in \F_q[T]$ the norm is defined by $|f|_q=q^{\deg(f)}$. Inspired by the work of Florea \cite{F17} to improve the quality of the error term, we evaluate exactly the dual sum and the secondary term of the main sum (corresponding to taking $f$ cube in the approximate functional equation) in order to obtain cancellation of those  terms. This is similar to the work of Florea for the first moment of quadratic Dirichlet characters over functions fields, replacing quadratic Gauss sums by cubic Gauss sums. Of course, this is not a trivial difference, as the behavior of quadratic Gauss sums is very regular since they are multiplicative functions. However cubic Gauss sums are different as they are no longer multiplicative. Handling the cubic Gauss sums is significantly more difficult than working with quadratic Gauss sums. This is one of the main focuses of our paper. 

The distribution of Gauss sums over number fields was adressed by Heath-Brown and Patterson \cite{HBP}, using the deep work of Kubota for automorphic forms associated to the metaplectic group. This was generalised by Hoffstein \cite{hoffstein} and Patterson \cite{patterson} for the function field case, and we review their work in Section \ref{sec:GS}. The main goal of Section \ref{sec:GS} is to obtain an exact formula for the residues of the generating series
$$
\tilde{\Psi}_q(f,u) = \sum_{\substack {F \in \mathcal{M}_q\\ (F, f)=1}} G_q(f, F) u^{\deg(F)},
$$
where $G_q(f, F)$ is the generalized shifted Gauss sum over $\F_q$ as defined by \eqref{gen-GS}. With those residues in hand, we can evaluate precisely the main term of the dual sum, and indeed we can show that it (magically!) cancels with the secondary term of the principal sum. Unfortunately obtaining the cancellation is not enough to improve the error term, as we do not have good bounds for $\tilde{\Psi}_q(f, u)$ beyond the pole at $u^3=1/q^4$. 
We prove that the convexity bound in Lemma \ref{our-psi} holds, and any improvement of the convexity bound would allow an improvement of the error term of Theorem \ref{first-moment-non-Kummer} coming from the cancellation that we exhibit.

Proving Theorem \ref{first-moment-Kummer} is more difficult than obtaining the asymptotic formula in the non-Kummer case, and our error term is not as good as that in Theorem \ref{first-moment-non-Kummer}. To our knowledge, Theorem \ref{first-moment-Kummer} is the first result when one considers all the primitive cubic characters (with the technical restriction that $\chi \vert_{\F_q^*} = \chi_3$, which does not change the size of the family). This explains the (maybe surprising) asymptotic for the first moment in Theorem \ref{first-moment-Kummer}, which is of the shape $g q^g P(1/g)$ where $P$ is a polynomial of degree $1$. 

Because of the size of the family of cubic twists in the Kummer case, we are not able to obtain cancellation between the dual term and the error term from the main term. Certain cross-terms seem to contribute to the cancellation, but we cannot obtain an asymptotic formula for these cross terms. Instead we bound them using the convexity bound for $\tilde{\Psi}_q(f,u)$, which explains the bigger error term from Theorem \ref{first-moment-Kummer}.

We remark that the  results of Theorems  \ref{first-moment-non-Kummer} and \ref{first-moment-Kummer} both correspond to a family with unitary symmetry, as expected. Note that for our results, we fix the size $q$ of the finite field and let the genus $g$ go to infinity. If instead one fixes the genus and lets $q$ go to infinity, it should be possible to obtain asymptotic formulas for moments using equidistribution results as in the work of Katz and Sarnak \cite{KS} and then a random matrix theory computation as in the work of Keating and Snaith \cite{Keating-Snaith}.
\kommentar{ Do we want to say more (if anything) about this? Just wanted to point out that when $q \to \infty$ the constant we get for the non-Kummer case (which doesn't involve the Euler product) is $1$ because the ratio of $\zeta$'s will tend to $1$. This is the same as in the case of the second moment of $\zeta$, which is good because both families have unitary symmetry.}

As mentioned before, a lower order term of size the cube root of the main term was computed in \cite{F17} in the case of the mean value of quadratic $L$--functions. We remark that in the case of the mean value of cubic $L$--functions, we can explicitly compute a term of size $q^{5g/6}$ in the non-Kummer case and a term of size $gq^{5g/6}$ in the Kummer setting
(see remarks \ref{remark56} and \ref{remark56b} respectively). Due to the size of the error terms, these terms do not appear in the asymptotic formulas in Theorems \ref{first-moment-non-Kummer} and \ref{first-moment-Kummer}. However, we suspect these terms do persist in the asymptotic formulas. Improving the convexity bound on $\Tilde{\Psi}_q(f,u)$ would allow us to improve the error terms, and maybe to detect the lower order terms. We remark that a similar sized term was conjectured by Heath-Brown and Patterson \cite{HBP} for the average of the arguments of cubic Gauss sums in the number field setting. We believe the matching size of these terms is not a coincidence, as the source of our $q^{5g/6}$ comes from averaging cubic Gauss sums over function fields.

\kommentar{Let $q$ be an odd prime power, and let $\F_q[T]$ be the set of polynomials over the finite field $\F_q$.

We denote by $\mathcal{M}_q$ the set of monic polynomials of $\F_q[T]$, by $\mathcal{M}_{q,d}$ the set of monic polynomials of degree exactly $d$, by
$\mathcal{M}_{q, \leq d}$ the set of monic polynomials of degree smaller than or equal to $d$,  
 by $\mathcal{H}_{q}$ the set of monic square-free polynomials of $\F_q[T]$ and analogously for 
 $\mathcal{H}_{q,d}$ and $\mathcal{H}_{q,\leq d}$.
In general, unless stated otherwise, all polynomials are monic. 
We keep the index $q$ in the notation to avoid confusion, as we will have to consider polynomials over the quadratic extension $\F_{q^2}$ of $\F_q$ when $q \equiv 2 \pmod 3$.

}

\medskip
{\bf Acknowledgements.} The authors would like to thank Roger Heath-Brown, Maxim Radziwill, Kannan Soundararajan, and Matthew Young for helpful discussions.
The research of the first and third authors is supported by the National Science and Engineering Research Council of Canada (NSERC) and 
the Fonds de recherche du Qu\'ebec -- Nature et technologies (FRQNT). The second author of the paper was supported by a National Science Foundation (NSF) Postdoctoral Fellowship during part of the research which led to this paper.

\section{Notation and Setting}\label{sec:notation}

Let $q$ be an odd prime power such that $q \equiv 1 \pmod 3$. 
We denote by $\mathcal{M}_q$ the set of monic polynomials of $\F_q[T]$, by $\mathcal{M}_{q,d}$ the set of monic polynomials of degree exactly $d$, by
$\mathcal{M}_{q, \leq d}$ the set of monic polynomials of degree smaller than or equal to $d$,  
 by $\mathcal{H}_{q}$ the set of monic square-free polynomials of $\F_q[T]$ and analogously for 
 $\mathcal{H}_{q,d}$ and $\mathcal{H}_{q,\leq d}$. Note that $| \mathcal{M}_{q,d}| = q^d$ and for $d \geq 2$, we have that $| \mathcal{H}_{q,d}| = q^d (1-\tfrac{1}{q}).$
 
In general, unless stated otherwise, all polynomials are monic. 
As for the $L$--functions in the introduction, we keep the index $q$ in the notation to avoid confusion, as we will have to consider polynomials over the quadratic extension $\F_{q^2}$ of $\F_q$ when $q \equiv 2 \pmod 3$. 

We define the norm of a polynomial $f(T) \in \F_q[T]$ over $\F_q[T]$ by
$$ |f|_{q} = q^{\deg(f)}.
$$
Then, if $f(T) \in \F_q[T]$, we have $|f|_{q^n} = q^{n\deg(f)},$ for any positive integer $n$.

For $q \equiv 1 \pmod{3}$ we fix once and for all an isomorphism $\Omega$ between $\mu_3$, the cubic roots of 1 in $\C^*$, and the cubic roots of 1 in 
$\F_q^*.$
We also fix a cubic character $\chi_3$ on $\F_q^*$ by
\begin{equation} \label{def-chi3}
\chi_3(\alpha) =  \Omega^{-1} \left( \alpha^{\frac{q-1}{3}} \right).
\end{equation}
For any character $\chi$ on $\F_q[T]$, we say that $\chi$ is even if it is trivial on $\F_q^*$, and odd otherwise. 
Then, when $q$ is an odd prime power such that $q \equiv 1 \pmod 3$, any cubic character on $\F_q[T]$ falls in three natural classes depending on its restriction to $\F_q^*$ which is either $\chi_3$, $\chi_3^2$ or the trivial character 
(in the first 2 cases,  the character is odd, and in the last case, the character is even).\footnote{
We will see in Section \ref{section-cubic}  that when $q \equiv 2 \pmod 3$, any cubic character on $\F_q[T]$ is even.} 

For any odd character $\chi$ on $\F_q[T]$, we denote by $\tau(\chi)$ the Gauss sum of the restriction of $\chi$ to $\F_q$ (which is either $\chi_3$ or $\chi_3^2$), i.e.
\begin{eqnarray} \label{standard-GS} 
\tau(\chi) = \sum_{a\in \F_q^*} \chi(a) e^{2 \pi i \tr_{\F_q/\F_p}(a)/p}. \end{eqnarray}

Then, $|\tau(\chi)| = q^{1/2}$, and we denote the sign of the Gauss sum by 
\begin{eqnarray} \label{sign-restriction}
\epsilon(\chi) = q^{-1/2} {\tau(\chi)}.
\end{eqnarray}
When $\chi$ is even, we set $\epsilon(\chi)=1$.

We will often use the fact that when $q \equiv 1 \pmod 6$, the cubic reciprocity law is very simple.

\begin{lem}[Cubic Reciprocity] \label{cubic-reciprocity} Let $a, b \in \F_q[T]$ be relatively prime monic polynomials, and let $\chi_a$ and $\chi_b$ be the cubic residue symbols defined above. If $q \equiv 1 \pmod 6$, then
$$\chi_a(b) = \chi_b(a).$$ 
\end{lem}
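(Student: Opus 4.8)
The plan is to reduce, by bi-multiplicativity of the cubic residue symbol, to the case of two monic irreducibles, and then to rewrite each symbol as a value of the restricted cubic character $\chi_3$ of \eqref{def-chi3} at a resultant. Since $\chi_a(b)$ is multiplicative in $b$ for fixed $a$ and satisfies $\chi_{a_1a_2}(b)=\chi_{a_1}(b)\chi_{a_2}(b)$, it suffices to prove $\chi_P(Q)=\chi_Q(P)$ for monic irreducibles $P,Q$ of degrees $d,e$; the general statement follows by taking products.

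The key step is the identity
$$\chi_P(f)=\chi_3\bigl(\Res(P,f)\bigr)$$
for every $f$ coprime to $P$, where $\Res$ is the resultant. To see this, fix a root $\alpha\in\F_{q^d}$ of $P$; the isomorphism $\F_q[T]/(P)\xrightarrow{\ \sim\ }\F_{q^d}$ sending $T\mapsto\alpha$ carries the class of $f^{(q^d-1)/3}\bmod P$ (which defines $\chi_P(f)$) to $\psi_d\bigl(f(\alpha)\bigr)$, where $\psi_d$ is the cubic power-residue character $x\mapsto x^{(q^d-1)/3}$ on $\F_{q^d}^*$, valued in $\mu_3=\mu_3(\F_q)$ and transported to $\C^*$ via the fixed isomorphism $\Omega$ of Section~\ref{sec:notation}. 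From $\tfrac{q^d-1}{3}=\tfrac{q^d-1}{q-1}\cdot\tfrac{q-1}{3}$ one checks $\psi_d=\chi_3\circ N_{\F_{q^d}/\F_q}$, and since $P$ is monic, $N_{\F_{q^d}/\F_q}\bigl(f(\alpha)\bigr)=\prod_{i=0}^{d-1}f(\alpha^{q^i})=\Res(P,f)$. This proves the identity.

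Applying it with $f=Q$ and with $P,Q$ interchanged gives $\chi_P(Q)=\chi_3(\Res(P,Q))$ and $\chi_Q(P)=\chi_3(\Res(Q,P))$. The standard symmetry $\Res(P,Q)=(-1)^{de}\Res(Q,P)$ of the resultant then yields $\chi_P(Q)=\chi_3(-1)^{de}\,\chi_Q(P)$. Finally, by \eqref{def-chi3} we have $\chi_3(-1)=\Omega^{-1}\bigl((-1)^{(q-1)/3}\bigr)$, and the hypothesis $q\equiv 1\pmod 6$ forces $(q-1)/3$ to be even, so $\chi_3(-1)=1$ and hence $\chi_P(Q)=\chi_Q(P)$; unwinding the reduction completes the proof. (Alternatively, one may simply invoke the general $n$-th power reciprocity law in $\F_q[T]$ with $n=3$ and observe that its correction factor is trivial under the present hypotheses.)

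The only substantive point is the resultant identity for $\chi_P(f)$: one must be slightly careful that $\Omega$ may be used uniformly over the tower $\F_q\subset\F_{q^d}$ — harmless because $\mu_3(\F_q)=\mu_3(\F_{q^d})$ — and that the resultant is normalised using the monicity of $P$. Everything else is routine bookkeeping.
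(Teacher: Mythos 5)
Your argument is correct. Note, though, that the paper does not give an argument of its own here: its proof is a one-line citation to Theorem 3.5 of Rosen's book (the $n$-th power reciprocity law for $\F_q[T]$), specialized to monic $a,b$ and $q \equiv 1 \pmod 6$. What you have written is essentially that theorem's standard proof made self-contained: reduce by bi-multiplicativity to monic irreducibles $P,Q$, identify $\chi_P(f)$ with $\chi_3(\Res(P,f))$ via $\psi_d=\chi_3\circ N_{\F_{q^d}/\F_q}$ and $N_{\F_{q^d}/\F_q}(f(\alpha))=\Res(P,f)$ (using monicity of $P$), then use $\Res(P,Q)=(-1)^{de}\Res(Q,P)$ together with $\chi_3(-1)=\Omega^{-1}\bigl((-1)^{(q-1)/3}\bigr)=1$, since $q\equiv 1\pmod 6$ makes $(q-1)/3$ even. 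All the delicate points are handled: $\mu_3(\F_{q^d})=\mu_3(\F_q)$ so $\Omega$ applies uniformly, and the reduction covers prime powers since $\chi_{P^e}(b)=\chi_P(b)^e$ by definition. So the proposal buys a self-contained verification of exactly the statement the paper outsources, at the cost of reproducing the textbook argument; there is no gap.
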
 
\begin{proof} This is Theorem 3.5 in \cite{Rosen} in the case where $a$ and $b$ are monic and $q \equiv 1 \pmod 6$. \end{proof}
Finally, we recall Perron's formula over $\F_q[T]$ which we will use many times throughout the paper.

\begin{lem}[Perron's Formula] \label{perron}
If the generating series $\mathcal{A}(u) =\sum_{f \in \mathcal{M}_q} a(f) u^{\deg(f)}$ 
 is absolutely convergent in $|u|\leq r<1$, then
\[\sum_{{f} \in \mathcal{M}_{q,n}} a(f)=\frac{1}{2\pi i}\oint_{|u|=r} \frac{\mathcal{A}(u)}{u^n} \; \frac{du}{u}\]
and
\[\sum_{{f} \in \mathcal{M}_{q,\leq n}} a(f)=\frac{1}{2\pi i}\oint_{|u|=r} \frac{\mathcal{A}(u)}{u^{n}(1-u)} \; \frac{du}{u},\]
where, in the usual notation, we take $\oint$ to signify the integral over the circle oriented counterclockwise. 
\end{lem}

\kommentar{Let $q$ be an odd prime power such that $q \equiv 1 \pmod 3$. We fix once and for all an isomorphism $\Omega$ between $\mu_3$, the cubic roots of 1 in $\C^*$, and the cubic roots of 1 in 
$\F_q^*.$
We also fix a cubic character $\chi_3$ on $\F_q^*$ by
\begin{equation} \label{def-chi3}
\chi_3(\alpha) =  \Omega^{-1} \left( \alpha^{\frac{q-1}{3}} \right).
\end{equation}
For any character $\chi$ on $\F_q[T]$, we say that $\chi$ is even if it is trivial on $\F_q^*$, and odd otherwise. 
Then, when $q$ is an odd prime power such that $q \equiv 1 \pmod 3$, any cubic character on $\F_q(T)$ falls in three natural classes depending on its restriction to $\F_q^*$ which is either $\chi_3$, $\chi_3^2$ or the trivial character 
(in the first 2 cases,  the character is odd, and in the last case, the character is even).\footnote{
We will see in Section \ref{section-cubic}  that when $q \equiv 2 \pmod 3$, any cubic character on $\F_q[T]$ is even.} 

For any odd character $\chi$ on $\F_q[T]$, we denote by $\tau(\chi)$ the Gauss sum of the restriction of $\chi$ to $\F_q$ (which is either $\chi_3$ or $\chi_3^2$), i.e.
\begin{eqnarray} \label{standard-GS} 
\tau(\chi) = \sum_{a\in \F_q^*} \chi(a) e^{2 \pi i \tr_{\F_q/\F_p}(a)/p}. \end{eqnarray}

Then, $|\tau(\chi)| = q^{1/2}$, and we denote the sign of the Gauss sum by 
\begin{eqnarray} \label{sign-restriction}
\epsilon(\chi) = q^{-1/2} {\tau(\chi)}.
\end{eqnarray}
When $\chi$ is even, we set $\epsilon(\chi)=1$.

Finally, we recall Perron's formula over $\F_q[T]$.

\begin{lem}[Perron's Formula] \label{perron}
If the generating series $\mathcal{A}(u) =\sum_{f \in \mathcal{M}_q} a(f) u^{\deg(f)}$ 
 is absolutely convergent in $|u|\leq r<1$, then
\[\sum_{{f} \in \mathcal{M}_{q,n}} a(f)=\frac{1}{2\pi i}\oint_{|u|=r} \frac{\mathcal{A}(u)}{u^n} \; \frac{du}{u}\]
and
\[\sum_{{f} \in \mathcal{M}_{q,\leq n}} a(f)=\frac{1}{2\pi i}\oint_{|u|=r} \frac{\mathcal{A}(u)}{u^{n}(1-u)} \; \frac{du}{u},\]
where, in the usual notation, we take $\oint$ to signify the integral over the circle oriented counterclockwise. 
\end{lem}}

\subsection{Zeta functions  and the approximate functional equation}
The {\it affine zeta function} over $\F_q[T]$ is defined by 
$$ \mathcal{Z}_q(u) = \sum_{f \in \mathcal{M}_q} u^{\deg(f)}$$ for $|u|<1/q$. By grouping the polynomials according to the degree, it follows that
$$ \mathcal{Z}_q(u) = \sum_{n=0}^{\infty} u^n q^n = \frac{1}{1-qu},$$ and this provides a meromorphic continuation of $\mathcal{Z}_q(u)$ to the entire complex plane. We remark that $\mathcal{Z}_q(u)$ has a simple pole at $u=1/q$
with residue $-\frac{1}{q}$. We also define
\[\zeta_q(s)=\mathcal{Z}_q(q^{-s}).\]

Note that $\mathcal{Z}_q(u)$ can be expressed in terms of an Euler product as follows
$$ \mathcal{Z}_q(u) = \prod_{P} (1-u^{\deg(P)})^{-1},$$ where the product is over monic irreducible polynomials in $\F_q[T]$.

Let $C$ be a curve over $\F_q(T)$ whose function field is a cyclic cubic extension of $\F_q(T)$. 
From the Weil conjectures, the zeta function of the curve $C$ can be written as
 $$ \mathcal{Z}_{C}(u) =  \frac{\mathcal{P}_C(u)}{(1-u)(1-qu)},$$ 
 where
\kommentar{$$\mathcal{P}_{C}(u) = \prod_{j=1}^{g} \left( 1 - \sqrt{q} u e^{2 \pi i \theta_j} \right)  \prod_{j=1}^{g} \left( 1 - \sqrt{q} u e^{- 2 \pi i \theta_j} \right) =: \mathcal{L}_C^*(u)
\overline{\mathcal{L}_C^*}(u)$$
for some eigenangles $\theta_j$, $j = 1, \dots, g$.

We can write $\mathcal{P}_{C}(u)$ in terms of the $L$-functions of the two cubic Dirichlet characters $\chi$ and $\overline{\chi}$ of the function field of $C$.  Let 
$h$ be the conductor of the non-principal character $\chi$. 
Define 
\begin{eqnarray}
\mathcal{L}_q(u, \chi) := \sum_{f \in \mathcal{M}_q} \chi(u) u^{\deg(f)} = \sum_{d <  \deg(h)}  u^d  \sum_{f \in \mathcal{M}_{q, d}} \chi(f) , \label{finite-L-1}
\end{eqnarray}
where the second equality follows from the orthogonality relations.

We remark that setting $u=q^{-s}$, we have $L_q(s,\chi)=\mathcal{L}_q(u, \chi)$. From now on 
we will mainly use the notation $\mathcal{L}_q(u, \chi)$. The $L$--function has the following Euler product
$$ \mathcal{L}_q(u,\chi) = \prod_{P \nmid h} (1-\chi(P) u^{\deg(P)})^{-1},$$ where the product is again over monic irreducible polynomials $P$ in $\F_q[T]$. From now on, the Euler products we consider are over monic, irreducible polynomials and if there is an ambiguity as to whether the polynomials belong to $\F_q[T]$ or $\F_{q^2}[T]$ we will indicate so.


Considering the prime at infinity, we obtain that
\begin{eqnarray} \label{L}
\mathcal{L}_q(u, \chi) = \begin{cases} \mathcal{L}_C^*(u) & \mbox{if $\chi$ is odd,} \\
 (1-u) \mathcal{L}_C^*(u) & \mbox{if $\chi$ is even,} \end{cases}
\end{eqnarray}
possibly exchanging $\chi$ and $\overline{\chi}$.  Furthermore, using the Riemann--Hurwitz formula, we have that
\begin{eqnarray} \label{genus}
\deg(h) = g+2 - \begin{cases} 0 & \mbox{if $\chi$ is even,} \\ 1 & \mbox{if $\chi$ is odd.} \end{cases}
\end{eqnarray}}
$$\mathcal{P}_{C}(u) = \prod_{j=1}^{g} \left( 1 - \sqrt{q} u e^{2 \pi i \theta_j} \right)  \prod_{j=1}^{g} \left( 1 - \sqrt{q} u e^{- 2 \pi i \theta_j} \right)$$
for some eigenangles $\theta_j$, $j = 1, \dots, g$. 

We can write $\mathcal{P}_{C}(u)$ in terms of the $L$-functions of the two cubic Dirichlet characters $\chi$ and $\overline{\chi}$ of the function field of $C$.  Let 
$h$ be the conductor of the non-principal character $\chi$. 
Define 
\begin{eqnarray}
\mathcal{L}_q(u, \chi) := \sum_{f \in \mathcal{M}_q} \chi(u) u^{\deg(f)} = \sum_{d <  \deg(h)}  u^d  \sum_{f \in \mathcal{M}_{q, d}} \chi(f) , \label{finite-L-1}
\end{eqnarray}
where the second equality follows from the orthogonality relations.

We remark that setting $u=q^{-s}$, we have $L_q(s,\chi)=\mathcal{L}_q(u, \chi)$. From now on 
we will mainly use the notation $\mathcal{L}_q(u, \chi)$. The $L$--function has the following Euler product
$$ \mathcal{L}_q(u,\chi) = \prod_{P \nmid h} (1-\chi(P) u^{\deg(P)})^{-1},$$ where the product is again over monic irreducible polynomials $P$ in $\F_q[T]$. From now on, the Euler products we consider are over monic, irreducible polynomials and if there is an ambiguity as to whether the polynomials belong to $\F_q[T]$ or $\F_{q^2}[T]$ we will indicate so.


Considering the prime at infinity, we write
\begin{eqnarray} \label{L}
\mathcal{L}_C(u, \chi) = \begin{cases}  \mathcal{L}_q(u, \chi) &\mbox{if $\chi$ is odd,} \\ \\
\displaystyle\frac{\mathcal{L}_q(u, \chi)}{1-u}  & \mbox{if $\chi$ is even.} \end{cases}
\end{eqnarray}
Then we have 
\[\mathcal{P}_{C}(u)=\mathcal{L}_C(u, \chi)\mathcal{L}_C(u, \overline{\chi}).\]

Furthermore, using the Riemann--Hurwitz formula, we have that
\begin{eqnarray} \label{genus}
\deg(h) = g+2 - \begin{cases} 0 & \mbox{if $\chi$ is even,} \\ 1 & \mbox{if $\chi$ is odd.} \end{cases}
\end{eqnarray}

\begin{lem} \label{lemma-FE-gen}  \label{sign-FE} Let $\chi$ be a primitive cubic character to the modulus $h$.

If $\chi$ is odd, then 
$\mathcal{L}_q(u, \chi)$ satisfies the functional equation
\begin{equation} \label{eq:funceq-gen}
\mathcal{L}_q(u, \chi) = \omega(\chi) (\sqrt{q}u)^{\deg(h)-1} \mathcal{L}_q \left(\frac{1}{qu}, \overline{\chi}\right),
\end{equation}
where the sign of the functional equation is
\begin{eqnarray} \label{sign}
\omega(\chi) &=& q^{-(\deg(h)-1)/2} \sum_{f \in \mathcal{M}_{q,\deg(h)-1}} \chi (f).
\end{eqnarray}
If $\chi$ is even, then 
$\mathcal{L}_q(u, \chi)$ satisfies the functional equation
\begin{equation*} \label{eq:funceq-gen-even}
\mathcal{L}_q(u, \chi) = \omega(\chi) (\sqrt{q}u)^{\deg(h)-2} \frac{1-u}{1-\frac{1}{qu}}\mathcal{L}_q \left(\frac{1}{qu}, \overline{\chi}\right),
\end{equation*}
where the sign of the functional equation is
\begin{equation} \label{sign-even}
\omega(\chi) = -q^{-(\deg(h)-2)/2} \sum_{f \in \mathcal{M}_{\deg(h)-1}} \chi(f).\end{equation}
\end{lem}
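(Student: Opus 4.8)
The plan is to derive the functional equation directly from a Gauss‑sum expansion of the coefficients of $\mathcal{L}_q(u,\chi)$, treating the odd and even cases in parallel and reading the constant off from the extreme coefficients. Write $m=\deg(h)$ and $A_n(\chi)=\sum_{f\in\mathcal{M}_{q,n}}\chi(f)$; since for $n\geq m$ reduction modulo $h$ distributes $\mathcal{M}_{q,n}$ uniformly over $\F_q[T]/(h)$ we have $A_n(\chi)=0$ there, so $\mathcal{L}_q(u,\chi)=\sum_{n=0}^{m-1}A_n(\chi)u^n$ as in \eqref{finite-L-1}. Let $\mathbf{e}$ be the standard additive character on $\F_q((1/T))/\F_q[T]$ attached to the place at infinity, normalised to be compatible with \eqref{standard-GS}, and for primitive $\psi$ modulo $h$ put $\tau_h(\psi)=\sum_{a\bmod h}\psi(a)\,\mathbf{e}(a/h)$, which is nonzero. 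Primitivity of $\chi$ gives the separation identity $\sum_{a\bmod h}\overline{\chi}(a)\,\mathbf{e}(af/h)=\chi(f)\,\tau_h(\overline{\chi})$ for every $f\in\F_q[T]$; substituting this for $\chi(f)$ and interchanging the summations yields
\begin{equation*}
A_n(\chi)=\frac{1}{\tau_h(\overline{\chi})}\sum_{a\bmod h}\overline{\chi}(a)\sum_{f\in\mathcal{M}_{q,n}}\mathbf{e}(af/h).
\end{equation*}

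The core step is to evaluate the inner sum. Writing $f=T^n+b$ with $\deg b<n$, and using that the coefficient of $T^{-1}$ in $ab/h$ equals the coefficient of $T^{m-1}$ in $ab\bmod h$, one checks that $\sum_{\deg b<n}\mathbf{e}(ab/h)$ equals $q^n$ when $\deg a\leq m-1-n$ and vanishes otherwise, while for such $a$ one has $\mathbf{e}(aT^n/h)=e^{2\pi i\,\tr_{\F_q/\F_p}(a_{m-1-n})/p}$, where $a_{m-1-n}$ is the coefficient of $T^{m-1-n}$ in $a$. Splitting $a=cT^{m-1-n}+a'$ with $c\in\F_q$ and $\deg a'<m-1-n$: for each $c\neq0$ the inner sum over $a'$ contracts, after pulling out $\overline{\chi}(c)$ and rescaling $a'$ by $c^{-1}$, to $\overline{\chi}(c)A_{m-1-n}(\overline{\chi})$, and the $c=0$ term is $\sum_{\deg a'<m-1-n}\overline{\chi}(a')=\big(\sum_{\lambda\in\F_q^*}\overline{\chi}(\lambda)\big)\sum_{k=0}^{m-2-n}A_k(\overline{\chi})$. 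Therefore
\begin{equation*}
A_n(\chi)=\frac{q^n}{\tau_h(\overline{\chi})}\left[\Big(\sum_{c\in\F_q^*}\overline{\chi}(c)\,e^{2\pi i\,\tr_{\F_q/\F_p}(c)/p}\Big)A_{m-1-n}(\overline{\chi})+\Big(\sum_{\lambda\in\F_q^*}\overline{\chi}(\lambda)\Big)\sum_{k=0}^{m-2-n}A_k(\overline{\chi})\right].
\end{equation*}

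Now feed in the restriction of $\chi$ to $\F_q^*$. If $\chi$ is odd, then $\overline{\chi}|_{\F_q^*}$ is a non‑trivial cubic character, so the second sum vanishes and the first equals $\tau(\overline{\chi})$ in the notation of \eqref{standard-GS}; thus $A_n(\chi)=q^n\,\tau(\overline{\chi})\,\tau_h(\overline{\chi})^{-1}A_{m-1-n}(\overline{\chi})$, and multiplying by $u^n$, summing over $n$ and re‑indexing $j=m-1-n$ gives $\mathcal{L}_q(u,\chi)=\tau(\overline{\chi})\,\tau_h(\overline{\chi})^{-1}(qu)^{m-1}\mathcal{L}_q(1/(qu),\overline{\chi})$. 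Evaluating the coefficient relation at $n=m-1$ (where $A_0(\overline{\chi})=1$) identifies $\tau(\overline{\chi})\,\tau_h(\overline{\chi})^{-1}=q^{-(m-1)}A_{m-1}(\chi)$, which turns this into \eqref{eq:funceq-gen} with $\omega(\chi)=q^{-(m-1)/2}\sum_{f\in\mathcal{M}_{q,m-1}}\chi(f)$, i.e.\ \eqref{sign}. If $\chi$ is even, then $\overline{\chi}|_{\F_q^*}$ is trivial, so the second sum is $q-1$ and the first is $\sum_{c\in\F_q^*}e^{2\pi i\,\tr_{\F_q/\F_p}(c)/p}=-1$; multiplying by $u^n$ and summing, the partial sums $\sum_{k\leq m-2-n}A_k(\overline{\chi})$ resum geometrically and, after using that $\mathcal{L}_q(1,\overline{\chi})=0$ for even characters (because $\big(\sum_{\lambda\in\F_q^*}\overline{\chi}(\lambda)\big)\mathcal{L}_q(1,\overline{\chi})$ equals the sum of $\overline{\chi}$ over all residues modulo $h$, hence vanishes), combine with the first part into a common factor $(qu)^{m-1}\mathcal{L}_q(1/(qu),\overline{\chi})$; the elementary identity $-1+\frac{q-1}{qu-1}=\frac{-q(1-u)}{1-qu}$ then produces exactly the factor $\frac{1-u}{1-1/(qu)}$, and comparing the coefficient at $n=m-1$ as before yields $\omega(\chi)=-q^{-(m-2)/2}\sum_{f\in\mathcal{M}_{q,m-1}}\chi(f)$, i.e.\ \eqref{sign-even}.

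The step I expect to require the most care is the evaluation of $\sum_{f\in\mathcal{M}_{q,n}}\mathbf{e}(af/h)$: one must pin down the residue‑at‑infinity normalisation of $\mathbf{e}$ and verify the cutoff $\deg a\leq m-1-n$ uniformly in $a$, including the cases where $\gcd(a,h)\neq1$ (which do occur, since $h$ need not be irreducible). Everything afterwards—the contraction of the $c$‑sum and the generating‑function bookkeeping, especially in the even case—is formal, and the two cases differ only through the values of the two character sums $\sum_{\lambda\in\F_q^*}\overline{\chi}(\lambda)$ and $\sum_{c\in\F_q^*}\overline{\chi}(c)e^{2\pi i\,\tr_{\F_q/\F_p}(c)/p}$.
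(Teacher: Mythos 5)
Your proof is correct, but it takes a genuinely different route from the paper's. The paper deduces the lemma from the Weil conjectures: writing $\mathcal{L}_C(u,\chi)=\prod_j\bigl(1-\sqrt{q}\,u\,e^{2\pi i\theta_j}\bigr)$ (with the factor $1-u$ split off in the even case), the reflection $u\mapsto 1/(qu)$ is immediate, and $\omega(\chi)$ is read off by comparing the top coefficient of the polynomial with $(-1)^{\deg(h)-1}q^{(\deg(h)-1)/2}\prod_j e^{2\pi i\theta_j}$. You instead establish the coefficient reflection $A_n(\chi)=q^n\,\tau(\overline{\chi})\,\tau_h(\overline{\chi})^{-1}A_{\deg(h)-1-n}(\overline{\chi})$ (plus the partial-sum term in the even case) directly from the separation identity for the primitive character and the evaluation of $\sum_{f\in\mathcal{M}_{q,n}}e_q(af/h)$. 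I checked the cutoff $\deg a\le \deg(h)-1-n$, the contraction of the $c$-sum, the use of $\mathcal{L}_q(1,\overline{\chi})=0$ for even $\chi$, and the identification of the constant at $n=\deg(h)-1$: everything matches \eqref{sign} and \eqref{sign-even}. (One cosmetic slip: your identity actually yields $u^{-1}\cdot\frac{1-u}{1-\frac{1}{qu}}$ rather than $\frac{1-u}{1-\frac{1}{qu}}$; the stray $u^{-1}$ is precisely what converts $(qu)^{\deg(h)-1}$ into the stated normalisation $(\sqrt{q}u)^{\deg(h)-2}$, and your final constants are consistent with this.) Your approach is more elementary and self-contained: it avoids the Weil conjectures and the genus formula, it proves $A_n(\chi)=0$ for $n\ge\deg(h)$ and the functional equation in one stroke, and it essentially yields for free the Gauss-sum expression for $\omega(\chi)$ that the paper proves separately in Corollary \ref{sign-GS}; what the paper's route buys is brevity, since the zeta-function machinery is already in place for \eqref{genus}. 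Two points you should make explicit rather than assert: the nonvanishing of $\tau_h(\overline{\chi})$ for primitive $\chi$ (standard, e.g.\ summing $\bigl|\sum_{a\bmod h}\overline{\chi}(a)e_q(af/h)\bigr|^2$ over $f\bmod h$ gives $|\tau_h(\overline{\chi})|^2=q^{\deg(h)}$), and the validity of the separation identity when $(f,h)\neq 1$, which indeed requires primitivity as you note.
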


\begin{proof} 
From \eqref{L} and \eqref{genus}, 
if $\chi$ is odd, then $g=\deg(h)-1$,
$\mathcal{L}_q(u, \chi)=\mathcal{L}_C(u, \chi)$,
and the functional equation follows from the Weil conjectures, since we have
\begin{eqnarray} 
\mathcal{L}_C(u, \chi) &=& (u \sqrt{q})^{\deg(h)-1} \prod_{j=1}^{\deg(h)-1}  \left( (u \sqrt{q})^{-1} - e^{2 \pi i \theta_j} \right) \nonumber\\
&=&  (u \sqrt{q})^{\deg(h)-1} (-1)^{\deg(h)-1}  \prod_{j=1}^{\deg(h)-1} e^{2 \pi i \theta_j}  \prod_{j=1}^{\deg(h)-1}  \left( 1 - \frac{e^{- 2 \pi i \theta_j}}{u \sqrt{q}} \right)\nonumber \\
&=& (u \sqrt{q})^{\deg(h)-1} (-1)^{\deg(h)-1} \prod_{j=1}^{\deg(h)-1} e^{2 \pi i \theta_j} \mathcal{L}_C \left( \frac{1}{qu}, \overline{\chi} \right).\label{fe*}
\end{eqnarray}
Since
$$ \sum_{n=0}^{\deg(h)-1} u^n \sum_{f \in \mathcal{M}_{q,n}} \chi (f) = \prod_{j=1}^{\deg(h)-1} (1-u \sqrt{q} e^{2 \pi i \theta_j}),$$  
comparing the coefficients of $u^{\deg(h)-1}$, it follows that
$$ \sum_{f \in \mathcal{M}_{q,\deg(h)-1}} \chi (f) = (-1)^{\deg(h)-1} q^{(\deg(h)-1)/2} \prod_{j=1}^{\deg(h)-1} e^{2 \pi i \theta_j},$$ which gives that
$$\omega(\chi) = q^{-(\deg(h)-1)/2} \sum_{f \in \mathcal{M}_{q,\deg(h)-1}} \chi(f).$$

From \eqref{L} and \eqref{genus}, 
if $\chi$ is even,  then $g=\deg(h)-2$, $\mathcal{L}_q(u, \chi)=(1-u)\mathcal{L}_C^*(u)$,
and we have
\begin{eqnarray} 
\mathcal{L}_q(u, \chi) &=& (1-u)(u \sqrt{q})^{\deg(h)-2} \prod_{j=1}^{\deg(h)-2}  \left( (u \sqrt{q})^{-1} - e^{2 \pi i \theta_j} \right) \nonumber \\
&=& (1-u) (u \sqrt{q})^{\deg(h)-2} (-1)^{\deg(h)-2}  \prod_{j=1}^{\deg(h)-2} e^{2 \pi i \theta_j}  \prod_{i=1}^{\deg(h)-2}  \left( 1 - \frac{ e^{-2 \pi i \theta_j}}{u \sqrt{q}} \right) \nonumber \\
&=& \left(\frac{1-u}{1-\frac{1}{qu}}\right)(u \sqrt{q})^{\deg(h)-2} (-1)^{\deg(h)-2} \prod_{j=1}^{\deg(h)-2} e^{2 \pi i \theta_j} \mathcal{L}_q \left(\frac{1}{qu}, \overline{\chi}\right). \label{fe3}
\end{eqnarray}

Since
$$ \sum_{n=0}^{\deg(h)-1} u^n \sum_{f \in \mathcal{M}_{q,n}} \chi (f) = (1-u)\prod_{j=1}^{\deg(h)-2} (1-u \sqrt{q}  e^{2 \pi i \theta_j}),$$ 
comparing the coefficients of $u^{\deg(h)-1}$, it follows that
$$ \sum_{f \in \mathcal{M}_{q,\deg(h)-1}} \chi (f) = (-1)^{\deg(h)-1} q^{(\deg(h)-2)/2} \prod_{j=1}^{\deg(h)-2}  e^{2 \pi i \theta_j},$$ which gives that
$$\omega(\chi) = -q^{-(\deg(h)-2)/2} \sum_{f \in \mathcal{M}_{q,\deg(h)-1}} \chi(f).$$

\end{proof}

It is more natural to rewrite the sign of the functional equation in terms of Gauss sums over $\F_q[T]$.  In particular, it is not obvious from \eqref{sign} and \eqref{sign-even} that $|\omega(\chi)|=1$.  

As in \cite{F17}, we will use the exponential function which was introduced by D. Hayes \cite{hayes}. For any $a \in \mathbb{F}_q((1/T))$, we define 
\begin{equation}
e_q(a) = e^{\frac{2 \pi i \tr_{\F_q/\F_p}(a_1) }{p}},
\label{exp-ff}
\end{equation} with $a_1$ the coefficient of $1/T$ in the Laurent expansion of $a$.
We then have that $e_q(a+b) = e_q(a)  e_q(b)$, and $e_q(a) = 1$ for $a \in \F_q[T]$. Also, if $a,b,h \in \F_q[T]$ are such that  $a \equiv b \pmod h$, then
$e_q(a/h) = e_q(b/h).$

For $\chi$ a primitive character of modulus  $h$ on $\F_q[T]$, let 
\begin{eqnarray*} 
G(\chi) =\sum_{a \pmod h} \chi(a) e_q \left (  \frac{a}{h}\right ) \end{eqnarray*}
 be the Gauss sum of the primitive Dirichlet character $\chi$ over $\F_q[T].$
 The following corollary expresses the root number in terms of Gauss sums.

\begin{cor}\label{sign-GS} Let $\chi$ be a primitive character of modulus  $h$ on $\F_q[T]$. Then
$$\omega(\chi) = \begin{cases} \frac{1}{\tau(\chi)} q^{-(\deg(h) - 1)/2}  G(\chi) & \mbox{ if } \chi  \,\mathrm{ odd},\\
\frac{1}{\sqrt{q}}  q^{-(\deg(h) - 1)/2} G(\chi)  & \mbox{ if } \chi  \,\mathrm{ even}. \end{cases}$$
\end{cor}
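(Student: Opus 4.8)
The plan is to evaluate the Gauss sum $G(\chi)$ directly, taking the residues $a \pmod h$ to be the polynomials of degree $<\deg(h)=:d$, and then to match the outcome against the short character sums in \eqref{sign} and \eqref{sign-even}. The key elementary point is that $e_q(a/h)$ depends only on the top coefficient of such an $a$: since $h$ is monic of degree $d$, the coefficient of $1/T$ in the Laurent expansion of $a/h$ is the coefficient $a_{d-1}$ of $T^{d-1}$ in $a$, so $e_q(a/h)=e^{2\pi i\,\tr_{\F_q/\F_p}(a_{d-1})/p}$. Splitting the sum defining $G(\chi)$ according to $\lambda:=a_{d-1}\in\F_q$, the residues with $\lambda=0$ run over all $f\in\F_q[T]$ with $\deg(f)\le d-2$ and carry trivial exponential, while for $\lambda\ne 0$ one writes $a=\lambda f$ with $f$ monic of degree $d-1$ and pulls $\lambda$ out of $\chi$. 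This gives
\[
G(\chi)=\Sigma_0+\Bigl(\sum_{\lambda\in\F_q^*}\chi(\lambda)\,e^{2\pi i\,\tr_{\F_q/\F_p}(\lambda)/p}\Bigr)\,S,
\]
where $\Sigma_0:=\sum_{f\in\F_q[T],\ \deg(f)\le d-2}\chi(f)$ and $S:=\sum_{f\in\mathcal M_{q,\deg(h)-1}}\chi(f)$.

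Next I would split into the odd and even cases. If $\chi$ is odd, then $\chi|_{\F_q^*}$ is one of the nontrivial characters $\chi_3,\chi_3^2$, the bracketed factor is exactly the classical Gauss sum $\tau(\chi)$ of \eqref{standard-GS}, and $\Sigma_0$ vanishes: grouping the polynomials of each degree $m\le d-2$ by their leading coefficient $\mu\in\F_q^*$ produces the factor $\sum_{\mu\in\F_q^*}\chi(\mu)=0$. Hence $G(\chi)=\tau(\chi)\,S$, and substituting $S=G(\chi)/\tau(\chi)$ into \eqref{sign} gives $\omega(\chi)=\tau(\chi)^{-1}q^{-(\deg(h)-1)/2}G(\chi)$. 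If $\chi$ is even, then $\chi|_{\F_q^*}$ is trivial, so the bracketed factor is $\sum_{\lambda\in\F_q^*}e^{2\pi i\,\tr_{\F_q/\F_p}(\lambda)/p}=-1$, while the same leading-coefficient grouping gives $\Sigma_0=(q-1)\sum_{m=0}^{d-2}\sum_{f\in\mathcal M_{q,m}}\chi(f)=(q-1)\bigl(\mathcal L_q(1,\chi)-S\bigr)$. Since $\mathcal L_q(u,\chi)$ carries the Euler factor $1-u$ at the infinite place by \eqref{L}, one has $\mathcal L_q(1,\chi)=0$, whence $\Sigma_0=-(q-1)S$ and $G(\chi)=-qS$. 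Plugging into \eqref{sign-even} then gives $\omega(\chi)=q^{-(\deg(h)-2)/2}G(\chi)/q=q^{-\deg(h)/2}G(\chi)=q^{-1/2}q^{-(\deg(h)-1)/2}G(\chi)$, as claimed.

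The argument is essentially formal, and the only steps that need care are the two evaluations of the ``low-degree tail'' $\Sigma_0$. In the odd case it is annihilated by orthogonality of the nontrivial restriction $\chi|_{\F_q^*}$; in the even case it does not vanish, and one must spot its telescoping with $S$ and then invoke $\mathcal L_q(1,\chi)=0$, which is precisely the input of \eqref{L}--\eqref{genus}. I expect this even-case bookkeeping --- together with the verification that the Hayes exponential only detects the leading coefficient of a reduced residue, which is exactly what makes the constant $\tau(\chi)$ (resp.\ $\sqrt q$) appear --- to be the only mildly delicate point; everything else is routine manipulation of finite character sums.
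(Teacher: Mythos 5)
Your proposal is correct and follows essentially the same route as the paper's proof: you split $G(\chi)$ according to the top coefficient (equivalently, the degree) of the residue, use that the Hayes exponential only detects the coefficient of $T^{\deg(h)-1}$, kill the low-degree tail by orthogonality of $\chi|_{\F_q^*}$ in the odd case, and in the even case use that $u=1$ is a root of $\mathcal{L}_q(u,\chi)$ to telescope it against the top-degree sum, before substituting into \eqref{sign} and \eqref{sign-even}. The bookkeeping in both cases matches the paper's computation exactly.
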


\begin{proof} We prove the following relation
$$ G(\chi)= 
\begin{cases}
\tau(\chi) \sum_{f \in \mathcal{M}_{q,\deg(h)-1}} \chi(h)&\mbox{ if } \chi  \,\mathrm{ odd},\\
- q \sum_{f \in \mathcal{M}_{q,\deg(h)-1}} \chi(h) &\mbox{ if } \chi  \,\mathrm{ even},
\end{cases}$$
which clearly implies the corollary.
Writing \[\mathcal{L}_q(u, \chi)= \sum_{j=0}^{\deg(h)-1}a_ju^j, \qquad  a_j=\sum_{\ell \in \mathcal{M}_{q,j}} \chi(\ell),\]
we have
\[\sum_{\ell \pmod h} \chi(\ell) e_q\left(  \frac{\ell}{h}\right )=\sum_{j=0}^{\deg(h)-2}a_j \sum_{a \in \F_q^*}\chi(a)+a_{\deg(h)-1}\sum_{a \in \F_q^*}\chi(a) e^{2 \pi i \tr_{\F_q/\F_p}(a)/p}.\]
\[=\left\{\begin{array}{ll}  \tau(\chi) a_{\deg(h)-1} &\mbox{ if } \chi  \,\mathrm{ odd},\\
(q-1) \sum_{j=0}^{\deg(h)-2} a_j-a_{\deg(h)-1} & \mbox{ if } \chi  \,\mathrm{ even}.
         \end{array} \right.\]
When $\chi$ is even, 1 is a root of $\mathcal{L}_q(u, \chi)$ and therefore $\sum_{j=0}^{\deg(h)-1} a_j=0$. The result follows. 
\end{proof}


The following result allows us to replace the sum \eqref{finite-L-1} by two shorter sums of lengths $A$ and $g-A-1$, where $A$ is a parameter that can be chosen later, where the relationship between $g$ and $\deg(h)$ is given by \eqref{genus}.

\begin{prop} [Approximate Functional Equation] \label{prop-AFE} Let $\chi$ be a primitive cubic character of modulus $h$. 

If $\chi$ is odd, then
\begin{align*}
\mathcal{L}_q \left( \frac{1}{\sqrt{q}} ,\chi \right) =&\sum_{f \in \mathcal{M}_{q, \leq A}} \frac{ \chi(f)}{ q^{\deg(f)/2}} + \omega (\chi) \sum_{f \in \mathcal{M}_{q, \leq g-A-1}} \frac{ \overline{\chi}(f)}{q^{\deg(f)/2}},\\
\end{align*}
where $g=\deg(h)-1$ by \eqref{genus}.

If $\chi$ is even, then
\begin{align*}
\mathcal{L}_q \left( \frac{1}{\sqrt{q}} ,\chi \right) =&\sum_{f \in \mathcal{M}_{q, \leq A}} \frac{ \chi(f)}{ q^{\deg(f)/2}} + \omega (\chi) \sum_{f \in \mathcal{M}_{q, \leq g-A-1}} \frac{ \overline{\chi}(f)}{q^{\deg(f)/2}} \\
&+ \frac{1}{1-\sqrt{q}} \sum_{f \in \mathcal{M}_{q, A+1}} \frac{ \chi(f)}{q^{\deg(f)/2}} + \frac{\omega(\chi)}{1-\sqrt{q}} \sum_{f \in \mathcal{M}_{q,g-A}} \frac{ \overline{\chi}(f)}{q^{\deg(f)/2}},
\end{align*}
where $g=\deg(h)-2$ by  \eqref{genus}.
\end{prop}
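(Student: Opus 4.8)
The plan is to derive the approximate functional equation directly from Perron's formula (Lemma \ref{perron}) applied to the finite Dirichlet polynomial $\mathcal{L}_q(u,\chi)$, splitting the contour integral into two pieces and applying the functional equation of Lemma \ref{lemma-FE-gen} to one of them. Since $\mathcal{L}_q(u,\chi)=\sum_{n=0}^{\deg(h)-1}a_n u^n$ with $a_n=\sum_{f\in\mathcal{M}_{q,n}}\chi(f)$ is a polynomial, we have exactly $\mathcal{L}_q(1/\sqrt q,\chi)=\sum_{n=0}^{\deg(h)-1}a_n q^{-n/2}$. First I would use Perron's formula with the kernel $1/(u^n(1-u))$ to write, for a suitable radius $r<1/q$,
\[
\sum_{f\in\mathcal{M}_{q,\le N}}\frac{\chi(f)}{q^{\deg(f)/2}}=\frac{1}{2\pi i}\oint_{|u|=r}\frac{\mathcal{L}_q(u/\sqrt q,\chi)}{u^N(1-u)}\,\frac{du}{u},
\]
(after the substitution $u\mapsto u/\sqrt q$, tracking the geometric-series factor carefully), so that the truncated sums appearing in the statement are residues. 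The idea is then to deform the contour: the full value $\mathcal{L}_q(1/\sqrt q,\chi)$ is captured, and pushing the contour past $|u|=1$ picks up the pole at $u=1$ (which in the even case produces the extra $1/(1-\sqrt q)$ terms, since $u=1$ is a genuine zero of $\mathcal{L}_q(u,\chi)$ only in the odd case — compare the proof of Corollary \ref{sign-GS}).

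The core step is the symmetrization. On the far contour I substitute the functional equation \eqref{eq:funceq-gen} (odd case) or its even analog, turning $\mathcal{L}_q(1/(qu),\overline\chi)$ into a Dirichlet polynomial in $1/(qu)$; changing variables $u\mapsto 1/(qu)$ converts this into a second Perron integral, now producing $\omega(\chi)\sum_{f\in\mathcal{M}_{q,\le g-A-1}}\overline\chi(f)q^{-\deg(f)/2}$, with the length $g-A-1$ dictated by the exponent $\deg(h)-1$ in the functional equation together with \eqref{genus}. The parameter $A$ is simply the choice of which power of $u$ we truncate at; the two pieces together must reconstruct all $\deg(h)-1$ coefficients $a_n$ (the first $A+1$ directly, the remaining ones via the functional equation, using $a_n$ versus $a_{\deg(h)-1-n}$ symmetry encoded in \eqref{eq:funceq-gen}). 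In the even case the factor $\tfrac{1-u}{1-1/(qu)}$ in the functional equation is what generates the two additional boundary sums over $\mathcal{M}_{q,A+1}$ and $\mathcal{M}_{q,g-A}$ with the coefficient $\tfrac{1}{1-\sqrt q}$; one tracks this by writing $\tfrac{1-u}{1-1/(qu)}=\tfrac{qu(1-u)}{qu-1}$ and expanding.

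I expect the main obstacle to be purely bookkeeping rather than conceptual: getting the exact ranges of summation ($\le A$ versus $\le g-A-1$, and the exceptional degrees $A+1$, $g-A$ in the even case) to come out correctly, and making sure the residue at $u=1$ is handled consistently in the two cases — in the odd case $\mathcal{L}_q(1,\chi)=\mathcal{P}_C$-type value is finite and the pole of $1/(1-u)$ must be accounted for, whereas in the even case $\mathcal{L}_q(u,\chi)=(1-u)\mathcal{L}_C^*(u)$ cancels it, which is exactly why the even statement has the extra terms while the odd one does not. An alternative, possibly cleaner, route is to avoid contour shifting entirely: write $\sum_{n=0}^{\deg(h)-1}a_n q^{-n/2}=\sum_{n\le A}+\sum_{n>A}$, and in the second sum substitute $n\mapsto \deg(h)-1-n$ together with the coefficient relation $a_{\deg(h)-1-n}=\omega(\chi)\,q^{\,(\deg(h)-1)/2-n}\,\overline a_n'$ read off from \eqref{eq:funceq-gen} (where $\overline a_n'=\sum_{f\in\mathcal{M}_{q,n}}\overline\chi(f)$), which turns $\sum_{n>A}a_n q^{-n/2}$ into $\omega(\chi)\sum_{n\le g-A-1}\overline a_n' q^{-n/2}$ directly; the even case then follows from the same manipulation applied to $(1-u)\mathcal{L}_C^*(u)$, the failure of the naive coefficient symmetry at the two endpoint degrees producing precisely the $\tfrac{1}{1-\sqrt q}$ terms.
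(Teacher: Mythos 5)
Your ``alternative, possibly cleaner, route'' is in fact the paper's own proof. For $\chi$ odd, the paper extracts from the functional equation \eqref{eq:funceq-gen} exactly the coefficient relation you state (its equation \eqref{id1}, i.e.\ $\sum_{f\in\mathcal{M}_{q,n}}\chi(f)=\omega(\chi)q^{n-g/2}\sum_{f\in\mathcal{M}_{q,g-n}}\overline{\chi}(f)$ with $g=\deg(h)-1$), splits the polynomial at degree $A$, and substitutes this in the tail — your bookkeeping for that case is correct. For $\chi$ even, the paper likewise works with $\mathcal{L}_C^{*}(u)=\sum b_nu^n$, whose coefficients carry the clean symmetry $b_n=\omega(\chi)\overline{b_{g-n}}q^{n-g/2}$, and converts back to the coefficients $a_n$ of $\mathcal{L}_q(u,\chi)=(1-u)\mathcal{L}_C^{*}(u)$ via $b_n=a_0+\cdots+a_n$; the endpoint mismatch in this conversion is what produces the two $\tfrac{1}{1-\sqrt{q}}$ terms, which is precisely the mechanism you describe. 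The Perron/contour version you present first is only a repackaging of this splitting and buys nothing extra; note moreover that its parenthetical justification is backwards: $u=1$ is a zero of $\mathcal{L}_q(u,\chi)$ in the \emph{even} case, not the odd one (this is exactly what the paper uses in the proof of Corollary \ref{sign-GS}), and in any case the pole of the Perron kernel at $u=1$ simply reproduces the full value $\mathcal{L}_q(1/\sqrt{q},\chi)$ in both parities — the extra even-case terms come from the factor $\frac{1-u}{1-\frac{1}{qu}}$ in the even functional equation (equivalently from the $a_n$--$b_n$ conversion), as you correctly say later, not from that zero.
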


\begin{proof}  

For $\chi$ odd, we use Lemma \ref{lemma-FE-gen} for $\chi$ and then we have that
\begin{equation*}
\mathcal{L}_q(u,\chi)= \omega(\chi) (\sqrt{q} u)^g \mathcal{L}_q \left(  \frac{1}{qu}, \overline{\chi} \right ).
\end{equation*}
Using equation \eqref{finite-L-1} and the functional equation above, it follows that
\begin{equation}
\sum_{f \in \mathcal{M}_{q,n}} \chi(f) = \omega(\chi) q^{n - \frac{g}{2}} \sum_{f \in \mathcal{M}_{q,g-n}} \overline{\chi}(f).\label{id1}
\end{equation}
Writing
\begin{align*}
\mathcal{L}_q(u,\chi) = \sum_{n=0}^A u^n \sum_{f \in \mathcal{M}_{q,n}} \chi(f) + \sum_{n=A+1}^g u^n \sum_{f \in \mathcal{M}_{q,n}} {\chi}(f),
\end{align*}
and using \eqref{id1} for the second sum, it follows that
\begin{equation*}
\mathcal{L}_q(u,\chi) = \sum_{f \in \mathcal{M}_{q,\leq A}} \chi(f) u^{\deg(f)} + \omega(\chi) (\sqrt{q} u)^{g} \sum_{f \in \mathcal{M}_{q,\leq g-A-1}} \frac{\overline{\chi}(f)}{(qu)^{\deg(f)}}.
\end{equation*}
Plugging in $u=1/\sqrt{q}$ finishes the proof.

For $\chi$ even we have 
\[\mathcal{L}_q(u,\chi) = \sum_{n=0}^{g+1} a_n u^n, \qquad a_n = \sum_{f \in \mathcal{M}_{q,n}} \chi(f).\] 
We write $$\mathcal{L}_C^{*} (u) = \prod_{j=1}^{g} \left( 1 - u \sqrt{q} e^{2\pi i \theta_j} \right) = \sum_{n=0}^g b_n u^n.$$
 By the functional equation \eqref{fe3},
\begin{align*}
\sum_{n=0}^{g}b_n u^n = &\omega(\chi) (\sqrt{q}u)^g \sum_{n=0}^g \overline{b_n} q^{-n}u^{-n}\\
=& \omega(\chi) \sum_{n=0}^g \overline{b_n} q^{g/2-n}u^{g-n}
=\omega(\chi) \sum_{m=0}^g \overline{b_{g-m}} q^{m-g/2}u^{m}, 
\end{align*}
from where
\[b_n=\omega(\chi) \overline{b_{g-n}} q^{n-g/2}.\]
Thus, we can write
$$ \mathcal{L}_C^{*} (u)  = \sum_{n=0}^{A} b_n u^n + \omega(\chi) (\sqrt{q} u)^g \sum_{n=0}^{g-A-1} \frac{\overline{b_n}}{q^nu^n}.$$
Now since $\mathcal{L}_q(u,\chi) = (1-u) \mathcal{L}_C^{*}(u)$, we get that
$$ a_n = b_n -b_{n-1}$$ for $n = 0, \ldots, g$ and $a_{g+1} = -b_g$. Hence
\begin{equation} b_n = a_0 + \ldots +a_n
\label{eqb}
\end{equation} for $n =0,\ldots, g$. Now plugging in $u = 1/\sqrt{q}$, we get that
$$ \mathcal{L}_q \left( \frac{1}{\sqrt{q}},\chi \right ) = \sum_{n=0}^{A} \frac{b_n}{q^{n/2}} \left ( 1- \frac{1}{\sqrt{q}} \right )  + \omega(\chi) \sum_{n=0}^{g-A-1} \frac{\overline{b_n}}{q^{n/2}} \left ( 1-\frac{1}{\sqrt{q}}\right ).$$

Now using equation \eqref{eqb} for $b_n$ and $b_{n+1}$, substracting the two equations and using the functional equation for $b_n$, we get that
$$ \overline {a_0} + \ldots + \overline{a_{g-n-1}} = \frac{1}{q-1} a_{n+1} \overline{\omega(\chi)} q^{\frac{g}{2}-n} + \frac{\overline{a_{g-n}}}{q-1},$$ and hence
$$  a_0 + \ldots + a_{g-n-1} = \frac{1}{q-1} \overline{a_{n+1}} \omega(\chi) q^{\frac{g}{2}-n} + \frac{a_{g-n}}{q-1}.$$
Now we use the equations above for $n = g-1-A$ and $n = A$ and after some manipulations, we get that
\begin{align*}
\mathcal{L}_q \left( \frac{1}{\sqrt{q}},\chi \right ) =& \sum_{f \in \mathcal{M}_{q,\leq A}} \frac{\chi(f)}{q^{\deg(f)/2}} + \omega(\chi) \sum_{f \in \mathcal{M}_{q,\leq  g-A-1 }}  \frac{\overline{\chi}(f)}{q^{\deg(f)/2}} \\
& + \frac{a_{A+1}}{(1-\sqrt{q})q^{\frac{A+1}{2}}}+ \omega(\chi) \frac{\overline{a _{g-A}} }{(1-\sqrt{q}) q^{\frac{g-A}{2}}},
\end{align*}
and the result follows.

\end{proof}

The following lemmas provide upper and lower bounds for $L$--functions. 
\begin{lem} \label{lindelof}
Let $\chi$ be a primitive cubic character of conductor $h$ defined over $\F_q[T]$. Then, for 
$\re(s) \geq 1/2$ and for all $\varepsilon > 0$,
$$\left|L_q(s, \chi) \right| \ll q^{ \varepsilon \deg(h)}.$$
\end{lem}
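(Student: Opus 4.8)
The plan is to exploit the fact that over function fields the Riemann Hypothesis is a theorem (Weil), so every inverse zero of $\mathcal{L}_q(u,\chi)$ has absolute value $q^{-1/2}$, and to combine this with the finiteness of the Dirichlet polynomial. Recall from \eqref{finite-L-1} that $\mathcal{L}_q(u,\chi)=\sum_{d<\deg(h)} u^d \sum_{f\in\mathcal{M}_{q,d}}\chi(f)$ is a polynomial in $u$ of degree at most $\deg(h)-1$, and that by \eqref{L} its roots are, up to the possible trivial factor $(1-u)$ when $\chi$ is even, exactly the $\sqrt{q}\,e^{\pm 2\pi i\theta_j}$ with $|e^{\pm 2\pi i\theta_j}|=1$. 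Hence we may write
\[
\mathcal{L}_q(u,\chi) = (1-u)^{\delta}\prod_{j=1}^{g}\bigl(1-\sqrt{q}\,u\,e^{2\pi i\theta_j}\bigr),
\]
with $\delta\in\{0,1\}$ according as $\chi$ is odd or even, and all $\theta_j\in\R$.

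The key step is then a direct estimate on the circle $|u|=q^{-\sigma}$ for $\sigma=\re(s)\ge 1/2$. For each factor we bound $|1-\sqrt{q}\,u\,e^{2\pi i\theta_j}|\le 1+\sqrt{q}|u| = 1+q^{1/2-\sigma}\le 2$, using $\sigma\ge 1/2$; likewise $|1-u|\le 1+q^{-\sigma}\le 2$. Therefore
\[
\bigl|L_q(s,\chi)\bigr| = \bigl|\mathcal{L}_q(q^{-s},\chi)\bigr| \le 2^{g+\delta}.
\]
Now $g\le \deg(h)$ by \eqref{genus}, so $|L_q(s,\chi)|\le 2^{\deg(h)+1} = q^{(\deg(h)+1)\log_q 2}$, which is $\ll q^{\varepsilon\deg(h)}$ once $\deg(h)$ is large enough in terms of $\varepsilon$ (the finitely many small conductors contribute only an implied constant). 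This proves the stated bound for every $\varepsilon>0$.

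I do not expect a genuine obstacle here; the only point requiring a little care is to make sure the trivial zero and the normalization of the $L$-function are handled consistently, i.e.\ that one really has the clean product over $g$ unit-modulus angles as in \eqref{L}, so that the crude triangle inequality $|1-\sqrt q u e^{2\pi i\theta_j}|\le 2$ is legitimate on all of $\re(s)\ge 1/2$ rather than only at $s=1/2$. An alternative, if one wants to avoid even invoking the Weil bound, is to apply the functional equation of Lemma~\ref{lemma-FE-gen} to reduce $\re(s)\ge 1/2$ to $\re(s)\le 1/2$ and then bound the short Dirichlet polynomial $\sum_{d\le \deg(h)-1}\sum_{f\in\mathcal{M}_{q,d}}\chi(f)q^{-ds}$ trivially by the number of its terms, namely $\sum_{d\le\deg(h)-1}q^d\ll q^{\deg(h)}$ — again giving $q^{O(\deg(h))}$ and hence, after the standard convexity-type interpolation, the claimed $q^{\varepsilon\deg(h)}$; but the RH route above is cleaner and is the natural one to record.
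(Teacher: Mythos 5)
Your argument has a genuine gap, and it occurs at the very last step. From the Weil factorization you correctly get $|L_q(s,\chi)|\le 2^{\,g+\delta}$ on $\re(s)\ge 1/2$, but $2^{g}=q^{\,g\log_q 2}$ is \emph{not} $\ll q^{\varepsilon\deg(h)}$: in this paper $q$ is fixed and $\deg(h)\to\infty$, so $q^{(\log_q 2-\varepsilon)\deg(h)}\to\infty$ for every $\varepsilon<\log_q 2$, and ``taking $\deg(h)$ large in terms of $\varepsilon$'' only makes the discrepancy worse. The triangle inequality applied factor by factor can never give less than a fixed positive proportion of $\deg(h)$ in the exponent, which is exactly the trivial range the lemma is designed to beat; a bound of the form $q^{c\deg(h)}$ with fixed $c>0$ would be useless in every place the lemma is invoked (the bound for $S_{11}$ in the non-Kummer case and for the non-cube contribution in the Kummer case). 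Your fallback route has the same defect: bounding the Dirichlet polynomial trivially and interpolating with the functional equation of Lemma \ref{lemma-FE-gen} is precisely the convexity bound, again of size $q^{c\deg(h)}$ with a fixed $c>0$ (roughly $\tfrac14\deg(h)$ in the exponent at the central point), not $q^{\varepsilon\deg(h)}$.

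What the lemma actually requires is the standard ``RH implies Lindel\"of'' argument, and the paper does not reprove it: its proof is a citation to Theorem 5.1 of \cite{BCDGLD} (with \cite{F17c} and \cite{ATsimerman} for the quadratic analogue). The shape of that argument is: write $\log L_q(s,\chi)$ as a sum over prime powers, bound the coefficient of $q^{-ns}$ trivially by $q^{n}$ for small $n$ and by $O(\deg(h)\,q^{n/2})$ for large $n$ using the Weil bound through the explicit formula, and split the sum at $n\asymp\log_q\deg(h)$ (with some extra care near $\re(s)=1/2$, e.g. a Borel--Carath\'eodory step or an argument through the zeros). This yields $\log|L_q(s,\chi)|\ll \deg(h)\log q/\log\deg(h)=o(\deg(h))$, which is the real content of the bound $\ll_\varepsilon q^{\varepsilon\deg(h)}$. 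So either cite the result, as the paper does, or carry out this splitting argument; the bare factorization together with $|1-\sqrt q\,u\,e^{2\pi i\theta_j}|\le 2$ cannot suffice.
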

\begin{proof}
This is the Lindel\"of hypothesis in function fields. It is Theorem 5.1 in \cite{BCDGLD}. For the quadratic case see also the proof of Corollary $8.2$ in
\cite{F17c} and Theorem 3.3 in \cite{ATsimerman}. 
 
\end{proof}


\begin{lem}\label{folednil}
Let $\chi$ be a primitive cubic character of conductor $h$ defined over $\F_q[T]$. Then, for $\re(s) \geq 1$ and for all $\varepsilon > 0$, 
$$\left| L_q(s, \chi) \right| \gg q^{ - \varepsilon \deg(h)}.$$
\end{lem}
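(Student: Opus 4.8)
The plan is to prove the lower bound $|L_q(s,\chi)| \gg q^{-\varepsilon\deg(h)}$ for $\re(s) \ge 1$ by exploiting the Euler product, which converges absolutely in this region, together with the Lindel\"of-type upper bound of Lemma~\ref{lindelof} applied to a higher power of $\chi$. First I would observe that since $\re(s) \ge 1$, the Euler product $\mathcal{L}_q(u,\chi) = \prod_{P \nmid h}(1 - \chi(P)u^{\deg P})^{-1}$ with $u = q^{-s}$ converges absolutely, so $L_q(s,\chi) \ne 0$ and $\log L_q(s,\chi) = \sum_{P \nmid h}\sum_{k \ge 1}\frac{\chi(P)^k}{k}u^{k\deg P}$ is well-defined. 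Hence $-\log|L_q(s,\chi)| = -\re\log L_q(s,\chi) = -\sum_{P\nmid h}\sum_{k\ge 1}\frac{\re(\chi(P)^k q^{-ks\deg P})}{k}$, and it suffices to bound this quantity from above by $\varepsilon \deg(h)$.

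The key idea is that the terms with $k \ge 2$ contribute only $O(1)$ (uniformly in $\chi$ and $s$ with $\re(s) \ge 1$), because $\sum_{P}\sum_{k\ge 2}\frac{1}{k}q^{-k\deg P} = \sum_{P}\sum_{k \ge 2} \frac{1}{k} |P|_q^{-k}$ converges to an absolute constant (this is essentially $\log \mathcal{Z}_q(1/q)$ minus its leading part, which is finite since $\mathcal{Z}_q$ has its pole only at $u = 1/q$ — more carefully, one uses $\deg P \ge 1$ so $q^{-k\deg P} \le q^{-k}$ and sums the geometric-type series, or invokes the prime polynomial theorem $|\{P : \deg P = n\}| \le q^n/n$). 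So the whole problem reduces to bounding the $k=1$ sum $-\re\sum_{P \nmid h}\chi(P)q^{-s\deg P}$ from above. For this I would not try to bound it directly (cancellation in $\chi(P)$ is exactly what is hard) but instead relate $L$-functions for $\chi$, $\chi^2$ and the trivial-ish character: since $\chi$ is cubic, $1 + \chi(P) + \chi(P)^2 = 3$ when $P \nmid h$ and $=0$ otherwise, so for $P\nmid h$ one has a pointwise identity, and taking the product over primes gives $\zeta_q(s)$-type comparison. Concretely I would consider $|\zeta_q^{(h)}(s)^3| = |\zeta_q^{(h)}(s) L_q(s,\chi) L_q(s,\overline\chi)| \cdot (\text{correction})$ — rather, use that $\prod_{P\nmid h}(1-|P|_q^{-s})^{-3}$ relates to the product of the three $L$-functions via the positivity $3 = 1 + \chi(P)^k + \overline\chi(P)^k$ for $3 \mid k$ and the standard $3$-positivity trick: $\log|\zeta_q^{(h)}(s)^a L_q(s,\chi)^b L_q(s,\overline\chi)^b|$ with well-chosen $a,b$ has nonnegative Dirichlet coefficients, forcing $|L_q(s,\chi)|^{2b} \gg |\zeta_q^{(h)}(s)|^{-a}$, and $\zeta_q^{(h)}(s) = \zeta_q(s)\prod_{P\mid h}(1-|P|_q^{-s})$ is bounded above by $q^{O(\varepsilon \deg h)}$ since $\zeta_q(s) = 1/(1-q^{1-s})$ is $O(1)$ for $\re s \ge 1+\delta$ and $\prod_{P\mid h}(1-|P|_q^{-s})$ is trivially $\le 2^{\omega(h)} \le q^{\varepsilon\deg h}$ (number of prime factors of $h$ is $O(\deg h/\log_q 2)$, in fact $\le \deg h$).

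The one subtlety is the behavior near $s = 1$ itself, where $\zeta_q(s)$ has its pole; there the $3$-positivity argument gives $|L_q(s,\chi)| \gg |\zeta_q^{(h)}(s)|^{-a/(2b)}$, and $|\zeta_q(s)|$ can be as large as $\gg q^{\deg h}$-free — it is bounded since $s$ is bounded away from... actually at $\re(s) = 1$ exactly, $|1 - q^{1-s}| \ge$ some positive quantity unless $s = 1$, and at $s = 1$ one has $\zeta_q(1) = \infty$, so one handles $s=1$ by a limiting argument or by noting $L_q(1,\chi)$ is a finite nonzero value (a polynomial in $q^{-1}$ evaluated at a point, nonzero by RH which places zeros on $\re s = 1/2$) and is bounded below by a quantity depending only on $q$, hence trivially $\gg q^{-\varepsilon \deg h}$; more uniformly, for $\re s \ge 1$ write $\zeta_q^{(h)}(s) = \frac{1}{1-q^{1-s}}\prod_{P \mid h}(1-|P|_q^{-s})$ and note the pole factor is harmless after multiplying: the cleanest route is $|L_q(s,\chi)|^2 = |L_q(s,\chi)L_q(s,\overline\chi)| \ge |\zeta_q^{(h)}(s) L_q(s,\chi)L_q(s,\overline\chi)| / |\zeta_q^{(h)}(s)|$ and the numerator has nonnegative log-coefficients so is $\ge 1$, while the denominator near $s=1$ contributes the pole $\frac{1}{1-q^{1-s}}$ which blows up — wrong direction. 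Therefore the main obstacle, and the step I would spend the most care on, is precisely the neighborhood of $s=1$: I expect the right fix is to use the Lindel\"of bound (Lemma~\ref{lindelof}) on the product $\zeta_q^{(h)}(s)L_q(s,\chi)^2$ or similar whose zeros/poles cancel, or simply to bound $L_q(1,\chi)$ directly using that it equals $\mathcal{L}_q(1/q,\chi) = \sum_{d<\deg h}q^{-d}\sum_{f\in\mathcal{M}_{q,d}}\chi(f)$ combined with the functional equation and RH to get $|L_q(1,\chi)| = |\mathcal{P}_C^{1/2}\text{-type product}| = \prod_j|1 - q^{-1/2}e^{2\pi i\theta_j}| \ge (1 - q^{-1/2})^{\deg h} \gg q^{-\varepsilon\deg h}$ since $1-q^{-1/2} \ge q^{-\varepsilon}$ fails for fixed $q$ — so one actually needs $(1-q^{-1/2})^{\deg h}$ is \emph{not} $\gg q^{-\varepsilon \deg h}$ in general, meaning the honest proof must go through the lower bound $|1-q^{-s}e^{2\pi i\theta_j}| \ge 1 - q^{-1/2}$ being too weak, and instead one multiplies the $g$ factors from $\chi$ with those from $\overline\chi$ and uses that $\mathcal{P}_C(q^{-s})$ for $\re s\ge 1$ is, via RH, a product $\prod_j(1-q^{1/2-s}e^{2\pi i\theta_j})(1-q^{1/2-s}e^{-2\pi i\theta_j})$ each factor of which is $\ge (1-q^{1/2-\re s})^2 \ge (1-q^{-1/2})^2$ — leading to $|\mathcal{P}_C| \gg (1-q^{-1/2})^{2g}$, still exponentially small in $g$, which contradicts the claim unless I am missing that the claimed bound $q^{-\varepsilon\deg h}$ with $\varepsilon$ arbitrary but $q$ \emph{fixed} does allow $(1-q^{-1/2})^{\deg h} = q^{\deg h \log_q(1-q^{-1/2})}$ — this has a fixed negative exponent, not an arbitrarily small one, so the honest proof must avoid this crude per-factor bound entirely and instead argue as in the standard Landau/de la Vallée Poussin $3$-positivity argument that $|L_q(s,\chi)|$ is bounded below by a \emph{fixed} power of $\zeta_q^{(h)}(s)^{-1}$ and that the $h$-dependence enters only through $\prod_{P\mid h}$, giving $q^{O(\varepsilon\deg h)}$; I would carry out exactly that argument, treating $s=1$ by continuity from $\re s > 1$.
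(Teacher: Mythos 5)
There is a genuine gap, and it is exactly at the point you yourself flagged but did not resolve: the behaviour of your positivity argument on the line $\re(s)=1$ near the pole of $\zeta_q$. Two problems compound here. First, the combination $\zeta_q^{(h)}(s)^a L_q(s,\chi)^b L_q(s,\overline{\chi})^b$ evaluated at a single non-real point $s=\sigma+it$ does \emph{not} have nonnegative log-coefficients: the coefficient of $\Lambda(f)/(\deg f\,|f|_q^{\sigma})$ is $\cos(t\deg f\log q)\bigl(a+2b\,\re\chi(f)\bigr)$, whose sign oscillates with $t$. The classical de la Vall\'ee Poussin mechanism requires three different points, $\zeta_q^{(h)}(\sigma)^3\,|L_q(\sigma+it,\chi)|^4\,|L_q(\sigma+2it,\chi^2)|\ge 1$, so your "well-chosen $a,b$ at the same $s$" does not exist. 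Second, and more fundamentally, even the correct positivity inequality only yields $|L_q(\sigma+it,\chi)|\gg \zeta_q^{(h)}(\sigma)^{-3/4}\,|L_q(\sigma+2it,\chi^2)|^{-1/4}$, and since $\zeta_q(s)=(1-q^{1-s})^{-1}$ has poles \emph{on} the line $\re(s)=1$ (at $s=1+2\pi i k/\log q$; in the function field setting everything is periodic in $t$), the right-hand side tends to $0$ as $\sigma\to 1^+$. "Treating $s=1$ by continuity from $\re s>1$" therefore produces only the vacuous bound $|L_q(1+it,\chi)|\ge 0$ at and near those points. Bridging this gap requires an additional quantitative input — an upper bound on $L_q'/L_q$ (or $L_q'$) valid up to $\re(s)=1$, used to transfer a bound from $\sigma=1+c/\deg(h)$ down to $\sigma=1$ — and your proposal never supplies it. (A smaller inaccuracy: the Euler product does not converge absolutely at $\re(s)=1$, only for $\re(s)>1$; non-vanishing on $\re(s)=1$ comes instead from the Weil bound placing the zeros at $\re(s)=1/2$.)

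For comparison, the paper's proof avoids positivity altogether and supplies precisely the missing transfer step. Using the Riemann Hypothesis factorization $L_q(s,\chi)=\prod_{j=1}^{g}\bigl(1-q^{1/2-s}e^{2\pi i\theta_j}\bigr)$, each factor satisfies $|1-q^{1/2-s}e^{2\pi i\theta_j}|\ge 1-q^{-1/2}$ for $\re(s)\ge 1$, so $|L_q'/L_q(s,\chi)|\ll \deg(h)$ there. At $\sigma=1+\tfrac{1}{\deg(h)}$ one has the trivial bound $|\log L_q(s,\chi)|\le \log\zeta_q(\sigma)\ll \log\deg(h)$, and integrating $L_q'/L_q$ over the horizontal segment of length $1/\deg(h)$ from $\sigma+it$ to $1+it$ costs only $O(1)$. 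Hence $|\log L_q(1+it,\chi)|\ll\log\deg(h)$ and $|L_q(s,\chi)|\gg\deg(h)^{-1}\gg q^{-\varepsilon\deg(h)}$. If you want to salvage your route, you would need to add an analogous derivative (or convexity) bound near $\re(s)=1$ and perform the same $1/\deg(h)$-scale transfer; without it the argument fails exactly where the lemma is hardest.
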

\kommentar{\begin{proof}
We follow the main ideas of Goldfeld and Frielander \cite{Goldfeld}. Let $$\mathcal{Z}_{K}(u) 
=\frac{\mathcal{L}_q(u, \chi)  \mathcal{L}_q(u, \overline{\chi})}{(1-u)(1-qu)},$$ 
which is the zeta function of the cyclic cubic field $K/ \F_q(T)$ associated to $\chi$. Then, the Dirichlet series of $\mathcal{Z}_K(u)$ has real non-negative coefficients, and $\mathcal{Z}_{K}(u)$ is real when $u$ is real. Let $\varepsilon >0$, and fix $\beta \in \R$ such that $1-\varepsilon < \beta < 1$. Using Perron's formula, we have
\begin{eqnarray*}
1 &\ll& \oint_{|u|=q^{-2}} \frac{\mathcal{L}_q(uq^{-\beta}, \chi) \mathcal{L}_q(uq^{-\beta}, \overline{\chi})}{u^d (1-u) (1-uq^{-\beta})  (1-q^{1-\beta} u)} \; \frac{du}{u} 
\end{eqnarray*}
since the Dirichlet series of $\mathcal{Z}_K(u)$ has real non-negative coefficients. 
We move the integral to $|u|=q^{\beta-\varepsilon}$, picking up poles at $u = q^{-1+\beta}$ and $u=1$. This gives
\begin{eqnarray*}
1 &\ll& \frac{|\mathcal{L}_q(q^{-1}, \chi)|^2}{(q^{-1+\beta})^d (1-q^{-1}) (1- q^{-1+\beta})} + \frac{|\mathcal{L}_q(q^{-\beta}, \chi)|^2}{(1-q^{-\beta}) (1-q^{1-\beta})}\\
&&
+
\oint_{|u|=q^{\beta-\varepsilon}} \frac{\mathcal{L}_q(uq^{-\beta}, \chi) \mathcal{L}_q(uq^{-\beta}, \overline{\chi})}{u^d (1-u)  (1-uq^{-\beta})  (1-q^{1-\beta} u)} \; \frac{du}{u} \\
&\ll& \frac{|\mathcal{L}_q(q^{-1}, \chi)|^2}{(q^{-1+\beta})^d (1-q^{-1}) (1- q^{-1+\beta})} 
+ \frac{|\mathcal{L}_q(q^{-\beta}, \chi)|^2}{(1-q^{-\beta}) (1-q^{1-\beta})}
+ O \left( \frac{q^{(1+2 \varepsilon )\deg(h)}}{q^{(\beta-\varepsilon) d} (1-q^{\beta-\varepsilon})(1-q^{-\varepsilon})(1-q^{1-\varepsilon})
} \right)
\end{eqnarray*}
using the fact that $|\mathcal{L}_q(q^{-\varepsilon}, \chi)| \ll q^{(1/2+\varepsilon) \deg(h)}$, which follows from the functional equation. 

Since $$\frac{|\mathcal{L}_q(q^{-\beta}, \chi)|^2}{(1-q^{-\beta}) (1-q^{1-\beta})} < 0,$$by choosing $d$ such that
$$
\frac{q^{(1+2 \varepsilon )\deg(h)}}{q^{(\beta-\varepsilon) d} (1-q^{\beta-\varepsilon})(1-q^{-\varepsilon})(1-q^{1-\varepsilon})}
  \ll 1,
$$
we have
$$
1 \ll \frac{ q^{(1-\varepsilon) d-(1 +2\varepsilon) \deg(h)}(1-q^{\beta-\varepsilon}) (1-q^{-\varepsilon})(1-q^{1-\varepsilon})}{(1- q^{-1+\beta})} |\mathcal{L}_q(q^{-1}, \chi)|^2,$$
and then $$ |\mathcal{L}_q(q^{-1}, \chi)| \geq c q^{(1/2+\varepsilon)\frac{\beta-1}{\beta-\varepsilon} \deg(h)},$$
where $c$ depends only on $\varepsilon$. Choosing $\beta > 1 - 2\varepsilon \frac{1-\varepsilon}{1+4\varepsilon},$ we get the result.
\end{proof}

\mcom{Please if you agree with the above. I'm still thinking about $|u|=1/q$. I don't see how to easily adapt the proof above because we do use that things are real, etc.}
\acom{I think we can probably prove the lower bound more elementarily without using the Carneiro-Chandee machinery which would give better lower bounds. I follow the method used to prove the analogous result for $\zeta$.
}}

\begin{proof} First assume that $\chi$ is an odd character. 
Recall that $g= \deg(h)-1$. Then
$$ L_q(s,\chi) = \prod_{j=1}^g \left(1- q^{\frac{1}{2}-s}e^{2\pi i \theta_j}\right),$$
and
$$  \frac{1}{\log q} \frac{L_q'}{L_q}(s,\chi)= -g+\sum_{j=1}^g \frac{1}{1-q^{\frac{1}{2}-s}e^{2\pi i \theta_j}}.$$
From the above it follows that if $\re(s) \geq 1$ then 
\begin{equation}
 \left|  \frac{L_q'}{L_q} (s,\chi) \right| \ll \deg(h).
 \label{ineq_deriv}
 \end{equation}
Now for $\re(s)= \sigma>1$ we have
$$ \log L_q(s,\chi) = \sum_{f \in \mathcal{M}_q} \frac{\Lambda(f) \chi(f)}{|f|^s \deg(f)},$$ 
where $\Lambda (f)$ is the von Mangoldt function, equal to $\deg(P)$ when $f=P^n$ for $P$ prime, and zero otherwise. 

Hence
$$ | \log L_q(s,\chi)| \leq \sum_{f \in \mathcal{M}_q} \frac{\Lambda(f)}{|f|^{\sigma} \deg(f)}= \log \zeta_q(\sigma)= - \log\left(1-q^{1-\sigma}\right).$$
If $\sigma \geq 1+ \frac{1}{\deg h}$ then it follows that
\begin{equation}
 | \log L_q(s,\chi)| \ll \log ( \deg(h)).
 \label{ineq_log}
 \end{equation}
Now if $s= 1+it$ and $s_1= 1+\frac{1}{\deg(h)}+it$, we have that
$$ \log L_q(s,\chi) - \log L_q(s_1,\chi) = \int_{s_1}^s \frac{L_q'}{L_q}(z) \, dz \ll |s_1-s| \deg(h) \ll 1,$$ where the first inequality follows from \eqref{ineq_deriv}. Combining the above and \eqref{ineq_log} it follows that when $\re(s)=1$ we have
$$ | \log L_q(s,\chi) | \ll \log( \deg(h)).$$
Now $$ \left|  \log \frac{1}{|L_q(s,\chi)|} \right|= | \re \log L_q(s,\chi) | \leq | \log L_q(s,\chi)| \ll \log( \deg(h)),$$ and then
$$|L_q(s,\chi)| \gg \deg(h)^{-1} \gg q^{-\varepsilon \deg(h)}.$$

When $\chi$ is an even character, the $L$-function has an extra factor of $1-q^{-s}$ which does not affect the bound. 

\end{proof}

Note that using ideas as in the work of Carneiro and Chandee \cite{CC} one could prove that
$$ |L_q(s,\chi)| \gg \frac{1}{ \log (\deg(h))},$$ when $\re(s)=1$. For our purposes the lower bound of $\deg(h)^{-1}$ is enough and we do not have to follow the method in \cite{CC}.

\kommentar{\begin{rem} We will often work with the odd character $\chi_{F_1}\overline{\chi_{F_2}}$ (to be defined in the next subsection), where
$F_1,F_2$ are monic polynomials of degrees $d_1, d_2$, $(F_1,F_2)=1$, and $d_1+2d_2\equiv 1 \pmod{3}$. In this case, the approximate functional equation takes the shape
 \begin{align*}
\mathcal{L}_q \left( \frac{1}{\sqrt{q}},\cF \right ) =&\sum_{f \in \mathcal{M}_{q,\leq A}} \frac{\cFeval{f}}{\sqrt{|f|_q}} + \omega(\chi_{F_1}\overline{\chi_{F_2}}) 
\sum_{f \in \mathcal{M}_{q,\leq g-A-1}} \frac{\cFbeval{f}}{\sqrt{|f|_q}},\\
\end{align*}
where $g=d_1+d_2-1$. 
\end{rem}}

\subsection{Primitive cubic characters over $\F_q[T]$} \label{section-cubic}

Let $q$ be an odd power of a prime. In this section we describe the cubic characters over $\F_q[T]$ when $q \equiv 1 \pmod 3$ (the Kummer case) and $q \equiv 2 \pmod 3$ (the non-Kummer case).

We first suppose that $q$ is odd and $q \equiv 1 \pmod 3$.

We define the cubic residue symbol $\chi_P$, for $P$ an irreducible monic polynomial in $\F_q[T]$.  Let $a \in \F_q[T]$. If $P \mid a$, then $\chi_P(a) = 0$, and otherwise
$\chi_{P}(a) = \alpha,$
where $\alpha$ is the unique root of unity in $\mathbb{C}$ such that
$$ a^{\frac{q^{\deg(P)}-1}{3}} \equiv \Omega(\alpha) \pmod P.$$
We extend the definition by multiplicativity to any monic polynomial $F \in \F_q[T]$ by defining for $F = P_1^{e_1} \dots  P_s^{e_s}$, with distinct primes $P_i$, 
\begin{equation*}\label{CRS-def}
\chi_F = \chi_{P_1}^{e_1} \dots \chi_{P_s}^{e_s}.
\end{equation*}

Then, $\chi_F$ is a cubic character modulo $P_1 \dots P_s$. It is primitive if and only if all the $e_i$ are 1 or 2. Then it follows that the conductors of the primitive cubic characters are the square-free monic polynomials $F \in  \F_q[T]$, and for each such conductor, there are $2^{\omega(F)}$ characters, where $\omega(F)$ is the number of primes dividing $F$. More precisely, for any conductor $F = F_1 F_2$ with $(F_1, F_2)=1$ we have the primitive character of modulus $F$ given by
$$
\chi_{F_1 F_2^2} = \chi_{F_1} \chi_{F_2}^2 =  \chi_{F_1} \overline{\chi_{F_2}}.
$$

\begin{lem} \label{count-kummer} Suppose $q \equiv 1 \pmod 3$, and let ${N}_{\mathrm{K}}(d)$
be the number of primitive cubic characters with conductor of degree $d$. Then, 
$${N}_{\mathrm{K}}(d) = B_{\mathrm{K},1} d q^{d} + B_{\mathrm{K},2} q^d + O \left( q^{(1/2+\varepsilon) d} \right),$$
where $B_{{\mathrm{K}},1}=\mathcal{F}_{\mathrm{K}}(1/q)$, $B_{\mathrm{K}, 2}=\left( \mathcal{F}_{\mathrm{K}}(1/q) - \frac{1}{q} \mathcal{F}_{\mathrm{K}}'(1/q) \right)$, and $\mathcal{F}_{\mathrm{K}}$ is given by \eqref{mathcalF-K}.
\end{lem}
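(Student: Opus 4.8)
The plan is to reduce the count to the extraction of a single coefficient of a rational generating function via Perron's formula (Lemma~\ref{perron}). First I would invoke the description of primitive cubic characters given just before the statement: when $q\equiv 1\pmod 3$ the primitive cubic characters are exactly the symbols $\chi_{F_1}\overline{\chi_{F_2}}$ with $F=F_1F_2$ squarefree and $(F_1,F_2)=1$, and for a fixed squarefree conductor $F$ the coprime \emph{ordered} factorizations $F=F_1F_2$ are in bijection with the subsets of the primes dividing $F$, each giving a distinct character. Hence
$$
N_{\mathrm K}(d)=\sum_{F\in\mathcal{H}_{q,d}}2^{\omega(F)}.
$$

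Next I would pass to generating series. Since $F\mapsto 2^{\omega(F)}$ is multiplicative and supported on squarefrees,
$$
\sum_{F\in\mathcal{H}_{q}}2^{\omega(F)}u^{\deg(F)}=\prod_P\bigl(1+2u^{\deg(P)}\bigr),
$$
the product over monic irreducibles, absolutely convergent for $|u|<1/q$. Comparing with $\mathcal{Z}_q(u)^2=(1-qu)^{-2}=\prod_P(1-u^{\deg(P)})^{-2}$ and using the elementary identity $(1+2X)(1-X)^2=1-3X^2+2X^3$, I would write
$$
\prod_P\bigl(1+2u^{\deg(P)}\bigr)=\frac{\mathcal{F}_{\mathrm K}(u)}{(1-qu)^2},\qquad \mathcal{F}_{\mathrm K}(u):=\prod_P\bigl(1-3u^{2\deg(P)}+2u^{3\deg(P)}\bigr),
$$
which is \eqref{mathcalF-K}. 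The structural point is that the general factor of $\mathcal{F}_{\mathrm K}$ is $1+O(u^{2\deg(P)})$, so $\sum_P|u|^{2\deg(P)}<\infty$ whenever $q|u|^2<1$; thus $\mathcal{F}_{\mathrm K}$ is holomorphic and bounded on $|u|\le R$ for any $R<q^{-1/2}$, while the only singularity of the left-hand side in that region is a double pole at $u=1/q$.

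Finally I would apply Perron's formula on a circle $|u|=r<1/q$ and push the contour out to $|u|=R=q^{-1/2-\varepsilon}$, crossing only the double pole at $u=1/q$. Computing that residue by differentiating $\mathcal{F}_{\mathrm K}(u)u^{-d-1}$ once and evaluating at $u=1/q$ produces the main term
$$
d\,q^d\,\mathcal{F}_{\mathrm K}(1/q)+q^d\Bigl(\mathcal{F}_{\mathrm K}(1/q)-\tfrac1q\mathcal{F}_{\mathrm K}'(1/q)\Bigr),
$$
while the remaining integral over $|u|=R$ is $\ll R^{-d}\ll q^{(1/2+\varepsilon)d}$, using the boundedness of $\mathcal{F}_{\mathrm K}$ and of $(1-qu)^{-2}$ on $|u|=R$. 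This yields the asserted asymptotic with $B_{\mathrm K,1}=\mathcal{F}_{\mathrm K}(1/q)$ and $B_{\mathrm K,2}=\mathcal{F}_{\mathrm K}(1/q)-\tfrac1q\mathcal{F}_{\mathrm K}'(1/q)$.

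I do not expect a serious obstacle; the argument is routine analytic number theory over $\F_q[T]$. The one place to be careful is the residue at the \emph{double} pole at $u=1/q$, which is precisely what produces the term linear in $d$, together with the bookkeeping of signs and derivatives needed to match $B_{\mathrm K,1},B_{\mathrm K,2}$ with the normalization of $\mathcal{F}_{\mathrm K}$ in \eqref{mathcalF-K}; one should also record that $\mathcal{F}_{\mathrm K}(1/q)\neq 0$ so that the main term is genuinely of order $d\,q^d$.
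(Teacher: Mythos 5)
Your proposal is correct and follows essentially the same route as the paper: identify $N_{\mathrm K}(d)=\sum_{F\in\mathcal{H}_{q,d}}2^{\omega(F)}$, form the Euler product $\prod_P(1+2u^{\deg(P)})=\mathcal{F}_{\mathrm K}(u)(1-qu)^{-2}$, and apply Perron's formula, shifting the contour to $|u|=q^{-(1/2+\varepsilon)}$ and extracting the double-pole residue at $u=1/q$, which yields exactly the stated $B_{\mathrm K,1}$ and $B_{\mathrm K,2}$ and the error term. No gaps.
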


\begin{proof} Let $a(F)$ be the number of cubic primitive characters of conductor $F$. By the above discussion, the generating series for $a(F)$ is given by
$$\mathcal{G}_{\mathrm{K}}(u) = \sum_{F \in \mathcal{M}_q} a(F) u^{\deg(F)} = \prod_{P} \left( 1 + 2 u^{\deg(P)} \right),
$$
which is analytic for $|u|<1/q$ with a double pole at $u=1/q$.
We write
\begin{eqnarray} \label{mathcalF-K}
\mathcal{F}_{\mathrm{K}}(u) = \mathcal{G}_{\mathrm{K}}(u) (1-qu)^2 = \prod_{P} \left( 1 - 3 u^{2\deg(P)} + 2 u^{3 \deg(P)} \right) .
\end{eqnarray}
Then, using Perron's formula (Lemma \ref{perron}), and moving the integral from $|u|=q^{-2}$ to $|u|=q^{-(1/2 + \varepsilon)}$ while picking the residue of the (double) pole at $u=q^{-1}$, we have
\begin{eqnarray*}
{N}_{\mathrm{K}}(d)  &=& \frac{1}{2 \pi i} \oint_{|u|=q^{-2}} \frac{\mathcal{F}_{\mathrm{K}}(u)}{u^d(1-qu)^2} \frac{du}{u}\\
&=& \mathcal{F}_{\mathrm{K}}(1/q) d q^d + \left( \mathcal{F}_{\mathrm{K}}(1/q) - \frac{1}{q} \mathcal{F}_{\mathrm{K}}'(1/q) \right) q^d + O \left( q^{(1/2+\varepsilon)d} \right).
\end{eqnarray*}
\end{proof}

For each primitive cubic character $\chi_{F_1 F_2^2}$, we have that
for $\alpha \in \F_q^*$,
$$
\chi_{F_1 F_2^2}(\alpha) =  \Omega^{-1} \left( \alpha^{\frac{q-1}{3} \left(  \deg(F_1) + 2 \deg(F_2) \right)}  \right),
$$
and  $\chi_{F_1 F_2^2}$  is even if and only if $\deg(F_1) + 2 \deg(F_2) \equiv 0 \pmod 3$. If  $\chi_{F_1 F_2^2}$  is odd, 
the restriction to $\F_q^*$ is $\chi_3$ when $\deg(F_1) + 2 \deg(F_2)  \equiv 1 \pmod 3$, and 
 $\chi_3^2$ when $\deg(F_1) + 2 \deg(F_2)  \equiv 2 \pmod 3$,  where $\chi_3$ is defined by \eqref{def-chi3}.
 
Then, since the conductor of  $\chi_{F_1 F_2^2}$ is $F=F_1 F_2$, we have from \eqref{genus} that
$$
\deg(F_1) + \deg(F_2) = \begin{cases} g + 2 & \deg(F_1) + 2 \deg(F_2) \equiv 0 \pmod 3, \\   g + 1 & \deg(F_1) + 2 \deg(F_2) \not\equiv 0 \pmod 3. \end{cases}
$$
For convenience, recall that we restrict to the odd cubic primitive characters such that the restriction to $\F_q^*$ is $\chi_3$.

We have then showed the following.
 \begin{lem}  \label{kummer-desc}
Suppose $q$ is odd and $q \equiv 1 \pmod 3$. Then,
\begin{eqnarray*}
\sum_{\substack{\chi  \; \mathrm{ primitive \; cubic} \\ \mathrm{genus}(\chi) = g\\ \chi \vert_{\F_q^*} = \chi_3}} L_q(1/2, \chi) =  
 \sum_{\substack{d_1+d_2 = g+1\\ d_1 + 2 d_2 \equiv 1 \pmod 3}} \sum_{\substack{F_1 \in \mathcal{H}_{q, d_1} \\ F_2 \in \mathcal{H}_{q, d_2} \\ (F_1, F_2) = 1}} L_q \left( \frac{1}{2}, \chi_{F_1} \overline{\chi_{F_2}} \right)  ,
\end{eqnarray*}
and the sign of the functional equation of  $L_q(s, \chi_{F_1} \overline{\chi_{F_2}})$ is equal to
\begin{equation*}\label{eq:omega}
\omega(\chi_{F_1}\overline{\chi_{F_2}})= \overline{\epsilon(\chi_3)}  \;q^{-(d_1+d_2)/2}G(\chi_{F_1} \overline{\chi_{ F_2}})
\end{equation*}
where $\chi_3$ is the cubic residue symbol on $\F_q^*$ defined by \eqref{def-chi3} and $\epsilon(\chi_3)$ is defined by \eqref{sign-restriction}.
\end{lem}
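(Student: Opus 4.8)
The plan is to translate the sum over primitive cubic characters of genus $g$ with restriction $\chi_3$ into a sum over pairs of coprime squarefree polynomials, using the classification of cubic characters established earlier in this section. First I would recall the parametrization: every primitive cubic character over $\F_q[T]$ has squarefree conductor $F$, and writing $F = F_1 F_2$ with $(F_1,F_2)=1$ there is a bijection between such factorizations and the $2^{\omega(F)}$ primitive cubic characters of conductor $F$, given by $\chi_{F_1 F_2^2} = \chi_{F_1}\overline{\chi_{F_2}}$. Thus summing over all primitive cubic characters amounts to summing over all ordered coprime pairs $(F_1, F_2) \in \mathcal{H}_q \times \mathcal{H}_q$.

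Next I would impose the two constraints defining the family. The restriction of $\chi_{F_1 F_2^2}$ to $\F_q^*$ was computed above to be $\Omega^{-1}(\alpha^{\frac{q-1}{3}(\deg(F_1) + 2\deg(F_2))})$, so $\chi\vert_{\F_q^*} = \chi_3$ is equivalent to $\deg(F_1) + 2\deg(F_2) \equiv 1 \pmod 3$; in particular $\chi$ is odd. For such odd characters, formula \eqref{genus} together with the relation $\deg(F) = \deg(F_1) + \deg(F_2)$ and the computation displayed just before the lemma gives $\deg(F_1) + \deg(F_2) = g+1$. Therefore the condition $\mathrm{genus}(\chi) = g$ becomes $d_1 + d_2 = g+1$ where $d_i = \deg(F_i)$. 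Combining these identifications yields exactly the claimed double sum
$$
\sum_{\substack{d_1+d_2 = g+1\\ d_1 + 2 d_2 \equiv 1 \pmod 3}} \sum_{\substack{F_1 \in \mathcal{H}_{q, d_1} \\ F_2 \in \mathcal{H}_{q, d_2} \\ (F_1, F_2) = 1}} L_q \left( \tfrac{1}{2}, \chi_{F_1} \overline{\chi_{F_2}} \right).
$$

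Finally, for the sign of the functional equation I would invoke Corollary \ref{sign-GS}: since $\chi = \chi_{F_1}\overline{\chi_{F_2}}$ is odd with conductor $h = F_1 F_2$ of degree $d_1 + d_2$, we have $\omega(\chi) = \tau(\chi)^{-1} q^{-(\deg(h)-1)/2} G(\chi)$. The restriction of $\chi$ to $\F_q^*$ is $\chi_3$, so $\tau(\chi) = \tau(\chi_3)$, and by \eqref{sign-restriction} we have $\tau(\chi_3) = q^{1/2}\epsilon(\chi_3)$; hence $\tau(\chi)^{-1} q^{-(\deg(h)-1)/2} = q^{-1/2}\overline{\epsilon(\chi_3)} \cdot q^{-(d_1+d_2-1)/2} = \overline{\epsilon(\chi_3)} q^{-(d_1+d_2)/2}$, using $|\epsilon(\chi_3)| = 1$. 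This gives $\omega(\chi_{F_1}\overline{\chi_{F_2}}) = \overline{\epsilon(\chi_3)}\, q^{-(d_1+d_2)/2} G(\chi_{F_1}\overline{\chi_{F_2}})$, as stated. No step here is a genuine obstacle — the lemma is essentially bookkeeping assembled from the classification of cubic characters, \eqref{genus}, and Corollary \ref{sign-GS}; the only point requiring care is tracking the parity/degree congruences consistently and correctly matching the power of $q$ and the conjugation on $\epsilon(\chi_3)$ in the sign formula.
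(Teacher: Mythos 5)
Your proposal is correct and follows essentially the same route as the paper: the lemma there is stated as a summary (``We have then showed the following'') of exactly the ingredients you assemble, namely the parametrization $\chi_{F_1}\overline{\chi_{F_2}}$ of primitive cubic characters by ordered coprime squarefree factorizations, the computation of the restriction to $\F_q^*$ giving $d_1+2d_2\equiv 1 \pmod 3$, the genus relation \eqref{genus} giving $d_1+d_2=g+1$, and Corollary \ref{sign-GS} combined with \eqref{sign-restriction} for the sign formula. Your bookkeeping of the power of $q$ and the conjugate $\overline{\epsilon(\chi_3)}$ matches the paper's statement.
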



We now suppose that $q \equiv 2 \pmod 3$. Then there are no cubic characters modulo $P$ for primes of odd degree since $3 \nmid q^{\deg(P)} - 1$. For each prime $P$ of even degree and $a \in \mathbb{F}_q[T]$, we have the cubic residue symbol
$\chi_{P}(a) = \alpha,$
where $\alpha$ is the unique cubic root of unity in $\mathbb{C}$ such that
$$ a^{\frac{q^{\deg(P)}-1}{3}} \equiv \Omega(\alpha) \pmod P,$$
where $\Omega$ takes values in the cubic roots of unity in $\F_{q^2}$. 

We extend the definition by multiplicativity to any monic polynomial $F \in \F_q[T]$ supported on primes of even degree by defining for $F = P_1^{e_1} \dots  P_s^{e_s}$, with distinct primes $P_i$ of even degree,
\begin{equation*}\label{CRS-def-q^2}
\chi_F = \chi_{P_1}^{e_1} \dots \chi_{P_s}^{e_s}.
\end{equation*}
Then, $\chi_F$ is a cubic character modulo $P_1 \dots P_s$, and it is primitive if and only if all the $e_i$ are 1 or 2. It follows that the conductors of the primitive cubic characters are the square-free polynomials $F \in  \F_q[T]$ supported on primes of even degree, and for each such conductor, there are $2^{\omega(F)}$ characters, where $\omega(F)$ is the number of primes dividing $F$. 

\begin{lem} \label{count-non-kummer} Suppose $q \equiv 2 \pmod 3$, and let ${N}_{\mathrm{nK}}(d)$
be the number of primitive cubic characters with conductor of degree $d$. Then, 
$${N}_{\mathrm{nK}}(d) = \begin{cases} B_{\mathrm{nK}} q^{d} + O \left( q^{(1/2+\varepsilon) d} \right) & 2 \mid d, \\
0 & \mbox{otherwise.} \end{cases}$$
where $B_{\mathrm{nK}} = \mathcal{F}_{\mathrm{nK}}(1/q)$ and $\mathcal{F}_{\mathrm{nK}}(u)$ is defined by \eqref{mathcalF}.
\end{lem}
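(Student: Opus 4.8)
The proof will run parallel to that of Lemma \ref{count-kummer}, the only structural novelty being that the relevant generating series is supported on primes of \emph{even} degree, so that it has simple poles at \emph{both} $u=1/q$ and $u=-1/q$ rather than a double pole at $u=1/q$. First, exactly as in the Kummer case, the description of the primitive cubic characters when $q\equiv 2\pmod 3$ (squarefree moduli $F$ supported on primes of even degree, with exactly $2^{\omega(F)}$ characters of conductor $F$) shows that the counting function has generating series
\begin{equation*}
\mathcal{G}_{\mathrm{nK}}(u) \;=\; \sum_{F\in\mathcal{M}_q} a(F)\,u^{\deg F} \;=\; \prod_{\substack{P\ \mathrm{monic\ irreducible}\\ \deg P\ \mathrm{even}}}\bigl(1+2u^{\deg P}\bigr),
\end{equation*}
where $a(F)$ is the number of primitive cubic characters of conductor $F$; this is analytic for $|u|<1/q$. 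In particular $a(F)=0$, hence $N_{\mathrm{nK}}(d)=0$, whenever $d$ is odd, so it remains only to treat $d$ even.

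The key algebraic step is to extract the polar part. Splitting the Euler products of $\mathcal{Z}_q(u)=(1-qu)^{-1}$ and $\mathcal{Z}_q(-u)=(1+qu)^{-1}$ according to the parity of $\deg P$, and using $(1-u^m)(1+u^m)=1-u^{2m}$, gives the identity
\begin{equation*}
1-q^2u^2 \;=\; \mathcal{Z}_q(u)^{-1}\mathcal{Z}_q(-u)^{-1} \;=\; \prod_{\deg P\ \mathrm{even}}(1-u^{\deg P})^2\prod_{\deg P\ \mathrm{odd}}(1-u^{2\deg P}).
\end{equation*}
Multiplying $\mathcal{G}_{\mathrm{nK}}(u)$ by $1-q^2u^2$ and combining Euler factors prime by prime (using $(1+2u^n)(1-u^n)^2=1-3u^{2n}+2u^{3n}$) yields
\begin{equation*}
\mathcal{F}_{\mathrm{nK}}(u) \;:=\; \mathcal{G}_{\mathrm{nK}}(u)\,(1-q^2u^2) \;=\; \prod_{\deg P\ \mathrm{even}}\bigl(1-3u^{2\deg P}+2u^{3\deg P}\bigr)\prod_{\deg P\ \mathrm{odd}}\bigl(1-u^{2\deg P}\bigr),
\end{equation*}
which is the Euler product \eqref{mathcalF}. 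Comparing lowest-order terms shows this converges absolutely, and is nonvanishing, for $|u|<q^{-1/2}$; in particular it is analytic on a neighbourhood of the closed disc $|u|\le q^{-1}$, and since every exponent occurring is even we have $\mathcal{F}_{\mathrm{nK}}(-u)=\mathcal{F}_{\mathrm{nK}}(u)$. We set $B_{\mathrm{nK}}=\mathcal{F}_{\mathrm{nK}}(1/q)$ (which is then automatically nonzero).

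Finally I would apply Perron's formula (Lemma \ref{perron}): for $d$ even,
\begin{equation*}
N_{\mathrm{nK}}(d) \;=\; \frac{1}{2\pi i}\oint_{|u|=q^{-2}}\frac{\mathcal{F}_{\mathrm{nK}}(u)}{u^{d}\,(1-q^2u^2)}\,\frac{du}{u},
\end{equation*}
and move the contour out to $|u|=q^{-1/2-\varepsilon}$ (with $0<\varepsilon<1/2$). In the annulus the integrand is analytic apart from the two simple poles at $u=\pm 1/q$; since $\mathcal{F}_{\mathrm{nK}}(-u)=\mathcal{F}_{\mathrm{nK}}(u)$ and $d$ is even, the two residues are equal and together contribute $\mathcal{F}_{\mathrm{nK}}(1/q)\,q^{d}=B_{\mathrm{nK}}q^{d}$, while the shifted integral is $O(q^{(1/2+\varepsilon)d})$ because $\mathcal{F}_{\mathrm{nK}}$ is bounded and $|1-q^2u^2|\gg 1$ on the circle $|u|=q^{-1/2-\varepsilon}$. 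This yields the stated asymptotic. I do not anticipate a genuine obstacle; the only point requiring a little care is keeping track of both poles and checking that the residue at $u=-1/q$ \emph{reinforces} rather than cancels the one at $u=1/q$ precisely when $d$ is even — consistent with $N_{\mathrm{nK}}(d)=0$ for odd $d$.
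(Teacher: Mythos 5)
Your proposal is correct and follows essentially the same route as the paper: the same Euler product $\mathcal{G}_{\mathrm{nK}}(u)$, the same completed series $\mathcal{F}_{\mathrm{nK}}(u)=\mathcal{G}_{\mathrm{nK}}(u)(1-qu)(1+qu)$, Perron's formula at $|u|=q^{-2}$, and a contour shift to $|u|=q^{-1/2-\varepsilon}$ picking up the simple poles at $u=\pm 1/q$. The only (harmless) cosmetic differences are that you dispose of odd $d$ directly from the parity of $\deg F$, obtain the factorization of $1-q^2u^2$ from $\mathcal{Z}_q(u)\mathcal{Z}_q(-u)$ rather than from $\mathcal{Z}_{q^2}(u^2)$ (the same identity), and justify $\mathcal{F}_{\mathrm{nK}}(1/q)=\mathcal{F}_{\mathrm{nK}}(-1/q)$ by observing that $\mathcal{F}_{\mathrm{nK}}$ is a series in $u^2$, a point the paper asserts without comment.
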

\begin{proof} Let $a(F)$ be number of cubic primitive characters of conductor $F$. By the above discussion, the generating series for $a(F)$ is given by 
$$\mathcal{G}_{\mathrm{nK}}(u) = \sum_{F \in \mathcal{M}_q} a(F) u^{\deg(F)} = \prod_{2 \mid \deg(P)} \left( 1 + 2 u^{\deg(P)} \right),
$$
which is analytic for $|u|<1/q$  with simple poles at $u=1/q$ and $u=-1/q$. This follows from the fact that the primes of even degree in $\F_q[T]$ are exactly the primes splitting in the quadratic extension $\F_{q^2}(T)/\F_q(T).$ 
Recall that  
$$\mathcal{Z}_{q^2}(u^2) = \prod_{2 \mid \deg(P)} \left(1-u^{\deg(P)} \right)^{-2} \prod_{2 \nmid \deg(P)} \left(1 - u^{2 \deg(P)} \right)^{-1},$$
where $u = q^{-s}$ and the product is over primes $P$ of $\F_q[T]$. The analytic properties of $\mathcal{G}_{\mathrm{nK}}(u)$ then follow from the analytic properties of $\mathcal{Z}_{q^2}(u^2)$, which is analytic everywhere except for simple poles when $u^2=q^{-2}$.

We write
\begin{eqnarray} \nonumber
\mathcal{F}_{\mathrm{nK}}(u )&=& \mathcal{G}_{\mathrm{nK}}(u) (1-qu) (1+qu) \\ \nonumber &=& \prod_{2 \mid \deg(P)}  \left( 1 + 2 u^{\deg(P)} \right)  \left( 1 -  u^{\deg(P)} \right)^2
 \prod_{2 \nmid \deg(P)}  \left( 1 - u^{\deg(P)} \right)  \left( 1 +  u^{\deg(P)} \right) \\
\label{mathcalF}
&=& \prod_{2 \mid \deg(P)} \left( 1 -3 u^{2\deg(P)} +2 u^{3 \deg(P)} \right) \prod_{2 \nmid \deg(P)} \left( 1 - u^{2\deg(P)} \right),
\end{eqnarray}
which is analytic for $|u| < q^{-1/2}.$
Then, using Perron's formula (Lemma \ref{perron}), and moving the integral from $|u|=q^{-2}$ to $|u|=q^{-(1/2 + \varepsilon)}$ while picking the poles at $u=\pm q^{-1}$ , we have
\begin{eqnarray*}
{N}_{\mathrm{nK}}(d)  &=& \frac{1}{2 \pi i} \oint_{|u|=q^{-2}} \frac{\mathcal{F}_{\mathrm{nK}}(u)}{u^d(1-qu)(1+qu)} \frac{du}{u} \\
&=& \left( \frac{\mathcal{F}_{\mathrm{nK}}(1/q)}{2} +  (-1)^{d} \frac{\mathcal{F}_{\mathrm{nK}}(-1/q)}{2} \right) q^{d} +  O \left( q^{(1/2+\varepsilon)d} \right).
\end{eqnarray*}
Notice that $\mathcal{F}_{\mathrm{nK}}(1/q)=\mathcal{F}_{\mathrm{nK}}(-1/q)$, so the main term is zero when $d$ is odd. In this case, we already knew that there are no primitive cubic characters with conductor of odd degree as every prime which divides the conductor has even degree. For $d$ even, this proves the result.
\end{proof}

It is more natural to describe these characters as characters over $\F_{q^2}[T]$ restricting to characters over $\F_q[T]$ as in the work of  Bary-Soroker and Meisner  \cite{BSM} (generalizing the work of Baier and Young \cite{BY} from number fields to function fields) by counting characters of $\F_{q^2}[T]$ whose restrictions to $\F_q[T]$ are cubic characters over $\F_q[T]$. In what follows, for $f$ in the quadratic extension $\F_{q^2}[T]$ over $\F_{q}[T]$,  we will denote by $\tilde{f}$ the Galois conjugate of $f$.

Notice that $q^2 \equiv 1 \pmod 3$, and we have then described the primitive cubic characters of $\F_{q^2}[T]$ in the paragraph before Lemma \ref{count-non-kummer}.
Supose that $\pi$ is a prime in $\F_{q^2}[T]$ lying over a prime $P \in \F_{q}[T]$ such that $P$ splits as $\pi \tilde{\pi}$. Notice that  $P$ splits in $\F_{q^2}[T]$ if and only if the degree of $P$ is even.
It is easy to see that the restriction of $\chi_{\pi}$ to $\F_q[T]$ is the character $\chi_{P}$, and the restriction of $\chi_{\tilde{\pi}}$ to $\F_q[T]$ is the character $\overline{\chi_{P}}$ (possibly exchanging $\pi$ and $\tilde{\pi}$). Then by running over all the characters $\chi_{F}$ where $F \in \F_{q^2}[T]$ is square-free and not divisible by a prime $P$ of $\F_q[T]$, we are counting exactly the characters over $\F_{q^2}[T]$ whose restrictions are cubic characters over $\F_q[T]$, and each character over $\F_q[T]$ is counted exactly once. For more details, we refer the reader to  \cite{BSM}.

We also remark that any cubic character over $\F_q[T]$ is even when $q \equiv 2 \pmod 3$. Indeed, by the classification above, such a character comes from $\chi_{F}$ with $F \in \F_{q^2}[T]$, and for $\alpha \in \F_q \subseteq \F_{q^2}$,
 we have
$$\chi_F( \alpha) = \Omega^{-1} \left( \alpha^{\frac{q^2-1}{3} \deg(F)} \right).
$$
Since $q$ is odd and $q \equiv 2 \pmod 3$,  we have that $(q-1) \mid (q^2-1)/3.$ 

By  \eqref{genus}, if $F \in \F_q[T]$ is the conductor of a cubic primitive character $\chi$ over $\F_q[T]$,  it follows that $\deg(F) = g + 2$. By the classification above, it follows that $F = P_1 \dots P_s$ for distinct primes of even degree, and the character $\pmod F$ is the restriction of a character of conductor $\pi_1 \dots \pi_s$ over $\F_{q^2}[T]$, where $\pi_i$ is one of the primes lying above $P_i$. Then the degree of the conductor of this character over $\F_{q^2}[T]$ is equal to $g/2 + 1$.

We have then proved the following result.

\begin{lem} \label{non-kummer-desc}
Suppose $q \equiv 2 \pmod 3$. Then,
\begin{eqnarray*}
\sum_{\substack{\chi  \; \mathrm{primitive \; cubic} \\ \mathrm{genus}(\chi) = g}} L_q(1/2, \chi) =  \sum_{\substack{F \in \mathcal{H}_{q^2, g/2+1} \\
P \mid F \Rightarrow P \not\in \F_q[T]}} L_q(1/2, \chi_F).
\end{eqnarray*}
\end{lem}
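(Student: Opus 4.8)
The plan is to unravel the two descriptions of cubic characters over $\F_q[T]$ given earlier in the section and match them term by term. The left-hand side is a sum over primitive cubic characters $\chi$ of $\F_q[T]$ with $\mathrm{genus}(\chi)=g$. By \eqref{genus}, since every such $\chi$ is even when $q\equiv 2\pmod 3$ (as shown just above the statement), the conductor $h$ of $\chi$ satisfies $\deg(h)=g+2$. So I would first rewrite the left-hand side as a sum over primitive cubic characters whose conductor $F\in\F_q[T]$ has degree $g+2$.

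Next I would invoke the classification of primitive cubic characters over $\F_q[T]$ in the non-Kummer case: the conductor $F$ must be a squarefree monic polynomial of $\F_q[T]$ all of whose prime factors have even degree, equivalently all of whose prime factors split in $\F_{q^2}[T]$. The key observation, taken from the discussion following Lemma \ref{count-non-kummer} and from \cite{BSM}, is that each such character over $\F_q[T]$ is the restriction of exactly one character $\chi_{\mathcal F}$ with $\mathcal F\in\F_{q^2}[T]$ squarefree and divisible by no prime $P$ of $\F_q[T]$: writing $F=P_1\cdots P_s$ with each $P_i$ of even degree, each $P_i$ splits as $\pi_i\tilde\pi_i$ in $\F_{q^2}[T]$, and the $2^{\omega(F)}$ characters mod $F$ correspond bijectively to the $2^s$ choices of $\pi_i$ or $\tilde\pi_i$ for each $i$, i.e.\ to the squarefree $\mathcal F\mid P_1\cdots P_s$ in $\F_{q^2}[T]$ that are not divisible by any $P\in\F_q[T]$. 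Since $\deg_{\F_{q^2}}(\pi_i)=\tfrac12\deg_{\F_q}(P_i)$, one gets $\deg_{\F_{q^2}}(\mathcal F)=\tfrac12\deg_{\F_q}(F)=\tfrac12(g+2)=g/2+1$; in particular $g$ must be even, which is consistent with Lemma \ref{count-non-kummer}. This produces the claimed bijection between the characters indexed on the two sides.

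Finally, I would check that the $L$--function is preserved under this correspondence: if $\chi$ over $\F_q[T]$ is the restriction of $\chi_{\mathcal F}$ over $\F_{q^2}[T]$, then $L_q(1/2,\chi)$ — the $L$--function formed from $\chi$ as a character of $\F_q[T]$, as in \eqref{finite-L-1} with the norm $|f|_q=q^{\deg f}$ — equals $L_q(1/2,\chi_{\mathcal F})$ in the notation of the right-hand side, where $\chi_{\mathcal F}$ is again viewed through its values on $\F_q[T]$. This is essentially a tautology once one adopts the convention (implicit in the statement) that $L_q(1/2,\chi_F)$ on the right denotes the $L$--function of the restricted character over $\F_q[T]$; the Euler factors match prime by prime because $\chi_{\mathcal F}(P)=\chi_F(P)$ for all $P\in\F_q[T]$ by construction. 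Summing the identity $L_q(1/2,\chi)=L_q(1/2,\chi_{\mathcal F})$ over the matched index sets gives the lemma.

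The only genuine subtlety — and the step I would be most careful about — is the bookkeeping in the bijection: one must verify that running over squarefree $\mathcal F\in\mathcal H_{q^2,\,g/2+1}$ with no rational prime factor hits every primitive cubic character over $\F_q[T]$ of the right genus \emph{exactly once}, with no double counting from the Galois involution $\mathcal F\mapsto\tilde{\mathcal F}$. Here one uses that $\mathcal F$ and $\tilde{\mathcal F}$ are genuinely different polynomials of $\F_{q^2}[T]$ (their product $\mathcal F\tilde{\mathcal F}=F\in\F_q[T]$ being excluded forces $\mathcal F\neq\tilde{\mathcal F}$), so $\chi_{\mathcal F}$ and $\chi_{\tilde{\mathcal F}}=\overline{\chi_{\mathcal F}}$ restrict to the two distinct conjugate characters mod $F$, and every one of the $2^{\omega(F)}$ characters mod $F$ arises; this is exactly the content of the cited work \cite{BSM}, so I would simply appeal to it after spelling out the degree bookkeeping above.
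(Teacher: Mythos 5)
Your proposal is correct and follows essentially the same route as the paper: the paper's proof is precisely the discussion preceding the lemma, namely that every cubic character is even in the non-Kummer case (so $\deg(F)=g+2$ over $\F_q[T]$ by \eqref{genus}), that each primitive cubic character of $\F_q[T]$ is the restriction of exactly one $\chi_F$ with $F\in\F_{q^2}[T]$ square-free of degree $g/2+1$ and with no prime factor from $\F_q[T]$ (citing Bary-Soroker--Meisner), and that $L_q(1/2,\chi_F)$ denotes the $L$--function of the restricted character over $\F_q[T]$. Your extra bookkeeping on degrees and on injectivity of the correspondence only makes explicit what the paper delegates to \cite{BSM}.
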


\kommentar{\acom{I think the cubic reciprocity should probably be moved before section $2.1$ since it's more of a background result. For example we could move it before Perron's formula. Let me know if you agree and I can move it.
Also, the upper and lower bound below don't seem very related to the section on characters, so maybe it would make more sense to move them at the end of section $2.1$ (right before section $2.2$).}}

\kommentar{
\begin{lem}\label{ffolednil}
Let $\chi$ be a primitive cubic character of conductor $h$ defined over $\F_q[T]$. Then, for all $\varepsilon > 0$,
$$\left| L_q(1, \chi) \right| \gg q^{ - \varepsilon \deg(h)}.$$
\end{lem}
\acom{I think we need the lower bound for points on the circle of radius $1/q$, not only at $1/q$. The proof below should work, but we should at least say that for points on the circle other than $1/q$ a similar proof works.}
\begin{proof}
We follow the main ideas of Goldfeld and Friedlander \cite{Goldfeld}. Let $$\mathcal{Z}_{K}(u) = \mathcal{Z}(u) \mathcal{L}_q(u, \chi)  \mathcal{L}_q(u, \overline{\chi}),$$ 
\acom{What definition of $\mathcal{Z}(u)$ do we use here? We defined it as $1/(1-qu)$ but in the equation above shouldn't we have $1/((1-u)(1-qu))$?}
which is the zeta function of the cyclic cubic field $K/ \F_q(T)$ associated to $\chi$. Then, the Dirichlet series of $\mathcal{Z}_K(u)$ has real non-negative coefficients, and $\mathcal{Z}_{K}(u)$ is real when $u$ is real. Let $\varepsilon >0$, and fix $\beta \in \R$ such that $1-\varepsilon < \beta < 1$. Using Perron's formula, we have
\begin{eqnarray*}
1 &\ll& \oint_{|u|=q^{-2}} \frac{\mathcal{Z}_{K}(uq^{-\beta})}{u^d (1-u)} \; \frac{du}{u} 
\end{eqnarray*}
since the Dirichlet series of $\mathcal{Z}_K(u)$ has real non-negative coefficients. We move the integral to $|u|=q^{\beta}$, picking up poles at $u = q^{-1+\beta}$ and $u=1$. This gives
\begin{eqnarray*}
1 &\ll& \frac{|\mathcal{L}_q(q^{-1}, \chi)|^2}{(q^{-1+\beta})^d (1- q^{-1+\beta})} + \frac{|\mathcal{L}_q(q^{-\beta}, \chi)|^2}{(1-q)}
+
\oint_{|u|=q^{\beta}} \frac{\mathcal{Z}_{K}(uq^{-\beta})}{u^d (1-u)} \; \frac{du}{u} \\
&\ll& \frac{|\mathcal{L}_q(q^{-1}, \chi)|^2}{(q^{-1+\beta})^d (1- q^{-1+\beta})} + \frac{|\mathcal{L}_q(q^{-\beta}, \chi)|^2}{(1-q)}
+ O \left( \frac{q^{(1+2 \varepsilon )\deg(h)}}{q^{\beta d} (1-q^\beta)} \right)
\end{eqnarray*}
using the fact that $|\mathcal{L}_q(1, \chi)| \ll q^{(1/2+\varepsilon) \deg(h)}$ which follows from the functional equation and the Lindel\"of hypothesis.
\acom{I think for the second term in the first line we should have $1-q^{1-\beta}$ in the denominator. It doesn't affect things after though. Also in the error term I think we should have $q^{\beta}-1$ in the denominator since $1-q^{\beta}<0$. }

Since $\frac{|\mathcal{L}_q(q^{-\beta}, \chi)|^2}{(1-q)} < 0$, by choosing $d$ such that
$$
\frac{q^{(1 +2 \varepsilon) \deg(h)}}{q^{\beta d}(1-q^\beta)}  \ll 1,
$$
we have
$$
1 \ll \frac{ q^{d-(1 +2\varepsilon) \deg(h)}(1-q^{\beta})}{(1- q^{-1+\beta})} |\mathcal{L}_q(q^{-1}, \chi)|^2,$$
and then $$ |\mathcal{L}_q(q^{-1}, \chi)| \geq c q^{(1/2+\varepsilon)(1 - 1/\beta) \deg(h)},$$
where $c$ depends only on $\varepsilon$. 
Choosing $\beta > 1 - \frac{2\varepsilon}{1+4 \varepsilon},$ we get the result.
\end{proof}

\ccom{Alexandra, let us know if you agree with the proof above.}
}

\subsection{Generalized cubic Gauss sums and the Poisson summation formula}
\label{cubic-GS}

Let $\chi_f$ be the cubic residue symbol defined before for $f \in \F_q[T]$. This is a character of modulus $f$, but not necessarily primitive.
We define the generalized cubic Gauss sum by
\begin{eqnarray} \label{gen-GS} 
G_q(V,f) = \sum_{u \pmod f} \chi_f(u) e_q\left( \frac{uV}{f} \right ), \end{eqnarray}
with the exponential function defined in \eqref{exp-ff}.
We remark that if $\chi_f$ has conductor $f'$ with $\deg(f')<\deg( f)$, then 
$G(\chi_f) \neq G_q(1, f)$.

If $(a,f)=1$, we have
\begin{equation} \label{prop-GS} 
G_q(aV, f) = \overline{\chi_f}(a) G_q(V, f) .\end{equation}

The following lemma shows that the shifted Gauss sum is almost multiplicative as a function of $f$, and we can determine it on powers of primes. We have the following.

\begin{lem} \label{gauss}
Suppose that $q \equiv 1 \pmod 6$.
\begin{itemize}
 \item[(i)] If $(f_1 ,f_2)=1$, then 
\begin{eqnarray*} G_q(V,f_1 f_2) &=& \chi_{f_1}(f_2)^2 G_q(V, f_1 )G_q(V, f_2) \\ &=& G_q(Vf_2, f_1) G_q(V, f_2). \end{eqnarray*}
\item[(ii)] If $V=V_1P^\alpha$ where $P\nmid V_1$, then 
\[G_q(V, P^i) = \left\{\begin{array}{ll}
0 & \mbox{ if }i \leq \alpha \mbox{ and } i \not \equiv 0 \pmod{3},\\
                      \phi(P^{i})& \mbox{ if }i \leq \alpha \mbox{ and } i \equiv 0 \pmod{3},\\
                        -|P|_q^{i-1}&\mbox{ if }i=\alpha+1 \mbox{ and } i \equiv 0 \pmod{3},\\
                       \epsilon(\chi_{P^i})  \omega(\chi_{P^i}) \chi_{P^i}(V_1^{-1}) |P|_q^{i-\frac{1}{2}} &\mbox{ if }i=\alpha+1 \mbox{ and }  i \not \equiv 0 \pmod{3},\\
                                               0 & \mbox{ if }i \geq \alpha+2,
                         \end{array}
 \right.
\]
\end{itemize}
where $\phi$ is the Euler $\phi$-function for polynomials. We recall that $\epsilon(\chi)=1$ when $\chi$ is even. For the case of $\chi_{P^i}$, this happens if $3 \mid \deg(P^i)$.  
\end{lem}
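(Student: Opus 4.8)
The plan is to prove (i) by the Chinese Remainder Theorem together with cubic reciprocity, and (ii) by repeatedly peeling off the top coefficient of the summation variable and reducing everything to a Gauss sum or a trivial‑character sum modulo the prime $P$.

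For (i), write each residue $u \pmod{f_1 f_2}$ uniquely as $u = a f_2 + b f_1$ with $a$ ranging over residues mod $f_1$ and $b$ over residues mod $f_2$. Complete multiplicativity of the cubic residue symbol gives $\chi_{f_1 f_2}(u) = \chi_{f_1}(a)\chi_{f_1}(f_2)\chi_{f_2}(b)\chi_{f_2}(f_1)$, while additivity of $e_q$ gives $e_q(uV/(f_1f_2)) = e_q(aV/f_1)\,e_q(bV/f_2)$. Separating the two sums yields $G_q(V, f_1 f_2) = \chi_{f_1}(f_2)\chi_{f_2}(f_1)\,G_q(V, f_1)G_q(V, f_2)$. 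Since $q \equiv 1 \pmod 6$ and $f_1, f_2$ are monic and coprime, Lemma \ref{cubic-reciprocity} gives $\chi_{f_1}(f_2) = \chi_{f_2}(f_1)$, so the reciprocity factor equals $\chi_{f_1}(f_2)^2$, which is the first identity. For the second, $\chi_{f_1}(f_2)^2 = \overline{\chi_{f_1}}(f_2)$ because $\chi_{f_1}(f_2) \in \mu_3$, and then \eqref{prop-GS} with $a = f_2$ (coprime to $f_1$) turns $\overline{\chi_{f_1}}(f_2)\,G_q(V, f_1)$ into $G_q(Vf_2, f_1)$.

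For (ii), substitute $V = V_1 P^\alpha$, so that $e_q(uV/P^i) = e_q(uV_1/P^{i-\alpha})$, understood as $1$ when $i \le \alpha$. The key step: if $i \ge 2$, write a unit $u \pmod{P^i}$ as $u = u' + P^{i-1}b$ with $u'$ a unit mod $P^{i-1}$ and $b \pmod P$. Then $\chi_{P^i}(u) = \chi_P^i(u')$ depends only on $u' \bmod P$, and $e_q(uV_1/P^{i-\alpha}) = e_q(u'V_1/P^{i-\alpha})\,e_q(bV_1 P^{\alpha-1})$, so
\[ G_q(V, P^i) = \Big(\sum_{b \bmod P} e_q(bV_1 P^{\alpha-1})\Big)\sum_{\substack{u' \bmod P^{i-1}\\ P\nmid u'}} \chi_P^i(u')\, e_q(u'V_1/P^{i-\alpha}). \]
When $\alpha \ge 1$ the $b$-sum equals $|P|_q$ and the remaining sum has the same shape mod $P^{i-1}$ with $\alpha$ lowered by $1$; when $\alpha = 0$ the $b$-sum is a nontrivial additive character sum over $\F_q[T]/P$, hence $0$. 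Iterating: if $i \ge \alpha+2$ we reach a stage with $\alpha = 0$ and $i \ge 2$, forcing $G_q(V, P^i) = 0$; if $i \le \alpha+1$ we reach $i = 1$ with an accumulated factor $|P|_q^{i-1}$, leaving
\[ G_q(V, P^i) = |P|_q^{i-1}\sum_{\substack{u \bmod P\\ P\nmid u}} \chi_P^i(u)\, e_q\!\big(uV_1 P^{\alpha-i+1}/P\big). \]
If $i \le \alpha$ the exponential is $1$ and the sum is $\phi(P) = |P|_q - 1$ when $3 \mid i$ and $0$ otherwise, giving $\phi(P^i)$ or $0$; if $i = \alpha+1$ the exponential is $e_q(uV_1/P)$, and the sum is the Ramanujan sum $-1$ when $3 \mid i$ (so $G_q(V,P^i) = -|P|_q^{i-1}$) and the Gauss sum $\overline{\chi_P^i}(V_1)\,G(\chi_{P^i})$ when $3 \nmid i$.

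It remains to normalize $G(\chi_{P^i})$ in this last case: $\chi_{P^i}$ is a primitive cubic character of conductor $P$, and combining Corollary \ref{sign-GS} with $|\tau(\chi)| = q^{1/2}$ and $\epsilon(\chi) = q^{-1/2}\tau(\chi)$ (respectively $\epsilon = 1$ in the even case) gives $G(\chi_{P^i}) = \epsilon(\chi_{P^i})\,\omega(\chi_{P^i})\,|P|_q^{1/2}$ uniformly in whether $3 \mid \deg(P)$; since $\overline{\chi_P^i}(V_1) = \chi_{P^i}(V_1^{-1})$, this is precisely the claimed value $\epsilon(\chi_{P^i})\,\omega(\chi_{P^i})\,\chi_{P^i}(V_1^{-1})\,|P|_q^{i-1/2}$. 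The one genuinely delicate point is this normalization bookkeeping — matching the classical Gauss‑sum magnitude and the even/odd dichotomy to the $\epsilon\,\omega$ packaging coming from Corollary \ref{sign-GS}; everything else is repeated use of the orthogonality of additive characters over $\F_q[T]/P^k$ in the peeling step.
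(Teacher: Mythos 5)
Your proof is correct and follows essentially the same route as the paper: the Chinese Remainder decomposition plus cubic reciprocity and \eqref{prop-GS} for (i), and a $P$-power digit decomposition with orthogonality of characters for (ii), reducing the case $i=\alpha+1$, $3\nmid i$ to the Gauss sum of the primitive character modulo $P$ and normalizing it via the sign of the functional equation. The only differences are organizational: your one-digit-at-a-time peeling treats $i\ge\alpha+2$ explicitly (the paper defers that case to \cite{F17} and collapses the decomposition into the single step $u=PA+C$ when $i=\alpha+1$), and you invoke Corollary \ref{sign-GS} for the normalization $G(\chi_{P^i})=\epsilon(\chi_{P^i})\omega(\chi_{P^i})|P|_q^{1/2}$ where the paper re-derives that identity inline from Lemma \ref{sign-FE}.
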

\begin{proof}
The proof of (i) is the same as in \cite{F17}. 
We write $u \pmod {f_1f_2}$ as $u=u_1 f_1 + u_2 f_2$ for $u_1 \pmod {f_2}$ and $u_2 \pmod {f_1}$. Then,
\begin{align*}
G_q(V, f_1f_2)=& \chi_{f_2}(f_1) \chi_{f_1}(f_2) \sum_{u_1 \pmod f_2}\sum_{u_2 \pmod f_1}\chi_{f_1}(u_2)\chi_{f_2}(u_1)e_q\left(\frac{u_1V}{f_2}\right)e_q\left(\frac{u_2V}{f_1}\right)\\
=&\overline{\chi_{f_1}}(f_2) G_q(V, f_1) G_q(V, f_2)
\end{align*}
by cubic reciprocity. The second line of (i) follows from \eqref{prop-GS}.

Now we focus on the proof of (ii).

Assume that $i \leq \alpha$. Then 
$$G_q(V,P^i)= \sum_{u \pmod {P^i}} \chi_{P^i}(u) e_q(uV_1 P^{\alpha-i}).$$
The exponential above is equal to $1$ since $uV_1P^{\alpha-i} \in \F_q[T]$, and if $i \equiv 0 \pmod 3$, then $\chi_{P^i}(u)=1$ when $(u,P)=1$. The conclusion easily follows in this case. If $i \not \equiv 0 \pmod 3$, the conclusion also follows easily from orthogonality of characters.

Now assume that $i = \alpha+1$. Write $u \pmod {P^i}$ as $u = PA+C$, with $A \pmod {P^{i-1}}$ and $C \pmod P$. Then
$$G_q(V, P^i) = \sum_{A \pmod {P^{i-1}}} \sum_{C \pmod P} \chi_{P^i}(C) e_q\left( \frac{CV_1}{P} \right )= |P|_q^{i-1} \chi_{P^i}(V_1^{-1}) \sum_{C \pmod P} \chi_{P^i}(C) e_q\left( \frac{C}{P}\right ) .$$
If $i \equiv 0 \pmod 3$, then $\chi_{P^i}(V_1^{-1}) = 1$ and
$$  \sum_{C \pmod P} \chi_{P^i}(C) e_q\left( \frac{C}{P}\right ) = \sum_{\substack{C  \pmod P \\ C \neq 0}}  e_q\left( \frac{C}{P}\right )  = -1,$$ and the conclusion follows. So assume that $i \not \equiv 0 \pmod 3$. 
Then
\[ \sum_{C \pmod P} \chi_{P^i}(C) e_q\left( \frac{C}{P}\right ) = \left\{\begin{array}{ll}-q \sum_{f \in \mathcal{M}_{q,\deg(P)-1}} \chi_{P^i}(f) & 3\mid \deg(P), \\
           \epsilon(\chi_{P^i}) \sqrt{q} \sum_{f \in \mathcal{M}_{q,\deg(P)-1}} \chi_{P^i}(f) &3 \nmid \deg(P),
\end{array} \right. \]
and using Lemma \ref{sign-FE}, we can rewrite this as
\[\sum_{C \pmod P} \chi_{P^i}(C) e_q\left( \frac{C}{P}\right ) = \left\{\begin{array}{ll}\omega(\chi_{P^i})q^{\deg(P)/2} & 3\mid \deg(P), \\
          \epsilon(\chi_{P^i}) \omega(\chi_{P^i})q^{\deg(P)/2} &3 \nmid \deg(P). \end{array} \right.\]


Thus, we get 
\[G_q(V, P^i) = \left\{\begin{array}{ll}\omega(\chi_{P^i}) \chi_{P^i}(V_1^{-1})  |P|_q^{i-1/2}   &3 \mid \deg(P),\\ \\ \epsilon(\chi_{P^i}) \omega(\chi_{P^i}) \chi_{P^i}(V_1^{-1})  |P|_q^{i-1/2}   &3 \nmid \deg(P). \end{array} \right.\]

If $ i \geq \alpha+2$, then again the proof goes through exactly as in \cite{F17}.
\end{proof}

Now we state the Poisson summation formula for cubic characters. 
Recall that for any non-principal character on $\F_q^{*}$, $\tau(\chi)$ is the standard Gauss sum defined over $\F_q$ by equation \eqref{standard-GS}.
Also recall that for $\chi$ odd,  $|\tau(\chi)| = \sqrt{q}$, and $\tau(\chi)= \epsilon(\chi) \sqrt{q}$.
For $\chi$ even,  $\epsilon(\chi)=1$.




\begin{prop} \label{prop-Poisson} Let $f$ be a monic polynomial in $\F_q[x]$ with $\deg(f)=n$, and let $m$ be a positive integer.
 If $\deg(f) \equiv 0 \pmod{3}$, then
\[\sum_{h \in \mathcal{M}_{q,m}}\chi_{f }(h)=\frac{q^m}{|f|_q}\left[ G_q(0, f)+(q-1)\sum_{V \in \mathcal{M}_{q,\leq n-m-2}} G_q(V,f) - \sum_{V \in \mathcal{M}_{q,n-m-1}}G_q(V,f)\right].\]
If $\deg(f)  \not \equiv 0 \pmod{3}$, then
\[\sum_{h \in \mathcal{M}_{q,m}}\chi_{f }(h)=\frac{q^{m+\frac{1}{2}}}{|f|_q} \overline{\epsilon({\chi_f})} \sum_{V \in \mathcal{M}_{q,n-m-1}}G_q(V,f).\]
\end{prop}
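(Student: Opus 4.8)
The plan is to apply discrete Fourier inversion on the additive group $\F_q[T]/(f)$ and then evaluate the resulting exponential sum over $h\in\mathcal M_{q,m}$. Since $G_q(V,f)=\sum_{u\bmod f}\chi_f(u)e_q(uV/f)$ is the Fourier transform of $u\mapsto\chi_f(u)$ with respect to the additive characters $u\mapsto e_q(uV/f)$ of $\F_q[T]/(f)$, and this group has order $|f|_q$, inversion gives
$$\chi_f(h)=\frac{1}{|f|_q}\sum_{V\bmod f}G_q(V,f)\,e_q\!\left(\frac{-Vh}{f}\right),$$
where $V$ runs over residues mod $f$, represented by the polynomial of degree $<n$ in each class (including $V=0$). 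Summing over $h\in\mathcal M_{q,m}$ and interchanging the order of summation, the problem reduces to evaluating $S(V):=\sum_{h\in\mathcal M_{q,m}}e_q(-Vh/f)$ for each such $V$.

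The main computation, and the step I expect to require the most care, is that of $S(V)$, since it depends on the precise normalisation of Hayes's exponential. Writing $h=T^m+\sum_{i=0}^{m-1}c_iT^i$ with $c_i\in\F_q$, and letting $w_{-j}$ be the coefficient of $T^{-j}$ in the Laurent expansion of $-V/f$ (which lies in $T^{-1}\F_q[[1/T]]$ because $\deg V<n$), one checks that the coefficient of $T^{-1}$ in $-Vh/f$ equals $w_{-m-1}+\sum_{i=0}^{m-1}c_iw_{-i-1}$. Hence, by additivity of $\psi_0(x):=e^{2\pi i\,\tr_{\F_q/\F_p}(x)/p}$, $e_q(-Vh/f)=\psi_0(w_{-m-1})\prod_{i=0}^{m-1}\psi_0(c_iw_{-i-1})$, and summing each $c_i$ freely over $\F_q$ with $\sum_{c\in\F_q}\psi_0(cw)=q\cdot\mathbf 1_{w=0}$ yields
$$S(V)=\begin{cases}q^m\,\psi_0(w_{-m-1}) & \text{if }w_{-1}=\dots=w_{-m}=0,\\[2pt] 0 & \text{otherwise.}\end{cases}$$
The vanishing condition $w_{-1}=\dots=w_{-m}=0$ says precisely $\deg V\le n-m-1$, and in that range $w_{-m-1}=0$ when $\deg V\le n-m-2$, while $w_{-m-1}$ equals minus the leading coefficient of $V$ when $\deg V=n-m-1$ (using that $f$ is monic, so $1/f=T^{-n}+\dots$).

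Finally I would re-assemble. Plugging $S(V)$ back in and isolating $V=0$ (which contributes $\tfrac{q^m}{|f|_q}G_q(0,f)$), it remains to treat $V$ with $1\le\deg V\le n-m-1$. Splitting according to $\deg V\le n-m-2$ versus $\deg V=n-m-1$ and writing $V=cV'$ with $c\in\F_q^*$ its leading coefficient and $V'$ monic, relation \eqref{prop-GS} gives $G_q(V,f)=\overline{\chi_f}(c)\,G_q(V',f)$, so the $c$-sum factors off as $\sum_{c\in\F_q^*}\overline{\chi_f}(c)$ in the first range and as $\sum_{c\in\F_q^*}\overline{\chi_f}(c)\psi_0(-c)$ in the second. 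Now $\chi_f$ restricted to $\F_q^*$ is trivial exactly when $3\mid\deg f$ and is a non-trivial cubic character of $\F_q^*$ otherwise. When $3\mid n$ the first $c$-sum equals $q-1$ and the second equals $\sum_{c\in\F_q^*}\psi_0(-c)=-1$, which produces the first displayed formula. When $3\nmid n$ the first $c$-sum is $0$ (and $G_q(0,f)=0$, as $\chi_f$ is then a non-trivial character mod $f$), while for the second, substituting $c\mapsto-c$ and using $\chi_f(-1)=1$ — valid since $-1$ is a cube in $\F_q^*$ when $q\equiv1\pmod 6$ — gives $\sum_{c}\overline{\chi_f}(c)\psi_0(-c)=\tau(\overline{\chi_f})=\overline{\tau(\chi_f)}=\sqrt q\,\overline{\epsilon(\chi_f)}$, where $\tau$ is the $\F_q$-Gauss sum of \eqref{standard-GS}, using $\tau(\overline{\chi})=\chi(-1)\overline{\tau(\chi)}$ and \eqref{sign-restriction}. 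This gives the second displayed formula and completes the argument.
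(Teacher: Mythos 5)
Your proposal is correct and takes essentially the same approach as the paper: the Fourier-inversion step and your computation of $S(V)$ through the $1/T$-coefficient of $-Vh/f$ reproduce precisely the Poisson-summation identity that the paper imports from \cite{F17}, and the remaining steps (pulling out the leading coefficient via \eqref{prop-GS}, evaluating the sums over $c\in\F_q^*$ according to whether $3\mid n$, noting $G_q(0,f)=0$ in the odd case, and identifying $\sum_{c\in\F_q^*}\overline{\chi_f}(c)\psi_0(-c)=\overline{\tau(\chi_f)}=\sqrt{q}\,\overline{\epsilon(\chi_f)}$) coincide with the paper's argument. The only slip is that after isolating $V=0$ the remaining range should be all nonzero $V$ of degree at most $n-m-1$ (including the constants), not $1\le\deg V$; since your factorization $V=cV'$ covers $V'=1$ as well, this does not affect anything.
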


\begin{proof}
As in \cite{F17}, we have
$$ \sum_{h \in \mathcal{M}_{q,m}} \chi_f(h) = \frac{q^m}{|f|_q} \sum_{\deg(V) \leq n-m-1} G_q(V,f) e_q\left(- \frac{Vx^m}{f} \right ).$$ 

Using \eqref{prop-GS}, we have
\begin{align*}
 \sum_{h \in \mathcal{M}_{q,m}} \chi_f(h)  = & \frac{q^m}{|f|_q} \Big[ G_q(0,f) + \sum_{a=1}^{q-1} \overline{\chi_f}(a) \sum_{V \in \mathcal{M}_{q,\leq n-m-2}} G_q(V,f) \\
 &+ 
 \sum_{a=1}^{q-1} \overline{\chi_f}(a) e^{-2 \pi i \text{tr}_{\F_q / \mathbb{F}_p}(a)/p} \sum_{V \in \mathcal{M}_{q,n-m-1}} G_q(V,f) \Big].
\end{align*} 

Now if $\deg(f) \equiv 0 \pmod 3$ then $\chi_f$ is an even character, and 
$$ \sum_{a=1}^{q-1} \overline{\chi_f}(a) =q-1, \;\;\; \sum_{a=1}^{q-1} \overline{\chi_f}(a) e^{-2 \pi i \text{tr}_{\F_q / \mathbb{F}_p}(a)/p} =-1.$$
If $\deg(f) \not \equiv 0 \pmod 3$ then $\chi_f$ is an odd character, and 
$$ \sum_{a=1}^{q-1} \overline{\chi_f}(a) =0, \;\;\; \sum_{a=1}^{q-1} \overline{\chi_f}(a) e^{-2 \pi i \text{tr}_{\F_q / \mathbb{F}_p}(a)/p} = \overline{\tau(\chi_f)}.$$
Also, if $\deg(f) \not \equiv 0 \pmod 3$, then $f$ is not a cube, and the character $\chi_f$ is non-trivial, which implies that $G_q(0, f)=0$ by the orthogonality relations.
\end{proof}

\section{Averages of cubic Gauss sums} \label{sec:GS}

In this section we prove several results concerning averages of cubic Gauss sums which will be needed later. Assume throughout that $q \equiv 1 \pmod 6$. 
For $a, n  \in \Z$ and $n$ positive, we  denote by $[a]_n$ the residue of $a$ modulo $n$ such that
$0 \leq [a]_n \leq n-1$.

We will prove the following.
\begin{prop} \label{big-F-tilde-corrected}
Let $f=f_1 f_2^2 f_3^3$ with $f_1$ and $f_2$ square-free and coprime.
We have 
\begin{align*} \nonumber
\sum_{\substack{F \in \mathcal{M}_{q,d}\\(F,f)=1}} G_q(f,F) = & 
\delta_{f_2=1} \frac{ q^{\frac{4d}{3} - \frac{4}{3} [d+ \deg(f_1)]_3} }{ \zeta_q(2)|f_1|_q^{2/3}} \overline{G_q(1,f_1)} \rho(1, [d+ \deg(f_1)]_3)\prod_{P\mid f_1 f_3^*} \left ( 1+\frac{1}{|P|_q}\right )^{-1} \\ \nonumber
&+ O \left ( \delta_{f_2=1} \frac{q^{\frac{d}{3}+\varepsilon d}} {|f_1|_q^{\frac{1}{6}}}\right )
+ \frac{1}{2\pi i} \oint_{|u|=q^{-\sigma}} \frac{\tilde{\Psi}_q(f,u)}{u^d}\frac{du}{u} 
\end{align*}
with $2/3<\sigma< 4/3$ and where $\Tilde{\Psi}_q(f,u)$ is given by \eqref{tilde-F} and $\rho(1, [d+ \deg(f_1)]_3)$ is given by \eqref{residue}. 

Moreover, we have 
\[\frac{1}{2\pi i} \oint_{|u|=q^{-\sigma}} \frac{\tilde{\Psi}_q(f,u)}{u^d}\frac{du}{u} \ll q^{\sigma d} |f|_q^{\frac{1}{2} ( \frac{3}{2} - \sigma)}.\]

\end{prop}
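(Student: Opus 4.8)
The plan is to study the generating series
\[
\tilde{\Psi}_q(f,u) = \sum_{\substack{F\in\mathcal{M}_q\\(F,f)=1}} G_q(f,F)\,u^{\deg(F)}
\]
and extract its analytic behaviour from the theory of metaplectic Eisenstein series (Hoffstein, Patterson), which is reviewed earlier in Section \ref{sec:GS}. The key structural input is Lemma \ref{gauss}: the shifted Gauss sum $G_q(f,F)$ is ``almost multiplicative'' in $F$, so $\tilde{\Psi}_q(f,u)$ factors, up to correction at primes dividing $f$, into a product whose natural ambient object is the cubic metaplectic $L$-function. The known meromorphy of $\tilde{\Psi}_q(f,u)$ tells us it is analytic for $|u|<q^{-4/3}$ with a pole at $u^3=q^{-4}$ (that is, at $|u|=q^{-4/3}$), and no further singularities until one reaches the ``edge of convexity'' near $|u|=q^{-2/3}$; the decomposition in the first part of Proposition \ref{big-F-tilde-corrected} is precisely the statement that the $\delta_{f_2=1}$ term is the residue at that pole, while the contour integral over $|u|=q^{-\sigma}$ with $2/3<\sigma<4/3$ is the remainder after shifting past the pole.

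To obtain the claimed bound, I would first show that $\tilde{\Psi}_q(f,u)$ admits a meromorphic continuation to $|u|<q^{-1/2}$ (this is the content of the metaplectic theory / the earlier lemmas in the section, e.g.\ Lemma \ref{our-psi}, the ``convexity bound''), and that on the circle $|u|=q^{-\sigma}$ for any $2/3<\sigma<4/3$ one has the uniform-in-$f$ estimate
\[
\tilde{\Psi}_q(f,u) \ll |f|_q^{\frac12\left(\frac32-\sigma\right)+\varepsilon}.
\]
The shape of this exponent is dictated by the functional equation of the metaplectic Eisenstein series in the $F$-aspect combined with the Lindel\"of-type bound on the critical line: at $\sigma=3/2$ (well outside the region, but the endpoint of the convexity line) the $f$-dependence is trivial, at $\sigma$ near $4/3$ it is essentially $|f|_q^{1/12+\varepsilon}$, and one interpolates linearly by Phragm\'en--Lindel\"of between the trivial bound near the abscissa of absolute convergence and the functional-equation-transformed bound. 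Having this, Perron (Lemma \ref{perron}) gives
\[
\frac{1}{2\pi i}\oint_{|u|=q^{-\sigma}} \frac{\tilde{\Psi}_q(f,u)}{u^d}\,\frac{du}{u}
\ll q^{\sigma d}\,\sup_{|u|=q^{-\sigma}} |\tilde{\Psi}_q(f,u)|
\ll q^{\sigma d}\,|f|_q^{\frac12\left(\frac32-\sigma\right)+\varepsilon},
\]
which is the asserted inequality (the $\varepsilon$ being absorbed into the statement's implicit conventions, or tracked as $q^{\varepsilon d}$).

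Concretely the steps are: (1) write $\tilde{\Psi}_q(f,u)$ via Lemma \ref{gauss}(i)--(ii) as (metaplectic Dirichlet series)$\times$(finite Euler product over $P\mid f$ correcting the coprimality and the non-multiplicativity at bad primes), being careful that the bad-prime factors are bounded by $O(|f|_q^{\varepsilon})$ on the relevant circles; (2) invoke the metaplectic functional equation (Patterson/Hoffstein, as recalled in Section \ref{sec:GS}) to relate the value on $|u|=q^{-\sigma}$ to values in the region of absolute convergence, picking up a factor that is a power of $|f|_q$ of size $|f|_q^{\frac32-\sigma}$ up to the square root coming from the completed-$L$-function normalization, hence $|f|_q^{\frac12(\frac32-\sigma)}$; (3) bound the convergent side trivially; (4) apply Phragm\'en--Lindel\"of in the strip $2/3<\sigma<4/3$ to make the bound uniform, not just at the endpoints; (5) feed into Perron. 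The main obstacle is step (2)--(4): one needs the functional equation and growth estimates for the cubic metaplectic object \emph{uniformly in the shift parameter $f$}, including control of the bad Euler factors, and the fact that $\tilde{\Psi}_q(f,u)$ has no poles in $q^{-4/3}>|u|>q^{-1/2}$ other than at $u^3=q^{-4}$ — that analytic continuation and polar analysis is exactly where the metaplectic input (already set up in Section \ref{sec:GS}, culminating in Lemma \ref{our-psi}) is indispensable, and getting the $f$-dependence of the error sharp rather than merely finite is the delicate part. The residue computation in the first half of the proposition, giving the explicit $\rho(1,[d+\deg(f_1)]_3)$ and the Euler product over $P\mid f_1f_3^*$, is then a bookkeeping exercise in evaluating the residue of the metaplectic $L$-function at its pole; I would do that separately and it is not the bottleneck for the displayed inequality.
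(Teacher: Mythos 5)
Your treatment of the ``Moreover'' bound is essentially the paper's: on $|u|=q^{-\sigma}$ with $2/3<\sigma<4/3$ one stays away from $u^3=q^{-4}$ and $u^3=q^{-2}$, so the convexity bound of Lemma \ref{our-psi} applies, and the trivial estimate of the contour integral gives $q^{\sigma d}|f|_q^{\frac12(\frac32-\sigma)+\varepsilon}$; that part is fine, and your sketch of how Lemma \ref{our-psi} itself is obtained (functional equation plus Phragm\'en--Lindel\"of, with the bad Euler factors at $P\mid f$ controlled separately) matches Theorem \ref{first-generating-series} and the proof of Lemma \ref{our-psi}.

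The genuine gap is that you dismiss the first display --- the main term --- as ``a bookkeeping exercise'' to be done separately, when in fact it is the substance of the proposition and is exactly where all the specific features come from. To produce it one must: (i) start from the expression \eqref{expression} of Lemma \ref{our-psi}, so that after Perron the residue at $u^3=q^{-4}$ is taken for each pair $(a,\ell)$ with $a\mid f_3^*$, $\ell\mid af_1$, and only the single component $\psi(af_1f_2^2/\ell,\pi_\infty^{-j},u)$ with $j\equiv d-\deg(a)-2\deg(\ell)\pmod 3$ contributes (the power-series support argument from \eqref{meromorphic}), and contributes nothing at all unless $j+\deg(a)+2\deg(\ell)\le d$; (ii) insert the explicit residue formula of Lemma \ref{lemma-residue}, which is what forces the vanishing unless $f_2=1$ and produces $\overline{G_q(1,af_1/\ell)}$ and $\rho(1,[d+\deg(f_1)]_3)$, and then use the Gauss-sum identity $G_q(f_1,a)\overline{G_q(1,\ell)}\chi_\ell(af_1/\ell)\overline{G_q(1,af_1/\ell)}=|a|_q\overline{G_q(1,f_1)}$ from Lemma \ref{gauss}; (iii) evaluate the resulting \emph{truncated} divisor sums over $a$ and $\ell$ (truncated precisely because of the degree constraints in (i)) by two further applications of Perron's formula, which is what yields the Euler product $\prod_{P\mid f_1f_3^*}(1+|P|_q^{-1})^{-1}$ together with a relative error $O(q^{\varepsilon d-d})$, and it is this truncation error, multiplied by the size $q^{4d/3}|f_1|_q^{-1/6}$ of the main-term prefactor, that produces the stated error $O\bigl(\delta_{f_2=1}q^{d/3+\varepsilon d}|f_1|_q^{-1/6}\bigr)$. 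Your proposal gives no mechanism for the $\delta_{f_2=1}$, for the factor $\overline{G_q(1,f_1)}|f_1|_q^{-2/3}$, for the restriction of the Euler product to $P\mid f_1f_3^*$, or for that error term; ``residue of the metaplectic $L$-function at its pole'' is not enough, because the residue must be computed after the coprimality sieve of \eqref{expression} and then resummed over $a$ and $\ell$ with the degree cutoffs. Until those steps are carried out (as in \eqref{interm-res} and the subsequent double contour integral in the paper), the first half of the proposition is not proved.
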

To prove Proposition \ref{big-F-tilde-corrected} we first need to understand the generating series of the Gauss sums. Let
\[\Psi_q(f,u)= \sum_{F \in \mathcal{M}_q} G_q(f,F)u^{\deg(F)}.\]
and 
\begin{equation} \label{tilde-F}
\Tilde{\Psi}_q(f,u) = \sum_{\substack{F \in \mathcal{M}_q \\ (F,f)=1}} G_q(f,F) u^{\deg(F)}.
\end{equation}
The function $\Psi_q(f,u)$ was studied by Hoffstein \cite{hoffstein}, and we will cite here the relevant results that we need, following the notation of Patterson \cite{patterson}. We postpone the proof of Proposition \ref{big-F-tilde-corrected} to the next sections.

\subsection{The work of Hoffstein and Patterson}

We first study the general Gauss sums associated to the $n^{\text{th}}$ residue symbols as done in \cite{hoffstein, patterson}, and we specialize to $n=3$ later. We always assume that $q \equiv 1 \pmod n$. 
Let $\eta \in (\F_q((1/T))^\times$ and define
\[\psi(f,\eta, u)=(1-u^nq^{n})^{-1} \sum_{\substack{F\in \mathcal{M}_q\\ F \sim \eta}} G_q(f, F)u^{\deg(F)},\]
where the equivalence relation is given by 
\[F\sim \eta \Leftrightarrow F/\eta \in (\F_q((1/T))^\times)^n.\]
There is difference between our definition of  $\psi(f,\eta, u)$ above, and the definition of $\psi(r,\eta, u)$ in \cite[p. 245]{patterson}:  we are summing over monic polynomials in $\F_q[T]$, and not all polynomials in $\F_q[T]$, as in 
\cite{hoffstein}. This explains the extra factors of the type $(q-1)/n$ which appear in \cite{patterson}.
Because our polynomials are monic, it is enough to consider the equivalence classes 
that separate degrees, namely $\eta=\pi_\infty^{-i}$, where $\pi_\infty$ is the uniformizer of the prime at infinity, i.e. $T^{-1}$ in the completion $\F_q((1/T))$.

A little bit of basic algebra in $\F_q((1/T))$ shows that for any $i \in \Z$, 
\[\psi(f,\pi_\infty^{-i}, u)=(1-u^nq^{n})^{-1} \sum_{\substack{F\in \mathcal{M}_q\\ \deg(F) \equiv i \pmod n}} G_q(f, F)u^{\deg(F)}.\]

Then $\psi(f,\pi_\infty^{-i}, u)$ depends only on the value of $i$ modulo $n$. 

We remark that since we have fixed the map between the $n^{\text{th}}$ roots of unity in $\F_q^*$ and $\mu_n \subseteq \C^*$ at the beginning of this paper,
we do not make this dependence explicit in our notation, as it is done in \cite{patterson}.

Then we can write the generating series $\Psi_q(f,u)$ as
\begin{eqnarray} \label{gen-series}
 \Psi_q(f,u)= (1-u^n q^n) \sum_{i=0}^{n-1} \psi(f, \pi_\infty^{-i},u).\end{eqnarray}
The main result of Hoffstein is a functional equation for $\psi(f, \pi_\infty^{-i}, u)$ \cite[Proposition 2.1]{hoffstein}, which we write below using the notation of Patterson.  

\begin{prop}{\cite[Proposition 2.1]{hoffstein}} \label{prop-FE}
For $0 \leq i < n$ and $f \in \mathcal{M}_q$, we have
\begin{eqnarray*}
&&q^{is} \psi(f, \pi_\infty^{-i}, q^{-s}) = q^{n(s-1)E} q^{(2-s)i}  \psi(f, \pi_\infty^{-i}, q^{s-2}) \frac{1-q^{-1}}{ (1 - q^{ns-n-1})} \\
&&\;\;\; + W_{f,i} q^{n(2-s)(B-2)} q^{2n-\deg(f) + 2i-2} q^{(2-s) \left[ 1 + \deg(f)-i \right]_n}  \psi(f, \pi_\infty^{i-1-\deg(f)}, q^{s-2})  \frac{1-q^{n-ns}} {1 - q^{ns-n-1}},
\end{eqnarray*}
 where $B=[(1+\deg(f)-i)/n]$, $E = 1 - [(\deg(f)+1-2i)/n]$, and $W_{f,i} = \tau( \chi_3^{2i-1} \overline{\chi_f})$ 
with $\chi_3$ given by equation \eqref{def-chi3}.
\end{prop}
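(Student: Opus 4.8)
The displayed identity is the Fourier-coefficient shadow of the functional equation of a metaplectic (Kubota) Eisenstein series, and I would establish it in that framework. The set-up: since $q \equiv 1 \pmod n$ and $\mu_n \subset \F_q^\times$, one has an $n$-fold metaplectic cover $\widetilde{GL_2}$ of $GL_2$ over the rational function field $K = \F_q(T)$, built from the $n$-th power residue (Hilbert) symbol. Form the genuine Borel Eisenstein series $E(g,s)$ attached to the character $\mathrm{diag}(a,d)\mapsto |a/d|^{s}$, normalized so that, with $u = q^{-s}$, the $f$-th Fourier coefficient of the Whittaker vector along the unipotent at the cusp equals, up to elementary factors, a sum $\sum_{i=0}^{n-1}(\text{archimedean weight}_i)\,\psi(f,\pi_\infty^{-i},u)$. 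The finitely many degree classes $\eta = \pi_\infty^{-i}$ appear precisely because we sum over monic $F$ rather than over all polynomials (summing over all of $\F_q[T]$ is where Patterson's extra factors $(q-1)/n$ would enter).

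Next I would invoke the functional equation of $E(g,s)$ under $s\mapsto 2-s$, of the shape $E(g,s) = c(s)\,E(g,2-s)+(\text{lower-rank/constant term})$, where $c(s)$ is computed place by place via the Gindikin--Karpelevich integral: at the unramified finite places it contributes ratios of local zeta factors, which in the $u = q^{-s}$ variable assemble into the factors $\tfrac{1-q^{-1}}{1-q^{ns-n-1}}$ and $\tfrac{1-q^{n-ns}}{1-q^{ns-n-1}}$ of the statement; at the ramified finite places (the primes dividing $f$) it contributes the local metaplectic Gauss sums, whose product is $W_{f,i} = \tau(\chi_3^{2i-1}\overline{\chi_f})$ (the $\overline{\chi_f}$ comes from the conductor, the $\chi_3^{2i-1}$ from the leading-coefficient/degree-class interaction); and at the place $\infty$ it contributes the power-of-$q$ bookkeeping, including the shift $i \mapsto i-1-\deg(f)$ of the degree class (this is the permutation of the classes $\pi_\infty^{-i}$ effected by the long Weyl element together with the conductor $f$), with $B = [(1+\deg f - i)/n]$ and $E = 1-[(\deg f + 1 - 2i)/n]$ recording the integer parts produced by this bookkeeping.

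Taking the $f$-th Fourier coefficient of both sides of the functional equation of $E$ and re-expanding then yields, term by term, an identity between $\psi(f,\cdot,q^{-s})$ and $\psi(f,\cdot,q^{s-2})$: the diagonal/constant-term contribution gives the first term $q^{n(s-1)E}q^{(2-s)i}\psi(f,\pi_\infty^{-i},q^{s-2})\tfrac{1-q^{-1}}{1-q^{ns-n-1}}$, and the long-Weyl-element contribution gives the second term $W_{f,i}q^{n(2-s)(B-2)}q^{2n-\deg f + 2i-2}q^{(2-s)[1+\deg f - i]_n}\psi(f,\pi_\infty^{i-1-\deg f},q^{s-2})\tfrac{1-q^{n-ns}}{1-q^{ns-n-1}}$. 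Multiplying through by $q^{is}$ to match the normalization of the statement finishes the argument.

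A more hands-on alternative, which avoids automorphic input, is to expand $G_q(f,F) = \sum_{(u,F)=1,\,u\bmod F}\chi_F(u)e_q(uf/F)$, reduce to $F$ squarefree and coprime to $f$ (Lemma \ref{gauss} controls the remaining $F$), write $u = c\tilde u$ with $\tilde u$ monic and $c\in\F_q^\times$, and use cubic reciprocity (Lemma \ref{cubic-reciprocity}, valid since $q\equiv 1\pmod 6$) to replace $\chi_F(\tilde u)$ by $\chi_{\tilde u}(F)$; the sum over $c$ of $\chi_F(c)$ against the additive character is exactly where $\tau(\chi_3^{\cdot})$ and the dependence on $\deg F \bmod n$ appear, and one then flips the inner sum over $F$ using the Poisson summation formula of Proposition \ref{prop-Poisson} and re-assembles. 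Either way, the genuinely delicate step --- and where I expect the bulk of the work to lie --- is the book-keeping at the place $\infty$: tracking the classes $\eta = \pi_\infty^{-i}$, the shift $i \mapsto i-1-\deg f$, and the precise exponents $q^{n(s-1)E}$, $q^{n(2-s)(B-2)}$, $q^{2n-\deg f + 2i-2}$, and pinning down the normalization so that the constants $1-q^{-1}$, $1-q^{n-ns}$, $1-q^{ns-n-1}$ emerge exactly as written rather than up to an unspecified unit; this is routine in principle but error-prone, which is why Hoffstein and Patterson carry it out so carefully.
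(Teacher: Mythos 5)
The paper does not reprove this statement: it is quoted directly from Hoffstein's Proposition 2.1, rewritten in Patterson's normalization (the only remark the authors add is that summing over monic $F$ rather than all of $\F_q[T]$ removes Patterson's factors of $(q-1)/n$). Your first argument — the Kubota metaplectic Eisenstein series on the $n$-fold cover of $GL_2$ over $\F_q(T)$, its functional equation under $s \mapsto 2-s$, the constant-term/Gindikin--Karpelevich factors producing $\frac{1-q^{-1}}{1-q^{ns-n-1}}$ and $\frac{1-q^{n-ns}}{1-q^{ns-n-1}}$, the ramified places producing $W_{f,i}=\tau(\chi_3^{2i-1}\overline{\chi_f})$, and the long Weyl element permuting the degree classes $\pi_\infty^{-i}\mapsto \pi_\infty^{i-1-\deg(f)}$ — is exactly the strategy of the cited proof, so in approach you match the source.

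Two caveats. First, as a proof your write-up is an outline rather than a verification: the entire content of the proposition lies in the precise exponents $q^{n(s-1)E}$, $q^{n(2-s)(B-2)}$, $q^{2n-\deg(f)+2i-2}$, $q^{(2-s)[1+\deg(f)-i]_n}$ and the identification of $W_{f,i}$, and these are precisely what you defer as "bookkeeping at $\infty$"; asserting that they "emerge" does not establish the stated formula, which is why the authors cite Hoffstein rather than sketch this themselves. Second, your proposed elementary alternative is doubtful as stated: the generating series $\psi(f,\pi_\infty^{-i},u)$ is not an Euler product, since $G_q(f,F)$ is only twisted-multiplicative in $F$ (Lemma \ref{gauss}(i)), and Proposition \ref{prop-Poisson} converts a character sum over $h$ of fixed degree into Gauss sums — it does not act on a sum of Gauss sums over $F$. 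In particular the two-term structure of the functional equation, with the shifted class $\pi_\infty^{i-1-\deg(f)}$ appearing in the second term, is exactly the output of the Whittaker expansion of the Eisenstein series; no reciprocity-plus-Poisson shortcut reproducing it is known, and one should not suggest it as a viable substitute without carrying it through.
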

\begin{rem} \label{remark-fe}
Note that we can rewrite the functional equation in the following form (for $n=3$)
\begin{align} \label{fe-2}
(1-q^4u^3)  \psi(f,\pi_{\infty}^{-i},u) = |f|_q u^{\deg(f)} \left[ a_1(u) \psi \left (f, \pi_{\infty}^{-i}, \frac{1}{q^2u}\right )  +a_2(u) \psi \left (f, \pi_{\infty}^{i-1-\deg(f)}, \frac{1}{q^2u}\right ) \right],
\end{align}
where
$$a_1(u) = -(q^2u) (qu)^{- [\deg(f)+1-2i]_3} (1-q^{-1}), \, \, a_2(u) = -W_{f,i} (qu)^{-2} (1-q^3u^3),$$ with $W_{f,i}$ as above. 
\end{rem}

By setting $u=q^{-s}$ and letting $u \rightarrow \infty$ in the functional equation, Hoffstein showed that
\begin{equation} \label{meromorphic}
\psi(f, \pi_\infty^{-i},u) = \frac{u^i P(f, i, u^n)}{(1-q^{n+1} u^n)},
\end{equation}
where $P(f, i, x)$ is a polynomial of degree at most $[(1+\deg(f)-i)/n]$ in $x$. We remark that while  $\psi(f, \pi_\infty^{-i},u)$ depends only on the value of $i$ modulo $n$, 
this is not the case for $P(f, i, u^n)$.

\begin{rem} \label{no-pole}
Note that, from \eqref{meromorphic}, the left-hand side of equation \eqref{fe-2} above has no pole at $u^3=1/q^2$, so neither does the right-hand side.
\end{rem}

We let 
$$C(f, i) = \sum_{F \in \mathcal{M}_{q,i}} G_q(f, F),$$
By setting $x=u^n=q^{-ns}$, we can write for $0 \leq i \leq n-1$, 
$$ P(f, i, x) = \frac{1-q^{n+1}x}{1-q^nx}  \sum_{j \geq 0} C(f,  i+nj) x^j.$$
If $j \geq [(1+\deg(f)-i)/n]$ with $0 \leq i \leq n-1$, then we have the recurrence relation
$$C(f,  i+n(j+1)) = q^{n+1} C(f, i + nj).$$
Using that, we can rewrite, for any $B \geq [(1+\deg(f)-i)/n]$, 
\begin{eqnarray}
\nonumber
 P(f,  i, x) &=& \frac{1-q^{n+1}x}{1-q^nx}  \left( \sum_{0 \leq j < B } C(f, i+nj) x^j +  \sum_{j \geq B} 
C(f,  i + n B) (q^{n+1})^{j-B} x^j \right) \\ \nonumber
&=& \frac{1-q^{n+1}x}{1-q^nx}  \left( \sum_{0 \leq j < B} C(f, i+nj) x^j +  C(f, i + n B) x^{B} \sum_{j \geq 0}  (q^{n+1})^{j} x^j \right) \\
\label{formula-for-P}
&=& \frac{1-q^{n+1}x}{1-q^nx}  \sum_{0 \leq j < B} C(f, i+nj) x^j +  \frac{C(f, i + n B)}{1-q^nx} x^{B}.
\end{eqnarray}
Let
\begin{equation} \rho(f,i)=\lim_{s \rightarrow 1+\frac{1}{n}} (1-q^{n+1-ns}) q^{is} \psi(f, \pi_\infty^{-i}, q^{-s}) =
P(f, i, q^{-n-1}). \label{residue}
\end{equation}
Using the formula above for  $P(f, i, x)$, it follows that
\begin{eqnarray*} \label{bound-for-residue}
\rho(f,i) = \frac{C(f, i')}{(1-q^{-1})q^{\frac{n+1}{n} (i'-i)}},
\end{eqnarray*}
where $i' \equiv i \pmod n$, and $i' \geq \deg(f).$

To prove Proposition \ref{big-F-tilde-corrected} we need to obtain an explicit formula for the residue in equation \eqref{residue} which we do  in the next subsection.

\subsection{Explicit formula for the residue $\rho(f,i)$}
From now on, we will specialize to $n=3$.

For $\pi$ prime, following Patterson's notation, let  
$$ \psi_{\pi}(f, \pi_{\infty}^{-i},u) = (1-u^3q^3)^{-1} \sum_{\substack{F \in \mathcal{M}_q \\ \deg(F) \equiv i \pmod 3 \\ (F,\pi)=1}} G_q(f,F) u^{\deg(F)}.$$
We will need the following result. 
\begin{lem} Let $\pi$ be a prime such that $\pi \nmid f$. 
We have the following relations
\begin{eqnarray}
\psi_\pi(f,\pi_\infty^{-i}, q^{-s})&=&\psi(f,\pi_\infty^{-i}, q^{-s}) -G_q(f,\pi) |\pi|_q^{-s} \psi_\pi(f\pi, \pi_\infty^{-i+\deg(\pi)},q^{-s}), \label{j=0}\\  
\psi_{\pi} (f \pi , \pi_{\infty}^{-i},q^{-s})&=& \psi(f \pi, \pi_{\infty}^{-i},q^{-s}) - \overline{G_q(f, \pi)} |\pi|_q^{1-2s}  \psi_{\pi}(f, \pi_{\infty}^{-i+2 \deg(\pi)},q^{-s}),\label{j=1}\\
\psi_\pi(f\pi^2, \pi_\infty^{-i}, q^{-s})&=&(1-|\pi|_q^{2-3s})^{-1}\psi(f\pi^2, \pi_\infty^{-i}, q^{-s}).\label{j=2}
\end{eqnarray}
\end{lem}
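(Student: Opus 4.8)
The three identities are all instances of a single mechanism: sieving the Gauss-sum generating series over $F$ coprime to $\pi$ against the full generating series, using the near-multiplicativity of $G_q$ in its second argument. The plan is to split the sum defining $\psi(f,\pi_\infty^{-i},u)$ according to the exact power of $\pi$ dividing $F$, write $F = \pi^k F'$ with $(F',\pi)=1$, and apply Lemma \ref{gauss}(i) to factor $G_q(f, \pi^k F') = \chi_{\pi^k}(F')^2\, G_q(f,\pi^k)\, G_q(f, F')$, then re-express $G_q(f,F')$ in terms of $G_q(f\pi^k, F')$ via \eqref{prop-GS} so that the inner sum is again a (sieved) $\psi$ but with $f$ replaced by $f\pi^k$ and with a shifted degree class.

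For \eqref{j=0}, the plan is to isolate the $k=0$ term (which gives $\psi_\pi(f,\pi_\infty^{-i},u)$ itself — no, rather: on the left we have $\psi(f,\cdot)$, the full sum) and collect the terms with $\pi\mid F$. Writing $F=\pi F'$ for those, and using $G_q(f,\pi F') = \chi_\pi(F')^2 G_q(f,\pi) G_q(f,F')$ together with $G_q(f,F')=\chi_\pi^2(f)$-type twists handled by \eqref{prop-GS} to turn $G_q(f,F')$ into $\overline{\chi_{F'}}(\pi)^{\text{something}}$ — more precisely one gets $G_q(f,\pi F') = G_q(f,\pi)\,G_q(f\pi, F')$ directly from the second line of Lemma \ref{gauss}(i) applied with $f_1=\pi$, so the sum over $\pi\mid F$ becomes $G_q(f,\pi)\sum_{(F',\pi)=1} G_q(f\pi,F') u^{\deg(\pi F')}$ restricted to $\deg(\pi F')\equiv i$, i.e. $\deg(F')\equiv i-\deg(\pi)$. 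Dividing out the $(1-u^3q^3)^{-1}$ factors consistently and substituting $u=q^{-s}$, $u^{\deg(\pi)}=|\pi|_q^{-s}$, gives exactly \eqref{j=0}. Identity \eqref{j=1} is the same computation one level deeper: in $\psi_\pi(f\pi,\cdot)$ the relevant $F$ have $(F,\pi)=1$, but expanding $G_q(f\pi,F)$ is not what is needed; rather one takes the \emph{full} $\psi(f\pi,\cdot)$, separates $\pi\nmid F$ (giving $\psi_\pi(f\pi,\cdot)$) from $\pi\mid F$, writes $F=\pi F'$, uses Lemma \ref{gauss}(i) to get $G_q(f\pi,\pi F') = G_q(f\pi,\pi)G_q(f\pi^2,F')$, and then invokes Lemma \ref{gauss}(ii) with $V=f\pi$, $\alpha=1$, $i=2$: since $2\not\equiv 0\pmod 3$ and $i=\alpha+1$, this is the fourth case and evaluates $G_q(f\pi,\pi^2)$ as an explicit multiple of $|\pi|_q^{3/2}$, but actually for \eqref{j=1} we need $G_q(f,\pi)$ appearing, which comes from matching $G_q(f\pi,\pi)$ against $\overline{G_q(f,\pi)}$ via the relation $G_q(f\pi,\pi) = \overline{\chi_\pi}(f)^? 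G_q(\pi,\pi)\cdots$ — the cleanest route is to note $G_q(f\pi, \pi) = $ (by the $i=1=\alpha+1$, $\alpha=0$, $1\not\equiv0$ case of Lemma \ref{gauss}(ii)) a quantity whose complex conjugate, after using $|\tau|^2=q$ and $\epsilon\omega\overline{\epsilon\omega}=|\pi|_q^{-1}\cdot$(stuff), reproduces $\overline{G_q(f,\pi)}|\pi|_q$. For \eqref{j=2}, writing $F=\pi^k F'$ and using Lemma \ref{gauss}(ii) with $V=f\pi^2$, $\alpha=2$: the terms $k\geq 1$ contribute $G_q(f\pi^2,\pi)=0$ (case $i=1\leq\alpha$, $1\not\equiv0\pmod3$) — wait, $i=1\le\alpha=2$ and $1\not\equiv0$, so $G_q(f\pi^2,\pi)=0$; similarly one must check $k=2$: $G_q(f\pi^2,\pi^2)=0$ since $2\le\alpha$ and $2\not\equiv0$; and $k=3$: $G_q(f\pi^2,\pi^3)=\phi(\pi^3)$ since $3\le\alpha$... no, $\alpha$ here is the valuation of $V=f\pi^2$ at $\pi$, which is $2$, so $k=3$ falls in case $i=3\geq\alpha+2=4$? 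No: $\alpha+2=4$, $3<4$, and $3=\alpha+1$ with $3\equiv0\pmod3$, giving $G_q(f\pi^2,\pi^3) = -|\pi|_q^2$. This generates exactly the geometric-type correction $(1-|\pi|_q^{2-3s})^{-1}$ after summing the contributions of $k\equiv 0\pmod 3$.

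The main obstacle I anticipate is bookkeeping the degree-shift in the equivalence class $\pi_\infty^{-i}$: each factor of $\pi$ pulled out shifts the residue class of $\deg(F')$ by $-\deg(\pi)\pmod 3$, and one must track this consistently through the $(1-u^3q^3)^{-1}$ normalizations (which are the same on both sides and cancel, but only if the congruence classes are handled correctly). A secondary subtlety is the $\chi$-twist: extracting $G_q(f,\pi)$ versus $\overline{G_q(f,\pi)}$ in \eqref{j=0} versus \eqref{j=1} requires carefully applying \eqref{prop-GS} and the conjugation relation for Gauss sums of primitive characters (Corollary \ref{sign-GS} combined with $|\tau(\chi)|=\sqrt q$); getting the exact power of $|\pi|_q$ ($|\pi|_q^{-s}$ in \eqref{j=0} versus $|\pi|_q^{1-2s}$ in \eqref{j=1}) right is where the arithmetic of Lemma \ref{gauss}(ii) cases $i=\alpha+1$ with $\alpha=0$ versus $\alpha=1$ does all the work. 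Everything else is routine manipulation of the geometric series and Perron-type formal identities in $u$, so I would relegate it to a "straightforward computation" remark rather than writing it in full.
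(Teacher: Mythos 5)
Your proposal follows essentially the same route as the paper's proof (which writes out only \eqref{j=1} in detail and attributes the full set to Patterson's Hecke-theory relations): split according to the exact power of $\pi$ dividing $F$, factor with Lemma \ref{gauss}(i), and evaluate the prime-power Gauss sums with Lemma \ref{gauss}(ii); your treatments of \eqref{j=0} and \eqref{j=2} are correct as sketched, including the observation that only $k=3$ survives in \eqref{j=2}. The one place your write-up needs repair is \eqref{j=1}: the factorization you display, $G_q(f\pi,\pi F')=G_q(f\pi,\pi)G_q(f\pi^2,F')$, is identically zero, since $G_q(f\pi,\pi)=0$ by the case $i=1\leq\alpha=1$, $i\not\equiv 0\pmod 3$ of Lemma \ref{gauss}(ii); this correctly kills the terms with $\pi$ exactly dividing $F$, and the surviving contribution comes from $F=\pi^2F_2$ with $(F_2,\pi)=1$, where $G_q(f\pi,\pi^2F_2)=G_q(f\pi^3,F_2)\,G_q(f\pi,\pi^2)=G_q(f,F_2)\,G_q(f\pi,\pi^2)$ and $\deg(F_2)\equiv i-2\deg(\pi)\pmod 3$, which is what produces $\psi_\pi(f,\pi_\infty^{-i+2\deg(\pi)},q^{-s})$. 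The identity you left with question marks, $G_q(f\pi,\pi^2)=|\pi|_q\,\overline{G_q(f,\pi)}$, is exactly the source of the factor $|\pi|_q^{1-2s}$, and it does hold: compare the two $i=\alpha+1$ cases of Lemma \ref{gauss}(ii) (namely $\alpha=1$, $i=2$ against $\alpha=0$, $i=1$), using $\chi_{\pi^2}=\overline{\chi_\pi}$, $\epsilon(\chi_{\pi^2})\omega(\chi_{\pi^2})=\overline{\epsilon(\chi_\pi)\omega(\chi_\pi)}$ and $\chi_\pi(-1)=1$ — this is precisely the one-line justification the paper gives, so once you state it cleanly your argument coincides with theirs.
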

\begin{proof}
These equations appear in page 249 of \cite{patterson} as part of the ``Hecke theory'' equations. 
For completeness we give here the details of the proof of \eqref{j=1}.  The proofs of the other two identities proceed in a similar fashion. Consider
\begin{align*}
\psi_{\pi}(f \pi , \pi_{\infty}^{-i},q^{-s}) =& (1-q^{3(1-s)})^{-1}\sum_{\substack{F \in \mathcal{M}_q\\ \deg(F) \equiv i \pmod 3 \\ (F, \pi)=1}} \frac{ G_q(f \pi ,F)}{|F|_q^s}\\
=& \psi(f \pi , \pi_{\infty}^{-i},q^{-s}) - (1-q^{3(1-s)})^{-1}\sum_{\substack{F_1 \in \mathcal{M}_q\\\deg(F_1) \equiv i - \deg(\pi) \pmod 3}} \frac{G_q(f \pi , \pi F_1)}{|\pi|_q^s |F_1|_q^s}.
\end{align*}
 Note that in the second sum above, we need $\pi || F_1$, otherwise the Gauss sum will vanish by Lemma \ref{gauss}. 
 We write $F_1 = \pi F_2$ with $\pi \nmid F_2$. Part (i) of Lemma \ref{gauss} implies that 
 $G_q(f \pi , \pi^2 F_2) = G_q(f \pi^3, F_2) G_q(f \pi ,\pi^2)= G_q(f,F_2) G_q( f \pi, \pi^2).$ 
 Moreover, part (ii) of Lemma \ref{gauss} implies that  $G_q(f \pi, \pi^2) = |\pi|_q \overline{G_q(f, \pi)}$, where we have used that $\chi_\pi(-1)=1$ since it is a cubic character. 
Putting all of this together yields \eqref{j=1}.
\end{proof}

\begin{lem}
Let $\pi$ be a prime such that $\pi \nmid f$. 
We have the following relation
\begin{equation}\label{anotherHecke2}
\psi(f\pi^{j+3}, \pi_\infty^{-i}, q^{-s}) -|\pi|_q^{3-3s} \psi(f\pi^{j}, \pi_\infty^{-i}, q^{-s})
= (1-|\pi|_q^{2-3s})\psi_\pi(f\pi^{j}, \pi_\infty^{-i}, q^{-s}).
\end{equation}
\end{lem}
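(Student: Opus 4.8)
\textbf{Proof plan for equation \eqref{anotherHecke2}.}

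The plan is to expand the left-hand side by definition and match it term by term against the definition of $\psi_\pi$. Recall that for any $g \in \mathcal{M}_q$ we have
\[
\psi(g, \pi_\infty^{-i}, q^{-s}) = (1-q^{3(1-s)})^{-1} \sum_{\substack{F \in \mathcal{M}_q \\ \deg(F) \equiv i \pmod 3}} \frac{G_q(g,F)}{|F|_q^s},
\]
and $\psi_\pi$ is the same sum restricted to $(F,\pi)=1$. So I would split the sum defining $\psi(f\pi^{j+3}, \pi_\infty^{-i}, q^{-s})$ according to the exact power $\pi^k \| F$. For $k=0$ this contributes exactly $\psi_\pi(f\pi^{j+3}, \pi_\infty^{-i}, q^{-s})$, which equals $\psi_\pi(f\pi^{j}, \pi_\infty^{-i}, q^{-s})$ because $G_q(f\pi^{j+3}, F) = G_q(f\pi^j \cdot \pi^3, F) = G_q(f\pi^j, F)$ when $(F,\pi)=1$ (the factor $\pi^3$ is a cube, so by Lemma \ref{gauss}(i) and the fact that a cube acts trivially in the shifted Gauss sum over $F$ coprime to it — indeed $\chi_F(\pi^3)^2 = 1$ and $G_q(\pi^3 V, F) = \overline{\chi_F}(\pi^3) G_q(V,F) = G_q(V,F)$ via \eqref{prop-GS}).

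For the terms with $\pi^k \| F$, $k \geq 1$, write $F = \pi^k F'$ with $(F',\pi)=1$. By Lemma \ref{gauss} the Gauss sum $G_q(f\pi^{j+3}, \pi^k F')$ vanishes unless $k$ is compatible with the $\pi$-valuation of the shift argument; here the relevant fact is that $G_q(V, \pi^k)$ vanishes for $k$ exceeding $(\text{valuation}) + 1$, which forces $k \in \{1, \dots\}$ bounded, and in fact only $k$ with $\pi^k$ "matching" $f\pi^{j+3}$ survive. The key computation is that, after factoring via Lemma \ref{gauss}(i), each such term in the expansion of $\psi(f\pi^{j+3}, \cdot)$ and the corresponding term in $|\pi|_q^{3-3s}\psi(f\pi^j, \cdot)$ — where the shift $\pi^3$ scales both the Gauss sum contribution and the $|F|_q^{-s}$ weight by a predictable power of $|\pi|_q$ — agree \emph{except} for a mismatch at the boundary of the allowed range of $k$. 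Collecting that boundary discrepancy is precisely what produces the $(1-|\pi|_q^{2-3s})\psi_\pi(f\pi^j, \pi_\infty^{-i}, q^{-s})$ on the right: the $1$ comes from the $k=0$ term discussed above and the $-|\pi|_q^{2-3s}$ from the single surviving boundary term (using $G_q(f\pi^{j+1}, \pi^2) = |\pi|_q \overline{G_q(f\pi^j, \pi)}$ type identities from Lemma \ref{gauss}(ii), together with the extra $|\pi|_q^{-s}$ from the weight, giving $|\pi|_q^{1-s} \cdot |\pi|_q^{1-2s} = |\pi|_q^{2-3s}$).

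Concretely I would carry this out by first establishing, directly from \eqref{prop-GS} and Lemma \ref{gauss}, the clean shift relation $G_q(f\pi^3, F) = G_q(f, F)$ for all $F$ (not just those coprime to $\pi$), since $\pi^3$ is a perfect cube; this immediately makes $\psi(f\pi^{j+3}, \cdot)$ and $\psi(f\pi^j, \cdot)$ differ only through the \emph{range constraints} encoded in Lemma \ref{gauss}(ii) on the power of $\pi$ dividing $F$. Then the difference $\psi(f\pi^{j+3}, \cdot) - |\pi|_q^{3-3s}\psi(f\pi^j, \cdot)$ telescopes: grouping $F = \pi^k F'$ and using $G_q(f\pi^j, \pi^{k+3}F') $ versus $G_q(f\pi^{j+3}, \pi^k F')$, almost everything cancels and one is left with the top-end term plus the $(F,\pi)=1$ term, which reassemble into $(1-|\pi|_q^{2-3s})\psi_\pi(f\pi^j, \pi_\infty^{-i}, q^{-s})$. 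Alternatively — and this is probably the cleanest route — one can deduce \eqref{anotherHecke2} by combining the three identities \eqref{j=0}, \eqref{j=1}, \eqref{j=2} already proved: applying \eqref{j=2} with $f$ replaced by $f\pi^{j+1}$ (so $f\pi^{j+1}\cdot\pi^2 = f\pi^{j+3}$) expresses $\psi_\pi(f\pi^{j+1}, \cdot)$ in terms of $\psi(f\pi^{j+3}, \cdot)$, and then \eqref{j=1} (with $f$ replaced by $f\pi^j$) relates $\psi_\pi(f\pi^{j+1}, \cdot)$ back to $\psi_\pi(f\pi^j, \cdot)$; eliminating $\psi_\pi(f\pi^{j+1}, \cdot)$ between these two yields \eqref{anotherHecke2} after rearrangement.

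The main obstacle is bookkeeping the exact power of $|\pi|_q$ and the shift of the index $i \pmod 3$ in the $\pi_\infty$-equivalence class as one factors $F = \pi^k F'$ (each factor of $\pi$ shifts the degree class by $\deg(\pi) \bmod 3$), and making sure the case analysis in Lemma \ref{gauss}(ii) — which power of $\pi$ in the argument versus in $F$ gives a nonzero, and possibly non-multiplicative, contribution — is handled correctly at the single boundary term that survives. Once the clean identity $G_q(f\pi^3,F)=G_q(f,F)$ is in place and one commits to the ``combine \eqref{j=1} and \eqref{j=2}'' approach, the rest is a short algebraic manipulation with no analytic input.
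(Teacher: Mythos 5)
Your first route—splitting the sum over $F$ according to the exact power $\pi^k\,\|\,F$, evaluating the $\pi$-part with Lemma \ref{gauss}, and taking the difference so that almost everything cancels—is indeed the paper's strategy, but the identity you propose as its backbone is false. The ``clean shift relation'' $G_q(f\pi^3,F)=G_q(f,F)$ holds only for $(F,\pi)=1$: relation \eqref{prop-GS} requires the shift to be coprime to the modulus, and Lemma \ref{gauss}(ii) shows the $\pi$-part genuinely changes with the shift (for instance $G_q(f,\pi^3)=0$ while $G_q(f\pi^3,\pi^3)=\phi(\pi^3)$). If your relation were true one would get $\psi(f\pi^{j+3},\pi_\infty^{-i},q^{-s})=\psi(f\pi^{j},\pi_\infty^{-i},q^{-s})$, and \eqref{anotherHecke2} would collapse to a different, false statement, so it cannot carry the argument. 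The correct bookkeeping is: for $V=f\pi^{j}$ the only exact powers $\pi^k\,\|\,F$ that contribute are $k\equiv 0\pmod 3$ with $k\le j$, each giving a factor $\phi(\pi^{k})$, together with the single boundary power $k=j+1$; the boundary contributions for exponents $j$ and $j+3$ differ by exactly the factor $|\pi|_q^{3(1-s)}$ and cancel in the difference, while the cube terms telescope, leaving $1+\phi(\pi^3)|\pi|_q^{-3s}-|\pi|_q^{3-3s}=1-|\pi|_q^{2-3s}$. That telescoping of the $\phi(\pi^{3\ell})$ terms—not a boundary identity of the type $G_q(f\pi^{j+1},\pi^2)=|\pi|_q\overline{G_q(f\pi^{j},\pi)}$, which belongs to the proof of \eqref{j=1}—is where the factor $1-|\pi|_q^{2-3s}$ comes from.

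The alternative you call the cleanest route does not work either. The relations \eqref{j=1} and \eqref{j=2} are proved under the hypothesis $\pi\nmid f$, and you propose to apply them with $f$ replaced by $f\pi^{j+1}$ and $f\pi^{j}$, which violates that hypothesis for every $j\ge 0$. Moreover the extension of \eqref{j=2} to higher powers of $\pi$ is genuinely false: already $\psi_\pi(f\pi^{3},\pi_\infty^{-i},q^{-s})\neq(1-|\pi|_q^{2-3s})^{-1}\psi(f\pi^{3},\pi_\infty^{-i},q^{-s})$ in general, since both $F=\pi^3F'$ and $F=\pi^4F'$ contribute to $\psi(f\pi^3,\cdot)$, whereas for exponent exactly $2$ only $F=\pi^3F'$ survives. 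So the proposed elimination has no valid starting point, and you need to carry out the direct decomposition (your first route) with the corrected mechanism described above, which is exactly what the paper does.
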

\begin{proof}
 We have 
 \begin{align}
\sum_{\deg(F) \equiv i \pmod 3} \frac{G_q(f \pi^{j}, F)}{|F|_q^s} =& \sum_{\substack{\deg(F) \equiv i \pmod 3 \\ (F, \pi)=1}} \frac{G_q(f \pi^{j},F)}{|F|_q^s}+ \sum_{\ell=1}^{\left [ \frac{j}{3}\right ]} |\pi|_q^{-3\ell s} \sum_{\substack{\deg(F) \equiv i \pmod 3 \\ (F, \pi)=1}} \frac{G_q(f \pi^{j}, \pi^{3\ell} F)}{|F|_q^s}  \label{eq-simpl}\\
&+ |\pi|_q^{-(j+1)s} \sum_{\substack{\deg(F) \equiv i-(j+1)\deg(\pi) \pmod 3 \\ (F, \pi)=1}} \frac{G_q(f \pi^{j}, \pi^{j+1} F)}{|F|_q^s}.\nonumber
\end{align}
Now when $(F,\pi)=1$, by  \eqref{prop-GS},
 it follows that $G_q(f \pi^{k},F) = G_q(f\pi^{[k]_3},F)$. We also have  using Lemma \ref{gauss} and   \eqref{prop-GS},
\begin{align*}
G_q(f \pi^{j},\pi^{3\ell} F)=& G_q(f \pi^{j+3\ell},F) G_q(f \pi^{j}, \pi^{3\ell})= G_q(f\pi^j,F) \phi( \pi^{3\ell}),\\
G_q(f \pi^j, \pi^{j+1} F) =& G_q(f \pi^{2j+1},F) G_q(f\pi^j, \pi^{j+1}) = G_q(f\pi^{2[j]_3+1} ,F) G_q(f\pi^{[j]_3},\pi^{[j]_3+1}) |\pi|_q^{j-[j]_3}\\
=& G_q(f\pi^{[j]_3}, \pi^{[j]_3+1} F) |\pi|_q^{j-[j]_3}.
\end{align*}
Using the relations above in \eqref{eq-simpl} and rearranging, we get that
\begin{align*}
\sum_{\deg(F) \equiv i \pmod 3} \frac{G_q(f \pi^j, F)}{|F|_q^s} =& \left( 1+ \sum_{\ell=1}^{\left [ \frac{j}{3}\right ]}  \frac{\phi(\pi^{3\ell})}{|\pi|_q^{3\ell s}}\right) \sum_{\substack{\deg(F) \equiv i \pmod 3 \\ (F,\pi)=1}} \frac{G_q(f\pi^j,F)}{|F|_q^s}\\&+ |\pi|_q^{(j-[j]_3)(1-s)} \sum_{\substack{ \deg(F) \equiv i \pmod 3 \\ \pi \mid F}} \frac{G_q(f\pi^{[j]_3}, F) }{|F|_q^s}.\\
\end{align*}
We now do the same with $j+3$ and take the difference. Then we have
\begin{equation*}\label{anotherHecke}
\sum_{\deg(F) \equiv i \pmod 3} \frac{G_q(f \pi^{j+3}, F)}{|F|_q^s} -|\pi|_q^{3-3s} \sum_{\deg(F) \equiv i \pmod 3} \frac{G_q(f \pi^j, F)}{|F|_q^s}= (1-|\pi|_q^{2-3s})\sum_{\substack{\deg(F) \equiv i \pmod 3 \\ (F,\pi)=1}} \frac{G_q(f\pi^j,F)}{|F|_q^s}.
\end{equation*}
Dividing by $(1-q^{3(1-s)})$, we obtain the result. 
\end{proof}

We will also use the following periodicity result, which is stated in \cite{patterson} and in \cite[p. 135]{Kazhdan-Patterson}.

\begin{lem}[The Periodicity Theorem] \label{periodicity-theorem} 
Let $\pi$ be a prime such that $\pi \nmid f$. Then 
\[\rho (f\pi^{j+3},i)=\rho(f\pi^j,i).\]
 \end{lem}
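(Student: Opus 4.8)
\textbf{Proof plan for the Periodicity Theorem (Lemma \ref{periodicity-theorem}).}

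The plan is to deduce the periodicity relation $\rho(f\pi^{j+3},i)=\rho(f\pi^j,i)$ directly from the two identities established just above, namely the Hecke-type relation \eqref{anotherHecke2} and the $j=2$ identity \eqref{j=2}, together with the description of $\rho$ as a residue in \eqref{residue}. First I would recall that by \eqref{residue} we have $\rho(g,i) = \lim_{s\to 1+1/3}(1-q^{4-3s})q^{is}\psi(g,\pi_\infty^{-i},q^{-s})$ for any $g\in\mathcal{M}_q$; in particular the quantity $1-|\pi|_q^{2-3s} = 1-q^{(2-3s)\deg(\pi)}$ evaluated at $s=1+1/3$ equals $1-q^{-\deg(\pi)} = 1-|\pi|_q^{-1}$, which is nonzero, and similarly $|\pi|_q^{3-3s}$ evaluated at the same point equals $|\pi|_q^{-1}$. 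So at $s_0 := 1+1/3$ none of the relevant auxiliary factors vanish, and multiplying \eqref{anotherHecke2} by $(1-q^{4-3s})q^{is}$ and taking the limit $s\to s_0$ gives
\begin{equation*}
\rho(f\pi^{j+3},i) - |\pi|_q^{-1}\rho(f\pi^j,i) = (1-|\pi|_q^{-1})\,\widehat{\rho}_\pi(f\pi^j,i),
\end{equation*}
where I write $\widehat{\rho}_\pi(g,i) := \lim_{s\to s_0}(1-q^{4-3s})q^{is}\psi_\pi(g,\pi_\infty^{-i},q^{-s})$ for the analogous residue of the coprime-to-$\pi$ generating series.

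The key remaining point is therefore to show that $\widehat{\rho}_\pi(f\pi^j,i) = \rho(f\pi^j,i)$, i.e.\ that removing the Euler factor at $\pi$ does not change the residue at $s_0$. Here I would split into cases according to $[j]_3$. When $[j]_3 = 2$, identity \eqref{j=2} says $\psi_\pi(f\pi^2,\pi_\infty^{-i},q^{-s}) = (1-|\pi|_q^{2-3s})^{-1}\psi(f\pi^2,\pi_\infty^{-i},q^{-s})$; multiplying by $(1-q^{4-3s})q^{is}$ and taking $s\to s_0$ shows $\widehat{\rho}_\pi(f\pi^2,i) = (1-|\pi|_q^{-1})^{-1}\rho(f\pi^2,i)$, and substituting this and $j=2$ into the displayed relation above makes the $\rho(f\pi^5,i)$ and $\rho(f\pi^2,i)$ terms cancel exactly to give $\rho(f\pi^5,i) = \rho(f\pi^2,i)$. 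For the cases $[j]_3 \in \{0,1\}$ I would instead use \eqref{j=0} and \eqref{j=1}: these express $\psi_\pi$ in terms of $\psi$ and of $\psi_\pi$ with a shifted modulus, and one checks that the correction terms $G_q(f,\pi)|\pi|_q^{-s}$ and $\overline{G_q(f,\pi)}|\pi|_q^{1-2s}$, evaluated near $s_0$, are finite and that the shifted $\psi_\pi$'s are regular at $s_0$ (they have their only pole at $u^3 = q^{-4}$, coming from the explicit shape \eqref{meromorphic} of $\psi$, which the coprimality condition only modifies by the factor $(1-|\pi|_q^{2-3s})^{-1}$ that is regular and nonzero at $s_0$). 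Hence $\psi_\pi(f\pi^j,\cdot)$ and $\psi(f\pi^j,\cdot)$ have the same residue at $s_0$, i.e.\ $\widehat{\rho}_\pi(f\pi^j,i) = \rho(f\pi^j,i)$ in those cases. Plugging $\widehat{\rho}_\pi(f\pi^j,i) = \rho(f\pi^j,i)$ into the displayed relation gives $\rho(f\pi^{j+3},i) - |\pi|_q^{-1}\rho(f\pi^j,i) = (1-|\pi|_q^{-1})\rho(f\pi^j,i)$, which simplifies to $\rho(f\pi^{j+3},i) = \rho(f\pi^j,i)$, as desired.

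An alternative, perhaps cleaner, route which I would mention is to use the closed formula $\rho(f,i) = C(f,i')/((1-q^{-1})q^{\frac43(i'-i)})$ with $i'\equiv i\pmod 3$, $i'\ge\deg(f)$, together with the recurrence $C(g,i+3(k+1)) = q^4 C(g,i+3k)$ valid for $i+3k \ge \deg(g)$: one needs to compare $C(f\pi^{j+3}, i')$ with $C(f\pi^j, i')$ for $i'$ large, and the identity \eqref{anotherHecke2} (read off coefficient-wise) relates these two coefficient sequences; for $i'$ beyond the degree of $f\pi^{j+3}$ both sides are in their "stable" regime where $C$ grows by the fixed ratio $q^4$, and the normalizing powers of $q$ in the definition of $\rho$ are designed precisely to absorb the degree shift $\deg(f\pi^{j+3}) - \deg(f\pi^j) = 3\deg(\pi)$. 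I expect the main obstacle to be the bookkeeping in the $[j]_3\in\{0,1\}$ cases — verifying that the shifted terms in \eqref{j=0}–\eqref{j=1} genuinely do not contribute a pole at $s_0$, so that one really may ignore them when extracting residues — rather than anything structurally deep; this is where I would be most careful, and where citing the pole location from \eqref{meromorphic} (only at $u^3 = q^{-4}$) does the real work.
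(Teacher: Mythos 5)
First, a point of comparison: the paper does not actually prove this lemma. It is imported from Patterson and Kazhdan--Patterson, and in the paper's logical order Lemma \ref{lemma-res} (the evaluation of $\lim_{s\to 4/3}q^{is}(1-q^{4-3s})\psi_\pi(f\pi^j,\pi_\infty^{-i},q^{-s})$) is deduced \emph{from} periodicity, not the other way around. Your proposal inverts this, and as written it has a genuine gap. The central claim -- that deleting the Euler factor at $\pi$ does not change the residue at $s=4/3$, i.e.\ that your $\widehat{\rho}_\pi(f\pi^j,i)$ equals $\rho(f\pi^j,i)$ -- is false in general: granted periodicity, Lemma \ref{lemma-res} shows the correct value is $\rho(f\pi^j,i)/(1+|\pi|_q^{-1})$. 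Your justification collapses at exactly the point you flag as delicate: you argue that the shifted $\psi_\pi$'s in \eqref{j=0}--\eqref{j=1} are regular at $s_0$ ``because their only pole is at $u^3=q^{-4}$'', but $u=q^{-s}$, so $u^3=q^{-4}$ \emph{is} $s_0=4/3$; these series generically do have a pole there, and the correction terms in \eqref{j=0}--\eqref{j=1} do contribute to the residue. There is also an arithmetic slip that hides the problem: at $s=4/3$ one has $1-|\pi|_q^{2-3s}=1-|\pi|_q^{-2}$, not $1-|\pi|_q^{-1}$. With the correct constant, your (false) premise would yield $\rho(f\pi^{j+3},i)=(1+|\pi|_q^{-1}-|\pi|_q^{-2})\rho(f\pi^j,i)$, which is not periodicity; and even in your $[j]_3=2$ case the algebra gives $\rho(f\pi^5,i)=(1+|\pi|_q^{-1})\rho(f\pi^2,i)$ rather than the cancellation you claim -- it is only rescued by $\rho(f\pi^2,i)=0$, which is \eqref{Patterson-8}, proved later via Lemma \ref{lemma-res} and hence via periodicity, so invoking it here would be circular. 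Your alternative sketch through the coefficients $C(\cdot,\cdot)$ has the same missing ingredient: reading \eqref{anotherHecke2} coefficient-wise still leaves the coefficients of $\psi_\pi$ on the right-hand side, so the two $C$-sequences are not related in closed form.

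If one wants a derivation from the Hecke relations, the natural move is the opposite of yours: work at the other special point $s=2/3$, where the prefactor $1-|\pi|_q^{2-3s}$ in \eqref{anotherHecke2} \emph{vanishes}, so the inaccessible $\psi_\pi$ term is annihilated and one gets $\psi(f\pi^{j+3},\pi_\infty^{-i},q^{-2/3})=|\pi|_q\,\psi(f\pi^j,\pi_\infty^{-i},q^{-2/3})$; one then extracts $\rho(g,i)$ from the functional equation of Proposition \ref{prop-FE} (multiplied by $1-q^{4-3s}$, $s\to 4/3$) as an explicit combination of $\psi(g,\cdot,q^{-2/3})$'s whose prefactors change only by $|\pi|_q^{-1}$ when $g=f\pi^j$ is replaced by $f\pi^{j+3}$ (since $W_{g,i}$ and all residues mod $3$ are unchanged), and the two factors of $|\pi|_q^{\pm1}$ cancel. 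But even this requires knowing that $\psi_\pi$ is regular at $u^3=q^{-2}$ -- solving \eqref{j=0}--\eqref{j=1} shows $\psi_\pi$ carries a potential pole there with denominator $1-|\pi|_q^{2-3s}$ -- and that regularity is genuine metaplectic input, not formal bookkeeping. This is precisely why the paper quotes the Periodicity Theorem from Patterson and Kazhdan--Patterson instead of deriving it from the surrounding lemmas; your expectation that nothing structurally deep is needed is where the proposal goes wrong.
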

\kommentar{\begin{proof} 
Evaluating \eqref{anotherHecke2} as $s=2/3$ yields
\begin{equation} \label{funeq}
\psi(f\pi^{j+3},\pi_\infty^{-i},q^{-2/3})=|\pi|_q \psi(f\pi^{j},\pi_\infty^{-i},q^{-2/3})
\end{equation}

Multiplying both sides of the functional equation in Proposition \ref{prop-FE} by
$(1-q^{4-3s})$  and 
taking the limit as $s \to 4/3$, we obtain 
\begin{align*}
\rho(f,i)=&q^{2/3+4i/3-\deg(f)/3+[\deg(f)+1-2i]_3/3}(q^{-1}-1)\psi(f, \pi_\infty^{-i},q^{-2/3})\nonumber\\
&+W_{f,i}q^{2/3+4i/3-\deg(f)/3}(q-1)\psi(f, \pi_\infty^{-i-1-\deg(f)},q^{-2/3}).
\end{align*}
Applying the above to $f\pi^j$ and $f\pi^{j+3}$, and using equation \eqref{funeq}, we obtain the result.

\end{proof}
}
 
 We also need the following.
 \begin{lem} \label{lemma-res} Let $\pi$ be a prime such that $\pi \nmid f$. Then 
\[\lim_{s \rightarrow 4/3}q^{is}(1-q^{4-3s})\psi_\pi(f\pi^j,\pi_\infty^{-i}, q^{-s})=\frac{\rho(f\pi^j,i)}{1+|\pi|_q^{-1}}.\] 
\end{lem}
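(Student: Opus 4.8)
\textbf{Proof plan for Lemma \ref{lemma-res}.}

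The plan is to use the ``Hecke'' relation \eqref{anotherHecke2} as the main tool, since it is precisely the identity that relates $\psi_\pi(f\pi^j,\pi_\infty^{-i},q^{-s})$ to the $\psi$-functions whose residue behaviour at $s=4/3$ is already understood via \eqref{residue}. Rewriting \eqref{anotherHecke2} as
\[
\psi_\pi(f\pi^{j}, \pi_\infty^{-i}, q^{-s}) = \frac{\psi(f\pi^{j+3}, \pi_\infty^{-i}, q^{-s}) -|\pi|_q^{3-3s} \psi(f\pi^{j}, \pi_\infty^{-i}, q^{-s})}{1-|\pi|_q^{2-3s}},
\]
I observe that the denominator $1-|\pi|_q^{2-3s}$ is nonzero and equals $1-|\pi|_q^{-2}$ at $s=4/3$, so it contributes no pole; the only pole at $s=4/3$ comes from the factor $(1-q^{4-3s})$ hidden inside each $\psi(\cdot)$ on the right via \eqref{residue}.

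First I would multiply both sides by $q^{is}(1-q^{4-3s})$ and take the limit $s\to 4/3$. By the definition \eqref{residue} of $\rho$, the right-hand side becomes
\[
\frac{\rho(f\pi^{j+3},i) - |\pi|_q^{3-3s}\big|_{s=4/3}\,\rho(f\pi^{j},i)}{1-|\pi|_q^{2-3s}\big|_{s=4/3}}
= \frac{\rho(f\pi^{j+3},i) - |\pi|_q^{-1}\rho(f\pi^{j},i)}{1-|\pi|_q^{-2}}.
\]
Second, I would invoke the Periodicity Theorem (Lemma \ref{periodicity-theorem}), which gives $\rho(f\pi^{j+3},i)=\rho(f\pi^{j},i)$. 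Substituting this in, the numerator becomes $\rho(f\pi^j,i)(1-|\pi|_q^{-1})$, and dividing by $1-|\pi|_q^{-2} = (1-|\pi|_q^{-1})(1+|\pi|_q^{-1})$ yields exactly $\rho(f\pi^j,i)/(1+|\pi|_q^{-1})$, as claimed.

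The only genuinely delicate point is to justify that the limit on the left-hand side exists and can be taken termwise, i.e. that $q^{is}(1-q^{4-3s})\psi_\pi(f\pi^j,\pi_\infty^{-i},q^{-s})$ is well-behaved as $s\to 4/3$; this follows because each $\psi(f\pi^{j+3},\pi_\infty^{-i},q^{-s})$ and $\psi(f\pi^{j},\pi_\infty^{-i},q^{-s})$ has at worst a simple pole at $s=4/3$ coming from the factor $(1-q^{n+1-ns})^{-1}$ in \eqref{meromorphic} (with $n=3$), while $(1-|\pi|_q^{2-3s})^{-1}$ is holomorphic and nonvanishing there, so the product is meromorphic with a simple pole whose residue is computed exactly as above. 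I do not expect any real obstacle here; the argument is a direct algebraic manipulation of \eqref{anotherHecke2} combined with Lemma \ref{periodicity-theorem}, and the main thing to be careful about is bookkeeping the factor $q^{is}$ consistently on both sides (it passes through the linear relation unchanged since \eqref{anotherHecke2} is an identity in $s$ for fixed $i$).
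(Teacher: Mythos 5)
Your proposal is correct and follows essentially the same route as the paper: multiply \eqref{anotherHecke2} by $q^{is}(1-q^{4-3s})$, take $s\to 4/3$ using the definition \eqref{residue} of $\rho$, and then apply the Periodicity Theorem (Lemma \ref{periodicity-theorem}) to replace $\rho(f\pi^{j+3},i)$ by $\rho(f\pi^{j},i)$, yielding the factor $(1-|\pi|_q^{-1})/(1-|\pi|_q^{-2})=(1+|\pi|_q^{-1})^{-1}$. Your additional remark justifying the existence of the limit via the meromorphic structure \eqref{meromorphic} is a fine (and harmless) elaboration of what the paper leaves implicit.
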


\begin{proof}
We multiply relation \eqref{anotherHecke2} by $q^{is}(1-q^{4-3s})/(1-q^{3(1-s)})$ and take the limit as $s \to \frac{4}{3}$. This yields
$$ \rho(f \pi^{j+3},i)-|\pi|_q^{-1} \rho(f\pi^j,i) = (1-|\pi|_q^{-2}) \lim_{s \to 4/3} q^{is} (1-q^{4-3s}) \psi_{\pi}(f\pi^j, \pi_{\infty}^{-i},q^{-s}).$$
Using Lemma \ref{periodicity-theorem} we obtain the result. 
\end{proof}

We now explicitly compute the residue $\rho(f,i)$.

\begin{lem} \label{lemma-residue}
Let $f=f_1f_2^2f_3^3$ with $f_1$, $f_2$ square-free and coprime. 
For $n=3$, we have that $\rho(f,i)=0$ if $f_2\not = 1$ and
\begin{equation}\label{estimate-residues}
\rho(f,i) = \overline{G_q(1,f_1)} |f_1|_q^{-2/3}  q^{\frac{4i}{3}-\frac{4}{3}\left[i-2 \deg(f) \right]_3}\rho\left(1,\left[i- 2\deg(f) \right]_3\right),
\end{equation}
when $f_2=1$. Here
\[\rho(1,0)=1, \qquad \rho(1,1)=\tau(\chi_3)q, \qquad \rho(1,2)=0.\]
\end{lem}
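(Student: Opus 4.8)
The plan is to reduce the computation of $\rho(f,i)$ to the case $f=1$ (more precisely, to $\rho(1,j)$ for $j\in\{0,1,2\}$) by exploiting two structural tools already in place: the Periodicity Theorem (Lemma \ref{periodicity-theorem}) together with the ``Hecke theory'' relations \eqref{j=0}--\eqref{j=2} and Lemma \ref{lemma-res}, and the near-multiplicativity of the Gauss sums from Lemma \ref{gauss}. First I would dispose of the vanishing statement: if $f_2\neq 1$, then $f$ is divisible by $\pi^2$ for some prime $\pi\nmid f_1 f_3$, and writing $f=f'\pi^2$ one uses \eqref{j=2} and \eqref{meromorphic} to see that $\psi(f,\pi_\infty^{-i},q^{-s})$ has no pole at $s=1+\tfrac13$ coming from the $\pi$-part, forcing $\rho(f,i)=\rho(f'\pi^2,i)=0$; alternatively, this follows directly from the recurrence \eqref{anotherHecke2} with $j\equiv 2\pmod 3$ evaluated near $s=4/3$. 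So the substance is the case $f_2=1$, i.e. $f=f_1 f_3^3$ with $f_1$ squarefree.

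Next, using the Periodicity Theorem I may replace $f_3^3$ by $1$ prime-by-prime: for each prime $\pi\mid f_3$ with $\pi\nmid f_1$, $\rho(f_1 f_3^3,i)=\rho(f_1 (f_3/\pi)^3\cdot \pi^3,i)=\rho(f_1(f_3/\pi)^3,i)$, and iterating gives $\rho(f_1 f_3^3,i)=\rho(f_1,i)$. (One must be a little careful about primes dividing both $f_1$ and $f_3$, but since $f=f_1f_2^2f_3^3$ is the canonical factorization into the cube-free and cube parts with $f_1,f_2$ coprime squarefree, we may arrange $f_3$ coprime to $f_1 f_2$, or handle the common part separately by the same periodicity argument after absorbing it.) Thus it remains to compute $\rho(f_1,i)$ for $f_1$ squarefree, and to peel off the prime factors of $f_1$ one at a time. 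For a single prime $\pi\mid f_1$, write $f_1 = \pi g$ with $\pi\nmid g$; the relation \eqref{j=0} expresses $\psi(g,\cdot)$ in terms of $\psi_\pi(g,\cdot)$ and $\psi_\pi(g\pi,\cdot)$, and \eqref{j=1} expresses $\psi_\pi(g\pi,\cdot)$ in terms of $\psi(g\pi,\cdot)$ and $\psi_\pi(g,\cdot)$. Taking residues at $s=4/3$ via Lemma \ref{lemma-res} and tracking the powers of $|\pi|_q$ and the Gauss sum factors $G_q(g,\pi)$, $\overline{G_q(g,\pi)}$ that appear, I expect to obtain a clean recursion of the shape
\[
\rho(\pi g,i) = \overline{G_q(1,\pi)}\,|\pi|_q^{-2/3}\, q^{\text{(shift in }i)}\,\rho\bigl(g,\,[\,i-2\deg(\pi)\,]_3\bigr),
\]
where the shift in the exponent of $i$ records the $q^{is}$ normalization against the change $i\mapsto i-2\deg(\pi) \pmod 3$ forced by the twist by $\chi_\pi$ (compare the index shifts in \eqref{j=0}--\eqref{j=1}). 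Iterating over the prime factors of $f_1$ and using part (i) of Lemma \ref{gauss} to reassemble $\prod_\pi \overline{G_q(1,\pi)}$ (up to the appropriate cubic-reciprocity factors, which for squarefree coprime arguments collapse) into $\overline{G_q(1,f_1)}$, and collecting $\prod_\pi |\pi|_q^{-2/3}=|f_1|_q^{-2/3}$ and the cumulative exponent shift $\tfrac{4i}{3}-\tfrac43[i-2\deg(f_1)]_3$, yields \eqref{estimate-residues} with the residual $\rho(1,[i-2\deg(f_1)]_3)$; note $\deg(f)\equiv\deg(f_1)\pmod 3$ since $f=f_1f_3^3$, so one may write $[i-2\deg(f)]_3$.

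Finally I would compute the base cases $\rho(1,0),\rho(1,1),\rho(1,2)$ directly from the definition \eqref{residue} and the formula \eqref{formula-for-P} for $P(f,i,x)$: here $C(1,m)=\sum_{F\in\mathcal M_{q,m}}G_q(1,F)$, so $C(1,0)=1$ (the empty sum has $G_q(1,1)=1$), $C(1,1)=\sum_{\deg F=1}G_q(1,F)=\sum_{a\in\F_q}\sum_{u\bmod (T+a)}\chi_{T+a}(u)e_q(u/(T+a))$, which is a sum of standard Gauss sums over $\F_q$ and evaluates to $\tau(\chi_3)\,q$ after using \eqref{standard-GS} and the fact that there are $q$ linear polynomials; and $\rho(1,2)=0$ because the relevant $C(1,m)$ with $m\equiv 2\pmod 3$ vanishes (the degree-$2$ contribution involves $G_q(1,F)$ for $F$ of degree $2$, which is $0$ for $F$ a square of a linear prime by Lemma \ref{gauss}(ii) and cancels by orthogonality for $F$ squarefree of degree $2$ in the relevant class). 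Plugging $i=0,1,2$ into \eqref{formula-for-P} and taking $x=q^{-n-1}=q^{-4}$ then gives the stated values, with the $\tau(\chi_3)q$ in $\rho(1,1)$ coming precisely from $C(1,1)$.

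The main obstacle I anticipate is bookkeeping rather than conceptual: correctly tracking all the powers of $|\pi|_q$ and the index shifts $i\mapsto i-2\deg(\pi)\pmod 3$ through the coupled recursions \eqref{j=0}--\eqref{j=1} when taking residues, and making sure the Gauss-sum factors reassemble into $\overline{G_q(1,f_1)}$ (and not, say, $G_q(1,f_1)$ or some reciprocity-twisted variant) — this is where an off-by-one in an exponent or a missed conjugation would corrupt the final formula. The use of Lemma \ref{gauss}(i) to turn $\prod_\pi \overline{G_q(1,\pi)}$ into $\overline{G_q(1,f_1)}$ requires checking that the cubic-reciprocity correction factors $\chi_{f_1}(f_2)^2$-type terms are trivial here because all arguments are $1$ or squarefree and mutually coprime; and the reduction from $f$ to $f_1$ requires care that $\deg(f)$ and $\deg(f_1)$ agree modulo $3$, which they do since the removed part is a perfect cube. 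Once these are pinned down, the identity \eqref{estimate-residues} follows by induction on the number of prime factors of $f_1$.
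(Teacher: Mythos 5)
Your plan follows the same route as the paper: compute the base values $\rho(1,j)$ directly, obtain the vanishing for a squared prime in the cube-free part from the Hecke relation \eqref{j=2}, strip the cube part with the Periodicity Theorem (Lemma \ref{periodicity-theorem}), and peel off the primes of $f_1$ one at a time using \eqref{j=1} together with Lemma \ref{lemma-res}, finally reassembling the Gauss-sum factors with Lemma \ref{gauss}(i). (You do not actually need \eqref{j=0}; the relation \eqref{j=1} alone yields the recursion.)

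One step, as you wrote it, would corrupt the Gauss-sum factor if carried out literally. Multiplying \eqref{j=1} by $q^{is}(1-q^{4-3s})$ and letting $s\to 4/3$ gives, for $\pi\nmid g$,
\begin{equation*}
\rho(g\pi,i)\;=\;\overline{G_q(g,\pi)}\,|\pi|_q^{-2/3}\,q^{\frac{8}{3}\deg(\pi)}\,\rho\bigl(g,\,i-2\deg(\pi)\bigr),
\end{equation*}
not the version you display with $\overline{G_q(1,\pi)}$: by \eqref{prop-GS} one has $\overline{G_q(g,\pi)}=\chi_\pi(g)\,\overline{G_q(1,\pi)}$, and in the Kummer setting these twists $\chi_\pi(g)$ are \emph{not} trivial, contrary to your assertion that the reciprocity-type corrections ``collapse'' for squarefree coprime arguments. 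They are precisely what is needed so that, after iterating over the primes of $f_1$, the product of local factors equals $\overline{G_q(1,f_1)}$ via the twisted multiplicativity of Lemma \ref{gauss}(i); with untwisted factors you would instead get $\prod_{\pi\mid f_1}\overline{G_q(1,\pi)}$, which differs from $\overline{G_q(1,f_1)}$ in general. So you must carry $\overline{G_q(g,\pi)}$ through the induction, exactly as in the paper's recursion \eqref{Patterson-7}; the exponent $q^{\frac{8}{3}\deg(\pi)}$ then combines with the $q^{4/3}$-periodicity of $\rho$ in $i$ to produce the stated $q^{\frac{4i}{3}-\frac{4}{3}[i-2\deg(f)]_3}$. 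A smaller caution concerns your base cases: your values $\rho(1,0)=1$, $\rho(1,1)=q\tau(\chi_3)=C(1,1)$, $\rho(1,2)=0$ are correct, but they should be justified by the fact that $P(1,i,x)$ has degree $\leq 0$, so $\rho(1,i)=P(1,i,q^{-4})=P(1,i,0)=C(1,i)$; a naive use of \eqref{formula-for-P} with $B=[(1+\deg f-i)/3]$ is delicate at the boundary (for instance $C(1,4)=q^3(q-1)\,C(1,1)$, so the geometric recurrence only starts one step later), and the paper avoids this by computing $\psi(1,\pi_\infty^{-i},u)$ exactly as a rational function and reading off the residue. With these repairs your argument does give \eqref{estimate-residues}.
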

\begin{proof}
We start by computing $\rho(1,[i]_3)$. Recall by definition that 
\begin{align*}
 G_q(1,F)=&\sum_{v \pmod{F}}\chi_F(v)e_q\left(\frac{v}{F}\right)\\
 =& \sum_{\deg(v) \leq \deg(F)-2} \chi_F(v) + \sum_{\deg(v) = \deg(F)-1} \chi_F(v)e_q\left(\frac{v}{F}\right)\\
 =&\sum_{c\in \F_q^*}\sum_{v \in \mathcal{M}_{q,\leq \deg(F)-2}} \chi_F(c) \chi_F(v) + \sum_{c\in \F_q^*} \sum_{v \in \mathcal{M}_{q,\deg(F)-1}} 
\chi_F(c) \chi_F(v)e^{2 \pi i \text{tr}_{\F_q / \mathbb{F}_p} (c)/p}\\
=&\sum_{c\in \F_q^*} \chi_F(c) \sum_{v \in \mathcal{M}_{q,\leq \deg(F)-2}} \chi_F(v) + \tau(\chi_F) \sum_{v \in \mathcal{M}_{q,\deg(F)-1}}\chi_F(v).\\
\end{align*}

First suppose that $[i]_3=0$, i.e., $\deg(F)\equiv 0 \pmod{3}$. Then, $\chi_F$ is even and
\begin{align*}
\sum_{v \in \mathcal{M}_{q,\leq \deg(F)-2}} \chi_F(v) + \sum_{v \in \mathcal{M}_{q,\deg(F)-1}}\chi_F(v)
=&\left \{\begin{array}{cc}0 & F \not = \cube, \\ \frac{\phi(F)}{q-1} & F =\cube. \end{array} \right .
\end{align*}
Then we write
\[G_q(1,F)=\phi(F)\delta_{\tinycube}(F)+ \left(\tau(\chi_F)-\sum_{c\in \F_q^*} \chi_F(c)\right) \sum_{v \in \mathcal{M}_{q,\deg(F)-1}}\chi_F(v),\]
where the term $\delta_{\tinycube}(F)=1$ is 1 if $F=\cube$ and 0 otherwise. 

Since $[i]_3=0$, $\tau(\chi_F)=-1$ and $\sum_{c\in \F_q^*} \chi_F(c)=q-1$, and we have
\begin{align*}
\psi(1,\pi_\infty^0,u)=&(1-u^3q^3)^{-1}\sum_{\substack{F \in \mathcal{M}_q\\3\mid \deg(F)}} 
G_q(1,F) u^{\deg(F)}\\
=& (1-u^3q^3)^{-1}\sum_{\substack{F \in \mathcal{M}_q\\\deg(F)=\tinycube}} 
\phi(F) u^{\deg(F)}-q (1-u^3q^3)^{-1}\sum_{\substack{F \in \mathcal{M}_q\\3 \mid \deg(F)}} 
\sum_{v \in \mathcal{M}_{q,\deg(F)-1}}\chi_F(v) u^{\deg(F)}.\\
\end{align*}
Notice that
\begin{equation}\label{eq:cubicsum}
\sum_{F \in \mathcal{M}_{q,k}} \sum_{v \in \mathcal{M}_{q,k-1}}\chi_F(v)=\sum_{v \in \mathcal{M}_{q,k-1}}
\sum_{F \in \mathcal{M}_{q,k}} \chi_v(F)=q^k \# \{v \in \mathcal{M}_{q,k-1} \, |\, v = \cube\},
\end{equation}
and this gives zero when $k \not \equiv 1 \pmod 3$. 

This gives 
\begin{align*}
\psi(1,\pi_\infty^0,u)=&(1-u^3q^3)^{-1}\sum_{F \in \mathcal{M}_q} 
\phi(F^3) u^{3\deg(F)}\\
=& (1-u^3q^3)^{-1}\sum_{k=0}^\infty \sum_{F \in \mathcal{M}_{q,k}} 
\phi(F)|F|_q^2 u^{3\deg(F)}\\
=& (1-u^3q^3)^{-1}\sum_{k=0}^\infty q^{2k}u^{3k}\sum_{F \in \mathcal{M}_{q,k}} 
\phi(F)\\
=& (1-u^3q^3)^{-1}\sum_{k=0}^\infty q^{4k}u^{3k}(1-q^{-1}),
\end{align*}
where we have used Proposition 2.7 in \cite{Rosen}. Finally, we get
\[\psi(1,\pi_\infty^0,u)=\frac{1-q^{-1}}{(1-u^3q^3)(1-u^3q^4)},\]
and taking the residue,
\[\rho(1,0)=1.\]

When $[i]_3\not=0$, $\sum_{c\in \F_q^*} \chi_F(c)=0$ and we obtain,
\begin{align*}
\psi(1,\pi_\infty^{-i},u)=&(1-u^3q^3)^{-1}\sum_{\substack{F \in \mathcal{M}_q\\\deg(F)\equiv i \pmod 3}} 
G_q(1,F) u^{\deg(F)}\\
=& \frac{\tau(\chi_3^i)}{1-u^3q^3}\sum_{\substack{F \in \mathcal{M}_q\\\deg(F)\equiv i \pmod 3}} 
 \sum_{v \in \mathcal{M}_{q,\deg(F)-1}}\chi_F(v) u^{\deg(F)}\\
\end{align*}
When $[i]_3=2$, from equation \eqref{eq:cubicsum}, we immediately get that the sum above is zero and
\[\rho(1,2)=0.\]
On the other hand, if $[i]_3=1$ we have, by cubic reciprocity,
\begin{align*}
\psi(1,\pi_\infty^{-1},u)=&\frac{\tau(\chi_3)}{1-u^3q^3}\sum_{j=0}^\infty  u^{3j+1} \sum_{v \in \mathcal{M}_{q,3j}} \sum_{F \in \mathcal{M}_{q,1+3j}} \chi_v(F) \\
=&\frac{\tau(\chi_3)}{(1-u^3q^3)} \sum_{j=0}^\infty  u^{3j+1} \sum_{w \in \mathcal{M}_{q,j}} \frac{\phi(w^3)}{|w^3|_q} q^{3j+1} \\
=&\frac{\tau(\chi_3)}{(1-u^3q^3)} \sum_{j=0}^\infty  u^{3j+1} q^j (1- q^{-1}) q^{3j+1} \\
=&\frac{\tau(\chi_3) (q-1) u}{(1-u^3q^3)}  \sum_{j=0}^\infty (q^4 u^3)^j = \frac{\tau(\chi_3) (q-1) u}{(1-u^3q^3) (1 - u^3 q^4)},
\end{align*}
where we have used again Proposition 2.7 in \cite{Rosen}. Taking the residue, we get
\[\rho(1,1)= \lim_{s \rightarrow 4/3} 
\frac{\tau(\chi_3) (q-1) }{(1-u^3q^3)} =
\tau(\chi_3)q .\]

To obtain equation \eqref{estimate-residues}, we start by multiplying equation \eqref{j=1} by $q^{is}(1-q^{4-3s})$ and taking the limit as
$s \to 4/3$. By Lemma \ref{lemma-res} for $\pi \nmid f$ we get that 
\begin{align*}
\rho(f \pi, i)\left[1-\frac{1}{1+|\pi|_q^{-1}}\right]= \overline{G_q(f, \pi)} |\pi|_q^{-\frac{5}{3}} q^{\frac{8}{3}\deg(\pi)}  \frac{\rho(f, i-2 \deg(\pi))}{1+|\pi|_q^{-1}},
\end{align*}
which simplifies to
\begin{equation}\label{Patterson-7}
\rho(f \pi , i) =  \overline{G_q(f, \pi)}|\pi|_q^{-\frac{2}{3}}  q^{\frac{8}{3}\deg(\pi)} \rho(f, i-2 \deg(\pi)).
\end{equation}

Multiplying equation \eqref{j=2} by $q^{is}(1-q^{4-3s})$, taking the limit as $s\rightarrow 
4/3$, and applying Lemma \ref{lemma-res} we get that
\[\rho(f\pi^2,i)= \rho(f\pi^2,i) \left[\frac{1-|\pi|_q^{-2}}{1+|\pi|_q^{-1}}\right],\]
which implies that
\begin{equation}\label{Patterson-8}
 \rho(f\pi^2,i)=0.
\end{equation}

Notice that by the Periodicity Theorem (Lemma \ref{periodicity-theorem}),
$\rho(f,i)$ depends on the cubic-free part of $f$. 
From this and equation \eqref{Patterson-8} we can suppose that $f=f_1$ with $f_1$ square-free. 
Write $f=\pi_1\cdots\pi_k$. By \eqref{Patterson-7}, we have
\begin{align*}
\rho(f, i)= & \overline{G_q(f/\pi_k,\pi_k)}|\pi_k|_q^{-\frac{2}{3}}
q^{\frac{8}{3}\deg(\pi_k)} \rho(f/\pi_k,i-2\deg(\pi_k))\\
=& \overline{G_q(f/\pi_k, \pi_k)}|\pi_k|_q^{-\frac{2}{3}}   q^{\frac{4i}{3}-\frac{4}{3}[i-2\deg(\pi_k)]_3}\rho(f/\pi_k, [i-2 \deg(\pi_k)]_3)\\
=& \cdots \\
=& \prod_{j=1}^k \overline{G_q\left (\prod_{\ell=1}^{j-1}\pi_\ell,\pi_j\right )}|f|_q^{-\frac{2}{3}} q^{\frac{4i}{3}-\frac{4}{3}\left[i-2\sum_{j=1}^k\deg(\pi_j)\right]_3}\rho\left (1,\left[i-2\sum_{j=1}^k\deg(\pi_j)\right]_3 \right )
\end{align*}
In the equation above, note that
$$ \prod_{j=1}^k \overline{G_q\left(\prod_{\ell=1}^{j-1}\pi_\ell,\pi_j\right)} = \overline{G_q(1, f)},$$ which follows by induction on the number of prime divisors of $f$ and part (i) of Lemma \ref{gauss}.
This finishes the proof of Lemma \ref{lemma-residue}.
\end{proof}
\subsection{Upper bounds for $\Psi_q(f,u)$ and $\tilde{\Psi}_q(f,u)$}
We will first prove the following result which provides an upper bound for $\Psi_q(f,u)$. 
\begin{thm} \label{first-generating-series}
For $1/2 \leq \sigma \leq 3/2$ and $|u^3-q^{-4}|>\delta$ where $\delta>0$, we have that
$$\Psi_q(f,u)  \ll_\delta |f|_q^{\frac{1}{2} \left( \frac32 - \sigma \right)+\varepsilon},$$
where $u=q^{-s}$ as usual, and $\sigma=\re(s)$. 
\end{thm}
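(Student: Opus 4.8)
The plan is to bound $\Psi_q(f,u)$ via the Phragm\'en--Lindel\"of convexity principle, using the meromorphic continuation \eqref{meromorphic}, the functional equation of Proposition \ref{prop-FE} (equivalently \eqref{fe-2}), and trivial estimates on the two natural vertical lines $\sigma=3/2$ and $\sigma=1/2$. Recall from \eqref{gen-series} that $\Psi_q(f,u)=(1-u^3q^3)\sum_{i=0}^{2}\psi(f,\pi_\infty^{-i},u)$, so it suffices to bound each $\psi(f,\pi_\infty^{-i},u)$, and by \eqref{meromorphic} each such function is $u^i P(f,i,u^3)/(1-q^4u^3)$ with $P(f,i,x)$ a polynomial in $x$ of degree at most $[(1+\deg(f)-i)/3]$. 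Multiplying through by the known pole $1-u^3q^4$ (which is where the excluded region $|u^3-q^{-4}|>\delta$ enters, absorbing the $\delta$-dependence) reduces the problem to bounding the polynomial $P(f,i,u^3)$ on the annulus $q^{-3/2}\le |u| \le q^{-1/2}$.

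First I would establish the bound on the line $\sigma=3/2$, i.e. $|u|=q^{-3/2}$. Here the claimed exponent $\tfrac12(\tfrac32-\sigma)=0$, so I need $\Psi_q(f,u)\ll |f|_q^{\varepsilon}$. This is the ``dual'' side where the Dirichlet series defining $\psi$ should be close to convergent: using the recurrence $C(f,i+3(j+1))=q^{4}C(f,i+3j)$ for $j\ge[(1+\deg(f)-i)/3]$ together with the trivial bound $|G_q(f,F)|\le |F|_q$ (hence $|C(f,n)|\le q^{2n}$), one sees from formula \eqref{formula-for-P} that $P(f,i,q^{-9/2})$ is a sum of $O(\deg(f))$ terms each of size $O(1)$ in the relevant scaling, giving $\psi(f,\pi_\infty^{-i},u)\ll \deg(f) \ll |f|_q^\varepsilon$ there. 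Actually a cleaner route is to first prove the bound on $\sigma=1/2$ (the ``trivial'' side, $|u|=q^{-1/2}$) where one wants $\Psi_q(f,u)\ll |f|_q^{1/2+\varepsilon}$: on $|u|=q^{-1/2}$ the Dirichlet series is far from convergent, but using the functional equation \eqref{fe-2} one maps $u\mapsto 1/(q^2u)$, which sends $|u|=q^{-1/2}$ to $|u|=q^{-3/2}$, and the gamma-like factors $a_1(u),a_2(u)$ together with $W_{f,i}=\tau(\chi_3^{2i-1}\overline{\chi_f})$, $|W_{f,i}|\le |f|_q^{1/2}$, contribute exactly the factor $|f|_q^{1/2}$ — this is the standard ``convexity on the two edges'' input.

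Then I would interpolate: the function $u\mapsto (1-u^3q^4)\Psi_q(f,u)$ is holomorphic on the annulus (entire in $u$ after removing the single pole), and $\log|f|_q^{1/2(3/2-\sigma)}$ is linear in $\sigma=\re(s)$, so a Hadamard three-circles / Phragm\'en--Lindel\"of argument in the variable $s$ (with $u=q^{-s}$, working in the strip $1/2\le \re(s)\le 3/2$ and using periodicity in $\im(s)$ of period $2\pi/\log q$ to get a genuinely bounded region, or equivalently three-annuli in $u$) yields the claimed bound for all intermediate $\sigma$. The $\delta$-dependence is harmless: dividing by $1-u^3q^4$ costs a constant depending only on $\delta$ uniformly on $|u^3-q^{-4}|>\delta$ within the annulus.

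The main obstacle I expect is getting the edge bound on $\sigma=1/2$ with the clean exponent $|f|_q^{1/2+\varepsilon}$ and genuinely uniform (polynomial-in-$\deg f$, i.e. $|f|_q^\varepsilon$) control of the error terms. One must check that applying the functional equation \eqref{fe-2} does not reintroduce spurious $|f|_q$-powers beyond $|W_{f,i}|=|f|_q^{1/2}$ and the explicit factor $|f|_q u^{\deg f}$ (which has modulus $|f|_q \cdot q^{-\deg(f)/2}=|f|_q^{1/2}$ on $|u|=q^{-1/2}$ — note these two $|f|_q^{1/2}$ factors must \emph{not} both appear, so care is needed in reading off which normalization of $\psi$ is being used); and one must ensure the polynomial-degree bound $\deg_x P(f,i,x)\ll \deg(f)$ only costs $|f|_q^\varepsilon$ rather than something larger. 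Handling the three residue classes $i=0,1,2$ uniformly, and tracking the prime-at-infinity equivalence-class bookkeeping from Patterson's normalization, is the fiddly part, but it is routine once the right trivial bounds $|C(f,n)|\le q^{2n}$ and $|W_{f,i}|\le |f|_q^{1/2}$ are in place.
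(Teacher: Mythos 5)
The interpolation step of your proposal is sound: applying Hadamard's three-circles theorem (equivalently Phragm\'en--Lindel\"of in $s$) to the polynomial $(1-q^4u^3)\psi(f,\pi_\infty^{-i},u)$ on the annulus $q^{-3/2}\le |u|\le q^{-1/2}$ is legitimate, and is in fact more direct than the paper's route, which works with the product $(1-q^{4-3s})(1-q^{3s-2})\psi(f,\pi_\infty^{-i},q^{-s})\psi(f,\pi_\infty^{-i},q^{s-2})$ and then untangles the two functions appearing in the functional equation through a linear-independence case analysis. The genuine gap is at the edge $\sigma=3/2$. You claim that the trivial bound $|G_q(f,F)|\le |F|_q$, i.e.\ $|C(f,n)|\le q^{2n}$, already gives $\psi(f,\pi_\infty^{-i},u)\ll \deg(f)\ll |f|_q^{\varepsilon}$ on $|u|=q^{-3/2}$, ``each term being $O(1)$''. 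This is false: with $|C(f,n)|\le q^{2n}$ the term $C(f,n)u^n$ has modulus up to $q^{2n}q^{-3n/2}=q^{n/2}$, and since $n$ runs up to roughly $\deg(f)$ in \eqref{formula-for-P}, the edge bound you get this way is only $|f|_q^{1/2+\varepsilon}$. Feeding that into the functional equation gives only $|f|_q^{1+\varepsilon}$ on $\sigma=1/2$, and interpolation then produces the exponent $\frac{5}{4}-\frac{\sigma}{2}$, worse than the claimed $\frac{1}{2}\left(\frac{3}{2}-\sigma\right)$ by a factor $|f|_q^{1/2}$ throughout. What is actually needed -- and what the paper proves -- is the coefficient bound $|C(f,k)|\ll q^{3k/2}|f|_q^{\varepsilon}$, which rests on square-root cancellation in the individual cubic Gauss sums: writing $F=F_1F_2$ with $(F_1,f)=1$ and $F_2\mid f^{\infty}$, Lemma \ref{gauss} gives $|G_q(f,F_1)|\le |F_1|_q^{1/2}$ (with $G_q(f,F_1)=0$ unless $F_1$ is square-free), while the non-coprime part must divide $f^2$ and so contributes only $|f|_q^{\varepsilon}$ divisors. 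Without this input the $\sigma=3/2$ edge bound, and hence the whole convexity argument, does not reach the stated exponent.

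A secondary slip: in the paper's normalization $W_{f,i}=\tau(\chi_3^{2i-1}\overline{\chi_f})$ is a Gauss sum over $\F_q$, so $|W_{f,i}|\ll q^{1/2}$ uniformly in $f$, not $|f|_q^{1/2}$; on the line $\sigma=1/2$ the entire factor $|f|_q^{1/2}$ comes from $|f|_q u^{\deg(f)}$ in \eqref{fe-2}. You flag the danger of double counting, and with the correct reading there is exactly one such factor, so this would not change the exponent, but the bound $|W_{f,i}|\le |f|_q^{1/2}$ as stated is not what the functional equation provides.
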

\begin{proof} 
The bound for $\Psi_q(f,q^{-s})$ for  $1 /2< \re(s)\leq 3/2$ and $|u^3-q^{-4}| > \delta$ follows from the  functional equation and the Phragm\'en--Lindel\"of principle. 
It suffices to show that the bound holds for $\psi(f, \pi_\infty^{-i}, q^{-s})$ for $i=0,1,2$ by \eqref{gen-series}.

First, it follows from \eqref{meromorphic} and \eqref{formula-for-P} that for $B=[(1+\deg(f)-i)/3]$ we have
\begin{align*}
\psi(f, \pi_\infty^{-i},u)
=&\frac{u^i P(f,i,u^3)}{1-q^{4}u^3}\\
=&\frac{1}{1-q^3u^3}  \sum_{0\leq j < B}C(f,i+3j)u^{i+3j}+\frac{C(f,i+3B)u^{i+3B}}{(1-q^{4}u^3)(1-q^3u^3)} .
\end{align*}

We now bound $|C(f,k)|$.  Write $F=F_1F_2$ with $(F_1,f)=1$ and $F_2\mid f^\infty$ (by this we mean that the primes of $F_2$ divide $f$.) We use repeatedly that $|G_q(f,F_1F_2)|=|G_q(f,F_1)||G_q(f,F_2)|$. By Lemma \ref{gauss} we have
for $F_2\mid f^\infty$ that $|G_q(f,F_2)|=0$ unless $F_2\mid f^2$. 
We write
\begin{align*}
\sum_{F \in \mathcal{M}_{q,k}} |G_q(f,F)|=&\sum_{j=0}^k \sum_{\substack{F_1 \in \mathcal{M}_{q,j}\\(F_1,f)=1}} 
 |G_q(f,F_1)|\sum_{\substack{F_2 \in \mathcal{M}_{q,k-j}\\F_2\mid f^2}} |G_q(f,F_2)|\\
 \leq &\sum_{j=0}^k \sum_{\substack{F_1 \in \mathcal{M}_{q,j}\\(F_1,f)=1}} q^{j/2}\sum_{\substack{F_2 \in \mathcal{M}_{q,k-j}\\F_2\mid f^2}} q^{k-j}\\
\ll &\sum_{j=0}^k q^{3j/2}q^{k-j}|f|_q^{\varepsilon}\\
 \ll& q^{3k/2}|f|_q^\varepsilon.
 \end{align*}
Thus
\[|C(f,k)| \ll q^{3k/2}|f|_q^\varepsilon.\]

We get that for $\sigma \leq 3/2$ 
\begin{align*}
|\psi(f, \pi_\infty^{-i},q^{-s})|\ll & 
 {\sum_{k=0}^{3B+2}} q^{(3/2-\sigma)k} |f|_q^\varepsilon \ll |f|_q^{3/2- \sigma+\varepsilon},
\end{align*}
with an absolute constant in that region.
In particular, 
\begin{equation} \label{bound-f}
\psi(f, \pi_\infty^{-i},q^{-s}) \ll  |f|_q^{\varepsilon}
\end{equation}
 when $\re(s)=3/2$.

From the functional equation of Remark \ref{remark-fe}, we have for $1/2 \leq \re(s) \leq 3/2$ 
and $|u^3-q^{-4}| > \delta$ that
\begin{equation} \label{our-FE}  
\psi(f, \pi_\infty^{-i}, q^{-s}) = a_1(s) \; |f|_q^{1-s} \psi(f, \pi_\infty^{-i}, q^{s-2}) +
a_2(s) \; |f|_q^{1-s}  \psi(f, \pi_\infty^{i-1-\deg(f)}, q^{s-2}) , \end{equation}
where $a_1(s)$ and $a_2(s)$ are absolutely bounded above and below in the region considered
 (independently of $f$).

Using the bound \eqref{bound-f} and the functional equation gives that
$$\psi(f, \pi_\infty^{-i},q^{-s}) \ll |f|_q^{1/2+\varepsilon}$$ when $\re(s)=1/2$.

We consider the function $\Phi(f,\pi_\infty^{-i},s) = (1-q^{4-3s}) (1-q^{3s-2})\psi(f, \pi_\infty^{-i},q^{-s}) \psi(f, \pi_\infty^{-i},q^{s-2}).$ 
Then $\Phi(f,\pi_\infty^{-i},s)$ is holomorphic in the region $1/2 \leq \re(s) \leq 3/2$, and
$\Phi(f,\pi_\infty^{-i},s) \ll |f|_q^{1/2+\varepsilon}$ for $\re(s) = 3/2$ and $\re(s) = 1/2$. 

Using the Phragm\'en--Lindel\"of principle, 
 it follows that for  $1/2 \leq \re(s) \leq 3/2$, we  have that 
\begin{equation*}
 \Phi(f,\pi_\infty^{-i},s) = (1-q^{4-3s}) (1-q^{3s-2})\psi(f, \pi_\infty^{-i}, q^{-s})  \psi(f, \pi_\infty^{-i}, q^{s-2})  \ll |f|_q^{1/2+\varepsilon}.
 \end{equation*}

Using the functional equation \eqref{our-FE}, this gives 
\begin{equation}
 (1-q^{4-3s}) (1-q^{3s-2})\left[ a_1(s) \psi(f, \pi_\infty^{-i}, q^{s-2})^2  + a_2(s)  \psi(f, \pi_\infty^{-i}, q^{s-2})  \psi(f, \pi_\infty^{i-1-\deg(f)}, q^{s-2})\right]
\ll |f|_q^{\sigma-\frac{1}{2} + \varepsilon} \label{PL-1}
\end{equation}
in the region  $1/2 \leq \re(s) \leq 3/2$ 
and $|u^3-q^{-4}| > \delta$.

If $\deg(f)+1\equiv 2i \pmod{3}$, then the formula above implies that 
\begin{equation}\label{good bound}
\psi(f, \pi_{\infty}^{-i},q^{s-2})  + \psi(f, \pi_{\infty}^{i-1-\deg(f)},q^{s-2}) \ll |f|_q^{\frac{1}{2}\left(\sigma- \frac{1}{2}+\varepsilon\right)}.
\end{equation}

Now assume that $\deg(f)+1 \not \equiv 2i \pmod{3}$.
Similarly we consider the function $\tilde{\Phi}(s)=(1-q^{4-3s})(1-q^{3s-2})\psi(f, \pi_{\infty}^{-i},q^{-s}) \psi(f, \pi_{\infty}^{i-1-\deg(f)},q^{s-2})$. Then, using the same arguments as above we get that
\begin{align} 
(1-q^{4-3s}) (1-q^{3s-2}) & \left[  a_1(s) \psi(f, \pi_{\infty}^{-i},q^{s-2}) \psi(f, \pi_{\infty}^{i-1-\deg(f)},q^{s-2})+a_2(s) \psi(f, \pi_{\infty}^{i-1-\deg(f)},q^{s-2})^2 \right] \nonumber \\
& \ll |f|_q^{\sigma - \frac{1}{2}+\varepsilon}.\label{PL-2}
 \end{align}

Combining the two equations \eqref{PL-1} and \eqref{PL-2}, it would follow that
\begin{align}
&(1-q^{4-3s})(1-q^{3s-2}) \left[ \psi(f, \pi_{\infty}^{-i},q^{s-2}) + \psi(f, \pi_{\infty}^{i-1-\deg(f)},q^{s-2}) \right]\nonumber \\
&\times \left[a_1(s) \psi(f, \pi_{\infty}^{-i},q^{s-2}) + a_2(s) \psi(f, \pi_{\infty}^{i-1-\deg(f)},q^{s-2}) \right] 
\ll|f|_q^{\sigma- \frac{1}{2}+\varepsilon}. \label{combined-upper-bound}
\end{align}
Switching $i$ with $\deg(f)+1-i$ (since $\deg(f)+1 \not \equiv 2i \pmod 3$), we get that there exist absolutely bounded constants $b_1(s)$ and $b_2(s)$ such that
$$ \psi(f, \pi_{\infty}^{i-1-\deg(f)},q^{-s}) = b_1(s) |f|_q^{1-s} \psi(f, \pi_{\infty}^{i-1-\deg(f)},q^{s-2})+b_2(s) |f|_q^{1-s} \psi(f, \pi_{\infty}^{-i},q^{s-2}).$$
 If $(a_1,a_2)$ and $(b_2,b_1)$ are not linearly independent, then from the equation above and \eqref{our-FE} it follows that
 $$ \psi(f, \pi_{\infty}^{i-1-\deg(f)},q^{-s}) = \lambda(s) \psi(f, \pi_{\infty}^{-i},q^{-s}),$$ for some $\lambda(s)$.
 Combining this with equation \eqref{combined-upper-bound}, we get that
 $$ \psi(f, \pi_{\infty}^{-i},q^{s-2}) \ll |f|_q^{\frac{1}{2} \left( \sigma- \frac{1}{2}+\varepsilon\right)},$$ and the conclusion again follows by replacing $2-s$ by $s$.
 
 If $(a_1,a_2)$ and $(b_2,b_1)$ are linearly independent, then 
\begin{align*}
&(1-q^{4-3s})(1-q^{3s-2}) \left[ \psi(f, \pi_{\infty}^{i-1-\deg(f)},q^{s-2})  + \psi(f, \pi_{\infty}^{-i},q^{s-2}) \right] \\
&\times \left[b_1(s) \psi(f, \pi_{\infty}^{i-1-\deg(f)},q^{s-2}) + b_2(s) \psi(f, \pi_{\infty}^{-i},q^{s-2}) \right] \ll |f|_q^{\sigma- \frac{1}{2}+\varepsilon}.
\end{align*}
 From the equation above and \eqref{combined-upper-bound}, by the linear independence condition, we get that 
$$ \left[ \psi(f, \pi_{\infty}^{-i},q^{s-2})  + \psi(f, \pi_{\infty}^{i-1-\deg(f)},q^{s-2}) \right]  \psi(f, \pi_{\infty}^{-i},q^{s-2}) \ll  |f|_q^{\sigma- \frac{1}{2}+\varepsilon},$$ and
$$ \left[ \psi(f, \pi_{\infty}^{-i},q^{s-2})  + \psi(f, \pi_{\infty}^{i-1-\deg(f)},q^{s-2}) \right]  \psi(f, \pi_{\infty}^{i-1-\deg(f)},q^{s-2}) \ll  |f|_q^{\sigma- \frac{1}{2}+\varepsilon}.$$
By summing the two equations above, we recover equation \eqref{good bound} without any restrictions on $i$,
$$ \psi(f, \pi_{\infty}^{-i},q^{s-2})  + \psi(f, \pi_{\infty}^{i-1-\deg(f)},q^{s-2}) \ll |f|_q^{\frac{1}{2}\left(\sigma- \frac{1}{2}+\varepsilon\right)}.$$ Summing over $i=0,1,2$ and replacing $2-s$ by $s$ finishes the proof.

\end{proof}

In order to obtain an upper bound for $\Tilde{\Psi}_q(f,u)$ (recall its definition \eqref{tilde-F}) we first need to relate it to $\Psi_q(f,u)$ which we do in the next lemma.

\begin{lem} \label{our-psi}
Let $f = f_1 f_2^2 f_3^3$ with $f_1, f_2$ square-free and co-prime, and let $f_3^*$ be the product of the primes dividing $f_3$ but not dividing $f_1 f_2$.
Then, 
\begin{eqnarray}
\Tilde{\Psi}_q(f,u)  &=& \prod_{P \mid f_1f_2} \left( 1 - (u^3 q^2)^{\deg(P)} \right) ^{-1}\sum_{a \mid f_3^*} \mu(a) G_q(f_1 f_2^2, a) u^{\deg(a)} 
 \prod_{P\mid a}(1-(u^3q^2)^{\deg(P)})^{-1} \nonumber \\
 && \times \sum_{\ell \mid af_1} \mu(\ell) (u^2q)^{\deg(\ell)} \overline{G_q(1,\ell)} \chi_{\ell}(af_1f_2^2/\ell)   \Psi_q(af_1f_2^2/\ell,u) \label{expression}.
\end{eqnarray}

If $1/2 \leq \sigma \leq 3/2$ and $|u^3-q^{-4}|, |u^3-q^{-2}|>\delta$, then
$$
\Tilde{\Psi}_q(f,u) \ll_\delta |f|_q^{{\frac{1}{2}}(\frac{3}{2}-\sigma)+\varepsilon}.
$$
\end{lem}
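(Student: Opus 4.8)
The plan is to obtain the identity \eqref{expression} by a sieving argument that removes, one prime factor at a time, the constraint $(F,f)=1$ from the defining sum of $\tilde\Psi_q(f,u)$, and then to deduce the bound from Theorem \ref{first-generating-series}. First I would note that by Lemma \ref{gauss}(ii) the Gauss sum $G_q(f,F)$ vanishes unless, for each prime $P\mid F$ with $P\nmid f$, one has $P\| F$ ``up to cubes'' in the precise sense controlled by that lemma; more importantly, the multiplicativity relation of Lemma \ref{gauss}(i) lets us factor $G_q(f, F)$ whenever $F$ splits into coprime pieces. Writing the condition $(F,f)=1$ via M\"obius inversion over divisors of $f$ is the natural first move, but the fact that $G_q$ is only \emph{twisted}-multiplicative (the factor $\chi_{f_1}(\cdot)$, and the appearance of $\overline{G_q(1,\ell)}$) means one must track the reciprocity twists carefully. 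Concretely, I would:
\begin{itemize}
\item[(1)] First handle the primes dividing $f_3^*$ (i.e.\ dividing $f_3$ but not $f_1f_2$): for such $P$, the contribution of the powers $P^j\| F$ is governed by a geometric-type series, and summing it produces the factor $\bigl(1-(u^3q^2)^{\deg(P)}\bigr)^{-1}$ together with the finite M\"obius sum $\sum_{a\mid f_3^*}\mu(a)G_q(f_1f_2^2,a)u^{\deg(a)}\prod_{P\mid a}(1-(u^3q^2)^{\deg(P)})^{-1}$. The exponent $u^3q^2$ comes from the ``$i\equiv 0\pmod 3$'' case of Lemma \ref{gauss}(ii), where $G_q(\cdot,P^{3k})=\phi(P^{3k})=|P|_q^{3k}(1-|P|_q^{-1})$ paired with $u^{3k\deg(P)}$.
\item[(2)] Next strip the primes dividing $f_1$: here the relevant local computation uses the case $i=\alpha+1$, $i\not\equiv0\pmod3$ of Lemma \ref{gauss}(ii), which is exactly where the $\overline{G_q(1,\ell)}$ and the character value $\chi_\ell(\cdot)$ enter (via $G_q(fP,P^2)=|P|_q\overline{G_q(f,P)}$ and cubic reciprocity, just as in the proof of \eqref{j=1}). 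This produces the sum $\sum_{\ell\mid af_1}\mu(\ell)(u^2q)^{\deg(\ell)}\overline{G_q(1,\ell)}\chi_\ell(af_1f_2^2/\ell)$.
\item[(3)] What remains after both sieving steps is precisely $\Psi_q(af_1f_2^2/\ell, u)$, the \emph{unrestricted} generating series with the shift parameter $af_1f_2^2/\ell$, giving \eqref{expression}.
\end{itemize}
I would double-check the bookkeeping by specializing to $f$ squarefree (so $f_2=f_3=1$), where the identity should collapse to a single M\"obius sum over $\ell\mid f_1$, and by comparing with the analogous quadratic computation in \cite{F17}.

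For the upper bound, I would feed Theorem \ref{first-generating-series} into \eqref{expression}. In the region $1/2\le\sigma\le 3/2$ with $|u^3-q^{-4}|>\delta$ we have $\Psi_q(g,u)\ll_\delta |g|_q^{\frac12(\frac32-\sigma)+\varepsilon}$ for every shift $g$ appearing, and here $g=af_1f_2^2/\ell$ with $a\mid f_3^*$, $\ell\mid af_1$, so $|g|_q\le |f|_q$ (indeed $|af_1f_2^2|_q\le |f|_q$ since $a\mid f_3^*$ divides $f_3$). The prefactors $\prod_{P\mid f_1f_2}(1-(u^3q^2)^{\deg P})^{-1}$ and $\prod_{P\mid a}(1-(u^3q^2)^{\deg P})^{-1}$ are each $O(|f|_q^\varepsilon)$ once we stay at distance $\delta$ from $|u^3|=q^{-2}$ — this is exactly why the extra hypothesis $|u^3-q^{-2}|>\delta$ is imposed — and the Gauss-sum weights satisfy $|G_q(f_1f_2^2,a)|\le |a|_q^{1/2}\le|f|_q^{\varepsilon}$-type bounds after noting $a$ ranges over divisors (the number of divisors is $|f|_q^\varepsilon$), while $|\overline{G_q(1,\ell)}|\le |\ell|_q^{1/2}$ and $(u^2q)^{\deg\ell}$, $u^{\deg a}$ are bounded by $|f|_q^\varepsilon$ on the relevant circle. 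Collecting, every factor is $|f|_q^\varepsilon$ except the $\Psi_q$ term, yielding $\tilde\Psi_q(f,u)\ll_\delta |f|_q^{\frac12(\frac32-\sigma)+\varepsilon}$.

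The main obstacle I anticipate is the sieving identity \eqref{expression} itself, specifically keeping the twisting cocycle consistent across the two stages: the cubic residue symbols $\chi_\ell$ and $\chi_P$ are not genuinely multiplicative, so each time one factors $G_q$ via Lemma \ref{gauss}(i) a reciprocity twist $\chi_{f_1}(f_2)^2$-type factor is generated, and these must be shown to reassemble exactly into the single clean factor $\chi_\ell(af_1f_2^2/\ell)$ and the single $\overline{G_q(1,\ell)}$ rather than a messier product. Verifying this is essentially an induction on $\omega(f_1)$ (and separately on $\omega(f_3^*)$) using cubic reciprocity (Lemma \ref{cubic-reciprocity}) and the identity $\prod_j \overline{G_q(\prod_{\ell<j}\pi_\ell,\pi_j)}=\overline{G_q(1,f_1)}$ already invoked at the end of the proof of Lemma \ref{lemma-residue}. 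Once the identity is in hand, the bound is routine. I would also be careful that the local geometric series in step (1) genuinely has ratio $(u^3q^2)^{\deg P}$ and not $(u^3q^3)^{\deg P}$ — the discrepancy with the $(1-u^3q^3)^{-1}$ normalization of $\psi$ is absorbed into the definition of $\Psi_q$ via \eqref{gen-series}, so one must use the un-normalized $\Psi_q$ throughout, which is consistent with the statement.
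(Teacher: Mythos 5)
Your skeleton is the same as the paper's (M\"obius inversion over $a\mid f_3^*$, prime-by-prime use of Lemma \ref{gauss}, reassembly of the twists into $\overline{G_q(1,\ell)}\chi_\ell(\cdot)$, then the convexity bound from Theorem \ref{first-generating-series}), but the mechanism you give for the factors $\left(1-(u^3q^2)^{\deg P}\right)^{-1}$ is not correct, and your proposed repair does not work. The local series you invoke in step (1), $\sum_k \phi(P^{3k})u^{3k\deg P}$, has ratio $(u^3q^3)^{\deg P}$, not $(u^3q^2)^{\deg P}$, and nothing is ``absorbed into the definition of $\Psi_q$'': by \eqref{gen-series}, $\Psi_q(f,u)=\sum_F G_q(f,F)u^{\deg F}$ is the plain, unnormalized series. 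In the actual argument the first stage is only the M\"obius inversion, producing $\sum_{a\mid f_3^*}\mu(a)G_q(f_1f_2^2,a)u^{\deg a}$ together with the change of shift to $af_1f_2^2$, in which the primes of $a$ now occur to the first power; no Euler factor at $P\mid f_3^*$ is computed at that stage. The factors $\left(1-(u^3q^2)^{\deg P}\right)^{-1}$ arise afterwards: for $P\mid f_2$ from the case $i=\alpha+1\equiv 0\pmod 3$ of Lemma \ref{gauss}(ii), where $G_q(\cdot,P^3)=-|P|_q^{2}$ pairs with $u^{3\deg P}$; and for $P\mid af_1$ from a \emph{two-way} coupling that your one-directional stripping cannot see: removing the coprimality at $P$ from the series with shift divisible by $P$ produces (via the $i=2$ case, with coefficient of size $(u^2q^{3/2})^{\deg P}$ times a root number and residue symbol) the series with the shift divided by $P$, and removing the coprimality at $P$ from that series produces (via the $i=1$ case, coefficient of size $(uq^{1/2})^{\deg P}$) the original one; eliminating between the two relations yields both the factor $1-(u^3q^2)^{\deg P}$ (the product of the two cross-coefficients, the twists cancelling) and the single cross-term which, iterated over $\ell\mid af_1$, assembles into $\mu(\ell)(u^2q)^{\deg \ell}\overline{G_q(1,\ell)}\chi_\ell(af_1f_2^2/\ell)$ through $\prod_{P\mid\ell}\overline{\chi_P}(\ell/P)\epsilon(\chi_{P^2})\omega(\chi_{P^2})=\overline{G_q(1,\ell)}\,|\ell|_q^{-1/2}$. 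You also never treat the primes dividing $f_2$, although they contribute to the prefactor $\prod_{P\mid f_1f_2}$ in \eqref{expression}.

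For the upper bound, your reduction ``every factor is $|f|_q^\varepsilon$ except the $\Psi_q$ term'' fails on part of the stated range: $|a|_q^{1/2}$ is not $O(|f|_q^\varepsilon)$, and with $|u|=q^{-\sigma}$ one has $\left|(u^2q)^{\deg\ell}\overline{G_q(1,\ell)}\right|=|\ell|_q^{3/2-2\sigma}$, which grows polynomially in $|\ell|_q$ when $\sigma<3/4$. The estimate does follow from Theorem \ref{first-generating-series} applied to $\Psi_q(af_1f_2^2/\ell,u)$, but only after keeping the exponents: each term is $\ll |a|_q^{1/2-\sigma}\,|\ell|_q^{3/2-2\sigma}\,|af_1f_2^2/\ell|_q^{\frac12(\frac32-\sigma)+\varepsilon}$, the resulting $\ell$-exponent $3/4-3\sigma/2$ is nonpositive for $\sigma\ge 1/2$, while the $a$-exponent $5/4-3\sigma/2$ is positive for $\sigma<5/6$ and must be absorbed using $a\mid f_3^*$ together with the fact that $f_3$ enters $f$ cubed, i.e. $|f_3|_q^{(5-6\sigma)/4}\le |f_3^3|_q^{\frac12(\frac32-\sigma)}$. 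As written, your collection step would not give the exponent $\frac12\left(\frac32-\sigma\right)$ uniformly in $1/2\le\sigma\le 3/2$, so this part needs the same bookkeeping the paper carries out, even though the ingredients you cite are the right ones.
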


\begin{proof} We first show that the last assertion follows from the expression \eqref{expression} for $\Tilde{\Psi}_q(f,u)$.

Suppose that $1/2 \leq \sigma \leq 3/2$ and $|u^3-q^{-4}|, |u^3-q^{-2}|>\delta$. Then, for $\re (s)=\sigma$, 
\begin{eqnarray*}
\tilde{\Psi}_q(f,u)  &\ll& \sum_{a \mid f_3^*}|a|_q^{1/2 - \sigma}
 \sum_{\ell \mid af_1}|\ell|_q^{3/2-2 \sigma}  \left| \frac{af_1 f_2^2}{\ell} \right|_q^{\frac{1}{2}\left(\frac{3}{2}-\sigma\right)+\varepsilon} \\
&\ll& \sum_{a \mid f_3^*}|a|_q^{\frac{5-6 \sigma}{4}+\varepsilon} \sum_{\ell \mid af_1}|\ell|_q^{\frac{3-6\sigma}{4}}  \left| {f_1 f_2^2} \right|_q^{\frac{1}{2}\left(\frac{3}{2}-\sigma\right)+\varepsilon} \\
&\ll& \sum_{a \mid f_3^*}|a|_q^{\frac{5-6 \sigma}{4}+\varepsilon}  \left| {f_1 f_2^2} \right|_q^{\frac{1}{2}\left(\frac{3}{2}-\sigma\right)+\varepsilon} \\
&\ll&  \max\left\{|f_3^{*}|_q^{\varepsilon}, |f_3|_q^{\frac{5-6 \sigma}{4}+\varepsilon}\right\} \left| {f_1 f_2^2} \right|_q^{\frac{1}{2}\left(\frac{3}{2}-\sigma\right)+\varepsilon} \ll |f|_q^{\frac{1}{2}\left(\frac{3}{2}-\sigma\right)+\varepsilon}.
\end{eqnarray*}

We now prove \eqref{expression}. We first remark that by definition of $f_1, f_2, f_3^*$, we have that $(f, F) = 1 \iff (f_1 f_2, F) = 1 \;\mbox{and}\; (f_3^*, F)=1$ with $(f_1 f_2, f_3^*) = 1$.
If $(f_1 f_2 f_3, F)=1$, then $G_q(f_1 f_2^2 f_3^3, F) = \overline{\chi_F}(f_3^3) G_q(f_1 f_2^2, F) =  G_q(f_1 f_2^2, F)$, and 
\begin{eqnarray*}
\tilde{\Psi}_q(f,u) &=& \sum_{(F,f_1 f_2 f_3^*)=1} G_q(f_1 f_2^2, F) u^{\deg(F)}  \\
&= & \sum_{a \mid f_3^*} \mu(a) u^{\deg(a)} \sum_{(F, f_1 f_2)=1} G_q(f_1 f_2^2, a F) u^{\deg(F)}.
\end{eqnarray*}
If $(a, F) \neq 1$, then there is a prime $P$ such that $P^2 \mid aF$ and $P \nmid f_1 f_2^2$, and then
$G_q(f_1 f_2^2, aF) = 0$.
We can then suppose that $(a,F)=1$, and then by Lemma \ref{gauss} (i), we have that
$G_q(f_1 f_2^2, a F) =  G_q(f_1 f_2^2, a) G_q(a f_1 f_2^2, F),$ and
\begin{eqnarray} \label{first-expr}
\tilde{\Psi}_q(f,u) &= & \sum_{a \mid f_3^*} \mu(a) G_q(f_1 f_2^2, a) u^{\deg(a)} \sum_{(F, a f_1 f_2)=1} G_q(a f_1 f_2^2, F) u^{\deg(F)}.
\end{eqnarray}
Notice that $a f_1 f_2$ is square-free and that $a$, $f_1$ and $f_2$ are two-by-two co-prime.

Let $P$ be a prime dividing $f_2$, and we write $f_2 = P f_2'$, and $F = P^i F'$ with $(F' f_2', P)=1$.
Then, by Lemma \ref{gauss},
\begin{eqnarray*}
G_q( af_1 f_2'^2 P^2, P^i F') = G_q(af_1 f_2'^2 P^2, P^i) G_q(af_1 f_2'^2 P^{2+i}, F') =
\begin{cases} 
G_q(af_1 f_2'^2 P^{2}, F') & i=0, \\
- |P|_q^2 G_q(af_1 f_2'^2 P^{2}, F') & i=3, \\
0 & \mbox{otherwise.} 
\end{cases}
\end{eqnarray*}
We remark that we have used that  $G_q(af_1 f_2'^2 P^{5}, F') =  G_q(af_1 f_2'^2 P^{2}, F')$ for the second line, since $(P, F')=1$.
This gives
\begin{eqnarray*}
&& \sum_{(F,a f_1 f_2)=1} G_q(a f_1 f_2^2, F) u^{\deg(F)} \\ &=&  \sum_{(F,a f_1 f_2')=1} G_q(a f_1 f_2^2, F) u^{\deg(F)} - \sum_{\substack{(F,a f_1 f_2' )=1\\ P \mid F}} G_q(a f_1 f_2'^2 P^2, F) u^{\deg(F)}\\
&=& \sum_{(F,af_1 f_2' )=1} G_q(a f_1 f_2^2, F) u^{\deg(F)} + \sum_{(F',a f_1 f_2)=1}  G_q(a f_1 f_2'^2 P^2, F') u^{\deg(F')+3 \deg(P)} q^{2 \deg(P)}\\
&=& \sum_{(F,af_1 f_2')=1} G_q(a f_1 f_2^2, F) u^{\deg(F)} + (u^3 q^2)^{\deg(P)} \sum_{(F',a f_1 f_2)=1}  G_q(a f_1 f_2^2, F') u^{\deg(F')} ,
\end{eqnarray*}
or equivalently
\begin{eqnarray*}
\left( 1 - (u^3 q^2)^{\deg(P)} \right) \sum_{(F, a f_1 f_2)=1} G_q(a f_1 f_2^2, F) u^{\deg(F)} &=&  \sum_{(F,a f_1 f_2')=1} G_q(a f_1 f_2^2, F) u^{\deg(F)}.
\end{eqnarray*}
By induction on the prime divisors of $f_2$, we get
\begin{eqnarray*}
\sum_{(F,a f_1 f_2)=1} G_q(a f_1 f_2^2, F) u^{\deg(F)} =  \prod_{P \mid f_2} \left( 1 - (u^3 q^2)^{\deg(P)} \right) ^{-1} \sum_{(F,a f_1)=1} G_q(a f_1 f_2^2, F) u^{\deg(F)},
\end{eqnarray*}
and plugging in \eqref{first-expr}, we have
\begin{equation}
\tilde{\Psi}_q(f,u) =   \prod_{P \mid f_2} \left( 1 - (u^3 q^2)^{\deg(P)} \right) ^{-1} \sum_{a \mid f_3^*} \mu(a) G_q(f_1 f_2^2, a) u^{\deg(a)} 
 \sum_{(F,a f_1)=1} G_q(a f_1 f_2^2, F) u^{\deg(F)}.\label{psitilde}
\end{equation}

We now do the same thing for $ \sum_{(F,a f_1)=1} G_q(a f_1 f_2^2, F) u^{\deg(F)},$ dealing with the primes dividing $f_1^*:=a f_1$ one by one.

Let $f_1^* = P f_1'$, and we write
\begin{eqnarray*}
\sum_{(F,f_1^*)=1} G_q(f_1^* f_2^2, F) u^{\deg(F)} = \sum_{(F,f_1')=1} G_q(f_1^* f_2^2, F) u^{\deg(F)}
 - \sum_{\substack{(F,f_1')=1\\ F = P^i F', i \geq 1}} G_q(f_1' P f_2^2, P^i F') u^{\deg(F')+i \deg(P)}.
\end{eqnarray*}
Using Lemma \ref{gauss}, we compute that
\begin{eqnarray*}
G_q(f_1' P f_2^2, P^i F') &=& G_q(f_1' P f_2^2, P^i) G_q(f_1' P^{i+1} f_2^2, F')\\ &=&
\begin{cases} G_q(f_1' P f_2^2, F') & i = 0, \\
G_q(f_1' P^{3} f_2^2, F') \epsilon(\chi_{P^2}) \omega(\chi_{P^2}) \chi_{P}(f_1' f_2^2)  |P|_q^{3/2}  & i=2,\\
0 & \mbox{otherwise,}\end{cases}
\end{eqnarray*}
where we recall that $\epsilon(\chi_{P^2})=1$ when $3 \mid \deg(P)$.  

Then,
\begin{align}\label{Chantal-equation}
&\sum_{(F,f_1^*)=1} G_q(f_1^* f_2^2, F) u^{\deg(F)} \nonumber \\  = &\sum_{(F,f_1')=1} G_q(f_1^* f_2^2, F) u^{\deg(F)}
 - \sum_{(F',f_1^*)=1} G_q(f_1' f_2^2, F') u^{\deg(F')+ 2\deg(P)} q^{3 \deg(P)/2}\epsilon(\chi_{P^2})\omega(\chi_{P^2}) \chi_P(f'_1 f_2^2) \nonumber \\
  = & \sum_{(F,f_1')=1} G_q(f_1^* f_2^2, F) u^{\deg(F)}
 - (u^2 q^{3/2})^{\deg(P)}\epsilon(\chi_{P^2})\omega(\chi_{P^2})\chi_P(f'_1 f_2^2)  \sum_{(F',f_1^*)=1} G_q\left(\frac{f_1^* f_2^2}{P}, F'\right) u^{\deg(F')}.
\end{align}




Now we focus on 
\begin{align*}
\sum_{(F,f_1'P)=1} G_q(f_1' f_2^2, F) u^{\deg(F)}=&\sum_{(F,f_1')=1} G_q(f_1' f_2^2, F) u^{\deg(F)}-\sum_{\substack{(F,f_1')=1\\P\mid F}} G_q(f_1' f_2^2, F) u^{\deg(F)}.
 \end{align*}
As before, write $F=P^iF'$. By Lemma \ref{gauss} as always,
\begin{eqnarray*}
G_q(f_1' f_2^2, P^i F') &=& G_q(f_1' f_2^2, P^i) G_q(f_1' f_2^2 P^{i}, F')\\ &=&
\begin{cases}
                                                                G_q(f_1' f_2^2, F') & i=0,\\
G_q(f_1' f_2^2 P, F') \epsilon(\chi_{P}) \omega(\chi_{P}) \chi_{P^2}(f_1' f_2^2)  |P|_q^{1/2}  & i=1,\\

                                                                0& i\geq 2,\\
                                                               \end{cases}
\end{eqnarray*}
and we get
\begin{align*}
&\sum_{(F,f_1'P)=1} G_q(f_1' f_2^2, F) u^{\deg(F)}\\
=&\sum_{(F,f_1')=1} G_q(f_1' f_2^2, F) u^{\deg(F)}-\sum_{(F',Pf_1')=1} G_q(f_1' f_2^2, F'P) u^{\deg(F')+\deg(P)}\\ 
=&\sum_{(F,f_1')=1} G_q(f_1' f_2^2, F) u^{\deg(F)}-(uq^{1/2})^{\deg(P)}\epsilon(\chi_{P}) \omega(\chi_{P}) \chi_{P^2}(f_1' f_2^2)\sum_{(F',Pf_1')=1} G_q(f_1' f_2^2 P, F')  u^{\deg(F')}\\ 
=&\sum_{(F,f_1')=1} G_q(f_1' f_2^2, F) u^{\deg(F)}-(uq^{1/2})^{\deg(P)}\epsilon(\chi_{P}) \omega(\chi_{P}) \chi_{P^2}(f_1' f_2^2)\sum_{(F,f_1^*)=1} G_q(f_1^* f_2^2, F)  u^{\deg(F)}.\\ 
\end{align*}

Now we incorporate the equation above into equation \eqref{Chantal-equation}.
\begin{align*}
&\sum_{(F,f_1^*)=1} G_q(f_1^* f_2^2, F) u^{\deg(F)}\\ 
& = \sum_{(F,f_1')=1} G_q(f_1^* f_2^2, F) u^{\deg(F)}
 - (u^2 q^{3/2})^{\deg(P)} \epsilon(\chi_{P^2})\omega(\chi_{P^2}) \chi_P(f'_1 f_2^2)\sum_{(F',f_1^*)=1} G_q(f_1' f_2^2, F') u^{\deg(F')} \\
=&\sum_{(F,f_1')=1} G_q(f_1^* f_2^2, F) u^{\deg(F)}
 - (u^2 q^{3/2})^{\deg(P)}\epsilon(\chi_{P^2})\omega(\chi_{P^2})\chi_P(f'_1 f_2^2) \sum_{(F,f_1')=1} G_q(f_1' f_2^2, F) u^{\deg(F)}\\
& +(u^3 q^{2})^{\deg(P)}\sum_{(F,f_1^*)=1} G_q(f_1^* f_2^2, F)  u^{\deg(F)}.
\end{align*}

Rearranging, we write
\begin{align*}
&(1-(u^3 q^{2})^{\deg(P)})\sum_{(F,f_1^*)=1} G_q(f_1^* f_2^2, F) u^{\deg(F)}\\
=& \sum_{(F,f_1')=1} G_q(f_1^* f_2^2, F) u^{\deg(F)}
 - (u^2 q^{3/2})^{\deg(P)}\chi_P(f'_1 f_2^2) \epsilon(\chi_{P^2})\omega(\chi_{P^2})\sum_{(F,f_1')=1} G_q(f_1' f_2^2, F) u^{\deg(F)}\\
\end{align*}
or
\begin{align*}
&\sum_{(F,f_1^*)=1} G_q(f_1^* f_2^2, F) u^{\deg(F)}=(1-(u^3 q^{2})^{\deg(P)})^{-1} \sum_{(F,f_1')=1} G_q(f_1^* f_2^2, F) u^{\deg(F)}\\
& - (1-(u^3 q^{2})^{\deg(P)})^{-1}(u^2 q^{3/2})^{\deg(P)}\chi_P(f'_1 f_2^2) \epsilon(\chi_{P^2})\omega(\chi_{P^2})\sum_{(F,f_1')=1} G_q(f_1' f_2^2, F) u^{\deg(F)}.\\
\end{align*}

\kommentar{To fix ideas on how the induction works, I'm going to extract a second prime $P_2$ that divides $f_1^*$. I will write $f_1'=P_2f_1''$. And I write $P_1$ instead of $P$.

\begin{align*}
&\sum_{(F,f_1^*)=1} G_q(f_1^* f_2^2, F) u^{\deg(F)}=(1-(u^3 q^{2})^{\deg(P_1)})^{-1} (1-(u^3 q^{2})^{\deg(P_2)})^{-1}\sum_{(F,f_1'')=1} G_q(f_1^* f_2^2, F) u^{\deg(F)}\\
&-(1-(u^3 q^{2})^{\deg(P_1)})^{-1}(1-(u^3 q^{2})^{\deg(P_2)})^{-1} (u^2 q^{3/2})^{\deg(P_2)}\chi_{P_2}(f^*_1/P_2 f_2^2) \epsilon(\chi_{P_2^2})\omega(\chi_{P_2^2})\sum_{(F,f_1'')=1} G_q(f_1^*/P_2 f_2^2, F) u^{\deg(F)}\\ \\
& - (1-(u^3 q^{2})^{\deg(P_1)})^{-1}(1-(u^3 q^{2})^{\deg(P_2)})^{-1} (u^2 q^{3/2})^{\deg(P_1)}\chi_{P_1}(f'_1 f_2^2) \epsilon(\chi_{P_1^2})\omega(\chi_{P_1^2})\sum_{(F,f_1'')=1} G_q(f_1' f_2^2, F) u^{\deg(F)}\\
& + (1-(u^3 q^{2})^{\deg(P_1)})^{-1}(1-(u^3 q^{2})^{\deg(P_2)})^{-1}(u^2 q^{3/2})^{\deg(P_1P_2)}\chi_{P_1}(f'_1 f_2^2) \epsilon(\chi_{P_1^2})\omega(\chi_{P_1^2}) \chi_{P_2}(f''_1 f_2^2) \epsilon(\chi_{P_2^2})\omega(\chi_{P_2^2})\\
&\times\sum_{(F,f_1'')=1} G_q(f_1'' f_2^2, F) u^{\deg(F)}\\
\\
\end{align*}
\mcom{The notation is dreadful but we get the idea...}

Writing
\[\Psi_q(f,u)=\sum_F G_q(f,F)u^{\deg(F)},\]
we have
\begin{align*}
\sum_{(F,f_1^*)=1} G_q(f_1^* f_2^2, F) u^{\deg(F)}=&\prod_{P\mid f_1^*}(1-(u^3q^2)^{\deg(P)})^{-1} \Psi_q(f_1^*f_2^2,u)\\
&-\prod_{P\mid f_1^*}(1-(u^3q^2)^{\deg(P)})^{-1} \sum_{P\mid f_1^*}(u^2 q^{3/2})^{\deg(P)}\chi_{P}((f_1^*/P) f_2^2) \epsilon(\chi_{P^2})\omega(\chi_{P^2})
\Psi_q((f_1^*/P)f_2^2,u)\\
&+\cdots
\end{align*}
}
By applying this idea to each of the primes in the factorization of the square-free polynomial $f_1^*$, we obtain
\begin{align*}
\sum_{(F,f_1^*)=1} G_q(f_1^* f_2^2, F) & u^{\deg(F)}=\prod_{P\mid f_1^*}(1-(u^3q^2)^{\deg(P)})^{-1} \\
& \times \sum_{\ell\mid f_1^*} \mu(\ell) (u^2q^{3/2})^{\deg(\ell)}\overline{\left( \prod_{P\mid \ell}\chi_P\left(\frac{\ell}{P}\right)\right)}\chi_{\ell}\left(\frac{f_1^*f_2^2}{\ell}\right) \left(\prod_{P\mid \ell}\epsilon(\chi_{P^2})\omega(\chi_{P^2})\right)\\
&\times \Psi_q\left(\frac{f_1^*f_2^2}{\ell},u\right).
\end{align*}

Putting everything together in \eqref{psitilde}, we get
\begin{align*}
\tilde{\Psi}_q & (f,u) =  \prod_{P \mid f_2} \left( 1 - (u^3 q^2)^{\deg(P)} \right) ^{-1}\sum_{a \mid f_3^*} \mu(a) G_q(f_1 f_2^2, a) u^{\deg(a)} 
  \sum_{(F,a f_1)=1} G_q(a f_1 f_2^2, F) u^{\deg(F)}\\
=& \prod_{P \mid f_2} \left( 1 - (u^3 q^2)^{\deg(P)} \right) ^{-1}
\sum_{a \mid f_3^*} \mu(a) G_q(f_1 f_2^2, a) u^{\deg(a)} 
\prod_{P\mid af_1}(1-(u^3q^2)^{\deg(P)})^{-1} \\
& \times \sum_{\ell\mid af_1} \mu(\ell) (u^2q^{3/2})^{\deg(\ell)}\overline{\left( \prod_{P\mid \ell}\chi_P\left(\frac{\ell}{P}\right)\right)}\chi_{\ell}\left(\frac{af_1f_2^2}{\ell}\right) \left(\prod_{P\mid \ell}\epsilon(\chi_{P^2})\omega(\chi_{P^2})\right)\Psi_q\left(\frac{af_1f_2^2}{\ell},u\right)\\
=& \prod_{P \mid f_1f_2} \left( 1 - (u^3 q^2)^{\deg(P)} \right) ^{-1}\sum_{a \mid f_3^*} \mu(a) G_q(f_1 f_2^2, a) u^{\deg(a)} 
 \prod_{P\mid a}(1-(u^3q^2)^{\deg(P)})^{-1} \\
 &\times \sum_{\ell\mid af_1} \mu(\ell) (u^2q^{3/2})^{\deg(\ell)} \overline{\left( \prod_{P\mid \ell}\chi_P\left(\frac{\ell}{P}\right)\right)}\chi_{\ell}\left(\frac{af_1f_2^2}{\ell}\right) \left(\prod_{P\mid \ell}\epsilon(\chi_{P^2})\omega(\chi_{P^2})\right)  \Psi_q\left(\frac{af_1f_2^2}{\ell},u\right).
 \end{align*}
 Now note that
 $$ \prod_{P\mid \ell} \left ( \overline{\chi_P}\left(\frac{\ell}{P}\right) \epsilon(\chi_{P^2}) \omega(\chi_{P^2})\right ) = \frac{ \overline{G_q(1,\ell)}}{|\ell|_q^{1/2}},$$ which finishes the proof of the lemma.
\end{proof}

\subsection{Proof of Proposition \ref{big-F-tilde-corrected}}
We are now ready to prove Proposition \ref{big-F-tilde-corrected}.
\begin{proof}
By applying Perron's formula  (Lemma \ref{perron}) for a small  circle $C$ around the origin and using expression \eqref{expression}, we have
\begin{align} \nonumber
\sum_{\substack{F \in \mathcal{M}_{q,d} \\ (F,f)=1}} G_q(f,F) =& \frac{1}{2 \pi i} \oint_{C} \frac{ \Tilde{\Psi}_q(f,u)}{u^d} \, \frac{du}{u} = \frac{1}{2 \pi i} \oint_{C} \prod_{P \mid f_1f_2} \left( 1 - (u^3 q^2)^{\deg(P)} \right) ^{-1}\sum_{a \mid f_3^*} \mu(a) G_q(f_1 f_2^2, a)  \\ 
& \times \prod_{P\mid a}(1-(u^3q^2)^{\deg(P)})^{-1}  \sum_{\ell \mid af_1} \mu(\ell) |\ell|_q \overline{G_q(1,\ell)} \chi_{\ell}\left(\frac{af_1f_2^2}{\ell}\right)\nonumber \\
& \times  \frac{\Psi_q\left(\frac{af_1f_2^2}{\ell},u\right)u^{\deg(a)+2 \deg(\ell)} }{u^d}\, \frac{du}{u}.\label{integral}
\end{align}
Now we write $$ \Psi_q \left (\frac{af_1 f_2^2}{\ell} ,u \right )= (1-u^3q^3) \left[ \psi\left(\frac{af_1f_2^2}{\ell} ,0,u\right)+\psi\left(\frac{af_1f_2^2}{\ell} ,\pi_{\infty}^{-1},u\right)+ \psi\left(\frac{af_1f_2^2}{\ell} ,\pi_{\infty}^{-2},u\right) \right].$$
Each $\psi$ has three poles, at $q^{-4/3} \xi_3^{k}, k=0,1,2$, where $\xi_3=e^{2\pi i/3}$. We compute the residues of the poles in the integral above.
We recall that formula \eqref{meromorphic} gives $$\psi(f , \pi_{\infty}^{-j}, u) = \frac{u^jP(f,j,u^3)}{(1-q^4u^3)},$$ where $ \frac{{u^j}P(f,j,u^3)}{1-q^4u^3}$  is a power series whose nonzero coefficients correspond to monomials with $\deg \equiv j \pmod{3}$, and then
the only $\psi$ which gives a non-zero integral in equation \eqref{integral} comes from $\psi(af_1f_2^2/\ell,\pi_{\infty}^{-j},u)$ with $j$ such that $j + \deg(a) + 2 \deg(\ell) \equiv d \pmod 3$. 
Note that if $j+ \deg(a)+2\deg(\ell) \geq d+1$, the integral in \eqref{integral} is zero because the integrand has no poles inside $C$. Hence we assume that $j+\deg(a)+2\deg(\ell) \leq d$.


In \eqref{integral} we shift the contour of integration to $|u|=q^{-\sigma}$, where $2/3<\sigma<4/3$ and we encounter the poles when $u^3=q^{-4}$.
With $j$ as before, we compute the residue of the integrand at $u^3=q^{-4}$ and this gives 
\[\Res_{u=\xi_3^kq^{-4/3}} \psi\left(\frac{af_1f_2^2}{\ell},\pi_{\infty}^{-j},u\right) u^{\deg(a) +2\deg(\ell)-d-1}=\frac{1}{3}(q^{\frac{4}{3}} \xi_3^{-k})^{d-\deg(a)-2\deg(l)-j} \rho\left(\frac{af_1f_2^2}{\ell},j\right).\]
We get that
\begin{align*}
\sum_{\substack{F \in \mathcal{M}_{q,d} \\ (F,f)=1}} & G_q(f,F)  = \frac{q^{\frac{4}{3}(d - j)}}{\zeta_q(2)} \sum_{\substack{a\mid f_3^* \\ \deg(a) \leq d-j}} \frac{ \mu(a) G_q(f_1 f_2^2,a)}{|a|_q^{\frac{4}{3}}} \prod_{P\mid af_1} \left (1- \frac{1}{|P|_q^2} \right )^{-1} \\
& \times \sum_{\substack{\ell \mid af_1 \\ 2 \deg(\ell) \leq d-j-\deg(a)}}\frac{ \mu(\ell) \overline{G_q(1,\ell)}}{|\ell|_q^{\frac{5}{3}}} \chi_\ell\left(\frac{af_1f_2^2}{\ell}\right) \rho \left (\frac{af_1f_2^2}{\ell},j \right ) +  \frac{1}{2 \pi i} \oint_{|u|=q^{-\sigma}} \frac{ \Tilde{\Psi}_q(f,u)}{u^{d}} \, \frac{du}{u}.
\end{align*}
Using Lemma \ref{lemma-residue} and since $af_1/\ell$ is square-free and co-prime to $f_2$ it follows that
\begin{align*}
\rho\left(\frac{af_1f_2^2}{\ell},j\right) = \delta_{f_2=1} \overline{G_q \left ( 1, \frac{af_1}{\ell}\right )}\left|\frac{\ell}{af_1} \right|_q^{2/3} q^{\frac{4j}{3}- \frac{4}{3} \left[ j+\deg\left(\frac{af_1}{\ell}\right)\right]_3} \rho\left(1, \left[j+\deg\left(\frac{af_1}{\ell}\right)\right]_3\right).
\end{align*}
Note that $j +\deg\left(\frac{af_1}{\ell}\right) \equiv d+\deg(f_1) \pmod 3$, and
$$  G_q(f_1,a) \overline{G_q(1,\ell)} \chi_\ell\left(\frac{af_1}{\ell}\right)\overline{G_q \left (1, \frac{af_1}{\ell} \right )} =G_q(f_1,a) \overline{G_q(1,af_1)} = |a|_q \overline{G_q(1,f_1)},$$ where we used Lemma \ref{gauss}.
Combining the three equations above it follows that
\begin{align}
\sum_{\substack{F \in \mathcal{M}_{q,d} \\ (F,f)=1}} G_q(f,F)  = &\delta_{f_2=1}\frac{q^{\frac{4}{3}(d-[d+\deg(f_1)]_3)} \overline{G_q(1,f_1)}}{\zeta_q(2) |f_1|_q^{\frac{2}{3}}} \rho(1, [d+\deg(f_1)]_3)  \sum_{\substack{a\mid f_3^* \\ \deg(a) \leq d-j}} \frac{\mu(a)}{|a|_q} \nonumber \\
& \times \prod_{P\mid af_1} \left (1- \frac{1}{|P|_q^2} \right )^{-1}  \sum_{\substack{\ell \mid af_1 \\ 2 \deg(\ell) \leq d-j-\deg(a)}} \frac{\mu(\ell)}{|\ell|_q}+ O(q^{\sigma d} |f|_q^{\frac{1}{2} ( \frac{3}{2} - \sigma)+\varepsilon}), \label{interm-res}
\end{align}
where we have used Lemma \ref{our-psi} to bound the integral.

Now using Perron's formula  (Lemma \ref{perron}) for the sum over $\ell$ we have
\begin{equation}  \sum_{\substack{\ell \mid af_1 \\ 2 \deg(\ell) \leq d-j-\deg(a)}} \frac{\mu(\ell)}{|\ell|_q} = \frac{1}{2 \pi i}  \oint \frac{ \prod_{P\mid af_1} \left ( 1-\frac{x^{\deg(P)}}{|P|_q}\right )}{(1-x)x^{ \left[\frac{d-j-\deg(a)}{2}\right]}} \, \frac{dx}{x}, \label{sum-l}
\end{equation} where we are integrating along a small circle around the origin. Let $\alpha(a)=0$ if $\deg(a) \equiv d-j \pmod 2$ and $\alpha(a)=1$ otherwise. Introducing the sum over $a$ and using Perron's formula, it follows that
\begin{align}  
&\sum_{\substack{a\mid f_3^* \\ \deg(a) \leq d-j}} \frac{\mu(a)}{|a|_q} x^{\frac{\deg(a)+\alpha(a)}{2}} \prod_{P\mid a} \left (1- \frac{1}{|P|_q^2} \right )^{-1} \left ( 1-\frac{x^{\deg(P)}}{|P|_q}\right )  \nonumber \\
=& \frac{1+x^{\frac{1}{2}}}{2}\sum_{\substack{a\mid f_3^* \\ \deg(a) \leq d-j}}   \frac{\mu(a)}{|a|_q} x^{\frac{\deg(a)}{2}} \prod_{P\mid a} \left (1- \frac{1}{|P|_q^2} \right )^{-1} \left ( 1-\frac{x^{\deg(P)}}{|P|_q}\right )  \nonumber \\
&+(-1)^{d-j} \frac{1-x^{\frac{1}{2}}}{2}\sum_{\substack{a\mid f_3^* \\ \deg(a) \leq d-j}}  \frac{\mu(a)}{|a|_q} x^{\frac{\deg(a)}{2}} (-1)^{\deg(a)}\prod_{P\mid a} \left (1- \frac{1}{|P|_q^2} \right )^{-1} \left ( 1-\frac{x^{\deg(P)}}{|P|_q}\right )  \nonumber \\
=& \frac{1+x^{\frac{1}{2}}}{2}\frac{1}{2 \pi i} \oint \frac{ \prod_{P\mid f_3^*} \left (1 - \frac{ (x^{\frac{1}{2}} w)^{\deg(P)} \left(1- \frac{x^{\deg(P)}}{|P|_q}\right)}{|P|_q\left(1-\frac{1}{|P|_q^2}\right)} \right ) }{(1-w)w^{d-j}} \, \frac{dw}{w} \nonumber\\
&+ (-1)^{d-j} \frac{1-x^{\frac{1}{2}}}{2}\frac{1}{2 \pi i} \oint \frac{ \prod_{P\mid f_3^*} \left (1 - \frac{ (-x^{\frac{1}{2}} w)^{\deg(P)} \left(1- \frac{x^{\deg(P)}}{|P|_q}\right)}{|P|_q\left(1-\frac{1}{|P|_q^2}\right)} \right ) }{(1-w)w^{d-j}} \, \frac{dw}{w} \nonumber\\
=& \frac{1}{2 \pi i} \oint \frac{ \prod_{P\mid f_3^*} \left (1 - \frac{ (x^{\frac{1}{2}} w)^{\deg(P)} \left(1- \frac{x^{\deg(P)}}{|P|_q}\right)}{|P|_q\left(1-\frac{1}{|P|_q^2}\right)} \right ) }{(1-w^2)w^{d-j}} (1+x^{\frac{1}{2}}w) \, \frac{dw}{w}, \label{sum-a}
\end{align}
where again we are integrating along a small circle around the origin and we did the change of variables $w\rightarrow -w$ to the second integral to reach the last line. Let $\mathcal{R}(x,w)$ denote the Euler product above. Using equations \eqref{sum-l} and \eqref{sum-a} it follows that
\begin{align*}
& \sum_{\substack{a\mid f_3^* \\ \deg(a) \leq d-j}}  \frac{\mu(a)}{|a|_q}  \prod_{P\mid af_1} \left (1- \frac{1}{|P|_q^2} \right )^{-1}   \sum_{\substack{\ell \mid af_1 \\ 2 \deg(\ell) \leq d-j-\deg(a)}} \frac{\mu(\ell)}{|\ell|_q}\\
=&\frac{1}{(2 \pi i)^2} \oint \oint \prod_{P\mid f_1} \left (1- \frac{1}{|P|_q^2} \right )^{-1} \left ( 1-\frac{x^{\deg(P)}}{|P|_q}\right ) \frac{ \mathcal{R}(x,w)}{(1-x)(1-w^2) (x^{\frac{1}{2}}w)^{d-j}} (1+x^{\frac{1}{2}}w) \, \frac{dx}{x} \, \frac{dw}{w}.
\end{align*}
We first shift the contour in the integral over $x$ to $|x|= q^{1-\varepsilon}$ and we encounter a pole at $x=1$. We then shift the contour over $w$ to $|w|=q^{\frac{1}{2}-\varepsilon}$ and encounter a pole at $w=1$. Then 
\begin{align*}
\sum_{\substack{a\mid f_3^* \\ \deg(a) \leq d-j}}  &\frac{\mu(a)}{|a|_q}  \prod_{P\mid af_1} \left (1- \frac{1}{|P|_q^2} \right )^{-1}   \sum_{\substack{\ell\mid af_1 \\ 2 \deg(\ell) \leq d-j-\deg(a)}} \frac{\mu(\ell)}{|\ell|_q} = \prod_{P\mid f_1f_3^*} \left (1+\frac{1}{|P|_q} \right )^{-1} + O(q^{\varepsilon d - d}).
\end{align*}
Using the formula above in \eqref{interm-res} and the fact that $|G_q(1,f_1)| = |f_1|_q^{\frac{1}{2}}$ finishes the proof of the first statement of  Proposition \ref{big-F-tilde-corrected}.

\end{proof}

 \section{The non-Kummer setting}
 
We now assume that $q$ is odd with $q \equiv 2 \pmod 3$.
 We will prove Theorem \ref{thm-non-Kummer}.


 \subsection{Setup and sieving}

Using  Proposition \ref{prop-AFE} and Lemma \ref{non-kummer-desc}, 
we have to compute
\begin{eqnarray*}
\sum_{\substack{\chi  \; \mathrm{primitive \; cubic} \\ \mathrm{genus}(\chi) = g}} L_q(1/2, \chi)  &=&  S_{1, \mathrm{principal}}  + S_{1, \mathrm{dual}}, 
 \end{eqnarray*}
 where
 \begin{equation}
 S_{1, \mathrm{principal}} = \sum_{f \in \mathcal{M}_{q, \leq A}}\frac{1}{q^{\deg(f)/2}} \sum_{\substack{F \in \mathcal{H}_{q^2, g/2+1} \\ P \mid F \Rightarrow P \not\in \F_q[T]}}  \chi_F(f)
 + \frac{1}{1-\sqrt{q}} \sum_{f \in \mathcal{M}_{q, A+1}} \frac{1}{q^{\deg(f)/2}} \sum_{\substack{F \in \mathcal{H}_{q^2, g/2+1} \\ P \mid F \Rightarrow P \not\in \F_q[T]}}  \chi_F(f)
 \label{princ2}
 \end{equation} and
 \begin{equation}
 S_{1, \mathrm{dual}} = \sum_{f \in \mathcal{M}_{q, \leq g-A-1}}\frac{1}{q^{\deg(f)/2}}
 \sum_{\substack{F \in \mathcal{H}_{q^2, g/2+1} \\ P \mid F \Rightarrow P \not\in \F_q[T]}}  \omega(\chi_F) \overline{\chi_F}(f)
+\frac{1}{1-\sqrt{q}} \sum_{f \in \mathcal{M}_{q,g-A}} \frac{1}{q^{\deg(f)/2}} \sum_{\substack{F \in \mathcal{H}_{q^2, g/2+1} \\ P \mid F \Rightarrow P \not\in \F_q[T]}} \omega(\chi_F)  \overline{\chi_F}(f).
 \label{dual2}
 \end{equation}
We will choose $A \equiv 0 \pmod 3$. For the principal term, we will compute the contribution 
from cube polynomials $f$ and bound the contribution from non-cubes. We write
 $$S_{1,\mathrm{principal}} = S_{1,\cube}+S_{1, \neq \cube},$$ where 
$S_{1,\cube}$ corresponds to the sum with $f$ a cube in equation \eqref{princ2} and $S_{1,\neq \cube}$ corresponds to the sum with $f$ not a cube, namely,
\begin{equation}
S_{1,\cube} = \sum_{\substack{f \in \mathcal{M}_{q, \leq A}\\f = \tinycube}} \frac{1}{q^{\deg(f)/2}} \sum_{\substack{F \in \mathcal{H}_{q^2, g/2+1} \\ (F,f) =1 \\ P \mid F \Rightarrow P \not\in \F_q[T]}} 1,
\label{mt}
\end{equation} 
and 
$$
S_{1, \neq \cube} =  \sum_{\substack{f \in \mathcal{M}_{q, \leq A} \\ f \neq \tinycube}} \frac{1}{q^{\deg(f)/2}} \sum_{\substack{F \in \mathcal{H}_{q^2, \frac{g}{2}+1} \\ P \mid F \Rightarrow P \not\in \F_q[T] }} {\chi_F}(f)+ \frac{1}{1-\sqrt{q}} \sum_{f \in \mathcal{M}_{q,A+1}} \frac{1}{q^{\deg(f)/2}}\sum_{\substack{F \in \mathcal{H}_{q^2, \frac{g}{2}+1} \\ P \mid F \Rightarrow P \not\in \F_q[T]}}
 {\chi_F}(f).$$
Since $A \equiv 0 \pmod 3$, note that the second term in \eqref{princ2} does not contribute to the expression \eqref{mt}
for $S_{1,\cube}$.

The main results used to prove Theorem \ref{thm-non-Kummer} are summarized in the following lemmas whose proofs we postpone to the next sections. 
 \begin{lem} \label{mt_expression_nk}
The main term $S_{1, \cube}$ is given by the following asymptotic formula
 \begin{equation*}
S_{1, \cube} = \frac{ q^{g+2} \zeta_q(3/2)}{\zeta_q(3)} \mathcal{A}_{\mathrm{nK}} \left ( \frac{1}{q^2}, \frac{1}{q^{3/2}} \right ) + \frac{q^{g+2-\frac{A}{6}} \zeta_q(1/2)}{\zeta_q(3)} \mathcal{A}_{\mathrm{nK}} \left (\frac{1}{q^2}, \frac{1}{q}  \right ) +O(q^{g-\frac{A}{2}+\varepsilon g}),
\end{equation*}
with $\mathcal{A}_{\mathrm{nK}}(x,u)$ given by equation \eqref{a-euler}. In particular, 
$$ \mathcal{A}_{\mathrm{nK}} \left ( \frac{1}{q^2}, \frac{1}{q^{3/2}} \right ) =\prod_{\substack{R \in \mathbb{F}_q[T] \\ \deg(R)  \,\mathrm{ odd}}}  \left (1- \frac{1}{|R|_q^2+1}  \right ) \prod_{\substack{R \in \mathbb{F}_q[T] \\ \deg(R) \,\mathrm{ even}}} \left (1- \frac{1}{(|R|_q+1)^2}- \frac{2}{|R|_q^{\frac{1}{2}} (|R|_q+1)^2} \right )  ,$$ and
$$ \mathcal{A}_{\mathrm{nK}} \left ( \frac{1}{q^2},\frac{1}{q} \right ) = \prod_{\substack{R \in \mathbb{F}_q[T] \\ \deg(R)  \,\mathrm{ odd}}} \left (1- \frac{1}{|R|_q^2+1}  \right )\prod_{\substack{R \in \mathbb{F}_q[T] \\ \deg(R)  \,\mathrm{ even}}} \left ( 1- \frac{3}{(|R|_q+1)^2} \right ).$$
 \end{lem}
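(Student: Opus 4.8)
The plan is to reduce the sum $S_{1,\cube}$ to a manageable arithmetic sum, evaluate it by a generating-series / Perron's-formula argument, and then extract the main term by residue analysis.

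\textbf{Step 1: Unfolding the cube condition.} Since $A \equiv 0 \pmod 3$, writing $f = \ell^3$ with $\ell \in \mathcal{M}_q$ of degree at most $A/3$, the sum becomes
\[
S_{1,\cube} = \sum_{\substack{\ell \in \mathcal{M}_{q, \leq A/3}}} \frac{1}{q^{3\deg(\ell)/2}} \sum_{\substack{F \in \mathcal{H}_{q^2, g/2+1} \\ (F,\ell)=1 \\ P \mid F \Rightarrow P \notin \F_q[T]}} 1 .
\]
The inner count is a count of square-free polynomials in $\F_{q^2}[T]$ of fixed degree $g/2+1$, coprime to $\ell$, and supported only on primes of $\F_{q^2}[T]$ that do not lie over $\F_q[T]$ (equivalently, on primes $\pi$ with $\pi \neq \tilde\pi$, i.e. the inert primes of $\F_q[T]$ should be excluded). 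I would first produce the generating series in a variable $v$ for this inner count: it is an Euler product over primes $\pi \in \F_{q^2}[T]$ with $\pi \nmid \ell$ and $\tilde\pi \neq \pi$, of the form $\prod_\pi (1 + v^{\deg \pi})$, which factors against $\mathcal{Z}_{q^2}(v)$ and correction factors, giving a function with a dominant pole at $v = 1/q^2$ (note $|\pi|_{\F_{q^2}} = q^{2\deg\pi}$... being careful here) — more precisely with poles governed by $\mathcal{Z}_{q^2}(v^2)$ as in the proof of Lemma~\ref{count-non-kummer}. Extracting the coefficient of $v^{g/2+1}$ by Perron gives a main term of size $q^{g+2}$ times an Euler product depending on $\ell$, plus a secondary term of size $q^{(g+2)/2}\cdot(\text{something})$ coming from the pole at $v=-1/q^2$, plus a power-saving error.

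\textbf{Step 2: Summing over $\ell$.} After inserting the $\ell$-dependent Euler product from Step~1, I would interchange the order of summation and recognize the sum over $\ell \in \mathcal{M}_{q,\leq A/3}$ of $q^{-3\deg(\ell)/2}$ times the local correction factors as (a truncation of) another Euler product over primes $R \in \F_q[T]$. Completing this to a full Euler product introduces a tail error of size roughly $q^{(g+2) - A/2 + \varepsilon g}$ (since the omitted terms have $\deg \ell > A/3$ and each contributes $q^{-3\deg\ell/2}$), which matches the stated error term $O(q^{g - A/2 + \varepsilon g})$. The full Euler product, evaluated at the relevant point, is exactly $\mathcal{A}_{\mathrm{nK}}(q^{-2}, q^{-3/2})$ up to the ratio $\zeta_q(3/2)/\zeta_q(3)$ that one pulls out; similarly the secondary-pole contribution yields the term with $\mathcal{A}_{\mathrm{nK}}(q^{-2}, q^{-1})$ scaled by $q^{g+2-A/6}\zeta_q(1/2)/\zeta_q(3)$ — here the power $q^{-A/6}$ arises because the secondary pole is at $v = -1/q^2$ so the coefficient extraction gives $(-1)^{g/2+1} q^{g+2}$ but the $\ell$-sum is now weighted differently, and the exponent $A/6$ comes from the interaction of the $q^{-3\deg\ell/2}$ weight with the sign $(-1)^{\deg\ell}$ forcing a shift; I would track this carefully.

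\textbf{Step 3: Identifying the Euler factors.} Finally I would compute the local factors of $\mathcal{A}_{\mathrm{nK}}(x,u)$ at $x = 1/q^2$, $u = 1/q^{3/2}$ and $u = 1/q$ explicitly, separating primes $R \in \F_q[T]$ of odd degree (which are inert in $\F_{q^2}$, hence contribute only through coprimality with $\ell$, giving the factor $1 - \tfrac{1}{|R|_q^2+1}$) from primes of even degree (which split, contributing two primes of $\F_{q^2}[T]$ each of norm $|R|_q$, giving the quadratic factors displayed). This is a routine bookkeeping computation once the generating series is set up.

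\textbf{Main obstacle.} The genuinely delicate part is Step~1 --- correctly handling the interplay between square-free-ness over $\F_{q^2}[T]$, the restriction to split primes only, and the coprimality to $\ell$, all while keeping the pole structure (at $v = \pm 1/q^2$) transparent enough to read off \emph{both} the main term and the honest secondary term of size $q^{g+2-A/6}$. A secondary subtlety is bookkeeping the exponents of $q$ and the signs $(-1)^{\deg}$ through the double Perron extraction so that the $q^{-A/6}$ and $\zeta_q(1/2)$ come out correctly; I expect this to require the same care as the analogous computation in \cite{F17}.
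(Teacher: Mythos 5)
Your skeleton matches the paper's (unfold $f=\ell^3$, generating series for the $F$-count, Perron in both variables, residues, then local computations), but the analytic heart of the argument is wrong at the two places where the lemma is actually decided. First, there is no pole at $v=-1/q^2$ (nor at $v=-1/q$). In the variable tracking $\deg_{\F_{q^2}}(F)$, the admissible-$F$ count for fixed $\ell$ has Euler product $\prod_{2\mid \deg P,\ P\nmid \ell}\bigl(1+2v^{\deg P/2}\bigr)$; note the factor $1+2v^{\deg P/2}$, not $\bigl(1+v^{\deg P/2}\bigr)^2$ --- your support condition excludes inert primes but forgets that $F$ must not be divisible by a split prime $P=\pi\tilde{\pi}$ of $\F_q[T]$, i.e.\ at most one member of each conjugate pair may divide $F$; dropping this counts characters outside the family and changes every constant. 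Rewriting this product as $\mathcal{Z}_{q^2}(v)/\mathcal{Z}_{q^2}(v^2)$ times correction factors (as in \eqref{sum_F_2}) puts $\mathcal{Z}_{q^2}(v^2)$ in the \emph{denominator}, so $v=\pm 1/q$ are zeros, not poles, and the only singularity in $|v|<1/q$ is the simple pole at $v=1/q^2$. The analogy with Lemma \ref{count-non-kummer} fails because there the variable tracks the conductor degree over $\F_q[T]$, whereas here it tracks the degree over $\F_{q^2}[T]$; and in any case a pole at $v=-1/q$ would only produce a contribution of size $q^{(g+2)/2}$, which is not the second term of the lemma.

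Consequently the term $q^{g+2-\frac{A}{6}}\zeta_q(1/2)\mathcal{A}_{\mathrm{nK}}(1/q^2,1/q)/\zeta_q(3)$ cannot come from the $F$-count at all: it comes from the sharp truncation of the $\ell$-sum, and your tail accounting there is also wrong. Completing the $\ell$-sum to a full Euler product discards $\sum_{n>A/3}q^{n}\,q^{-3n/2}\asymp q^{-A/6}$ relative to the main term, i.e.\ an amount of size $q^{g-A/6}$, not $q^{g-A/2}$; if you dump it into the error you lose precisely the secondary term, which is indispensable because it is what cancels against the dual term in Lemma \ref{lemma_cube+dual}. The paper keeps the cutoff inside a second Perron integral: after taking the residue at $x=1/q^2$ one is left with $\oint \mathcal{A}_{\mathrm{nK}}(1/q^2,u)\bigl((1-qu)(1-q^{3/2}u)(q^{3/2}u)^{A/3}\bigr)^{-1}\,du/u$, whose poles at $u=q^{-3/2}$ and $u=q^{-1}$ give, respectively, the $\zeta_q(3/2)$ main term and the $\zeta_q(1/2)\,q^{-A/6}$ term, and only after extracting both does the remaining contour at $|u|=q^{-\varepsilon}$ yield the stated $O(q^{g-\frac{A}{2}+\varepsilon g})$. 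Your Step 3 is routine once the correct local factors $1+2x^{\deg P/2}$ are in place, but as written Steps 1--2 would not produce the formula of the lemma.
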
 
In combination with the dual term $S_{1,\mathrm{dual}}$ this gives the following result. 
\begin{lem}\label{lemma_cube+dual}
\begin{equation*} 
S_{1,\cube}+S_{1,\mathrm{dual}} =  \frac{q^{g+2} \zeta_q(3/2)}{\zeta_q(3)} \mathcal{A}_{\mathrm{nK}} \left ( \frac{1}{q^2}, \frac{1}{q^{3/2}} \right ) +O \left (q^{g-\frac{A}{2}+\varepsilon g} +q^{\frac{5g}{6}+\varepsilon g} +q^{\frac{3g}{2}-(2-\sigma)A} \right ).
\end{equation*} 
\end{lem}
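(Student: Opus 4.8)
\textbf{Plan of proof for Lemma \ref{lemma_cube+dual}.}

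The strategy is to combine the asymptotic formula for $S_{1,\cube}$ from Lemma \ref{mt_expression_nk} with an evaluation of the dual sum $S_{1,\mathrm{dual}}$, and exhibit the cancellation of the term $\frac{q^{g+2-A/6}\zeta_q(1/2)}{\zeta_q(3)}\mathcal{A}_{\mathrm{nK}}(q^{-2},q^{-1})$ against a main term arising from $S_{1,\mathrm{dual}}$. First I would express $S_{1,\mathrm{dual}}$ in \eqref{dual2} in terms of cubic Gauss sums: for each $F\in\mathcal H_{q^2,g/2+1}$ with all prime factors outside $\F_q[T]$, the sign $\omega(\chi_F)$ is, by Lemma \ref{non-kummer-desc} (or the analogue over $\F_{q^2}$), a normalized Gauss sum $G(\chi_F)$, and the inner sum $\sum_{f}\overline{\chi_F}(f)/q^{\deg(f)/2}$ can be opened up and, after applying the Poisson summation formula of Proposition \ref{prop-Poisson} (over $\F_{q^2}[T]$), the character sum over $f$ turns into a sum of generalized Gauss sums $G_{q^2}(V,F)$. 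This is the same mechanism as in \cite{BY} and \cite{F17}. After swapping the order of summation, one is left with averaging $G_{q^2}(V,F)$ over $F$, which is exactly what Proposition \ref{big-F-tilde-corrected} computes (here $f_2=1$ so the main residue term is present).

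Second, I would apply Proposition \ref{big-F-tilde-corrected} to extract the main term of this average. The residue term contributes a quantity of size $q^{(4/3)(\cdot)}$ depending on the relevant degree; after summing over $V$ and over the length-$(g-A-1)$ and length-$(g-A)$ ranges in \eqref{dual2}, and carrying out the Euler product bookkeeping, the leading piece should organize itself into precisely $-\frac{q^{g+2-A/6}\zeta_q(1/2)}{\zeta_q(3)}\mathcal{A}_{\mathrm{nK}}(q^{-2},q^{-1})$, which then cancels the corresponding term in Lemma \ref{mt_expression_nk}. (The appearance of $A/6$ is natural: the dual sum has length $g-A$, a Gauss sum over $\F_{q^2}$ of a polynomial of degree $\approx g/2$ has size $q^{g/2}$ relative to $q^{\deg}$, and the $1/\sqrt q$ normalizations from the approximate functional equation combine to give the exponent $g+2-A/6$; I would verify the exact matching of constants by comparing the Euler factors coming from $\prod_{P\mid f_1 f_3^*}(1+|P|_q^{-1})^{-1}$ against the product defining $\mathcal A_{\mathrm{nK}}(q^{-2},q^{-1})$ in Lemma \ref{mt_expression_nk}.) The error from the residue term in Proposition \ref{big-F-tilde-corrected}, namely $O(q^{d/3+\varepsilon d}/|f_1|_q^{1/6})$, summed appropriately, should be absorbed into $O(q^{5g/6+\varepsilon g})$, and this is the source of the $q^{5g/6}$ term in the final error.

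Third, the remaining contribution is the contour integral $\frac{1}{2\pi i}\oint_{|u|=q^{-\sigma}}\tilde\Psi_q(f,u)u^{-d}\,\frac{du}{u}$, which by the convexity bound in Proposition \ref{big-F-tilde-corrected} (or Lemma \ref{our-psi}) is $\ll q^{\sigma d}|f|_q^{(3/2-\sigma)/2}$. Summing this over the dual range, with $d\approx g/2$ and $|f|_q$ up to $q^{g-A}$, produces a term of size roughly $q^{(3/2)g/2 - \text{(gain from $A$)}}$; tracking the exponents gives the bound $O(q^{3g/2-(2-\sigma)A})$ recorded in the statement. One also has to check that the secondary main term of $S_{1,\cube}$ not already written (the $q^{g-A/2+\varepsilon g}$ error in Lemma \ref{mt_expression_nk}) is carried along unchanged. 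Finally I would assemble: $S_{1,\cube}+S_{1,\mathrm{dual}} = \frac{q^{g+2}\zeta_q(3/2)}{\zeta_q(3)}\mathcal A_{\mathrm{nK}}(q^{-2},q^{-3/2}) + (\text{cancelled }q^{g+2-A/6}\text{ terms}) + O(q^{g-A/2+\varepsilon g}) + O(q^{5g/6+\varepsilon g}) + O(q^{3g/2-(2-\sigma)A})$, which is the claim.

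\textbf{Main obstacle.} The delicate point is the exact identification of the dual main term with $-\frac{q^{g+2-A/6}\zeta_q(1/2)}{\zeta_q(3)}\mathcal A_{\mathrm{nK}}(q^{-2},q^{-1})$ — i.e.\ checking that the constant coming out of Proposition \ref{big-F-tilde-corrected} (through $\rho(1,[d+\deg f_1]_3)$, the factor $\overline{G_q(1,f_1)}/|f_1|_q^{2/3}$, the Euler product $\prod_{P\mid f_1f_3^*}(1+|P|_q^{-1})^{-1}$, and the $1/\zeta_q(2)$) reassembles, after summing over $V$, $a$, and the truncation ranges, into precisely the Euler product $\prod_{\deg R\text{ even}}(1-3/(|R|_q+1)^2)\prod_{\deg R\text{ odd}}(1-1/(|R|_q^2+1))$ up to the $\zeta_q(1/2)/\zeta_q(3)$ factor. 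This is a nontrivial computation because the Gauss sums live over $\F_{q^2}$ while the Dirichlet series of the $L$-function lives over $\F_q$, so one must carefully keep track of which primes split and how the norms $|R|_{q^2}=|R|_q^2$ enter; getting the bookkeeping exactly right so the cancellation with Lemma \ref{mt_expression_nk} is clean is the heart of the argument, and it is where the ``magical'' cancellation advertised in the introduction actually happens.
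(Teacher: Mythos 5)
Your outline follows the paper's route in broad strokes (extract a $q^{g-A/6}$ main term from $S_{1,\mathrm{dual}}$ via Proposition \ref{big-F-tilde-corrected}, cancel it against the secondary term of Lemma \ref{mt_expression_nk}, and bound the leftover contour integral with the convexity bound from Lemma \ref{our-psi}), but several load-bearing steps are missing or misdescribed. First, the mechanism you propose for producing Gauss sums is not right: no Poisson summation in $f$ is applied (nor would Proposition \ref{prop-Poisson}, applied to the sum over $f\in\mathcal{M}_q$ against the modulus-$F\tilde{F}$ character, produce $G_{q^2}(V,F)$ — it would produce Gauss sums over $\F_q[T]$ to the modulus $F\tilde{F}$). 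The correct step is that $\omega(\chi_F)=q^{-g/2-1}G(\chi_F)$ by Corollary \ref{sign-GS}, that $G(\chi_F)=G_{q^2}(1,F)$ by the Chinese Remainder Theorem, and that $G_{q^2}(1,F)\overline{\chi_F}(f)=G_{q^2}(f,F)$ for $(f,F)=1$; the dual sum is then already a sum of shifted Gauss sums, with no Poisson step. Second, you never address the constraint $P\mid F\Rightarrow P\notin\F_q[T]$: Proposition \ref{big-F-tilde-corrected} applies to sums over all monic $F$ coprime to the shift, so one must first sieve with a M\"obius sum over $D\in\F_q[T]$, and the resulting factors $G_{q^2}(f,D)$ and $\overline{G_{q^2}(1,f_1D)}$ only sum cleanly because of Lemma \ref{no-oscillation} ($G_{q^2}(1,h)=q^{\deg(h)}$ and $\chi_h(f)=1$ for coprime $h,f\in\F_q[T]$); without this non-oscillation input the sums over $f$ and $D$ cannot be assembled into Euler products at all.

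Third, and most importantly, the entire content of the lemma is the exact identification of the dual main term with $-q^{g+2-\frac{A}{6}}\zeta_q(1/2)\,\mathcal{A}_{\mathrm{nK}}(q^{-2},q^{-1})/\zeta_q(3)$, and this is precisely what you defer ("should organize itself", your stated main obstacle). In the paper it requires forming the generating functions $\mathcal{J}_{\mathrm{nK}}(w)$ in $D$ and $\mathcal{H}_{\mathrm{nK}}(u,w)=\mathcal{Z}_q(u q^{-5/6})\mathcal{B}_{\mathrm{nK}}(u,w)$ in $f$ (with $f_2=1$ and weights $q^{-\deg(f)/2-\deg(f_1)/3}$), a double contour shift picking up the poles at $w=1$ and $u=q^{-1/6}$, a case analysis of $\rho(1,[g/2+1+\deg(f_1)]_3)$ according to $\deg(f)\bmod 3$, the combination of the two ranges $\deg(f)\le g-A-1$ and $\deg(f)=g-A$ into the $\zeta_q(1/2)$ factor, and finally the Euler-product identity $\mathcal{K}_{\mathrm{nK}}(q^{-1/6})=\mathcal{A}_{\mathrm{nK}}(1/q^2,1/q)$ together with $\zeta_{q^2}(2)=\zeta_q(3)$; none of this is carried out, so the cancellation is asserted rather than proved. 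A smaller bookkeeping slip: the $q^{5g/6+\varepsilon g}$ error does not come from the $O(q^{d/3+\varepsilon d}|f_1|_q^{-1/6})$ term of Proposition \ref{big-F-tilde-corrected} (that contributes only $O(q^{(1/2+\varepsilon)g-A/6})$); it comes from the leftover $u$-integral, equivalently the poles at $u^3=1$, after extracting the residue at $u=q^{-1/6}$ in the main-term evaluation, cf.\ Remark \ref{remark56}.
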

 We also have the following upper bound for $S_{1, \neq \cube}$.
 \begin{lem} We have that 
$$S_{1, \neq \cube} \ll q^{\frac{g+A}{2}+\varepsilon g}.$$ \label{non-cubes-2}
 \end{lem}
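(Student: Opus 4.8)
\textbf{Proof plan for Lemma \ref{non-cubes-2}.}

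The plan is to bound $S_{1,\neq \cube}$ by expressing the inner sum $\sum_F \chi_F(f)$ (where $F$ runs over squarefree polynomials in $\F_{q^2}[T]$ of degree $g/2+1$ with no prime factor lying in $\F_q[T]$) via a character sum that can be opened up using the Poisson summation formula of Proposition \ref{prop-Poisson}, applied over $\F_{q^2}[T]$. The key point is that for $f$ not a perfect cube, $\chi_f$ is a non-principal character, so there is genuine cancellation. First I would remove the squarefree condition on $F$ and the condition $P \mid F \Rightarrow P \notin \F_q[T]$ by Möbius inversion, writing $F = \ell^2 m$ with $\ell$ supported on $\F_q$-primes-or-not appropriately and using $\mu$ over $\F_{q^2}[T]$; this introduces sums over short divisors at the cost of $|f|_q^{\varepsilon}$ factors and some loss controlled by the length of $F$. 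After this reduction I am left with estimating $\sum_{m \in \mathcal{M}_{q^2, \cdot}} \chi_f(m)$ for suitable degrees, twisted by characters that detect the Galois-conjugate-free condition.

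The main term of the argument is to apply Proposition \ref{prop-Poisson} (over $\F_{q^2}$) to the sum $\sum_{m \in \mathcal{M}_{q^2,k}} \chi_f(m)$. Since $f$ is not a cube, $\deg(f) \not\equiv 0 \pmod 3$ need not hold — $f$ could be a non-cube of degree divisible by $3$ — so I would split into the two cases of Proposition \ref{prop-Poisson} accordingly. In either case the dual sum is over $V \in \mathcal{M}_{q,\deg(f)-k-1}$ (roughly), weighted by generalized Gauss sums $G_{q^2}(V,f)$, and the crucial input is the trivial bound $|G_{q^2}(V,f)| \le |f|_{q^2}^{1/2} = |f|_q$ together with the fact that $V$ ranges over a set of size $\ll q^{\deg(f)-k}$. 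Since $\deg(f) \leq A+1$ and $k$ is of size $g/2$, the dual length is very short (often empty), and the resulting bound is of the shape $q^{k}\cdot |f|_q^{1/2} \cdot |f|_q \cdot |f|_q^{-1} \cdot q^{\deg(f)-k-1} \cdot q^{\varepsilon}$ — I will need to track the powers of $q$ carefully here, but the upshot is that each inner sum over $F$ is $\ll q^{g/2} |f|_q^{1/2+\varepsilon}$ after accounting for the normalization $q^k/|f|_{q^2}$ in Poisson. Summing against $\sum_{f \in \mathcal{M}_{q,\leq A}} |f|_q^{-1/2} \cdot q^{g/2} |f|_q^{1/2+\varepsilon} \ll q^{g/2+\varepsilon g}\sum_{f\in\mathcal{M}_{q,\le A}}|f|_q^{\varepsilon} \ll q^{g/2}\cdot q^{A(1+\varepsilon)}$, and similarly for the second (length-$A+1$) piece where the extra $(1-\sqrt q)^{-1}$ is harmless, yields the claimed bound $q^{(g+A)/2+\varepsilon g}$ — here one uses that the terms $|f|_q^{-1/2}$ from the approximate functional equation and $|f|_q^{1/2}$ from the Gauss sum cancel, leaving essentially $|\mathcal{M}_{q,\le A}| \asymp q^A$ times $q^{g/2}$, whose square root bookkeeping gives $q^{(g+A)/2}$.

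The main obstacle I anticipate is correctly handling the condition that $F$ has no prime factor lying in $\F_q[T]$ (equivalently, no prime factor fixed by the Galois involution $F \mapsto \tilde F$) after passing to the Poisson-dual side: this condition does not interact cleanly with the modulus $f \in \F_q[T] \subseteq \F_{q^2}[T]$, and the Möbius sieve to remove it must be arranged so that the sieving variable stays short enough not to destroy the savings. A secondary technical point is that $f$ lives in $\F_q[T]$ while the characters $\chi_F$ and the Poisson summation naturally operate over $\F_{q^2}[T]$, so I must be careful that $G_{q^2}(V,f)$ is the right Gauss sum and that the orthogonality step vanishing $G_{q^2}(0,f)$ is valid precisely when $3 \nmid \deg(f)$; when $3 \mid \deg(f)$ but $f \neq \cube$ the $G_{q^2}(0,f)$ term is nonzero but $\chi_f$ is still non-principal on units so the main $V=0$ contribution $\sum_{m} \chi_f(m) \cdot (\text{trivial})$ still exhibits cancellation coming from the $\sum_{V\ne 0}$ terms — this case needs the full strength of Proposition \ref{prop-Poisson} and a short additional argument that the leftover is absorbed into the stated error. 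Once these sieving and field-of-definition subtleties are pinned down, the remaining estimates are routine geometric-series bounds.
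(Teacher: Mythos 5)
There is a genuine quantitative gap: Poisson summation with the pointwise bound $|G_{q^2}(V,f)|\le |f|_{q^2}^{1/2}$ can never produce the stated estimate. Carrying out your own bookkeeping honestly, the normalization $q^{2m}/|f|_{q^2}$, the Gauss-sum bound $|f|_{q^2}^{1/2}=|f|_q$, and the count $q^{2(\deg(f)-m-1)}$ of dual frequencies $V\in\mathcal{M}_{q^2,\deg(f)-m-1}$ combine to give, for each inner sum over $F$ of length $g/2+1$, a bound of P\'olya--Vinogradov quality $\ll q^{g/2}|f|_q^{1/2+\varepsilon}$ (as you yourself state). Feeding this into $\sum_{f\in\mathcal{M}_{q,\le A}}|f|_q^{-1/2}$ gives $q^{g/2}\sum_{n\le A}q^{n}\cdot q^{-n/2}\cdot q^{n/2}\asymp q^{g/2+A}$, not $q^{(g+A)/2}$: the step where you pass from ``$q^{A}$ times $q^{g/2}$'' to $q^{(g+A)/2}$ by ``square root bookkeeping'' is simply an arithmetic error, since $q^{g/2}\cdot q^{A}=q^{g/2+A}$. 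With the eventual choice $A=3[g/4]$ this is $q^{5g/4}$, larger than the main term $q^{g+2}$, so the argument proves nothing. What the lemma really requires is square-root cancellation in the conductor ($f$-)aspect, i.e.\ an inner bound $\ll q^{g/2}|f|_q^{\varepsilon}$; the paper gets exactly this by writing the generating function of $\sum_{F\in\mathcal{H}_{q^2},\,(F,D)=1}\chi_F(f)$ as $\mathcal{L}_{q^2}(u,\chi_f)/\mathcal{L}_{q^2}(u^2,\overline{\chi_f})$ times finitely many Euler factors (the squarefree condition is absorbed into the ratio of $L$-functions, so no extra M\"obius sieve over $\ell$ is needed), applying Perron on $|u|=1/q$, and invoking the Lindel\"of upper bound of Lemma \ref{lindelof} for the numerator together with the lower bound of Lemma \ref{folednil} for the denominator — this is where the Riemann Hypothesis for function fields enters, and it is indispensable here. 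Salvaging your Poisson route would require exhibiting cancellation in the $V$-sum of $G_{q^2}(V,f)$ (which, via $G_{q^2}(V,f)=\overline{\chi_f}(V)G_{q^2}(1,f)$ for $(V,f)=1$, again reduces to an $L$-function/RH input), so you cannot avoid this ingredient by trivial bounds.

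A secondary inaccuracy: when $3\mid\deg(f)$ but $f$ is not a cube, you assert that the $G_{q^2}(0,f)$ term is nonzero. In fact $G_{q^2}(0,f)=\sum_{u\!\!\pmod f}\chi_f(u)$ vanishes whenever $\chi_f$ is non-principal modulo $f$, which holds precisely when $f$ is not a cube, irrespective of $\deg(f)\bmod 3$; so that case is easier than you fear, but it does not affect the main obstruction above.
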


 \subsection{The main term}
 Here we will prove Lemma \ref{mt_expression_nk}. 
In equation \eqref{mt}, write $f=k^3$. Recall that $A \equiv 0 \pmod 3$. Then $S_{1,\cube}$ can be rewritten as
 \begin{equation*}
S_{1,\cube} = \sum_{k \in \mathcal{M}_{q, \leq \frac{A}{3}}} \frac{1}{q^{3 \deg(k)/2}} \sum_{\substack{F \in \mathcal{H}_{q^2, g/2+1} \\ (F,k) =1 \\ P \mid F \Rightarrow P \not\in \F_q[T]}} 1.
\end{equation*}
We first look at the generating series of the sum over $F$.  We use the fact that 
\begin{equation}
\sum_{\substack{D \in \F_q[T]\\D \mid F}} \mu(D) = \begin{cases} 1 & \mbox{if $F$ has no prime divisor in $\F_q[T]$,} \\ 0 & \mbox{otherwise}, \end{cases}
\label{sieve_F}
\end{equation}
where we have taken $\mu$ over $\F_q[T]$. 
Then
\begin{equation}
\sum_{\substack{F \in \mathcal{H}_{q^2} \\ (F,k) =1 \\ P \mid F \Rightarrow P \not\in \F_q[T]}} x^{\deg(F)} 
= \sum_{\substack{F \in \mathcal{H}_{q^2} \\ (F,k)=1}} x^{\deg(F)} \sum_{\substack{D \in \mathbb{F}_q[T] \\ D\mid F}} \mu(D) = \sum_{\substack{D \in \mathbb{F}_q[T] \\ (D,k)=1}} \mu(D) x^{\deg(D)} \sum_{\substack{F \in \mathcal{H}_{q^2} \\ (F,Dk)=1}} x^{\deg(F)}.
\label{sum_F}
\end{equation}
We evaluate the sum over $F$ in the equation above and we have that
$$ \sum_{\substack{F \in \mathcal{H}_{q^2} \\ (F,kD)=1}} x^{\deg(F)} = \prod_{\substack{P \in \mathbb{F}_{q^2}[T] \\ P \nmid Dk}} (1+x^{\deg(P)}) = \frac{ \mathcal{Z}_{q^2}(x)}{\mathcal{Z}_{q^2}(x^2)\displaystyle  \prod_{\substack{P \in \mathbb{F}_{q^2}[T] \\ P \mid Dk}} (1+x^{\deg(P)})},$$
 so from equation \eqref{sum_F} and the above it follows that
$$ \sum_{\substack{F \in \mathcal{H}_{q^2} \\ (F,k) =1 \\ P \mid F \Rightarrow P \not\in \F_q[T]}} x^{\deg(F)}  = \frac{ \mathcal{Z}_{q^2}(x)}{\mathcal{Z}_{q^2}(x^2) \displaystyle \prod_{\substack{P \in \mathbb{F}_{q^2}[T] \\ P\mid k}} (1+x^{\deg(P)})} \sum_{\substack{D \in \mathbb{F}_q[T] \\ (D,k)=1}} \frac{ \mu(D)x^{\deg(D)}}{ \displaystyle \prod_{\substack{P \in \mathbb{F}_{q^2}[T] \\ P\mid D}} (1+x^{\deg(P)})}.$$
Now we write down an Euler product for the sum over $D$ and we have that
\begin{align}
\sum_{\substack{D \in \mathbb{F}_q[T] \\ (D,k)=1}} \frac{ \mu(D)x^{\deg(D)}}{ \displaystyle \prod_{\substack{P \in \mathbb{F}_{q^2}[T] \\ P\mid D}} (1+x^{\deg(P)})} = \prod_{\substack{R \in \mathbb{F}_q[T] \\ (R,k)=1 \\ \deg(R)  \,\mathrm{ odd}}} \left ( 1- \frac{x^{\deg(R)}}{1+x^{\deg(R)}} \right )\prod_{\substack{R \in \mathbb{F}_q[T] \\ (R,k)=1 \\ \deg(R)  \,\mathrm{ even}}} \left ( 1- \frac{x^{\deg(R)}}{(1+x^{\frac{\deg(R)}{2}})^2} \right ), \label{sum_d}
\end{align} where the product over $R$ is over monic, irreducible polynomials. Let $A_R(x)$ denote the first Euler factor above and $B_R(x)$ the second. Then we rewrite the sum over $D$ as
$$ \eqref{sum_d} =  \frac{\displaystyle \prod_{\substack{R \in \mathbb{F}_q[T] \\ \deg(R)  \,\mathrm{ odd}}} A_R(x)  \prod_{\substack{R \in \mathbb{F}_q[T] \\ \deg(R)  \,\mathrm{ even}}} B_R(x)}{\displaystyle \prod_{\substack{R \in \mathbb{F}_q[T] \\ R\mid k \\ \deg(R)  \,\mathrm{ odd}}} A_R(x)  \prod_{\substack{R \in \mathbb{F}_q[T] \\ R\mid k \\ \deg(R)  \,\mathrm{ even}}} B_R(x) },$$ and putting everything together, it follows that
\begin{equation}
\sum_{\substack{F \in \mathcal{H}_{q^2} \\ (F,k) =1 \\ P \mid F \Rightarrow P \not\in \F_q[T]}} x^{\deg(F)}  = \frac{ \mathcal{Z}_{q^2}(x) \displaystyle \prod_{\substack{R \in \mathbb{F}_q[T] \\ \deg(R)  \,\mathrm{ odd}}} A_R(x)  \prod_{\substack{R \in \mathbb{F}_q[T] \\ \deg(R)  \,\mathrm{ even}}} B_R(x) }{\mathcal{Z}_{q^2}(x^2) \displaystyle \prod_{\substack{P \in \mathbb{F}_{q^2}[T] \\ P\mid k}} (1+x^{\deg(P)}) \displaystyle \prod_{\substack{R \in \mathbb{F}_q[T] \\ R\mid k \\ \deg(R)  \,\mathrm{ odd}}} A_R(x)  \prod_{\substack{R \in \mathbb{F}_q[T] \\ R\mid k \\ \deg(R)  \,\mathrm{ even}}} B_R(x)  }  .
\label{sum_F_2}
\end{equation}

We now introduce the sum over $k$ and we have
\begin{align*}
& \sum_{k \in \mathcal{M}_q}  \frac{ u^{\deg(k)} }{ \displaystyle \prod_{\substack{P \in \mathbb{F}_{q^2}[T] \\ P\mid k}} (1+x^{\deg(P)}) \displaystyle \prod_{\substack{R \in \mathbb{F}_q[T] \\ R\mid k \\ \deg(R)  \,\mathrm{ odd}}} A_R(x)  \prod_{\substack{R \in \mathbb{F}_q[T] \\ R\mid k \\ \deg(R)  \,\mathrm{ even}}} B_R(x) } \\
=&\prod_{\substack{R \in \mathbb{F}_q[T] \\ \deg(R)  \,\mathrm{ odd}}} \left[ 1+ \frac{u^{\deg(R)}}{(1+x^{\deg(R)}) A_R(x) (1-u^{\deg(R)})}\right] \prod_{\substack{R \in \mathbb{F}_q[T] \\ \deg(R)  \,\mathrm{ even}}} \left[1+ \frac{u^{\deg(R)}}{(1+x^{\frac{\deg(R)}{2}})^2 B_R(x) (1-u^{\deg(R)})} \right],
\end{align*}
where $R$ denotes a monic irreducible polynomial in $\mathbb{F}_q[T]$. Combining the equation above and \eqref{sum_F_2} we get that the generating series for the double sum over $F$ and $k$ is equal to 
\begin{align*}
\sum_{k \in \mathcal{M}_q} u^{\deg(k)}\sum_{\substack{F \in \mathcal{H}_{q^2} \\ (F,k) =1 \\ P \mid F \Rightarrow P \not\in \F_q[T]}} x^{\deg(F)}  =&\frac{ \mathcal{Z}_{q^2}(x)}{\mathcal{Z}_{q^2}(x^2)} \prod_{\substack{R \in \mathbb{F}_q[T] \\ \deg(R)  \,\mathrm{ odd}}} \frac{1}{(1+x^{\deg(R)})(1-u^{\deg(R)})} \nonumber  \\
& \times \prod_{\substack{R \in \mathbb{F}_q[T] \\ \deg(R)  \,\mathrm{ even}}} \frac{1}{(1+x^{\frac{\deg(R)}{2}})^2} \left ( 1+2x^{\frac{\deg(R)}{2}} + \frac{u^{\deg(R)}}{1-u^{\deg(R)}}\right )\nonumber  \\
=&\mathcal{Z}_q (u) \frac{ \mathcal{Z}_{q^2}(x)}{\mathcal{Z}_{q^2}(x^2)} \mathcal{A}_{\mathrm{nK}}(x,u),
 \label{gen_series}
\end{align*} 
where
\begin{equation}
\mathcal{A}_{\mathrm{nK}}(x,u) =  \prod_{\substack{R \in \mathbb{F}_q[T] \\ \deg(R)  \,\mathrm{ odd}}} \frac{1}{1+x^{\deg(R)}} \prod_{\substack{R \in \mathbb{F}_q[T] \\ \deg(R)  \,\mathrm{ even}}}\frac{1}{(1+x^{\frac{\deg(R)}{2}})^2} \left ( 1+2x^{\frac{\deg(R)}{2}} (1-u^{\deg(R)}) \right ). \label{a-euler}
\end{equation}
 Using Perron's formula  (Lemma \ref{perron}) twice in \eqref{mt} and the expression of the generating series above, we get that
$$ S_{1,\cube} = \frac{1}{(2 \pi i)^2} \oint \oint \frac{ \mathcal{A}_{\mathrm{nK}}(x,u)(1-q^2x^2)}{(1-qu)(1-q^2x)(1-q^{3/2}u) x^{\frac{g}{2}+1}(q^{3/2}u)^{\frac{A}{3}}} \, \frac{dx}{x} \, \frac{du}{u},$$
where we are integrating along circles of radii $|u|<\frac{1}{q^{3/2}}$ and $|x|<\frac{1}{q^2}$. First note that $\mathcal{A}_{\mathrm{nK}}(x,u)$ is analytic for $|x|<1/q, |xu|<1/q, |xu^2|<1/q^2$. We initially pick $|u|=1/q^{\frac{3}{2}+\varepsilon}$ and $|x|=1/q^{2+\varepsilon}$. We shift the contour over $x$ to $|x|=1/q^{1+\varepsilon}$ and we encounter a pole at $x=1/q^2$. Note that the new double integral will be bounded by $O(q^{\frac{g}{2}+\varepsilon g})$. Then
$$ S_{1,\cube} =\frac{q^{g+2}}{\zeta_q(3)} \frac{1}{2 \pi i} \oint \frac{\mathcal{A}_{\mathrm{nK}}( \tfrac{1}{q^2},u) }{(1-qu)(1-q^{3/2}u) (q^{3/2}u)^{\frac{A}{3}}} \, \frac{du}{u}+ O(q^{\frac{g}{2}+\varepsilon g}).$$
Now we shift the contour of integration to $|u|=q^{-\varepsilon}$ and we encounter two simple poles: one at $u=1/q^\frac{3}{2}$ and one at $u=1/q$. We evaluate the residues and then 
\begin{equation*}
S_{1,\cube} = \frac{ q^{g+2} \zeta_q(3/2)}{\zeta_q(3)} \mathcal{A}_{\mathrm{nK}} \left ( \frac{1}{q^2}, \frac{1}{q^{3/2}} \right ) + \frac{q^{g+2-\frac{A}{6}} \zeta_q(1/2)}{\zeta_q(3)} \mathcal{A}_{\mathrm{nK}} \left (\frac{1}{q^2}, \frac{1}{q}  \right ) +O(q^{g-\frac{A}{2}+\varepsilon g}),
\end{equation*} which finishes the proof of Lemma \ref{mt_expression_nk}.

 \subsection{The contribution from non-cubes}

 Recall that $S_{1, \neq \cube}$ is the term with $f$ not a cube in $S_{1, \mathrm{principal}}$ of \eqref{princ2}. Since $A \equiv 0 \pmod 3$, the term we want to bound is equal to 


 $$
S_{1, \neq \cube} =  \sum_{\substack{f \in \mathcal{M}_{q, \leq A} \\ f \neq \tinycube}} \frac{1}{q^{\deg(f)/2}} \sum_{\substack{F \in \mathcal{H}_{q^2, \frac{g}{2}+1} \\ P \mid F \Rightarrow P \not\in \F_q[T] }} {\chi_F}(f)+ \frac{1}{1-\sqrt{q}} \sum_{f \in \mathcal{M}_{q,A+1}} \frac{1}{q^{\deg(f)/2}}\sum_{\substack{F \in \mathcal{H}_{q^2, \frac{g}{2}+1} \\ P \mid F \Rightarrow P \not\in \F_q[T]}}
 {\chi_F}(f).$$
Let $S_{11}$ be the first term above and $S_{12}$ the second. Note that it is enough to bound $S_{11}$, since bounding $S_{12}$ will follow in a similar way. We use equation \eqref{sieve_F} again for the sum over $F$ and we have
\begin{equation}
S_{11}=  \sum_{\substack{f \in \mathcal{M}_{q, \leq A} \\ f \neq \tinycube}} \frac{1}{q^{\deg(f)/2}} \sum_{\substack{D \in \mathcal{M}_{q, \leq \frac{g}{2}+1} \\ (D,f)=1}} \mu(D) \sum_{\substack{F \in \mathcal{H}_{q^2,\frac{g}{2}+1-\deg(D)} \\ (F,D)=1}} \chi_F(f).
\label{non_cube_term}
\end{equation}
Note that we used the fact that $\chi_D(f)=1$ since $D,f \in \mathbb{F}_q[T]$. Now we look at the generating series for the sum over $F$. We have the following.
\begin{align*}
 \sum_{\substack{F \in \mathcal{H}_{q^2} \\ (F,D)=1}} \chi_F(f) u^{\deg(F)} = \prod_{\substack{P \in \mathbb{F}_{q^2}[T] \\ P \nmid Df}} \left ( 1+\chi_P (f) u^{\deg(P)} \right ) = \frac{ \mathcal{L}_{q^2} \left(u, \chi_f\right) }{\mathcal{L}_{q^2}(u^2, \overline{\chi_f})} \prod_{\substack{P \in \mathbb{F}_{q^2}[T] \\ P \nmid f \\ P\mid D}} \frac{  1- \chi_P(f) u^{\deg(P)} }{   1- \overline{\chi_P}(f) u^{2 \deg(P)} }.
\end{align*}
Using Perron's formula  (Lemma \ref{perron}) and the generating series above, we have
\begin{align*}
 \sum_{\substack{F \in \mathcal{H}_{q^2,\frac{g}{2}+1-\deg(D)} \\ (F,D)=1}} \chi_F(f) = \frac{1}{2 \pi i} \oint \frac{ \mathcal{L}_{q^2} \left(u,\chi_f\right) }{\mathcal{L}_{q^2}(u^2, \overline{\chi_f}) u^{\frac{g}{2}+1-\deg(D)}} \prod_{\substack{P \in \mathbb{F}_{q^2}[T] \\ P \nmid f \\ P\mid D}} \frac{  1- \chi_P(f) u^{\deg(P)} }{   1- \overline{\chi_P}(f) u^{2 \deg(P)} } \, \frac{du}{u},
\end{align*}
where we are integrating along a circle of radius $|u|= \frac{1}{q}$ around the origin. Now we use the Lindel\"of bound for the $L$--function in the numerator and a lower bound for the $L$--function in the denominator. We have, by Lemmas \ref{lindelof} and \ref{folednil},
$$ \left|  \mathcal{L}_{q^2} \left(u,\chi_f\right) \right| \ll q^{2 \varepsilon \deg(f)}, \, \, \, \left| \mathcal{L}_{q^2} (u^2, \overline{\chi_f})  \right| \gg q^{-2 \varepsilon\deg(f)}.$$



Then
$$  \sum_{\substack{F \in \mathcal{H}_{q^2,\frac{g}{2}+1-\deg(D)} \\ (F,D)=1}} \chi_F(f) \ll q^{\frac{g}{2}-\deg(D)} q^{4 \varepsilon \deg(f)+2 \varepsilon \deg(D)}.$$
Trivially bounding the sums over $D$ and $f$ in \eqref{non_cube_term} gives a total upper bound of
\begin{equation*}
S_{11}\ll q^{\frac{A+g}{2}+\varepsilon g},
\end{equation*}
and similarly for $S_{12}$.  This finishes the proof of Lemma \ref{non-cubes-2}.

 \subsection{The dual term}
 Here we will evaluate $S_{1,\mathrm{dual}}$ and prove Lemma \ref{lemma_cube+dual}. Recall the expression \eqref{dual2} for $S_{1,\mathrm{dual}}$. We further write $S_{1,\mathrm{dual}} = S_{11,\mathrm{dual}}+S_{12,\mathrm{dual}}$.
 
For $F$ as in the expression \eqref{dual2}, we have that $\chi_F$ is an even primitive character over $\F_q[T]$ of modulus $F\tilde{F}$ (recall that $\tilde{F}$ is the Galois conjugate of $F$).
The modulus has degree $2\deg(F)=g+2$ and by Corollary \ref{sign-GS} the sign of the functional equation is 
$$\omega(\chi_F) = q^{-\frac{g}{2}-1} G(\chi_F),$$
where the Gauss sum is 
$$
G(\chi_F) = \sum_{\alpha \in \F_q[T]/(F \tilde{F})} \chi_F(\alpha) \; e_q \left( \frac{\alpha}{F \tilde{F}} \right).
$$

By the Chinese Remainder Theorem, since $F$ and $\tilde{F}$ are co-prime, if $\beta$ runs over the classes in $\F_{q^2}[T]/(F)$ then 
$\beta \tilde{F}+\tilde{\beta}F$  runs over the classes in $\F_q[T]/(F\tilde{F})$. Then
\begin{align*}
G(\chi_F) =& \sum_{\beta \in \F_{q^2}[T]/(F)} \chi_F(\beta \tilde{F}) \; e_q \left( \frac{\beta \tilde{F}+\tilde{\beta}F}{F \tilde{F}} \right)\\
=& \sum_{\beta \in \F_{q^2}[T]/(F)} \chi_F(\beta ) \; e_{q^2} \left( \frac{\beta}{F} \right)\\
=& G_{q^2}(1, F), 
\end{align*}
where we have used that $\chi_F(\tilde{F})=1$ due to cubic reciprocity. 

Using the fact that
$G_{q^2}(1,F)\overline{\chi_F}(f)=G_{q^2}(f,F)$ when $(f,F)=1$ and $\overline{\chi_F}(f)=0$ otherwise, we get 
 \begin{equation} \label{dual21}
S_{11,\mathrm{dual}}=q^{-\frac{g}{2}-1} \sum_{f \in \mathcal{M}_{q,\leq g-A-1}} \frac{1}{q^{\deg(f)/2}} \sum_{\substack{F \in \mathcal{H}_{q^2,\frac{g}{2}+1} \\ (F,f)=1 \\ P \mid F \Rightarrow P \not\in \F_q[T] }}
G_{q^2}(f,F),
\end{equation} 
and
\begin{equation} \label{dual22}
S_{12,\mathrm{dual}} = \frac{q^{-\frac{g}{2}-1}}{1-\sqrt{q}} \sum_{f \in \mathcal{M}_{q,g-A}} \frac{1}{q^{\deg(f)/2}} \sum_{\substack{F \in \mathcal{H}_{q^2,\frac{g}{2}+1} \\ (F,f)=1 \\ P \mid F \Rightarrow P \not\in \F_q[T]}}
G_{q^2}(f,F).
\end{equation}
 We first prove the following important feature of $G_{q^2}(1,f)$. 
\begin{lem} \label{no-oscillation}
Let $f \in \F_q[T]$ be square-free. Then 
\[G_{q^2}(1,f)=q^{\deg(f)}.\]
\end{lem}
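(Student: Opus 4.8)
The plan is to exploit the fact that $f \in \F_q[T]$ is square-free of degree $n$ and view the cubic residue symbol $\chi_f$ over $\F_{q^2}[T]$ through its factorization over primes of $\F_q[T]$. First I would factor $f = P_1 \cdots P_s$ with the $P_i$ distinct primes in $\F_q[T]$, and use the multiplicativity relation in Lemma \ref{gauss}(i) (applied over $\F_{q^2}$, which is legitimate since $q^2 \equiv 1 \pmod 6$ when $q$ is odd) together with the cubic reciprocity law in $\F_{q^2}[T]$ to reduce the computation of $G_{q^2}(1,f) = \prod_j \overline{\chi_{\prod_{\ell<j} P_\ell}}(P_j) \, G_{q^2}(1,P_j)$, and then absorb the reciprocity factors. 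Since each $P_j \in \F_q[T]$, the product $\prod_{j}\overline{\chi_{\prod_{\ell<j}P_\ell}}(P_j) = \prod_{\ell<j}\overline{\chi_{P_\ell}}(P_j)$; pairing the $(\ell,j)$ and $(j,\ell)$ terms and using $\chi_{P_\ell}(P_j)\chi_{P_j}(P_\ell) = \chi_{P_\ell}(P_j)\overline{\chi_{P_j}(P_\ell)}^{-1}$ — more precisely, because $P_\ell, P_j$ both lie in $\F_q[T]$, the key observation is that $\chi_{P_j}(P_\ell)$ is a norm-type quantity that equals $1$, so these cross terms vanish. I would state this cleanly: for $P, R$ distinct primes of $\F_q[T]$, $\chi_R(P) = 1$ as a cubic symbol over $\F_{q^2}[T]$, because $P^{(q^{2\deg R}-1)/3} \equiv 1 \pmod R$ (indeed $P \bmod R$ lies in $\F_{q^{\deg R}}^* \subseteq \F_{q^{2\deg R}}^*$, and $(q^{\deg R}-1) \mid (q^{2\deg R}-1)/3$ since $q$ is odd).

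Granting that, the problem reduces to a single prime: I need $G_{q^2}(1, P) = |P|_{q^2}^{1/2}\cdot(\text{something of absolute value }1)$ refined to the exact value $q^{\deg P}$. Here is the crucial point: since $P \in \F_q[T]$, its degree over $\F_q$ is $\deg P$, but over $\F_{q^2}$ it has degree $\deg P$ as well, so $|P|_{q^2} = q^{2\deg P}$, and Lemma \ref{gauss}(ii) with $V = 1$ (so $\alpha = 0$, $i = 1$) gives $G_{q^2}(1,P) = \epsilon(\chi_P)\omega(\chi_P)\chi_P(1) |P|_{q^2}^{1/2} = \epsilon(\chi_P)\omega(\chi_P) q^{\deg P}$ when $3 \nmid \deg_{\F_{q^2}}(P) = \deg P$, and $= \omega(\chi_P) q^{\deg P}$ when $3 \mid \deg P$. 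So everything comes down to showing $\epsilon(\chi_P)\omega(\chi_P) = 1$ (resp. $\omega(\chi_P) = 1$) for $P$ a prime of $\F_q[T]$ regarded over $\F_{q^2}$. I expect this to be the main obstacle, and the mechanism should be: the character $\chi_P$ over $\F_{q^2}[T]$, restricted to a prime coming from $\F_q[T]$, is "self-conjugate" in a suitable sense — precisely, $\chi_P$ agrees with $\overline{\chi_P} \circ \mathrm{Frob}$, or equivalently $G_{q^2}(1,P)$ is forced to be real and positive because the Galois conjugate $\tilde P = P$ is fixed, so the Gauss sum equals its own complex conjugate up to the $\epsilon$-factor, pinning it to the positive square root.

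Concretely, I would argue as follows. The restriction of $\chi_P$ (a character mod $P$ over $\F_{q^2}[T]$) to $\F_{q}^* \subseteq \F_{q^2}^*$ is $\alpha \mapsto \Omega^{-1}(\alpha^{\frac{q^2-1}{3}\deg P})$, and since $q$ is odd with — wait, we are in the case $q \equiv 2 \pmod 3$, but that is irrelevant for this sub-lemma since we work over $\F_{q^2}$ where $q^2 \equiv 1 \pmod 3$; the point is $(q-1) \mid \frac{q^2-1}{3}$ exactly when... actually $\frac{q^2-1}{3} = \frac{(q-1)(q+1)}{3}$, and $3 \mid (q+1)$ iff $q \equiv 2 \pmod 3$, so in our setting $(q-1) \mid \frac{q^2-1}{3}$, hence $\chi_P$ is trivial on $\F_q^*$, i.e. $\chi_P$ is "defined over $\F_q$" in the sense that it is $\F_q$-rational. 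This rationality forces $G_{q^2}(1,P) \in \F_q$-... no — forces the standard Gauss sum $\tau(\chi_P|_{\F_{q^2}^*})$... I would instead directly invoke: when a multiplicative character of $\F_{q^2}[T]/(P)$ factors through the norm to $\F_q[T]/(P_0)$ for a prime $P_0$ of $\F_q[T]$ with $P = P_0$, Hasse–Davenport or a direct orthogonality computation pins the sign. I would write the detailed version: expand $G_{q^2}(1,P) = \sum_{c\in\F_{q^2}^*}\chi_P(c)\sum_{v\in\mathcal M_{q^2,\deg P -1}}\chi_P(v)e_{q^2}(v/P) + (\text{lower})$ paralleling the computation in Lemma \ref{lemma-residue}, and use that $\sum_{c}\chi_P(c) = 0$ (since $\chi_P$ is nontrivial on $\F_{q^2}^*$ when $3\nmid \deg P$) together with $\tau(\chi_P|_{\F_{q^2}^*})$ being a quadratic Gauss sum type object with known sign, and conclude $\epsilon(\chi_P)\omega(\chi_P)=1$; the $3 \mid \deg P$ case follows the even-character branch of the same computation. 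Assembling the single-prime result with the multiplicativity/reciprocity reduction from the first paragraph then yields $G_{q^2}(1,f) = \prod_j q^{\deg P_j} = q^{\deg f}$, completing the proof.
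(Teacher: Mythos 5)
Your overall architecture --- reduce by multiplicativity to a single prime, show the cross symbols $\chi_R(P)$ are trivial for distinct primes of $\F_q[T]$, evaluate at a prime via Lemma \ref{gauss}(ii), and use Galois conjugation (the fact that $\tilde f=f$) as the source of rigidity --- is essentially the paper's. But the decisive point of the lemma is not $|G_{q^2}(1,P)|=q^{\deg P}$ but the exact \emph{positive} value, i.e. that $\epsilon(\chi_P)\omega(\chi_P)=1$, and this is exactly the step you leave unproven: you call it ``the main obstacle'' and offer only pointers (self-conjugacy, Hasse--Davenport, a ``known sign''). Reality of the Gauss sum does follow from conjugation --- the paper's first step is the substitution $\alpha\mapsto\tilde\alpha$, using $\tr(\alpha)=\tr(\tilde\alpha)$ and $\overline{\chi_f}(\alpha)=\chi_f(\tilde\alpha)$, which gives $\overline{G_{q^2}(1,f)}=G_{q^2}(1,f)$ --- but reality alone only yields $G_{q^2}(1,P)=\pm q^{\deg P}$, and something further must exclude the minus sign. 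The paper closes this by combining reality with the square identity $G_{q^2}(1,P)^2=\overline{G_{q^2}(1,P)}\,q^{\deg(P)}$ coming from the explicit evaluation of Lemma \ref{gauss}(ii), which forces $G_{q^2}(1,P)=q^{\deg P}$. Your sketch never supplies a mechanism of this kind; in particular the digression about $\chi_P$ being trivial on $\F_q^*$ says nothing about the sign.

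A second gap: you treat every prime $P$ (or $R$) of $\F_q[T]$ as if it stayed prime in $\F_{q^2}[T]$. That fails whenever the degree is even: such a prime splits as $\pi\tilde\pi$ in $\F_{q^2}[T]$ (and these are precisely the primes that occur in the conductors in this section). So Lemma \ref{gauss}(ii) cannot be applied to $P$ directly in that case, and your justification of $\chi_R(P)=1$ via $P^{(q^{2\deg R}-1)/3}\equiv 1 \pmod R$ is not the definition of $\chi_R$ for split $R$; moreover the divisibility $(q^{\deg R}-1)\mid (q^{2\deg R}-1)/3$ needs $3\mid q^{\deg R}+1$, i.e. $q\equiv 2\pmod 3$ together with $\deg R$ odd, not merely ``$q$ odd'', so it is false in the split case. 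The conclusions you want ($\chi_R(P)=1$, hence clean multiplicativity $G_{q^2}(1,fg)=G_{q^2}(1,f)G_{q^2}(1,g)$) are true, but the way to get them uniformly, as in the paper, is again conjugation: $\overline{\chi_f}(g)=\chi_{\tilde f}(\tilde g)=\chi_f(g)$ shows $\chi_f(g)$ is a real cube root of unity, hence equal to $1$, with no case distinction on how primes split.
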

\begin{proof}
As usual, we denote by $\tilde{\alpha}$ the Galois conjugate of $\alpha$. We have
 \begin{align*}
 \overline{G_{q^2}(1,f)}=& \sum_{\alpha \in \F_{q^2}[T]/(f)} \overline{\chi_{f}}(\alpha)e_{q^2}\left(\frac{-\alpha}{f} \right)= \sum_{\alpha \in \F_{q^2}[T]/(f)} \chi_{f}(\tilde{\alpha})e_{q^2}\left(\frac{-\tilde{\alpha}}{f} \right)\\
=& \sum_{\alpha \in \F_{q^2}[T]/(f)} \chi_{f}(\alpha)e_{q^2}\left(\frac{-\alpha}{f} \right)= \chi_{f}(-1) \sum_{\alpha \in \F_{q^2}[T]/(f)} \chi_{f}(\alpha)e_{q^2}\left(\frac{\alpha}{f} \right)\\
=& G_{q^2}(1,f).
\end{align*}
In the first line we used the fact that  $e_{q^2}(-\alpha/f) = e_{q^2}(- \tilde{\alpha}/f)$ which follows because $\tr(\alpha)=\tr(\tilde{\alpha})$.
 In the second line we used that $\chi_{f}(-1)=\Omega^{-1}( (-1)^{ \frac{q^2-1}{3} \deg(f)} ) =1$.

Notice that for $f,g \in \F_q[T]$, $(f,g)=1$, $\chi_f(g)=1$ because
\[\overline{\chi_{f}}(g)=\chi_{\tilde{f}}(\tilde{g})=\chi_{f}(g)\]
which implies that $\chi_{f}(g)\in \R$, hence it has to be equal to 1.

Then by Lemma \ref{gauss}, we have that 
\[G_{q^2}(1,fg)=G_{q^2}(1,f)G_{q^2}(1,g).\]

Now if $P\in \F_q[T]$, then
\[G_{q^2}(1,P)^2=\epsilon(\chi_P)^2\omega(\chi_P)^2|P|_{q^2} = \overline{\epsilon(\chi_P)\omega(\chi_P)|P|_{q^2}^{1/2}}|P|_{q^2}^{1/2} =\overline{G_{q^2}(1,P)}q^{\deg(P)}\]
and from this we conclude that 
\[G_{q^2}(1,P)=q^{\deg(P)}.\]
By multiplicativity, since $f$ is square-free, 
\[G_{q^2}(1,f)=q^{\deg(f)}.\]

\end{proof}

Now we go back to \eqref{dual21} and \eqref{dual22}. Using the sieve \eqref{sieve_F},
we get that
\begin{eqnarray}
 \sum_{\substack{F \in \mathcal{H}_{q^2, \frac{g}{2}+1} \\ (F,f)=1 \\ P \mid F \Rightarrow P \not\in \F_q[T]}} G_{q^2}(f, F) &=& \sum_{\substack{D \in \F_q[T]\\ \deg(D) \leq g/2 + 1\\(D,f)=1}} \mu(D) \sum_{\substack{F \in \mathcal{M}_{ q^2,g/2+1-\deg(D)}\\
(F, f)=1}} G_{q^2}(f, DF) \nonumber \\
&=& \sum_{\substack{D \in \F_q[T]\\ \deg(D) \leq g/2 + 1\\(D,f)=1}} \mu(D) G_{q^2}(f, D)  \sum_{\substack{F \in \mathcal{M}_{ q^2,g/2+1-\deg(D)}\\
(F, Df)=1}}  \chi_F^2(D) G_{q^2}(f, F) \nonumber \\
&=& \sum_{\substack{D \in \F_q[T]\\ \deg(D) \leq g/2 + 1\\(D,f)=1}} \mu(D) G_{q^2}(f, D)   \sum_{\substack{F \in \mathcal{M}_{ q^2,g/2+1-\deg(D)}\\
(F, Df)=1}} G_{q^2}(fD, F), \label{sieve_D}
\end{eqnarray}
where we have used that $G_{q^2}(f,DF)=0$ if $(D,F)\not = 1$, since $(f,DF)=1$.

Using Proposition \ref{big-F-tilde-corrected} (recall that we are working in $\mathbb{F}_{q^2}[T]$) we get that
\begin{align}
\sum_{\substack{F \in \mathcal{M}_{q^2,g/2+1-\deg(D)}\\(F,fD)=1}} &  G_{q^2}(fD,F)  = \delta_{f_2=1} \frac{q^{\frac{4g}{3}+\frac{8}{3}-4\deg(D)-\frac{4}{3}\deg(f_1) - \frac{8}{3} [g/2+1+ \deg(f_1)]_3}}{\zeta_{q^2}(2)}
\overline{G_{q^2}(1,f_1D)} \nonumber\\
&\times \rho(1, [g/2+1+\deg(f_1)]_3)\prod_{\substack{P \in \mathbb{F}_{q^2}[T] \\P\mid fD}} \left (1+ \frac{1}{|P|_{q^2}} \right )^{-1}\nonumber\\
& +O \left(\delta_{f_2=1}q^{\frac{g}{3}+\varepsilon g-\deg(D)(1+2\varepsilon)-\frac{\deg(f)_1}{3}} \right) + \frac{1}{2 \pi i} \oint_{|u|=q^{-2\sigma}} \frac{ \Tilde{\Psi}_{q^2}(fD,u)}{u^{g/2+1-\deg(D)}} \, \frac{du}{u}, \label{et2}
\end{align}
with $\delta_{f_2=1} = 1$ if $f_2=1$ and $\delta_{f_2=1}=0$ otherwise.
Combining equations \eqref{dual21}, \eqref{sieve_D}, \eqref{et2} and Lemma \ref{no-oscillation}, we write 
\begin{equation}
S_{11,\mathrm{dual}} = M_1+E_1, \label{m-e}
\end{equation} where $M_1$ corresponds to the main term in \eqref{et2} and $E_1$ corresponds to the two error terms in \eqref{et2}. 
We have
\begin{align*}
M_1= & \frac{q^{5g/6+5/3}}{\zeta_{q^2}(2)} \sum_{f \in \mathcal{M}_{q,\leq g-A-1}} \frac{\delta_{f_2=1}q^{- \frac{8}{3} [g/2+1+ \deg(f_1)]_3}}{q^{\deg(f)/2+\deg(f_1)/3}} \sum_{\substack{D \in \F_q[T]\\ \deg(D) \leq g/2 + 1\\(D,f)=1}} \mu(D)     q^{-4\deg(D)}
|G_{q^2}(1,D)|^2\\
&\times \rho(1, [g/2+1+\deg(f_1)]_3)\prod_{\substack {P \in \F_{q^2}[T] \\ P\mid fD}} \left (1+ \frac{1}{|P|_{q^2}} \right )^{-1} \\
=& \frac{q^{5g/6+5/3}}{\zeta_{q^2}(2)} \sum_{f \in \mathcal{M}_{q,\leq g-A-1}} \frac{\delta_{f_2=1}q^{- \frac{8}{3} [g/2+1+\deg(f_1)]_3}}{q^{\deg(f)/2+\deg(f_1)/3}} 
\rho(1, [g/2+1+ \deg(f_1)]_3)\\
&\times \prod_{\substack {P \in \F_{q^2}[T] \\P\mid f}} \left (1+ \frac{1}{|P|_{q^2}} \right )^{-1} \sum_{\substack{D \in \F_q[T]\\ \deg(D) \leq g/2 + 1\\(D,f)=1}} \mu(D)q^{-2\deg(D)}
\prod_{\substack {P \in \F_{q^2}[T] \\P\mid D}} \left (1+ \frac{1}{|P|_{q^2}} \right )^{-1}.
\end{align*}

We first treat the sum over $D$. We consider the generating series of the sum over $D$. We have that 
\begin{align*}
&\sum_{\substack{D \in \mathbb{F}_q[T] \\ (D,f)=1}}  \frac{ \mu(D)}{q^{2 \deg(D)}} \prod_{\substack{P \in \mathbb{F}_{q^2}[T] \\ P\mid D}} \left ( 1+ \frac{1}{|P|_{q^2}} \right )^{-1} w^{\deg(D)} \\
=&\prod_{\substack{R \in \mathbb{F}_q[T] \\ \deg(R)  \,\mathrm{ odd} \\ R \nmid f}} \left[ 1- \frac{w^{\deg(R)}}{q^{2 \deg(R)} (1+ \frac{1}{q^{2 \deg(R)}})}\right]\prod_{\substack{R \in \mathbb{F}_q[T] \\ \deg(R)  \,\mathrm{ even} \\ R \nmid f}} \left[ 1- \frac{w^{\deg(R)}}{q^{2 \deg(R)} ( 1+ \frac{1}{q^{\deg(R)}})^2}\right],
\end{align*}
where we have counted the primes in $\F_{q^2}[T]$ by counting the primes of $\F_q[T]$ lying under them.
Recall from Section \ref{section-cubic} that $P\in \F_q[T]$ splits in $\F_{q^2}[T]$ if and only if $\deg(P)$ is even. 

Let $A_{\mathrm{dual},R}(w)$ denote the first factor above and $B_{\mathrm{dual},R}(w)$ the second. Define
\begin{equation*}
\mathcal{J}_{\mathrm{nK}}(w) = \prod_{\substack{R \in \mathbb{F}_q[T]
\\ \deg(R)  \,\mathrm{ odd}}} 
A_{\mathrm{dual},R}(w) \prod_{\substack{R \in \mathbb{F}_q[T] 
\\ \deg(R)  \,\mathrm{ even}}} 
B_{\mathrm{dual},R}(w).
\label{gw}
\end{equation*}
which is absolutely convergent for $|w|<q$. 

Then by Perron's formula  (Lemma \ref{perron}) we have
\begin{align*}
\sum_{\substack{D \in \F_q[T]\\ \deg(D) \leq g/2 + 1\\(D,f)=1}} \mu(D)     q^{-2\deg(D)}  & \prod_{\substack{P \in \mathbb{F}_{q^2}[T] \\ P\mid D}} \left (1+ \frac{1}{|P|_{q^2}} \right )^{-1}  = \frac{1}{2 \pi i} \oint \frac{\mathcal{J}_{\mathrm{nK}}(w)}{w^{g/2+1}(1-w)} \\
& \times \prod_{\substack{R \in \mathbb{F}_q[T] \\ \deg(R)  \,\mathrm{ odd} \\ R \mid f}} A_{\mathrm{dual},R}(w)^{-1} \prod_{\substack{R \in \mathbb{F}_q[T] \\ \deg(R)  \,\mathrm{ even} \\ R\mid f}} B_{\mathrm{dual},R}(w)^{-1} \, \frac{dw}{w}.
\end{align*}

Now we introduce the sum over $f$. 
Using the expression for the sum over $D$ above, we get that 

\begin{align*}
& M_1= \frac{q^{5g/6+5/3}}{\zeta_{q^2}(2)} \sum_{f \in \mathcal{M}_{q,\leq g-A-1}} \frac{\delta_{f_2=1}\rho(1, [g/2+1+ \deg(f_1)]_3)}{q^{\frac{8}{3} [g/2+1+\deg(f_1)]_3}q^{\deg(f)/2+\deg(f_1)/3}} 
\prod_{\substack{R \in \mathbb{F}_q[T] \\ \deg(R)  \,\mathrm{ odd} \\ R \mid f}} \left(1+\frac{1}{q^{2\deg(R)}}\right)^{-1}
\\
&\times \prod_{\substack{R \in \mathbb{F}_q[T] \\ \deg(R)  \,\mathrm{ even} \\ R \mid f}} \left(1+\frac{1}{q^{\deg(R)}}\right)^{-2} \frac{1}{2 \pi i} \oint \frac{\mathcal{J}_{\mathrm{nK}}(w)}{w^{g/2+1}(1-w)}\prod_{\substack{R \in \mathbb{F}_q[T] \\ \deg(R)  \,\mathrm{ odd} \\ R \mid f}} A_{\mathrm{dual},R}(w)^{-1} \prod_{\substack{R \in \mathbb{F}_q[T] \\ \deg(R)  \,\mathrm{ even} \\ R\mid f}} B_{\mathrm{dual},R}(w)^{-1} \, \frac{dw}{w}.
\end{align*}
Let
\begin{align*}
\mathcal{H}_{\mathrm{nK}}(u,w)= &\sum_{f}
\frac{\delta_{f_2=1}}{q^{\deg(f)/2+\deg(f_1)/3}} 
\prod_{\substack{R \in \mathbb{F}_q[T] \\ \deg(R)  \,\mathrm{ odd} \\ R \mid f}} C_R(w)^{-1} \prod_{\substack{R \in \mathbb{F}_q[T] \\ \deg(R)  \,\mathrm{ even} \\ R\mid f}} D_R(w)^{-1} u^{\deg(f)},
\end{align*}
where 
\[C_R(w)=1+\frac{1}{q^{2\deg(R)}}-\frac{w^{\deg(R)}}{q^{2\deg(R)}}, \qquad D_R(w)=\left(1+\frac{1}{q^{\deg(R)}}\right)^2-\frac{w^{\deg(R)}}{q^{2\deg(R)}}.\]

Then we can write down an Euler product for $\mathcal{H}_{\mathrm{nK}}(u,w)$ and we have that
\begin{align*}
 \mathcal{H}_{\mathrm{nK}}(u,w)= & \prod_{\substack{R \in \mathbb{F}_q[T] \\ \deg(R)  \,\mathrm{ odd}}} \left[1+C_R(w)^{-1} \left ( \frac{1}{q^{\deg(R)/3}} \sum_{j=0}^{\infty} \frac{ u^{(3j+1) \deg(R)}}{q^{(3j+1) \deg(R)/2}} + \sum_{j=1}^{\infty} \frac{ u^{3j \deg(R)}}{q^{3j\deg(R)/2}}\right ) \right] \\
 & \times  \prod_{\substack{R \in \mathbb{F}_q[T] \\ \deg(R)  \,\mathrm{ even}}} \left[1+D_R(w)^{-1} \left ( \frac{1}{q^{\deg(R)/3}} \sum_{j=0}^{\infty} \frac{ u^{(3j+1) \deg(R)}}{q^{(3j+1) \deg(R)/2}} + \sum_{j=1}^{\infty} \frac{ u^{3j \deg(R)}}{q^{3j\deg(R)/2}}\right ) \right].
\end{align*}
After simplifying, we have
\begin{align}
\mathcal{H}_{\mathrm{nK}}(u,w) =& \prod_{\substack{R \in \mathbb{F}_q[T] \\ \deg(R)  \,\mathrm{ odd}}} \left[1+C_R(w)^{-1} \left ( \frac{u^{\deg(R)}}{|R|_q^{5/6} (1- \frac{u^{3 \deg(R)}}{|R|_q^{3/2}})} + \frac{ u^{3 \deg(R)}}{ |R|_q^{3/2}-u^{3 \deg(R)}}\right )\right] \nonumber \\
& \times  \prod_{\substack{R \in \mathbb{F}_q[T] \\ \deg(R)  \,\mathrm{ even}}} \left[1+D_R(w)^{-1} \left ( \frac{u^{\deg(R)}}{|R|_q^{5/6} (1- \frac{u^{3 \deg(R)}}{|R|_q^{3/2}})} + \frac{ u^{3 \deg(R)}}{ |R|_q^{3/2}-u^{3 \deg(R)}}\right ) \right] \nonumber \\
=&\mathcal{Z} \left ( \frac{u}{q^{5/6}}\right ) \mathcal{B}_{\mathrm{nK}}(u,w), \label{b-def}
\end{align}
with $\mathcal{B}_{\mathrm{nK}}(u,w)$ analytic in a wider region (for example, $\mathcal{B}_{\mathrm{nK}}(u,w)$ is absolutely convergent for $|u|<q^{\frac{11}{6}}$ and $|uw|< q^{\frac{11}{6}}$). 

We will use Perron's formula  (Lemma \ref{perron}) for the sum over $f$.  Note that if $g/2+1+\deg(f_1) \equiv 0 \pmod 3$, then $\deg(f_1) \equiv g-1 \pmod 3$. In this case by Lemma \ref{lemma-residue}, $\rho(1,0)= 1$. 

If $g/2+1+\deg(f_1) \equiv 1 \pmod 3$, then $\deg(f_1) \equiv g \pmod 3$, and by Lemma \ref{lemma-residue} again we have $\rho(1,1)=\tau(\chi_3) q^2$. Note that $\tau(\chi_3) = q \epsilon(\chi_3)$ and $\epsilon(\chi_3) = (-1)^{\frac{q^2-1}{3}}=1$. Since $q$ is odd, we have $\rho(1,1)=q^3$. Recall that $A \equiv 0 \pmod 3$. Using Perron's formula  (Lemma \ref{perron}) twice depending on whether $\deg(f) \equiv g-1 \pmod 3$ or $\deg(f) \equiv g \pmod 3$, we have
\begin{align*}\label{twoterms}
M_1 =&\frac{q^{5g/6+5/3}}{\zeta_{q^2}(2)} \oint \oint \frac{\mathcal{H}_{\mathrm{nK}}(u,w) \mathcal{J}_{\mathrm{nK}}(w)}{w^{g/2+1}(1-w)} \left[ \frac{1}{u^{g-A-1} (1-u^3)} + \frac{ q^{1/3}}{u^{g-A-3}(1-u^3)} \right] \, \frac{dw}{w} \, \frac{du}{u},
\end{align*}
where we integrate along small circles around the origin. We first shift the contour over $w$ to $|w|=q^{1-\epsilon}$ (since $\mathcal{J}_{\mathrm{nK}}(w)$ is absolutely convergent for $|w|<q$) and encounter the pole at $w=1$. Note that $\mathcal{H}_{\mathrm{nK}}(u,1)$ has a pole at $u=q^{-1/6}$. Let
 \begin{equation}
 \mathcal{K}_{\mathrm{nK}}(u)= \mathcal{B}_{\mathrm{nK}}(u,1) \mathcal{J}_{\mathrm{nK}}(1). \label{f-def}
 \end{equation} Then
\begin{align*}
M_1 =&\frac{q^{5g/6+5/3}}{\zeta_{q^2}(2)} \oint \frac{ \mathcal{K}_{\mathrm{nK}}(u)}{(1-uq^{1/6})(1-u^3) u^{g-A-1}} (1+ q^{1/3}u^2) \, \frac{du}{u}\\
&+ \frac{q^{5g/6+5/3}}{\zeta_{q^2}(2)} \oint_{|u|=q^{-1/6-\varepsilon}} \oint_{|w|=q^{1-\varepsilon}} \frac{\mathcal{H}_{\mathrm{nK}}(u,w) \mathcal{J}_{\mathrm{nK}}(w)}{w^{g/2+1}(1-w)} \left[ \frac{1}{u^{g-A-1} (1-u^3)} + \frac{ q^{1/3}}{u^{g-A-3}(1-u^3)} \right] \, \frac{dw}{w} \, \frac{du}{u}\\
=&\frac{q^{5g/6+5/3}}{\zeta_{q^2}(2)} \oint \frac{ \mathcal{K}_{\mathrm{nK}}(u)}{(1-uq^{1/6})(1-u^3) u^{g-A-1}} (1+ q^{1/3}u^2) \, \frac{du}{u}+O\left(q^{\frac{g}{2}-\frac{A}{6}+\varepsilon g}\right).
\end{align*}
Note that $\mathcal{K}_{\mathrm{nK}}(u)$ is absolutely convergent for $|u|<q^{\frac{1}{6}}$. We shift the contour of integration to $|u|=q^{-\varepsilon}$, we compute the residue at $u=q^{-1/6}$ and we get that
\begin{align}
M_1=& 2q^{g-\frac{A}{6}+2} \frac{ \mathcal{K}_{\mathrm{nK}}(q^{-1/6})}{\zeta_{q^2}(2) (\sqrt{q}-1)}+O\left(q^{\frac{g}{2}-\frac{A}{6}+\varepsilon g} \right) \nonumber\\
&+\frac{q^{5g/6+5/3}}{\zeta_{q^2}(2)} \oint_{|u|=q^{-\varepsilon}} \frac{ \mathcal{K}_{\mathrm{nK}}(u)}{(1-uq^{1/6})(1-u^3) u^{g-A-1}} (1+ q^{1/3}u^2) \, \frac{du}{u}\nonumber\\
=& 2q^{g-\frac{A}{6}+2} \frac{ \mathcal{K}_{\mathrm{nK}}(q^{-1/6})}{\zeta_{q^2}(2) (\sqrt{q}-1)}+O\left(q^{\frac{5g}{6}+\varepsilon g}\right).
\label{main_dual}
\end{align}

Now we consider the error term $E_1$ from equation \eqref{m-e}. The first term coming from the first error in equation \eqref{et2} will be bounded by
\begin{align*} 
\ll &q^{-\frac{g}{2}}\sum_{f \in \mathcal{M}_{q,\leq g-A-1}}\frac{1}{q^{\deg(f)/2}} \sum_{\deg(D)\leq g/2+1} q^{\deg(D)} q^{\frac{g}{3}+\varepsilon g-\deg(D)(1+2\varepsilon)-\frac{\deg(f)_1}{3}}
\ll q^{\left(\frac{1}{2}+\varepsilon\right)g -\frac{A}{6}}
\end{align*}
Then we get that 
\begin{align*}
E_1 =&q^{-g/2-1} \frac{1}{2 \pi i} \oint_{|u|=q^{-2\sigma}} \sum_{f \in \mathcal{M}_{q,\leq g-A-1}} \frac{1}{q^{\deg(f)/2}} \sum_{\substack{D \in \mathbb{F}_q[T] \\ \deg(D) \leq g/2+1 \\ (D,f)=1}} \mu(D) G_{q^2}(f,D) \frac{\Tilde{\Psi}_{q^2}(fD,u)}{u^{g/2+1-\deg(D)}} \, \frac{du}{u} \\
&+O(q^{\frac{g}{2} - \frac{A}{6} + \varepsilon g}),
\end{align*} where recall that $2/3<\sigma<4/3$.

Combining the expressions for $M_1$ and $E_1$ it follows that
\begin{align*}
S_{11,\mathrm{dual}} =&2q^{g-\frac{A}{6}+2} \frac{ \mathcal{K}_{\mathrm{nK}}(q^{-1/6})}{\zeta_{q^2}(2) (\sqrt{q}-1)}+O\left(q^{\frac{5g}{6}+\varepsilon g}\right) \\
& +q^{-g/2-1} \frac{1}{2 \pi i} \oint_{|u|=q^{-2\sigma}} \sum_{f \in \mathcal{M}_{q,\leq g-A-1}} \frac{1}{q^{\deg(f)/2}} \sum_{\substack{D \in \mathbb{F}_q[T] \\ \deg(D) \leq g/2+1 \\ (D,f)=1}} \mu(D) G_{q^2}(f,D) \frac{\Tilde{\Psi}_{q^2}(fD,u)}{u^{g/2+1-\deg(D)}} \, \frac{du}{u}.
\end{align*} 
We treat $S_{12,\mathrm{dual}}$ similarly and since $\deg(f) = g-A$ we have $[g/1+1+ \deg(f_1)]_3 = 1$. Then as before $\rho(1,1)=\tau(\chi_3) = q^3$, and
we get that
\begin{align*}
S_{12,\mathrm{dual}} =&q^{g-\frac{A}{6}+2} \frac{\mathcal{K}_{\mathrm{nK}}(q^{-1/6})}{\zeta_{q^2}(2)(1-\sqrt{q})}+O\left(q^{\frac{5g}{6}+\varepsilon g} \right) \\
&+  \frac{ q^{-\frac{g}{2}-1}}{1-\sqrt{q}} \frac{1}{2 \pi i} \oint_{|u|=q^{-2\sigma}} \sum_{f \in \mathcal{M}_{q,g-A}} \frac{1}{q^{\deg(f)/2}} \sum_{\substack{D \in \mathbb{F}_q[T] \\ \deg(D) \leq g/2+1 \\ (D,f)=1}} \mu(D) G_{q^2}(f,D) \frac{\Tilde{\Psi}_{q^2}(fD,u)}{u^{g/2+1-\deg(D)}} \, \frac{du}{u}.
\end{align*}
Combining the two equations above, we get that
\begin{align}
S_{1,\mathrm{dual}} =&- \frac{q^{g-\frac{A}{6}+2} \mathcal{K}_{\mathrm{nK}}(q^{-1/6})\zeta_q(1/2)}{\zeta_{q^2}(2)} +O\left(q^{\frac{5g}{6}+\varepsilon g}\right) \label{final-dual} \\
&+ q^{-g/2-1} \frac{1}{2 \pi i} \oint_{|u|=q^{-2\sigma}} \sum_{f \in \mathcal{M}_{q,\leq g-A-1}} \frac{1}{q^{\deg(f)/2}} \sum_{\substack{D \in \mathbb{F}_q[T] \\ \deg(D) \leq g/2+1 \\ (D,f)=1}} \mu(D) G_{q^2}(f,D) \frac{\Tilde{\Psi}_{q^2}(fD,u)}{u^{g/2+1-\deg(D)}} \, \frac{du}{u} \nonumber \\
&+  \frac{ q^{-\frac{g}{2}-1}}{1-\sqrt{q}} \frac{1}{2 \pi i} \oint_{|u|=q^{-2\sigma}} \sum_{f \in \mathcal{M}_{q,g-A}} \frac{1}{q^{\deg(f)/2}} \sum_{\substack{D \in \mathbb{F}_q[T] \\ \deg(D) \leq g/2+1 \\ (D,f)=1}} \mu(D) G_{q^2}(f,D) \frac{\Tilde{\Psi}_{q^2}(fD,u)}{u^{g/2+1-\deg(D)}} \, \frac{du}{u}. \nonumber
\end{align}
We have
$$\mathcal{J}_{\mathrm{nK}}(1) =  \prod_{\substack{R \in \mathbb{F}_q[T] \\ \deg(R)  \,\mathrm{ odd}}} \left[1- \frac{1}{|R|_q^2+1} \right]  \prod_{\substack{R \in \mathbb{F}_q[T] \\ \deg(R)  \,\mathrm{ even}}} \left[ 1- \frac{1}{(|R|_q+1)^2} \right],$$ and using the definition \eqref{b-def} for $\mathcal{B}_{\mathrm{nK}}(u,w)$ 
\begin{align*}
\mathcal{B}_{\mathrm{nK}}(q^{-1/6},1) =& \prod_{\substack{R \in \mathbb{F}_q[T] \\ \deg(R)  \,\mathrm{ odd}}} \left [ 1+ \frac{1}{|R|_q(1-\frac{1}{|R|_q^2})} + \frac{1}{|R|_q^{1/2}(|R|_q^{3/2}-\frac{1}{|R|_q^{1/2}})} \right ] \left [ 1- \frac{1}{|R|_q} \right ] \\
& \times  \prod_{\substack{R \in \mathbb{F}_q[T] \\ \deg(R)  \,\mathrm{ even}}} \left [ 1+\frac{1}{1+\frac{2}{|R|_q}} \left ( \frac{1}{|R|_q(1-\frac{1}{|R|_q^2})} + \frac{1}{|R|_q^{1/2}(|R|_q^{3/2}-\frac{1}{|R|_q^{1/2}})} \right ) \right ] \left [ 1- \frac{1}{|R|_q} \right ] \\
=& \prod_{\substack{R \in \mathbb{F}_q[T] \\ \deg(R)  \,\mathrm{ even}}} \left [1+ \frac{1}{(1+ \frac{2}{|R|_q})(|R|_q-1)} \right ]\left [ 1- \frac{1}{|R|_q} \right ] .
\end{align*}
By \eqref{f-def},
\begin{align*} \label{f-euler}
\mathcal{K}_{\mathrm{nK}}(q^{-1/6}) =  \prod_{\substack{R \in \mathbb{F}_q[T] \\ \deg(R)  \,\mathrm{ odd}}}  \left (1- \frac{1}{|R|_q^2+1} \right ) \prod_{\substack{R \in \mathbb{F}_q[T] \\ \deg(R)  \,\mathrm{ even}}} \left (1- \frac{3}{(|R|_q+1)^2} \right ),
\end{align*}
and we have that $\mathcal{K}_{\mathrm{nK}}(q^{-1/6}) = \mathcal{A}_{\mathrm{nK}} (1/q^2, 1/q)$. Since $\zeta_q(3)= \zeta_{q^2}(2)$,  
by using equation \eqref{final-dual} and Lemma \ref{mt_expression_nk} we note that the corresponding terms of size $q^{g-\frac{A}{6}}$ in the expressions for 
$S_{1, \cube}$ and $S_{1, \mathrm{dual}}$
cancel out. Hence
\begin{align*}
S_{1, \cube}  + S_{1, \mathrm{dual}} =& \frac{q^{g+2} \zeta_q(3/2)}{\zeta_q(3)} \mathcal{A}_{\mathrm{nK}} \left ( \frac{1}{q^2}, \frac{1}{q^{3/2}} \right ) +O(q^{g-\frac{A}{2}+\varepsilon g} +q^{\frac{5g}{6}+\varepsilon g}) \\
&+ q^{-\frac{g}{2}-1} \frac{1}{2 \pi i} \oint_{|u|=q^{-2\sigma}} \sum_{f \in \mathcal{M}_{q,\leq g-A-1}} \frac{1}{q^{\deg(f)/2}} \sum_{\substack{D \in \mathbb{F}_q[T] \\ \deg(D) \leq g/2+1 \\ (D,f)=1}} \mu(D) G_{q^2}(f,D) \frac{\Tilde{\Psi}_{q^2}(fD,u)}{u^{g/2+1-\deg(D)}} \, \frac{du}{u} \\
&+  \frac{ q^{-\frac{g}{2}-1}}{1-\sqrt{q}} \frac{1}{2 \pi i} \oint_{|u|=q^{-2\sigma}} \sum_{f \in \mathcal{M}_{q,g-A}} \frac{1}{q^{\deg(f)/2}} \sum_{\substack{D \in \mathbb{F}_q[T] \\ \deg(D) \leq g/2+1 \\ (D,f)=1}} \mu(D) G_{q^2}(f,D) \frac{\Tilde{\Psi}_{q^2}(fD,u)}{u^{g/2+1-\deg(D)}} \, \frac{du}{u}. 
\end{align*}

Now we consider the integral terms above. Note that it is enough to bound the first one. Using Lemma \ref{our-psi}, the term in the second line above is bounded by 
\begin{align*} 
\ll &q^{-\frac{g}{2}}\sum_{f \in \mathcal{M}_{q,\leq g-A-1}}\frac{1}{q^{\deg(f)/2}} \sum_{\deg(D)\leq g/2+1} q^{\deg(D)} q^{\sigma g-3 \sigma \deg(D)+\frac{3}{2}\deg(D)+\deg(f) ( \tfrac{3}{2}-\sigma)}\\
\ll & g q^{\frac{3}{2}g -(2-\sigma)A}
\end{align*}
as long as $\sigma \geq 7/6$. Then
\begin{equation} \label{cube+dual}
S_{1,\cube}+S_{1,\mathrm{dual}} =  \frac{q^{g+2} \zeta_q(3/2)}{\zeta_q(3)} \mathcal{A}_{\mathrm{nK}} \left ( \frac{1}{q^2}, \frac{1}{q^{3/2}} \right ) +O \left (q^{g-\frac{A}{2}+\varepsilon g} +q^{\frac{5g}{6}+\varepsilon g} +q^{\frac{3g}{2}-(2-\sigma)A+\varepsilon g} \right ),
\end{equation}
which finishes the proof of Lemma \ref{lemma_cube+dual}.
\begin{rem}\label{remark56} Note that the error term of size $q^{\frac{5g}{6}}$ can be computed explicitly from equation \eqref{main_dual} by evaluating the residue when $u^3=1$. The other error terms will eventually dominate the term of size $q^{\frac{5g}{6}}$, so we do not carry out the computation. However, we believe this term will persist in the asymptotic formula.
 \end{rem}
 
\subsection{The proof of Theorem \ref{thm-non-Kummer}}
Using Lemmas \ref{lemma_cube+dual} and \ref{non-cubes-2}, we get that
$$  \sum_{\substack{F \in \mathcal{H}_{q^2,\frac{g}{2}+1} \\ P \mid F \Rightarrow P \not\in \F_q[T]}} L_q \left( \frac{1}{2}, \chi_F \right )=  \frac{ q^{g+2} \zeta_q(3/2)}{\zeta_q(3)}  \mathcal{A}_{\mathrm{nK}} \left (\frac{1}{q^2}, \frac{1}{q^{3/2}} \right ) + O \left (q^{\frac{g+A}{2}+\varepsilon g}+q^{\frac{5g}{6}+\varepsilon g} + q^{\frac{3g}{2}-(2-\sigma)A+\varepsilon g} \right ),$$ where $7/6 \leq \sigma<4/3$.
Picking $\sigma= 7/6$ and $A= 3 [g/4]$ finishes the proof of Theorem \ref{thm-non-Kummer}.

\section{The Kummer setting}

We now assume that $q$ is odd with $q \equiv 1 \pmod 3$.
 We will prove Theorem \ref{first-moment-Kummer}.

\subsection{Setup and sieving}

By Lemma \ref{kummer-desc}, we want to compute
\begin{align}
 \sum_{\substack{d_1+d_2 = g+1\\ d_1 + 2 d_2 \equiv 1 \pmod 3}} \sum_{\substack{F_1 \in \mathcal{H}_{q,d_1} \\ F_2 \in \mathcal{H}_{q,d_2} \\ (F_1, F_2) = 1}} L_q \left( \frac{1}{2}, \chi_{F_1 F_2^2} \right)  = S_{2,\rm principal} + S_{2,\rm dual} ,
\label{first-mom}
\end{align}
 where we have from Proposition \ref{prop-AFE} and Lemma \ref{cubic-reciprocity} (cubic reciprocity)
 \begin{align}
 S_{2,\rm principal} =&\sum_{\substack{d_1+d_2 = g+1\\ d_1 + 2 d_2 \equiv 1 \pmod 3}} \sum_{\substack{F_1 \in \mathcal{H}_{q,d_1} \\ F_2 \in \mathcal{H}_{q,d_2} \\ (F_1, F_2) = 1}} \sum_{f \in \mathcal{M}_{q,\leq A}} \frac{\chi_{f}(F_1) \overline{\chi_{f}}(F_2)}{|f|_q^{1/2}} \label{principal-term}, \\
 S_{2,\rm dual} =&\sum_{\substack{d_1+d_2 = g+1\\ d_1 + 2 d_2 \equiv 1 \pmod 3}} \sum_{\substack{F_1 \in \mathcal{H}_{q,d_1} \\ F_2 \in \mathcal{H}_{q,d_2} \\ (F_1, F_2) = 1}} \omega(\chi_{F_1}\overline{\chi_{F_2}}) \sum_{f \in \mathcal{M}_{q,\leq g-A-1}} \frac{\overline{\chi_{f}}(F_1) {\chi_{f}}(F_2)}{|f|_q^{1/2}} .\label{dual-term}
 \end{align}
 We will choose $A \equiv 0 \pmod 3$.
 For the principal term, we will compute the contribution from cube polynomials $f$ and bound the contribution from non-cubes. We write
 \begin{equation*} \label{sum-principal}
 S_{2,\rm principal} = S_{2, \cube}+S_{2, \neq \cube},
 \end{equation*}
 where 
 \begin{equation} \label{contrib-cubes}
 S_{2,\cube} = \sum_{\substack{d_1+d_2 = g+1\\ d_1 + 2 d_2 \equiv 1 \pmod 3}} \sum_{\substack{F_1 \in \mathcal{H}_{q,d_1} \\ F_2 \in \mathcal{H}_{q,d_2} \\ (F_1, F_2) = 1}} \sum_{\substack{f \in \mathcal{M}_{q,\leq A} \\ f = \tinycube}} \frac{\chi_{f}(F_1) \overline{\chi_{f}}(F_2)}{|f|_q^{1/2}},
 \end{equation}
 and 
 \begin{equation} 
 \label{contrib-non-cubes}
S_{2,\neq \cube} = \sum_{\substack{d_1+d_2 = g+1\\ d_1 + 2 d_2 \equiv 1 \pmod 3}} \sum_{\substack{F_1 \in \mathcal{H}_{q,d_1} \\ F_2 \in \mathcal{H}_{q,d_2} \\ (F_1, F_2) = 1}} \sum_{\substack{f \in \mathcal{M}_{q,\leq A} \\ f \neq \tinycube}} \frac{\chi_{f}(F_1) \overline{\chi_{f}}(F_2)}{|f|_q^{1/2}}.
 \end{equation}
The main results used to prove Theorem \ref{first-moment-Kummer} are summarized in the following  lemmas whose proofs we postpone to the next sections.
\begin{lem} \label{lemma-cubes}
The main term $S_{2,\cube}$ is given by the following asymptotic formula
$$ S_{2,\cube} = C_{\mathrm{K},1} g q^{g+1} + C_{\mathrm{K},2} q^{g+1} + D_{\mathrm{K},1} g q^{g+1-\frac{A}{6}}+D_{\mathrm{K},2} q^{g+1- \frac{A}{6}}+ O \left (q^{\frac{g}{3}+\varepsilon g}+q^{g- \frac{5A}{6}+\varepsilon g} \right ),$$
for some explicit constants $C_{\mathrm{K},1}, C_{\mathrm{K},2}, D_{\mathrm{K},1}, D_{\mathrm{K},2}$ (see formula \eqref{main-computation}).
\end{lem}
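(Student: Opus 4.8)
The plan is to evaluate $S_{2,\cube}$ (equation \eqref{contrib-cubes}) by writing $f = k^3$, so that $\chi_f(F_1)\overline{\chi_f}(F_2) = \chi_{k^3}(F_1)\overline{\chi_{k^3}}(F_2)$ is trivial whenever $(k, F_1 F_2) = 1$ and zero otherwise. Since $A \equiv 0 \pmod 3$, the constraint $\deg(f) \le A$ becomes $\deg(k) \le A/3$, and the constraint $d_1 + 2d_2 \equiv 1 \pmod 3$ (combined with $d_1 + d_2 = g+1$) forces $d_2 \equiv g \pmod 3$. Thus
\begin{align*}
S_{2,\cube} = \sum_{k \in \mathcal{M}_{q,\le A/3}} \frac{1}{|k|_q^{3/2}} \sum_{\substack{d_1 + d_2 = g+1 \\ d_2 \equiv g \pmod 3}} \sum_{\substack{F_1 \in \mathcal{H}_{q,d_1},\ F_2 \in \mathcal{H}_{q,d_2} \\ (F_1 F_2, k) = 1,\ (F_1,F_2)=1}} 1.
\end{align*}
First I would compute the generating series in two variables $x_1, x_2$ (tracking $\deg F_1$, $\deg F_2$) for the inner double sum over square-free coprime $F_1, F_2$ coprime to $k$: this is an Euler product $\prod_{P \nmid k} (1 + x_1^{\deg P} + x_2^{\deg P})$, which factors as $\mathcal{Z}_q(x_1)\mathcal{Z}_q(x_2)$ times a correction Euler product $\mathcal{C}(x_1,x_2)$ convergent in a wider region, divided by the local factors at primes dividing $k$. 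Then I would sum over $k$, producing another Euler product that will contribute a factor $\mathcal{Z}_q(u/q^{5/6})$ (the $q^{5/6}$ coming from the $|k|_q^{-3/2}$ weight interacting with the $k$-dependent local corrections, exactly as in the non-Kummer computation leading to \eqref{b-def}) times an analytic piece.

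Next, I would extract the asymptotics by three applications of Perron's formula (Lemma \ref{perron}). The congruence condition $d_2 \equiv g \pmod 3$ is handled by inserting $\frac{1}{3}\sum_{j=0}^{2} \xi_3^{j(d_2 - g)}$ and keeping only the pole contributions; only the $j=0$ term and possibly the ramified pole survive at top order. The variable $x_1$ (resp. $x_2$) has a pole at $1/q$ giving a factor $q^{d_1}$ (resp. $q^{d_2}$), and with $d_1 + d_2 = g+1$ this produces $q^{g+1}$; because $\mathcal{Z}_q$ appears to a combined power that creates a \emph{double} pole in the diagonal variable after combining the $x_1, x_2$ integrals with the constraint, we get the $g q^{g+1}$ term with coefficient $C_{\mathrm{K},1}$ and a secondary $q^{g+1}$ term with coefficient $C_{\mathrm{K},2}$ (the derivative of the analytic part and the $(1-u)^{-1}$ factors from the truncated Perron formula contribute here). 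The $u$-integral has a pole at $u = q^{-1/6}$ coming from $\mathcal{Z}_q(u/q^{5/6})$, whose residue produces the $g q^{g+1-A/6}$ and $q^{g+1-A/6}$ terms with coefficients $D_{\mathrm{K},1}, D_{\mathrm{K},2}$ (again a double pole structure in the diagonal variable gives both the $g$ term and the constant term), and shifting the remaining contours past these poles yields the error terms $q^{g/3+\varepsilon g}$ (from pushing the $x$-contours to radius $\approx q^{-1/3-\varepsilon}$, i.e. the convergence boundary of the correction Euler products) and $q^{g - 5A/6 + \varepsilon g}$ (from pushing the $u$-contour past $q^{-1/6}$ to radius $\approx q^{-\varepsilon}$, picking up $q^{(5/6)(g - A/6) \cdot \text{something}}$; more precisely the residue at $u=q^{-1/6}$ scales like $q^{(g+1-A/6)}$ and the shifted integral like $q^{g - 5A/6 + \varepsilon g}$). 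Assembling these gives the stated formula \eqref{main-computation}.

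The main obstacle I anticipate is the bookkeeping of the pole structure: unlike the non-Kummer case where the analogous main term had a \emph{simple} pole giving a clean $q^{g+2}$, here the family has size $\asymp g q^g$ and the factor of $g$ must emerge from a genuine double pole. Getting this right requires carefully tracking how $\mathcal{Z}_q(x_1)\mathcal{Z}_q(x_2)$, after imposing $d_1 + d_2 = g+1$ via Perron and the cube condition, collapses to something with a $(1-qv)^{-2}$-type singularity in a single effective variable $v$, and then correctly reading off \emph{both} the leading coefficient $C_{\mathrm{K},1}$ and the subleading $C_{\mathrm{K},2}$ (which involves logarithmic derivatives of the Euler products and the $(1-x)^{-1}$ and $(1-u)^{-1}$ truncation factors). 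The same double-pole phenomenon recurs at $u = q^{-1/6}$, so the coefficients $D_{\mathrm{K},1}, D_{\mathrm{K},2}$ demand the same care. Everything else — the Euler product manipulations, the sieving out of primes dividing $k$, the error term bounds from contour shifts — is routine and parallels the non-Kummer argument already carried out in Section 4.
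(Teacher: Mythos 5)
Your overall skeleton is the right one and matches the paper's Section on the main term: substitute $f=k^3$ in \eqref{contrib-cubes}, build a three-variable generating series (the paper's $\mathcal{C}_{\mathrm{K}}(x,y,u)=\mathcal{Z}_q(u/q^{3/2})\mathcal{Z}_q(x)\mathcal{Z}_q(y)\mathcal{D}_{\mathrm{K}}(x,y,u)$ with $\mathcal{D}_{\mathrm{K}}$ as in \eqref{euler-d}), apply Perron (Lemma \ref{perron}) three times, and extract the factor of $g$ from a double pole at $x=1/q$ in the diagonal term after summing over $d_1$ in its residue class; the $O(q^{g/3+\varepsilon g})$ error does indeed come from shifting the $x$-contour to the boundary $|x|\approx q^{-1/3}$ where $\mathcal{D}_{\mathrm{K}}(x,\xi_3^i x,1)$ stops converging. (Small slip: the congruence forces $d_2\equiv -g\pmod 3$, i.e.\ $d_1\equiv 2g+1\pmod 3$, not $d_2\equiv g$; this only shifts the roots-of-unity factors entering the explicit constants in \eqref{main-computation}.)

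The genuine gap is in your treatment of the $u$-variable (the $k$-sum), which is exactly where the $D_{\mathrm{K},i}\,q^{g+1-A/6}$ terms and the $O(q^{g-5A/6+\varepsilon g})$ error must come from. Summing $u^{\deg(k)}|k|_q^{-3/2}$ over $k$ coprime to $F_1F_2$ produces $\mathcal{Z}_q(u/q^{3/2})$ times finite local corrections at $P\mid F_1F_2$; there is no mechanism here to generate your claimed factor $\mathcal{Z}_q(u/q^{5/6})$ with a pole at $u=q^{-1/6}$. That $5/6$-structure (as in \eqref{b-def}) belongs to the dual-term computation, where the exponent arises from the weight $|f|_q^{-1/2}$ of the approximate functional equation combined with the $|f_1|_q^{-1/3}$ coming out of the Gauss-sum residues; it has no analogue in the principal cube term. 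Moreover, the sizes you assert do not follow from your stated pole: with the truncated-Perron factor $u^{-A/3}(1-u)^{-1}$, a pole at $u=q^{-1/6}$ would contribute $\asymp q^{A/18}$ (growing with $A$), not $q^{-A/6}$, and shifting to $|u|=q^{-\varepsilon}$ cannot produce a saving of $q^{-5A/6}$. The correct bookkeeping is: the pole at $u=1$ of the truncation factor gives the $\zeta_q(3/2)$-weighted terms of size $q^{g+1}$ (hence $C_{\mathrm{K},1},C_{\mathrm{K},2}$ after the $x,y$ analysis), the pole of $\mathcal{Z}_q(u/q^{3/2})$ at $u=\sqrt{q}$ gives $(\sqrt{q})^{-A/3}=q^{-A/6}$ and hence the $\zeta_q(1/2)$-weighted terms $D_{\mathrm{K},1}gq^{g+1-A/6}+D_{\mathrm{K},2}q^{g+1-A/6}$, and pushing the $u$-contour out to $|u|=q^{5/2}$ yields the remaining $O(q^{g-5A/6+\varepsilon g})$. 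As written, your argument would not produce the stated secondary terms or error term, so this step needs to be redone with the correct generating factor and pole locations.
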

 We also have the following upper bounds for $S_{2, \neq \cube}$ and $S_{2,\mathrm{dual}}$.
\begin{lem} \label{lemma-non-cubes}
We have that
$$S_{2, \neq \cube} \ll q^{ \frac{A+g}{2}+ \varepsilon g}.$$
\end{lem}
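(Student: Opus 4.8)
The plan is to mimic the argument used for $S_{1,\neq\cube}$ in the non-Kummer case, replacing the use of characters of $\F_{q^2}[T]$ by the cubic residue symbol directly over $\F_q[T]$. Recall from \eqref{contrib-non-cubes} that $S_{2,\neq\cube}$ is a sum over $d_1+d_2=g+1$ with $d_1+2d_2\equiv 1\pmod 3$, over coprime square-free $F_1\in\mathcal H_{q,d_1}$, $F_2\in\mathcal H_{q,d_2}$, and over non-cube $f\in\mathcal M_{q,\le A}$, of $\chi_f(F_1)\overline{\chi_f}(F_2)/|f|_q^{1/2}$. Since we have used cubic reciprocity to trade $\chi_{F_1}\overline{\chi_{F_2}}(f)$ for $\chi_f(F_1)\overline{\chi_f}(F_2)$, for a fixed non-cube $f$ the function $F\mapsto \chi_f(F)$ (resp. $\overline{\chi_f}(F)$) is a fixed non-principal character modulo $f$, and the inner double sum over $F_1,F_2$ is essentially a twisted count of coprime square-free polynomials weighted by this character.

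First I would fix $f$ and $d_1,d_2$ and study the generating series
\[
\sum_{\substack{F_1\in\mathcal H_q,\,F_2\in\mathcal H_q\\ (F_1,F_2)=1}} \chi_f(F_1)\overline{\chi_f}(F_2)\,x^{\deg F_1} y^{\deg F_2},
\]
which factors as an Euler product over primes $P$ of $\F_q[T]$, each local factor being $1+\chi_f(P)x^{\deg P}+\overline{\chi_f}(P)y^{\deg P}$. Dividing out by the appropriate $L_q(\cdot,\chi_f)$ and $L_q(\cdot,\overline{\chi_f})$ factors (and the correction factors at primes dividing $f$, which are harmless since $f\le A$), this exhibits the double generating series as $\mathcal L_q(x,\chi_f)\mathcal L_q(y,\overline{\chi_f})$ times something analytic in a slightly larger polydisc, uniformly in $f$ up to a factor $|f|_q^\varepsilon$. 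Then a double application of Perron's formula (Lemma \ref{perron}) on circles of radius just below $1/q$ in each of $x,y$ expresses the inner sum over $F_1,F_2$ as a double contour integral; bounding $\mathcal L_q$ on these circles by the Lindel\"of bound (Lemma \ref{lindelof}), $|\mathcal L_q|\ll q^{\varepsilon\deg f}$, and trivially bounding everything else gives the inner double sum over $\mathcal H_{q,d_1}\times\mathcal H_{q,d_2}$ as $O(q^{d_1+d_2}q^{\varepsilon\deg f}) = O(q^{g+1+\varepsilon g})$. Wait — this is too lossy; as in the non-Kummer case I should instead only move one contour past the pole for one variable (picking up the principal term which, for $f$ non-cube, is absent since the character is non-principal), so the whole inner sum is $O(q^{(d_1+d_2)/2}\cdot q^{\varepsilon g})$... but here there are two $L$-functions and the saving is only on one of them; the honest bound coming out is $q^{(g+1)/2}\cdot q^{\max(d_1,d_2)/2}\cdot q^{\varepsilon g}$, hence after summing $d_1+d_2=g+1$ we obtain $O(q^{g/2+\varepsilon g})$ for the inner double sum up to the $|f|_q^\varepsilon$. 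Summing over non-cube $f\in\mathcal M_{q,\le A}$ then contributes a further factor $\sum_{\deg f\le A}|f|_q^{-1/2}\cdot|f|_q^\varepsilon \ll q^{A/2+\varepsilon g}$, giving the claimed bound $S_{2,\neq\cube}\ll q^{(A+g)/2+\varepsilon g}$.

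The main obstacle is making the dependence on $f$ genuinely uniform: one must check that the ``analytic part'' of the Euler product (the ratio of the double generating series to the product of the two $L$-functions, together with the local corrections at $P\mid f$) is bounded by $|f|_q^\varepsilon$ on the relevant polydisc independently of $f$, and that the lower bound $|\mathcal L_q(\cdot,\overline{\chi_f})|\gg q^{-\varepsilon\deg f}$ from Lemma \ref{folednil} can be invoked on the circle where it is needed (it is stated for $\re(s)\ge 1$, i.e. $|u|\le 1/q$, which is exactly the regime of the Perron contour). Once uniformity is in hand, the summations over $d_1,d_2$ and over $f$ are routine geometric-series estimates. I expect this lemma to be considerably easier than the cube-contribution Lemma \ref{lemma-cubes} or the dual-term analysis, precisely because for non-cube $f$ there is no main term to extract — every relevant character is non-principal — so a clean convexity-type bound suffices.
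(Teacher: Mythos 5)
Your proposal is correct in substance but takes a genuinely different route from the paper. The paper bounds $S_{2, \neq \cube}$ from the dual side: it first applies the sieve of Corollary \ref{Alexandra-sieve}, then Poisson summation (Proposition \ref{prop-Poisson}) in the $L_1,L_2$ variables, writes $f=E^3B^2C$, evaluates the resulting Gauss sums $G_q(V_i,f)$ prime by prime via Lemma \ref{gauss}, and only at that point produces $L$--functions of the primitive character $\chi_{B^2C}$ (nontrivial precisely because $f$ is not a cube), to which the Lindel\"of bound of Lemma \ref{lindelof} is applied. You instead harvest the cancellation directly on the $F$--side: for fixed non-cube $f$ the two-variable Euler product with local factors $1+\chi_f(P)x^{\deg P}+\overline{\chi_f}(P)y^{\deg P}$ equals $\mathcal{L}_q(x,\chi_f)\,\mathcal{L}_q(y,\overline{\chi_f})\,H(x,y)$, where $H$ is absolutely convergent and bounded uniformly in $f$ for $|x|,|y|\le q^{-1/2-\varepsilon}$ (the binding constraints are $|xy|,|x|^2,|y|^2<1/q$, and the local factors at $P\mid f$ are identically $1$), while both $L$--factors are polynomials bounded by $q^{\varepsilon\deg f}$ on those circles, since $\chi_f$ is induced from the primitive $\chi_{B^2C}$ with $BC\neq 1$ and the imprimitivity costs at most $2^{\omega(f)}\ll|f|_q^{\varepsilon}$. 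Your route is shorter and avoids the Gauss-sum combinatorics entirely; what the paper's dual-side computation buys is uniformity with the rest of the Kummer analysis and the possibility, in principle, of isolating secondary terms from special configurations, which a pure convexity upper bound like yours cannot see. Both give the same exponent $q^{(A+g)/2+\varepsilon g}$.

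Two points in your sketch need repair, though neither is fatal. First, the contour discussion is muddled: the sentence producing the ``honest bound'' $q^{(g+1)/2}\cdot q^{\max(d_1,d_2)/2}\cdot q^{\varepsilon g}$ does not imply your stated conclusion $O(q^{g/2+\varepsilon g})$ for the inner sum, and the worry that ``the saving is only on one of them'' is unfounded. The correct move is simply to place \emph{both} contours at radius $q^{-1/2-\varepsilon}$, which is permissible since then $|xy|=q^{-1-2\varepsilon}<1/q$, apply Lemma \ref{lindelof} to both $L$--functions there, and conclude that the inner double sum is $\ll q^{(1/2+\varepsilon)(d_1+d_2)}|f|_q^{\varepsilon}$; no poles are crossed at all, since both $L$--functions are entire, and non-principality only matters in that no $\mathcal{Z}_q$ factor appears. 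Second, Lemma \ref{folednil} is not needed in your setup: no $L$--function occurs in a denominator in your factorization (that feature was an artifact of the square-free sieve over $\F_{q^2}$ in the non-Kummer case). With these corrections, summing $q^{(g+1)/2+\varepsilon g}\,|f|_q^{\varepsilon-1/2}$ over $f\in\mathcal{M}_{q,\le A}$ and over the $O(g)$ admissible pairs $(d_1,d_2)$ gives exactly the bound of the lemma.
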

 \begin{lem}\label{lemma-Kummer3}
 The dual term is bounded by
 $$S_{2,\rm dual} \ll q^{(1+\varepsilon)g -\frac{A}{6}} + q^{ \left(\frac{23}{12}-\frac{\sigma}{2}+\varepsilon\right)g- \left( \frac{13}{12}-\frac{\sigma}{2}\right)A} + q^{\frac{3g}{2}-A(2-\sigma)+\varepsilon g},$$ for $7/6 \leq \sigma <4/3$.
 \end{lem}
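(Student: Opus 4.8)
The plan is to bound the dual term $S_{2,\mathrm{dual}}$ defined in \eqref{dual-term} by the same strategy used in the non-Kummer case, but without attempting to extract and cancel a main term. First I would use cubic reciprocity (Lemma \ref{cubic-reciprocity}) to rewrite $\overline{\chi_f}(F_1)\chi_f(F_2)$ as $\overline{\chi_{F_1}}\chi_{F_2}$ evaluated at $f$, and combine this with the expression for $\omega(\chi_{F_1}\overline{\chi_{F_2}})$ from Lemma \ref{kummer-desc}, which introduces the Gauss sum $G(\chi_{F_1}\overline{\chi_{F_2}})$. The key identity (analogous to the computation of $G(\chi_F)=G_{q^2}(1,F)$ in the non-Kummer case, and to Lemma \ref{gauss}) lets one express the resulting sum over $F_1,F_2$ in terms of generalized cubic Gauss sums $G_q(f, F_1 F_2)$ with $f$ ranging over $\mathcal{M}_{q,\leq g-A-1}$. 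After this reduction, I would open up the square-free conditions on $F_1, F_2$ by Möbius inversion (introducing auxiliary variables $D_1, D_2$, as in \eqref{sieve_D} and \eqref{sum_d}), so that the inner sum becomes a sum of $\tilde\Psi_q(fD, u)$-type generating series.

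Next I would apply Proposition \ref{big-F-tilde-corrected} to each inner sum $\sum_{(F,fD)=1} G_q(fD,F)$: this produces a main term governed by the residue $\rho(1,\cdot)$ (via Lemma \ref{lemma-residue}), a secondary error, and a contour integral of $\tilde\Psi_q(fD,u)$ along $|u|=q^{-\sigma}$ with $2/3<\sigma<4/3$. The main-term contribution would be assembled into an explicit sum over $f$ and $D$ and estimated by Perron's formula (Lemma \ref{perron}), producing, after locating the relevant pole, a bound of size $q^{(1+\varepsilon)g-A/6}$ — this is the term matching the secondary term of $S_{2,\cube}$ from Lemma \ref{lemma-cubes}, which one does not expect to cancel in the Kummer case. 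The contour-integral remainder would be bounded using the convexity bound in Lemma \ref{our-psi}, namely $\tilde\Psi_q(fD,u)\ll |fD|_q^{\frac12(\frac32-\sigma)+\varepsilon}$ on $|u|=q^{-\sigma}$; trivially summing over $f$ with $\deg f \le g-A-1$ (with the weight $|f|_q^{-1/2}$ and the extra $q^{-g/2-1}$-type normalization from $\omega$) and over $D$ with $\deg D \le g/2$-ish, and inserting $|u|^{-(g-A-1)} = q^{\sigma(g-A-1)}$, yields, after optimizing in the ranges of the $D_i$, the two remaining error terms $q^{(\frac{23}{12}-\frac\sigma2+\varepsilon)g-(\frac{13}{12}-\frac\sigma2)A}$ and $q^{\frac{3g}2 - A(2-\sigma)+\varepsilon g}$, valid for $7/6\le\sigma<4/3$.

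The main obstacle, and the reason the error term here is worse than in Theorem \ref{first-moment-non-Kummer}, is exactly what the authors flag in the introduction: in the Kummer case the family is larger (size $\asymp g q^g$ rather than $q^g$), and the main term of the dual sum does \emph{not} fully cancel against the secondary term of the principal sum — certain cross-terms involving two independent sieve variables $D_1, D_2$ contribute, but one has no asymptotic formula for them. So instead of tracking those cross-terms one must bound them, and the only available tool beyond the pole at $u^3 = q^{-4}$ is the convexity bound of Lemma \ref{our-psi}. Carefully bookkeeping the powers of $q$ coming from: (i) the normalization $q^{-g/2-1}$ in $\omega$, (ii) the factor $q^{\deg D}$ from $|G_{q}(f,D)|$-type bounds, (iii) the $|fD|_q^{\frac12(\frac32-\sigma)}$ from convexity, (iv) the $q^{\sigma(g-A)}$ from the contour radius, and balancing the $f$- and $D$-ranges, is the delicate part; the constraint $\sigma\ge 7/6$ arises precisely from requiring the $D$-sum to converge after these insertions (as in the non-Kummer estimate following \eqref{cube+dual}). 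Once these pieces are combined, choosing $\sigma$ and $A$ optimally in the final assembly of Theorem \ref{first-moment-Kummer} gives the stated exponent $g\frac{1+\sqrt7}{4}$.
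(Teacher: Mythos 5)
Your overall strategy is the paper's: write the root number via Gauss sums, reduce the dual sum to averages of shifted cubic Gauss sums, extract the term of size $q^{(1+\varepsilon)g-A/6}$ from the residue/main term of Proposition \ref{big-F-tilde-corrected}, and bound everything else with the convexity bound of Lemma \ref{our-psi}, with $\sigma\geq 7/6$ forced by the convergence of the auxiliary M\"obius sum. However, two points in your reduction are off, and the first one, taken literally, would derail the argument. The key identity is not an expression in terms of a single Gauss sum $G_q(f,F_1F_2)$: by Lemma \ref{kummer-desc} one has $\omega(\chi_{F_1}\overline{\chi_{F_2}})=\overline{\epsilon(\chi_3)}\,q^{-(g+1)/2}G_q(1,F_1)\overline{G_q(1,F_2)}$, and absorbing $\overline{\chi_f}(F_1)\chi_f(F_2)$ gives the \emph{product} $G_q(f,F_1)\overline{G_q(f,F_2)}$. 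There is no analogue of Proposition \ref{big-F-tilde-corrected} for a sum over a single variable $F_1F_2$ with this conjugation pattern; the whole point is that, after removing the condition $(F_1,F_2)=1$ by M\"obius over $H\mid(F_1,F_2)$ (using $G_q(f,H)\overline{G_q(f,H)}=|H|_q$ and Lemma \ref{gauss}), the $F_1$- and $F_2$-sums separate and Proposition \ref{big-F-tilde-corrected} is applied to \emph{each}. It is exactly this double application that produces the three terms of the lemma: main$\times$main (computed via Lemma \ref{lemma-residue} and Perron, of size $gq^{g-A/6}\ll q^{(1+\varepsilon)g-A/6}$), main$\times$contour (the terms \eqref{e21}, giving $q^{(\frac{23}{12}-\frac{\sigma}{2}+\varepsilon)g-(\frac{13}{12}-\frac{\sigma}{2})A}$), and contour$\times$contour (the term \eqref{e31}, giving $q^{\frac{3g}{2}-A(2-\sigma)+\varepsilon g}$, where the $H$-sum forces $\sigma\geq 7/6$). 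Your plan, which attributes both remaining bounds to ``the contour-integral remainder,'' would miss the middle term unless you keep these mixed main$\times$contour products explicitly.

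Second, the sieving you propose is misplaced: there is no need to open the square-free conditions on $F_1,F_2$ with variables $D_1,D_2$ (as in \eqref{sieve_D}, which belongs to the non-Kummer setting), because for $(F_1F_2,f)=1$ the Gauss sums $G_q(f,F_i)$ vanish automatically unless $F_i$ is square-free, by Lemma \ref{gauss}; the only condition requiring M\"obius is the coprimality $(F_1,F_2)=1$, handled by the single variable $H$, which you never address. Relatedly, the ``cross-terms'' the authors cannot evaluate asymptotically are the main$\times$contour products just described, not artifacts of two sieve variables. With these corrections the bookkeeping you outline (the $q^{-(g+1)/2}$ normalization, the $|H|_q$ factor, the $|fH|_q^{\frac12(\frac32-\sigma)+\varepsilon}$ convexity bound on $|u|=q^{-\sigma}$, and the $q^{\sigma(d_i-\deg H)}$ from the contour radius) does reproduce the stated bounds, so the gap is one of structure rather than of analytic input.
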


 We finish the section by sieving out the values of $F_1$ and $F_2$.
\begin{lem}
\label{sieve}
For $f$ a monic polynomial in $\mathbb{F}_q[T]$ the following holds.
\begin{align*}
\sum_{\substack{F_1\in \mathcal{H}_{q,d_1}\\ F_2\in \mathcal{H}_{q,d_2}\\(F_1,F_2)=1}} \chi_{f}(F_1) \overline{\chi_f}(F_2) =&\sum_{\substack{D_1\in \mathcal{M}_{q,\leq d_1/2}\\D_2\in \mathcal{M}_{q\leq d_2/2}}} \mu(D_1) \mu(D_2)\chi_{f}(D_1^2D_2)\\
&\times \sum_{\substack{\deg(H)\leq \min\{d_1-\deg(D_1),d_2-\deg(D_2)\}\\\deg(H)-\deg(D_1,H)\leq d_1-2\deg(D_1)\\\deg(H)-\deg(D_2,H)\leq d_2-2\deg(D_2)\\(H,f)=1 }}\mu(H)\chi_f((D_1,H)^2(D_2,H))\\&\times\sum_{\substack{L_1\in \mathcal{M}_{q, d_1-2\deg(D_1)-\deg(H)+\deg(D_1,H)}\\ L_2\in \mathcal{M}_{q, d_2-2\deg(D_2)-\deg(H)+\deg(D_2,H)}}} \chi_{f}(L_1) \overline{\chi_{f}}(L_2).\\
 \end{align*}

\end{lem}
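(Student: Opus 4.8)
\textbf{Proof proposal for Lemma \ref{sieve}.}

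The plan is to apply Möbius inversion over $\F_q[T]$ three times, once to remove the squarefree constraint on $F_1$, once to remove it on $F_2$, and once to remove the coprimality constraint $(F_1,F_2)=1$. First I would write each squarefree indicator as $\delta_{F_i \text{ squarefree}} = \sum_{D_i^2 \mid F_i} \mu(D_i)$. Substituting $F_i = D_i^2 E_i$ with $D_i$ monic and using the multiplicativity of the cubic residue symbol in the sense $\chi_f(D_i^2 E_i) = \chi_f(D_i)^2 \chi_f(E_i) = \chi_f(D_i^2)\chi_f(E_i)$ (recall $\chi_f$ is a character of modulus $f$ when $f \in \F_q[T]$, and $f$ being monic this is legitimate), the sum becomes
\[
\sum_{\substack{D_1 \in \mathcal{M}_{q,\leq d_1/2}}} \sum_{\substack{D_2 \in \mathcal{M}_{q,\leq d_2/2}}} \mu(D_1)\mu(D_2) \chi_f(D_1^2 D_2) \sum_{\substack{E_1 \in \mathcal{M}_{q,d_1-2\deg(D_1)} \\ E_2 \in \mathcal{M}_{q,d_2-2\deg(D_2)} \\ (E_1,E_2)=1}} \chi_f(E_1)\overline{\chi_f}(E_2),
\]
where the ranges $\deg(D_i) \leq d_i/2$ come from the constraint $\deg(E_i) = d_i - 2\deg(D_i) \geq 0$. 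Note $\overline{\chi_f}(D_2) = \chi_f(D_2^2) = \chi_f(D_2)^2$ since $\chi_f$ is cubic, which justifies rewriting $\chi_f(D_1^2)\overline{\chi_f}(D_2)$ as $\chi_f(D_1^2 D_2)$.

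Next I would remove the coprimality constraint $(E_1,E_2)=1$ by writing $\sum_{H \mid (E_1,E_2)} \mu(H) = \delta_{(E_1,E_2)=1}$, again with $\mu$ over $\F_q[T]$. Writing $E_1 = H E_1'$, $E_2 = H E_2'$ and using $\chi_f(HE_1')\overline{\chi_f}(HE_2') = \chi_f(H)\overline{\chi_f}(H)\chi_f(E_1')\overline{\chi_f}(E_2') = \chi_f(E_1')\overline{\chi_f}(E_2')$ when $(H,f)=1$ (and the whole term vanishes when $(H,f)\neq 1$, forcing the constraint $(H,f)=1$), this produces a sum over $H$ with $\deg(H) \leq \min\{d_1 - 2\deg(D_1), d_2 - 2\deg(D_2)\}$ and inner sums over $E_i'$ of degree $d_i - 2\deg(D_i) - \deg(H)$. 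The only subtlety here is that $H$ should be taken coprime to $D_1 D_2$ in the natural formulation; the statement instead keeps $H$ ranging freely but inserts the factors $\chi_f((D_i,H)^2(D_2,H))$ and adjusts the degree ranges — this is because if $P \mid (D_i, H)$ then one effectively pulls that prime out, and the bookkeeping is captured by replacing $\deg(H)$ by $\deg(H) - \deg((D_i,H))$ in the length of the $L_i$-sum and by the constraint $\deg(H) - \deg((D_i,H)) \leq d_i - 2\deg(D_i)$. I would carry this out carefully by splitting $H = H_0 \cdot (D_1,H)(D_2,H)/(\text{overlap})$ or, more cleanly, by simply grouping the primes of $H$ according to whether they divide $D_1$, $D_2$, or neither, and tracking how each choice shifts the degree of the free part; after relabelling $L_i = E_i'$ one arrives at the stated formula.

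The main obstacle I anticipate is precisely this last bookkeeping step: matching the cross terms $(D_i, H)$ and getting the three degree inequalities exactly right. The constraints $\deg(H) - \deg((D_1,H)) \leq d_1 - 2\deg(D_1)$ and the analogous one for $D_2$, together with $\deg(H) \leq \min\{d_1 - \deg(D_1), d_2 - \deg(D_2)\}$, encode that after pulling out the common factor $H$ from $E_1$ and $E_2$ and then absorbing any overlap between $H$ and the $D_i$, the remaining free polynomials $L_1, L_2$ still have nonnegative degree. I would verify these by a direct degree count: $\deg(L_i) = d_i - 2\deg(D_i) - \deg(H) + \deg((D_i,H)) \geq 0$ gives the middle inequalities, and $\deg(L_i) \geq 0$ combined with $\deg((D_i,H)) \leq \deg(D_i)$ (not $\deg(H)$, but since $(D_i,H)\mid D_i$) gives $\deg(H) \leq d_i - \deg(D_i)$, which is the outer constraint. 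Everything else is a routine reorganization of absolutely convergent finite sums, using only the multiplicativity of $\chi_f$ as a Dirichlet character mod $f$ and the defining property of the Möbius function over $\F_q[T]$.
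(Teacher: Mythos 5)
There is a genuine gap, and it occurs at your very first Möbius step. Writing $F_i=D_i^2E_i$, the original constraint $(F_1,F_2)=1$ becomes $(D_1^2E_1,D_2^2E_2)=1$, i.e.\ $(D_1E_1,D_2E_2)=1$ — not $(E_1,E_2)=1$. Your intermediate identity silently discards the conditions $(D_1,D_2)=1$, $(D_1,E_2)=1$ and $(D_2,E_1)=1$, and the discarded terms do not cancel: for example with $f=1$ and $d_1=d_2=2$ the spurious tuples with $(E_1,E_2)=1$ but $(D_1E_1,D_2E_2)\neq 1$ contribute $q-2q^2\neq 0$, so the displayed intermediate formula is simply not equal to the left-hand side. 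Everything you then flag as "bookkeeping" — the factors $\chi_f((D_1,H)^2(D_2,H))$, the shifted lengths $d_i-2\deg(D_i)-\deg(H)+\deg(D_i,H)$, and the outer constraint $\deg(H)\leq\min\{d_1-\deg(D_1),d_2-\deg(D_2)\}$ — is exactly the footprint of the lost conditions, and your proposed patch ("group the primes of $H$ according to whether they divide $D_1$, $D_2$, or neither") is a heuristic that cannot recover them from an already incorrect identity; nothing in your setup ever removes, say, the terms with $(D_1,D_2)\neq 1$.

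The correct route (which is the paper's) is to keep the constraint $(D_1E_1,D_2E_2)=1$ after the squarefree sieve and then run the second Möbius sum over $H\mid(D_1E_1,D_2E_2)$; this immediately gives $\deg(H)\leq\min\{d_1-\deg(D_1),d_2-\deg(D_2)\}$ since $\deg(D_iE_i)=d_i-\deg(D_i)$. For squarefree $H$ (the only ones surviving $\mu(H)$) one has $H\mid D_iE_i$ if and only if $H_i:=H/(D_i,H)$ divides $E_i$, because $H_i$ is coprime to $D_i$. Writing $E_i=H_iL_i$ produces the stated lengths of the $L_i$-sums and the factor $\chi_f(H_1)\overline{\chi_f}(H_2)=\chi_f(H_1H_2^2)$, which equals $\chi_f((D_1,H)^2(D_2,H))$ upon using $\chi_f(H)^3=1$ when $(H,f)=1$; terms with $(H,f)\neq 1$ vanish since any prime of $(H,f)$ kills either $\chi_f(H_1H_2^2)$ or $\chi_f(D_1^2D_2)$. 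Your character manipulations and the interpretation of the degree inequalities at the end are fine, but without repairing the first step the argument does not establish the lemma.
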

\begin{proof}
 We have that
 \begin{align*}
\sum_{\substack{F_1\in \mathcal{H}_{q,d_1}\\ F_2\in \mathcal{H}_{q,d_2}\\(F_1,F_2)=1}} \chi_{f}(F_1) \overline{\chi_f}(F_2)=&\sum_{\substack{D_1\in \mathcal{M}_{q,\leq d_1/2}\\D_2\in \mathcal{M}_{q,\leq d_2/2}}} \mu(D_1) \mu(D_2) \chi_{f}(D_1^2 D_2) \sum_{\substack{F'_1\in \mathcal{M}_{q,d_1-2\deg(D_1)}\\ F_2'\in \mathcal{M}_{q,d_2-2\deg(D_2)}\\(D_1F_1',D_2F_2')=1}} \chi_{f}(F'_1)\overline{\chi_{f}}(F_2')\\
 =&\sum_{\substack{D_1\in \mathcal{M}_{q,\leq d_1/2}\\D_2\in \mathcal{M}_{q,\leq d_2/2}}} \mu(D_1) \mu(D_2)\chi_{f}(D_1^2D_2)\sum_{H\in \mathcal{M}_{q,\leq \min\{d_1-\deg(D_1),d_2-\deg(D_2)\}}}\mu(H)\\&\times\sum_{\substack{F_1'\in \mathcal{M}_{q,d_1-2\deg(D_1)}\\ F_2'\in \mathcal{M}_{q,d_2-2\deg(D_2)}\\H\mid (D_1F_1',D_2F_2')}} \chi_{f}(F_1') \overline{ \chi_{f}}(F_2')\\
 \end{align*}
We remark that $H\mid (D_1F_1',D_2F_2')$ is equivalent to $H_1=\frac{H}{(D_1,H)}\mid F_1'$ and $H_2=\frac{H}{(D_2,H)}\mid F_2'$. This gives
\begin{align*}
\sum_{\substack{F_1\in \mathcal{H}_{q,d_1}\\ F_2\in \mathcal{H}_{q,d_2}\\(F_1,F_2)=1}} & \chi_{f}(F_1) \overline{\chi_{f}}(F_2) =\sum_{\substack{D_1\in \mathcal{M}_{q,\leq d_1/2}\\D_2\in \mathcal{M}_{q,\leq d_2/2}}} \mu(D_1) \mu(D_2)\chi_{f}(D_1^2D_2)  \\
& \times \sum_{\substack{\deg(H)\leq \min\{d_1-\deg(D_1),d_2-\deg(D_2)\}\\\deg(H_1)\leq d_1-2\deg(D_1)\\\deg(H_2)\leq d_2-2\deg(D_2) }}\mu(H)\chi_f(H_1H_2^2) \sum_{\substack{F_1''\in \mathcal{M}_{q,d_1-2\deg(D_1)-\deg(H_1)}\\ F_2''\in \mathcal{M}_{q,d_2-2\deg(D_2)-\deg(H_2)}}} \chi_{f}(F_1'') \overline{\chi_{f}}(F_2'').
 \end{align*}
 \end{proof}
We rewrite Lemma \ref{sieve} in the following form.

\begin{cor} \label{Alexandra-sieve}
For $f$ a monic polynomial in $\mathbb{F}_q[T]$ the following holds.
\begin{align*}
\sum_{\substack{F_1\in \mathcal{H}_{q,d_1}\\ F_2\in \mathcal{H}_{q,d_2}\\(F_1,F_2)=1}} & \chi_{f}(F_1) \overline{\chi_f}(F_2) = \sum_{\substack{H \in \mathcal{M}_{q,\leq \min\{d_1,d_2\}} \\ (H,f)=1}} \mu(H) \sum_{\substack{ R_1 \in \mathcal{M}_{q, \leq d_1-\deg(H)}\\R_1\mid H }} \mu(R_1) \chi_f(R_1) \\
& \times \sum_{\substack{R_2\in \mathcal{M}_{q,\leq d_2-\deg(H)}\\R_2\mid H}} \mu(R_2) \chi_f(R_2)^2 \sum_{\substack{D_1 \in \mathcal{M}_{q,\leq \frac{d_1-\deg(H)-\deg(R_1)}{2}} \\ (D_1,H)=1}} \mu(D_1) \chi_f(D_1)^2 \\
& \times \sum_{\substack{D_2 \in \mathcal{M}_{q,\leq \frac{d_2-\deg(H)-\deg(R_2)}{2}} \\ (D_2,H)=1}} \mu(D_2) \chi_f(D_2) \sum_{\substack{L_1\in \mathcal{M}_{q,d_1-2\deg(D_1)-\deg(H)-\deg(R_1)}\\ L_2\in \mathcal{M}_{q,d_2-2\deg(D_2)-\deg(H)-\deg(R_2)}}} \chi_{f}(L_1) \overline{\chi_{f}}(L_2).
\end{align*}
\end{cor}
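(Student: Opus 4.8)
The goal is to rewrite the double sum over $F_1\in\mathcal H_{q,d_1}$ and $F_2\in\mathcal H_{q,d_2}$ with the coprimality condition $(F_1,F_2)=1$ replaced by a sum over unrestricted monic polynomials $L_1,L_2$ together with several auxiliary summation variables coming from M\"obius inversions. Corollary \ref{Alexandra-sieve} is simply a bookkeeping reorganization of Lemma \ref{sieve}, so the plan is to start from the formula in Lemma \ref{sieve} and carry out two manipulations: (i) combine the two $\mu(H)\chi_f$-type factors, and (ii) disentangle the greatest-common-divisor conditions $(D_i,H)$ appearing inside Lemma \ref{sieve} by introducing the variables $R_i=(D_i,H)$ (or rather the square-free divisors of $H$ that they force).

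\textbf{Step 1: Factor the gcd contributions.} In Lemma \ref{sieve} the inner sum runs over $D_1,D_2$ with no coprimality restriction to $H$, and the exponents of $L_1,L_2$ as well as the argument of $\chi_f$ involve $(D_1,H)$ and $(D_2,H)$. For $i=1,2$ I would set $R_i=(D_i,H)$; since $D_i$ and $H$ are both square-free (the $D_i$ because of the $\mu(D_i)$ factor, $H$ because of $\mu(H)$), $R_i$ is a square-free divisor of $H$, and writing $D_i=R_i D_i'$ with $(D_i',H)=1$ one has $\mu(D_i)=\mu(R_i)\mu(D_i')$ and $\chi_f(D_i^2)=\chi_f(R_i)^2\chi_f(D_i')^2$, $\chi_f(D_i)=\chi_f(R_i)\chi_f(D_i')$. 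This is exactly the step that produces the sums over $R_1\mid H$ and $R_2\mid H$ in the statement, with the relabelling $D_i'\mapsto D_i$. One must check that the degree constraints transform correctly: the condition $\deg(D_i)\le d_i/2$ together with $\deg(H)-\deg(D_i,H)\le d_i-2\deg(D_i)$ becomes, after substituting $D_i=R_iD_i'$ and using $(D_i,H)=R_i$, the condition $\deg(D_i')\le (d_i-\deg(H)-\deg(R_i))/2$ that appears in the Corollary, and the exponent $d_i-2\deg(D_i)-\deg(H)+\deg(D_i,H)$ of $L_i$ becomes $d_i-2\deg(D_i')-\deg(H)-\deg(R_i)$.

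\textbf{Step 2: Reorder the summations.} After Step 1 the variables are $D_1,D_2$ (now coprime to $H$), $R_1,R_2$ (square-free divisors of $H$), $H$ (square-free, coprime to $f$), and $L_1,L_2$ unrestricted. I would then simply pull the sum over $H$ to the outside, then $R_1,R_2$, then $D_1,D_2$, then $L_1,L_2$, which is the order displayed in the Corollary, and collect the factors $\mu(H)$, $\mu(R_1)\chi_f(R_1)$, $\mu(R_2)\chi_f(R_2)^2$, $\mu(D_1)\chi_f(D_1)^2$, $\mu(D_2)\chi_f(D_2)$ attached to each. Finally I would verify the ranges $\deg(H)\le\min\{d_1,d_2\}$, $\deg(R_i)\le d_i-\deg(H)$ (needed so that the $D_i'$ and $L_i$ ranges are non-empty), which follow from the original constraints in Lemma \ref{sieve}.

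\textbf{Main obstacle.} There is no real analytic difficulty here; the only thing to be careful about is the combinatorial/degree bookkeeping, especially making sure that the square-freeness of $D_i$ and $H$ is genuinely available (it is, since both carry M\"obius factors) so that the multiplicativity identities $\chi_f(D_i)=\chi_f(R_i)\chi_f(D_i')$ and $\mu(D_i)=\mu(R_i)\mu(D_i')$ are valid, and that the $\chi_f$ arguments match: in Lemma \ref{sieve} we have $\chi_f(D_1^2D_2)\chi_f((D_1,H)^2(D_2,H))$, which after the substitution and using $\chi_f(R_i^2 R_i^2)=\chi_f(R_i^4)=\chi_f(R_i)$ (since $\chi_f^3=\chi_0$) combines to precisely $\chi_f(R_1)\chi_f(R_2)^2\chi_f(D_1')^2\chi_f(D_2')$ as claimed. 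Thus the whole proof is a direct substitution-and-rearrangement argument starting from Lemma \ref{sieve}, and I would present it as such in one or two short paragraphs.
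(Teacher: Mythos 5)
Your proposal is correct and is exactly the paper's route: the paper proves the corollary by the single observation that one should take $R_i=(D_i,H)$ in Lemma \ref{sieve}, and your Steps 1--2 just spell out the resulting M\"obius, character, and degree bookkeeping (including the use of $\chi_f^3=\chi_0$ and $\deg(R_i)\le\deg(H)$ to match the stated ranges), all of which checks out.
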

\begin{proof} This follows by taking  $R_i = (D_i,H)$ in Lemma \ref{sieve}. \end{proof}
 \kommentar{ \acom{ We probably shouldn't use Poisson summation everywhere. If either $d_1$ or $d_2$ is small, then one of our sums is already short, so we should only use Poisson summation once for the other sum. Maybe we should use Poisson summation twice when $X<d_1<Y$, with $Y = g-X$, for some parameter $X$ to be 
 chosen later.}
 
 We first concentrate on the principal sum for medium values of $d_1$, i.e.
\begin{eqnarray*} 
\Sigma_1 (X,Y) &:=& \sum_{\substack {X < d_1 < Y \\d_1 + d_2 = g+2 \\ d_1 + 2 d_2 \equiv 0 \pmod 3}}
\sum_{\substack{F_1,F_2 \square-\text{ free} \\ (F_1,F_2)=1 \\ \deg(F_i)=d_i}}  \sum_{\deg(f) \leq A} \frac{\chi_{f}(F_1)\overline{\chi_{f}}(F_2)}{|f|_q^{1/2}}\\
&=& \sum_{\substack {X < d_1 < Y \\d_1 + d_2 = g+2 \\ d_1 + 2 d_2 \equiv 0 \pmod 3}} \sum_{\deg(f) \leq A} \frac{1}{|f|_q^{1/2}} \sum_{D_1, D_2, H}' \mu(D_1) \mu(d_2) \mu(H) \chi_f(D_1 D_2^2) \chi_f((D_1, H)^2 (D_2, H)) \times\\
&&\times \sum_{\deg(L_1) \leq m_1} \chi_{f}(L_1) \sum_{\deg(L_2) \leq m_2} \overline{\chi_f}(L_2) \end{eqnarray*}
where $$\sum_{D_1, D_2, H}' = \sum_{\substack{D_1 \in \mathcal{M}_{q,\leq d_1/2}\\ D_2 \in \mathcal{M}_{q,\leq d_2/2}}}  \sum_{\substack{\deg(H)\leq \min\{d_1-\deg(D_1),d_2-\deg(D_2)\}\\\deg(H)-\deg(D_1,H)\leq d_1-2\deg(D_1)\\\deg(H)-\deg(D_2,H)\leq d_2-2\deg(D_2)\\(H,f)=1 }}$$
and 
\begin{eqnarray*} m_i=d_i-2\deg(D_i)-\deg(H)+\deg(D_i,H), \;\; i=1,2. \end{eqnarray*}
\acom{I think we should have $\chi_f(D_1^2 D_2)$ in the expression above. Why is the sum over $L_1$ for $\deg(L_1) \leq m_1$? Isn't $\deg(L_1)=m_1$?}
Using Proposition \ref{prop-Poisson} on the $L_1$ and the $L_2$ sums, and recalling that $G_q(0, f)$ is not zero only when $f$ is a cube by the orthogonality relations, we then have that 
\begin{align*}
\Sigma_{\rm principal}(X, Y)  =&q^{g+2} \sum_{\substack {X < d_1 < Y \\d_1 + d_2 = g+2 \\ d_1 + 2 d_2 \equiv 0 \pmod 3}} \sum_{\substack{f \in \mathcal{M}_{q,\leq A} \\ f \;\; {\rm cube}}} \frac{\phi(f)^2}{|f|_q^{5/2}}
\sum_{\substack{D_1, D_2, H}}' \frac{\mu(D_1) \mu(D_2)}{|D_1|_q^2 |D_2|_q^2} \frac{\mu(H)}{|H|_q^2}  \left|(D_1,H) (D_2,H) \right|_q \\
& +  q^{g+3} \sum_{\substack {X < d_1 < Y \\d_1 + d_2 = g+2 \\ d_1 + 2 d_2 \equiv 0 \pmod 3}} \sum_{\substack{f \in M_{\leq A} \\ \deg(f) \not\equiv 0 \pmod 3}}
\frac{1}{|f|_q^{5/2}} 
\sum_{\substack{D_1, D_2, H}}' \frac{\mu(D_1) \mu(D_2)}{|D_1|_q^2 |D_2|_q^2} \frac{\mu(H)}{|H|_q^2}  \left|(D_1,H) (D_2,H) \right|_q  \times \\ & \times
\sum_{V_1 \in \mathcal{M}_{q,\deg(f)-m_1-1}} G_q(V_1,{f})  \sum_{V_2 \in \mathcal{M}_{q,\deg(f)-m_2-1}} \overline{G_q(V_2,{f})}\\
& +  q^{g+2} \sum_{\substack {X < d_1 < Y \\d_1 + d_2 = g+2 \\ d_1 + 2 d_2 \equiv 0 \pmod 3}} \sum_{\substack{f \in M_{\leq A} \\ \deg(f) \equiv 0 \pmod 3\\ d \;\; {\rm not a cube}}}
\frac{1}{|f|_q^{5/2}} 
\sum_{\substack{D_1, D_2, H}}' \frac{\mu(D_1) \mu(D_2)}{|D_1|_q^2 |D_2|_q^2} \frac{\mu(H)}{|H|_q^2}  \left|(D_1,H) (D_2,H) \right|_q  \times \\ & \times
\left( (q-1) \sum_{V_1 \in \mathcal{M}_{q,\leq \deg(f)-m_1-2}} G_q(V_1,{f}) - \sum_{V_1 \in \mathcal{M}_{q,\deg(f)-m_1-1}} G_q(V_1, f) \right) \times \\
& \times \left( (q-1) \sum_{V_2 \in \mathcal{M}_{q,\leq \deg(f)-m_2-2}} \overline{G_q(V_2,{f})} - \sum_{V_2 \in \mathcal{M}_{q,\deg(f)-m_2-1}} \overline{G_q(V_2, f)}\right)
\end{align*}

We first concentrate on evaluating the following three sums. First, the sum
\begin{eqnarray*} 
\Sigma_{1, \cube}(X, Y) &=& q^{g+2} \sum_{\substack {X < d_1 < Y \\d_1 + d_2 = g+2 \\ d_1 + 2 d_2 \equiv 0 \pmod 3}} \sum_{\substack{f \in M_{\leq A} \\ f \;\; {\rm cube}}} \frac{\phi(f)^2}{|f|_q^{5/2}}
\sum_{\substack{D_1, D_2, H}}' \frac{\mu(D_1) \mu(D_2)}{|D_1|_q^2 |D_2|_q^2} \frac{\mu(H)}{|H|_q^2}  \left|(D_1,H) (D_2,H) \right|_q \\
\end{eqnarray*}
which is done in Section \ref{section-f-cube}, and which gives the main term.

Second, the contribution for $f$ not a cube, but $V_1 V_2^2$ a cube, given by
\begin{eqnarray*}
\Sigma_{1, \neq \cube}(X, Y) &=& q^{g+3} \sum_{\substack {X < d_1 < Y \\d_1 + d_2 = g+2 \\ d_1 + 2 d_2 \equiv 0 \pmod 3}} \sum_{\substack{f \in M_{\leq A} \\ f \neq \tinycube}}
\frac{1}{|f|_q^{5/2}} 
\sum_{\substack{D_1, D_2, H}}' \frac{\mu(D_1) \mu(D_2)}{|D_1|_q^2 |D_2|_q^2} \frac{\mu(H)}{|H|_q^2}  \left|(D_1,H) (D_2,H) \right|_q  \times \\ && \times
\sum_{\substack{V_1 \in \mathcal{M}_{q,\deg(f)-m_1-1} \\ _2 \in \mathcal{M}_{q,\deg(f)-m_2-1} \\ V_1 V_2^2 = \tinycube}}
G_q(V_1,{f})  \overline{G_q(V_2,{f})}
\end{eqnarray*}
This is done in Section \ref{v1v22 is a cube}, and we get a secondary term.
Finally, we treat the contribution
\begin{eqnarray*}
\Sigma_{2, \neq \cube}(X, Y) &=& q^{g+3} \sum_{\substack {X < d_1 < Y \\d_1 + d_2 = g+2 \\ d_1 + 2 d_2 \equiv 0 \pmod 3}} \sum_{\substack{f \in M_{\leq A} \\ f \neq \tinycube}}
\frac{1}{|f|_q^{5/2}} 
\sum_{\substack{D_1, D_2, H}}' \frac{\mu(D_1) \mu(D_2)}{|D_1|_q^2 |D_2|_q^2} \frac{\mu(H)}{|H|_q^2}  \left|(D_1,H) (D_2,H) \right|_q  \times \\ && \times
\sum_{\substack{V_1 \in \mathcal{M}_{q,\deg(f)-m_1-1} \\ _2 \in \mathcal{M}_{q,\deg(f)-m_2-1} \\ V_1 V_2^2 \neq \tinycube}}
G_q(V_1,{f})  \overline{G_q(V_2,{f})}
\end{eqnarray*}
This is done in Section \ref{sec:lindelof}.
\ccom{I do not see where we are using the fact that $V_1 V_2^2$ is not a cube in the proof of Section \ref{sec:lindelof}.}
\acom{I agree that we don't use the condition that $V_1^2 V_2$ is not a cube. I guess if we try to use the same idea as for the sum over quadratic characters, we would need to consider the case when $V_1^2 V_2 D_1^2 D_2 (D_1,H)^2 (D_2,H)$ is a cube, and this condition is more difficult to deal with. Maybe at a first stage, we can just bound the error term, without looking for secondary terms, and once we have a formula, we can try to work harder and maybe identify some secondary terms.}}

\subsection{The main term} \label{sec-main}
Here we will obtain an asymptotic formula for the main term \eqref{contrib-cubes} by proving Lemma \ref{lemma-cubes}. Recall that
$$ S_{2,\cube} = \sum_{\substack{d_1+d_2 = g+1\\ d_1 + 2 d_2 \equiv 1 \pmod 3}} \sum_{\substack{F_1 \in \mathcal{H}_{q,d_1} \\ F_2 \in \mathcal{H}_{q,d_2} \\ (F_1, F_2) = 1}} \sum_{\substack{f \in \mathcal{M}_{q,\leq A} \\ f = \tinycube}} \frac{\chi_{f}(F_1) \overline{\chi_{f}}(F_2)}{|f|_q^{1/2}}.$$
Let $2g+1 \equiv a \pmod 3$ and $g \equiv b \pmod 3$ with $a, b \in \{0,1,2 \}$. Notice that then $1+2a \equiv b \pmod 3.$
Recall that $A \equiv 0 \pmod 3$. Since $d_1+d_2=g+1$ and $d_1+2d_2 \equiv 1 \pmod 3$, it follows that $d_1 \equiv a \pmod 3$. In the equation above, write $f= k^3$. Then the main term $S_{2,\cube}$ can be rewritten as
\begin{equation}
S_{2,\cube} = \sum_{\substack{d_1+d_2 = g+1 \\ d_1 \equiv a \pmod 3}} \sum_{\substack{F_1 \in \mathcal{H}_{q,d_1} \\ F_2 \in \mathcal{H}_{q,d_2} \\ (F_1, F_2) = 1}} \sum_{\substack{k \in \mathcal{M}_{q,\leq \frac{A}{3}} \\ (k, F_1 F_2)=1}} \frac{1}{|k|_q^{3/2}}.
\label{mainpr}
\end{equation}
We consider the generating series
\begin{equation} \label{gen-series-c}
\mathcal{C}_{\mathrm{K}}(x,y,u) = \sum_{\substack{F_1 , F_2 \in \mathcal{H}_q \\ (F_1,F_2)=1}} \sum_{\substack{k \in \mathcal{M}_q \\ (k,F_1F_2)=1}} x^{\deg(F_1)} y^{\deg(F_2)} \frac{u^{\deg(k)}}{|k|_q^{3/2}}.
\end{equation}
Note that
\begin{equation} \sum_{\substack{k \in \mathcal{M}_q \\ (k,F_1F_2)=1}} \frac{u^{\deg(k)}}{|k|_q^{3/2}}  = \prod_{P \nmid F_1 F_2} \left ( 1- \frac{u^{\deg(P)}}{|P|_q^{3/2}}\right )^{-1} = \mathcal{Z}_q\left ( \frac{u}{q^{3/2}} \right ) \prod_{P \mid F_1 F_2}  \left ( 1- \frac{u^{\deg(P)}}{|P|_q^{3/2}}\right ).
\label{sum-k2}
\end{equation}

Let $C_{P,K}(u)$ denote the Euler factor above. 
Now we introduce the sum over $F_2$ and  we have that
\begin{align} 
\sum_{\substack{F_2 \in \mathcal{H}_q \\ (F_2, F_1)=1}}  y^{\deg(F_2)} \prod_{P \mid F_2}  C_{P,K}(u)
=&\prod_P \left ( 1+ y^{\deg(P)} C_{P,K}(u) \right ) \prod_{P\mid F_1} \left ( 1+ y^{\deg(P)} C_{P,K}(u) \right )^{-1}. \label{sum-F_2}
\end{align} 

Let $B_{P,K}(y,u)$ be the $P$-factor when $P\mid F_1$.
Finally, introducing the sum over $F_1$ and combining equations \eqref{sum-k2} and \eqref{sum-F_2}, we have that
\begin{equation}  \sum_{F_1 \in \mathcal{H}_q}x^{\deg(F_1)} \prod_{P \mid F_1} C_{P,K}(u) B_{P,K}(y,u)= \prod_P \left (1+ x^{\deg(P)} C_{P,K}(u) B_{P,K}(y,u) \right ) .
\label{sum-F_1}
\end{equation}
Combining equations \eqref{gen-series-c}, \eqref{sum-k2}, \eqref{sum-F_2} and \eqref{sum-F_1} and simplifying, we get that
\begin{align} 
\mathcal{C}_{\mathrm{K}}(x,y,u) =&\mathcal{Z}_q\left ( \frac{u}{q^{3/2}} \right )  \prod_P \left( 1+ (x^{\deg(P)}+y^{\deg(P)}) \left ( 1- \frac{u^{\deg(P)}}{|P|_q^{3/2}}\right ) \right) \nonumber \\
=& \mathcal{Z}_q\left ( \frac{u}{q^{3/2}} \right ) \mathcal{Z}_q(x) \mathcal{Z}_q(y) \mathcal{D}_{\mathrm{K}}(x,y,u),\label{simplified-c}
\end{align}
where
\begin{align} \label{euler-d}
\mathcal{D}_{\mathrm{K}}(x,y,u) =&\prod_P \left ( 1 - x^{2 \deg(P)} - y^{2 \deg(P)} - (xy)^{\deg(P)} + (x^2y)^{\deg(P)} + (y^2 x)^{\deg(P)}  \right. \\
&- \frac{(ux)^{\deg(P)}}{|P|_q^{3/2}} - \frac{ (uy)^{\deg(P)}}{|P|_q^{3/2}} + \frac{ (x^2u)^{\deg(P)}}{|P|_q^{3/2}}  + \frac{ (y^2u)^{\deg(P)}}{|P|_q^{3/2}}+ \frac{2 (xyu)^{\deg(P)}} {|P|_q^{3/2}}   \nonumber \\
&-\left. \frac{(x^2yu)^{\deg(P)}}{|P|_q^{3/2}} - \frac{ (y^2xu)^{\deg(P)}}{|P|_q^{3/2}}\right). \nonumber
\end{align}
Note that $\mathcal{D}_{\mathrm{K}}(x,y,u)$ has an analytic continuation when $|x|<1, |y|<1, |x^2y| < \frac{1}{q}, |y^2 x| < \frac{1}{q} , |x u| <q^{3/2}, | y u | <q^{3/2}, |x^2u | < \sqrt{q}, |y^2 u|<\sqrt{q}, |xyu|< \sqrt{q}.$
Using equation \eqref{simplified-c} and Perron's formula  (Lemma \ref{perron})  three times in equation \eqref{mainpr}, we get that
\begin{align*}
S_{2,\cube} =&\sum_{\substack{d_1+d_2=g+1 \\ d_1 \equiv a \pmod 3}} \frac{1}{ (2 \pi i)^3} \oint \oint \oint \frac{ \mathcal{D}_{\mathrm{K}}(x,y,u)}{ ( 1 - \frac{u}{\sqrt{q}})  (1-qx) (1-qy) (1-u) u^{A/3} y^{d_2} x^{d_1}} \, \frac{du}{u} \, \frac{dy}{y} \, \frac{dx}{x},
\end{align*} 
where we initially integrate along circles around the origin of radii $|u| = \frac{1}{q^{\varepsilon}}, |x|=|y| = \frac{1}{q^{1+\varepsilon}}.$ 

We first shift the contour over $u$ to $|u| = q^{5/2}$, and encounter two poles: one at $u= 1$ and another at $u=\sqrt{q}$. We compute the residues of the poles and then
\begin{align*}
\oint_{|u| = \frac{1}{q^{\varepsilon}}}  \frac{ \mathcal{D}_{\mathrm{K}}(x,y,u)}{ ( 1 - \frac{u}{\sqrt{q}}) (1-u) u^{A/3}} \, \frac{du}{u} =&\zeta_q(3/2) \mathcal{D}_{\mathrm{K}}(x,y,1) + q^{-\frac{A}{6}} \zeta_q(1/2) \mathcal{D}_{\mathrm{K}}(x,y,\sqrt{q}) \\
&+ \oint_{|u|=q^{5/2}}   \frac{ \mathcal{D}_{\mathrm{K}}(x,y,u)}{ ( 1 - \frac{u}{\sqrt{q}}) (1-u) u^{A/3}} \, \frac{du}{u}.
\end{align*}

Plugging this into the expression for $S_{2,\cube}$ and bounding the new triple integral by $q^{g- \frac{5A}{6} + \varepsilon g}$ give
\begin{align}
S_{2,\cube} =&\zeta_q(3/2) \sum_{\substack{d_1+d_2=g+1 \\ d_1 \equiv a \pmod 3}} \frac{1}{ (2 \pi i)^2} \oint_{|x| = \frac{1}{q^{1+\varepsilon}}} \oint_{|y| = \frac{1}{q^{1+\varepsilon}}} \frac{\mathcal{D}_{\mathrm{K}}(x,y,1)}{ (1-qx)(1-qy) y^{d_2} x^{d_1}} \, \frac{dy}{y} \, \frac{dx}{x} \label{i1} \\
&+ q^{-\frac{A}{6}} \zeta_q(1/2)\sum_{\substack{d_1+d_2=g+1 \\ d_1 \equiv a \pmod 3}} \frac{1}{ (2 \pi i)^2}\oint_{|x| = \frac{1}{q^{1+\varepsilon}}} \oint_{|y| = \frac{1}{q^{1+\varepsilon}}} \frac{\mathcal{D}_{\mathrm{K}}(x,y,\sqrt{q})}{ (1-qx)(1-qy) y^{d_2} x^{d_1}} \, \frac{dy}{y} \, \frac{dx}{x}  \label{i2} \\
&+ O(q^{g- \frac{5A}{6} +\varepsilon g}). \nonumber
\end{align}
We first focus on the first term \eqref{i1}. Note that $\mathcal{D}_{\mathrm{K}}(x,y,1)$ has an analytic continuation for $|x|<1 , |y|<1, |x^2y| < \frac{1}{q}, |y^2 x|< \frac{1}{q}.$

{We remark that in \eqref{i1} we can shift the contours of integration to the smaller circles  $|x| = q^{-3}$ and $|y|=q^{-2}$ without changing the value of the integral as we are not crossing any pole. }

We write $d_1=3k+a$ and compute the sum over $d_1$. Note that $k \leq [(g+1-a)/3]=[g/3]$. Then
\begin{equation*}
\eqref{i1} =\zeta_q(3/2) \frac{1}{(2 \pi i)^2} \oint_{|x|=q^{-3}} \oint_{|y|=q^{-2}} \frac{\mathcal{D}_{\mathrm{K}}(x,y,1)}{(1-qx)(1-qy)(y^3-x^3)} \left[ \frac{y^{2+a-b}}{x^{g+a-b}}-\frac{x^{3-a}}{y^{g+1-a}} \right] \, \frac{dy}{y} \, \frac{dx}{x}. 
\end{equation*} 
We write the integral above as a difference of two integrals.
Note that the second double integral vanishes, because the integrand for the integral over $x$ has no poles inside the circle $|x|=q^{-3}$.

 Hence
\begin{align*}
\eqref{i1} =&\zeta_q(3/2) \frac{1}{(2 \pi i)^2} \oint_{|x|=q^{-3}} \oint_{|y|=q^{-2}} \frac{\mathcal{D}_{\mathrm{K}}(x,y,1)y^{2+a-b}}{(1-qx)(1-qy)(y^3-x^3)x^{g+a-b}} \, \frac{dy}{y} \, \frac{dx}{x}.
\end{align*} 
{Note that for the integral over $y$, the only poles of the integrand inside the circle $|y|=q^{-2}$  are at $y^3=x^3$, so when $y = x \xi_3^i$ for $i \in \{0,1,2 \}$ and $\xi_3=e^{2\pi i/3}$. }
Hence
\begin{align*}
 \frac{1}{2 \pi i}  \oint_{|y|=q^{-2}} \frac{\mathcal{D}_{\mathrm{K}}(x,y,1)y^{2+a-b}}{x^{g+a-b}(1-qy)(y^3-x^3)} \, \frac{dy}{y} =&\frac{1}{3x^{g+1}} \left[ \frac{ \mathcal{D}_{\mathrm{K}}(x,x,1)}{1-qx} + \frac{\mathcal{D}_{\mathrm{K}}(x,\xi_3 x,1)\xi_3^{2+a-b}}{1-q \xi_3 x}\right.\\
 &\left. + \frac{ \mathcal{D}_{\mathrm{K}}(x, \xi_3^2 x,1)\xi_3^{2(2+a-b)}}{1-q \xi_3^2 x}  \right].
\end{align*} 
To compute the integral over $x$, we shift the  the contour of integration to $|x|=q^{-1/3+\varepsilon}$, evaluating the residues at $x=q^{-1}$ corresponding to each of the three functions above. Notice that the first integral has a double pole at $s=1/q$. This gives
\begin{align*}
\eqref{i1} =&\zeta_q(3/2) \frac{1}{2 \pi i} \oint_{|x|=q^{-3}} \frac{1}{3 (1-qx) x^{g+1}}  \left[ \frac{ \mathcal{D}_{\mathrm{K}}(x,x,1)}{1-qx} + \frac{\mathcal{D}_{\mathrm{K}}(x,\xi_3 x,1) \xi_3^{2+a-b}}{1-q \xi_3 x}\right. \\
&\left. + \frac{ \mathcal{D}_{\mathrm{K}}(x, \xi_3^2 x,1) \xi_3^{2(2+a-b)}}{1-q \xi_3^2 x}  \right] \, \frac{dx}{x}\\
=& \frac{\zeta_q(3/2)}{3} \left[ (g+2)q^{g+1} \mathcal{D}_{\mathrm{K}} ( \tfrac{1}{q} , \tfrac{1}{q} ,1) - q^{g} \frac{d}{dx} \mathcal{D}_{\mathrm{K}} (x,x,1) |_{x=1/q} \right. \\
&+q^{g+1} \frac{ \mathcal{D}_{\mathrm{K}} (\tfrac{1}{q}, \tfrac{\xi_3}{q},1) \xi_3^{1+2a}}{1-\xi_3}+q^{g+1} \frac{ \mathcal{D}_{\mathrm{K}} ( \tfrac{\xi_3^2}{q}, \tfrac{1}{q},1)\xi_3^a}{1-\xi_3^2} \\
&\left. + q^{g+1} \frac{\mathcal{D}_{\mathrm{K}} ( \tfrac{1}{q} , \tfrac{\xi_3^2}{q} ,1)\xi_3^{2+a}}{1-\xi_3^2}  + q^{g+1} \frac{\mathcal{D}_{\mathrm{K}} ( \tfrac{\xi_3}{q}, \tfrac{1}{q},1 ) \xi_3^{2a}}{1-\xi_3}\right] + O(q^{\tfrac{g}{3} + \varepsilon g}),
\end{align*}
where
we have used the fact that $1+2a \equiv b \pmod{3}$. 

Since $\mathcal{D}_{\mathrm{K}}(x,y,1) = \mathcal{D}_{\mathrm{K}}(y,x,1)$, we further simplify \eqref{i1} to
\begin{align*}
\eqref{i1} =&\zeta_q(3/2) \frac{q^{g+1}}{3} \left[ (g+2) \mathcal{D}_{\mathrm{K}} ( \tfrac{1}{q} , \tfrac{1}{q},1 ) - \frac{1}{q} \frac{d}{dx} \mathcal{D}_{\mathrm{K}} (x,x,1) |_{x=1/q} \right.\\
&\left. - \frac{ \mathcal{D}_{\mathrm{K}} (\tfrac{1}{q}, \tfrac{\xi_3}{q},1) \xi_3^{2a+2}}{1-\xi_3} - \frac{ \mathcal{D}_{\mathrm{K}} ( \tfrac{\xi_3^2}{q}, \tfrac{1}{q},1) \xi_3^{a+1}}{1-\xi_3^2} \right] + O(q^{\tfrac{g}{3} + \varepsilon g})\\
=& C_{\mathrm{K},1} g q^{g+1}+ C_{\mathrm{K},2} q^{g+1} +O(q^{\frac{g}{3}+\varepsilon g}),
\end{align*} where
\begin{align} \label{c-1}
C_{\mathrm{K},1} =&\zeta_q(3/2) \frac{  \mathcal{D}_{\mathrm{K}} ( \tfrac{1}{q} , \tfrac{1}{q} ,1) }{3},\\
C_{\mathrm{K},2} =&\zeta_q(3/2) \left[  \frac{2  \mathcal{D}_{\mathrm{K}} ( \tfrac{1}{q} , \tfrac{1}{q},1 ) }{3}- \frac{1}{3q} \frac{d}{dx} \mathcal{D}_{\mathrm{K}} (x,x,1) |_{x=1/q} - \frac{ \mathcal{D}_{\mathrm{K}} (\tfrac{1}{q}, \tfrac{\xi_3}{q},1) \xi_3^{g+1}}{3(1-\xi_3)}- \frac{ \mathcal{D}_{\mathrm{K}} ( \tfrac{\xi_3^2}{q}, \tfrac{1}{q},1) \xi_3^{2g+2}}{3(1-\xi_3^2)} \right] \label{c-2},
\end{align} where we used the fact that $2g+1 \equiv a \pmod 3.$
We remark that the constants above are real, which reflects the fact that the sum is a real number.

We similarly compute the term \eqref{i2} and we get that
\begin{align*}
\eqref{i2} =& \frac{ \zeta_q(1/2) q^{g+1-\frac{A}{6}}}{3} \left[ (g+2) \mathcal{D}_{\mathrm{K}} ( \tfrac{1}{q} , \tfrac{1}{q},\sqrt{q} ) - \frac{1}{q} \frac{d}{dx} \mathcal{D}_{\mathrm{K}} (x,x,\sqrt{q}) |_{x=1/q} - \frac{ \mathcal{D}_{\mathrm{K}} (\tfrac{1}{q}, \tfrac{\xi_3}{q},\sqrt{q}) \xi_3^{2a+2}}{1-\xi_3} \right.\\
&\left.- \frac{ \mathcal{D}_{\mathrm{K}} ( \tfrac{\xi_3^2}{q}, \tfrac{1}{q},\sqrt{q}) \xi_3^{a+1}}{1-\xi_3^2} \right] + O(q^{\tfrac{g}{3}-\frac{A}{6} + \varepsilon g}).
\end{align*}
Putting everything together, we get that 
\begin{align} 
S_{2,\cube} =& \zeta_q(3/2) \frac{q^{g+1}}{3} \Bigg[ (g+2) \mathcal{D}_{\mathrm{K}} ( \tfrac{1}{q} , \tfrac{1}{q},1 ) - \frac{1}{q} \frac{d}{dx} \mathcal{D}_{\mathrm{K}} (x,x,1) |_{x=1/q} - \frac{ \mathcal{D}_{\mathrm{K}} (\tfrac{1}{q}, \tfrac{\xi_3}{q},1) \xi_3^{g+1}}{1-\xi_3} \nonumber \\
&- \frac{ \mathcal{D}_{\mathrm{K}} ( \tfrac{\xi_3^2}{q}, \tfrac{1}{q},1) \xi_3^{2g+2}}{1-\xi_3^2} \Bigg] + \frac{ \zeta_q(1/2) q^{g+1-\frac{A}{6}}}{3} \Bigg[ (g+2) \mathcal{D}_{\mathrm{K}} ( \tfrac{1}{q} , \tfrac{1}{q},\sqrt{q} ) - \frac{1}{q} \frac{d}{dx} \mathcal{D}_{\mathrm{K}} (x,x,\sqrt{q}) |_{x=1/q} \nonumber\\
&- \frac{ \mathcal{D}_{\mathrm{K}} (\tfrac{1}{q}, \tfrac{\xi_3}{q},\sqrt{q}) \xi_3^{g+1}}{1-\xi_3}   - \frac{ \mathcal{D}_{\mathrm{K}} ( \tfrac{\xi_3^2}{q}, \tfrac{1}{q},\sqrt{q}) \xi_3^{2g+2}}{1-\xi_3^2} \Bigg] + O\Big(q^{\tfrac{g}{3} + \varepsilon g}\Big)+O\Big(q^{g- \frac{5A}{6}+\varepsilon g} \Big). \label{main-computation}
\end{align}

\subsection{The contribution from non-cubes} \label{sec:lindelof}

Here we will prove Lemma \ref{lemma-non-cubes}. Recall the definition \eqref{contrib-non-cubes} of $S_{2, \neq \cube}$, the term coming from the contribution of non-cube polynomials.

Using the sieve of Corollary \ref{Alexandra-sieve}, we rewrite $S_{2, \neq \cube}$ as
\begin{align*}
S_{2, \neq \cube} =&\sum_{\substack{d_1 + d_2 = g+1 \\ d_1 + 2 d_2 \equiv 1 \pmod 3}}\sum_{\substack{f \in \mathcal{M}_{q,\leq A} \\ f \neq \tinycube}} {\frac{1}{|f|_q^{1/2}}}
\sum_{\substack{H \in \mathcal{M}_{q,\leq \min\{d_1,d_2\}} \\ (H,f)=1}} \mu(H) \sum_{\substack{R_1 \mid H \\ \deg(R_1) \leq d_1-\deg(H)}} \mu(R_1) \chi_f(R_1)\\
& \times  \sum_{\substack{R_2 \mid H \\ \deg(R_2) \leq d_2-\deg(H)}} \mu(R_2) \chi_f(R_2)^2 \sum_{\substack{D_1 \in \mathcal{M}_{q,\leq \frac{d_1-\deg(H)-\deg(R_1)}{2}} \\ (D_1,R_1)=1}} \mu(D_1) \chi_f(D_1)^2 \\
& \times  \sum_{\substack{D_2 \in \mathcal{M}_{q,\leq \frac{d_2-\deg(H)-\deg(R_2)}{2}} \\ (D_2,R_2)=1}} \mu(D_2) \chi_f(D_2)  \sum_{\substack{L_1\in \mathcal{M}_{d_1-2\deg(D_1)-\deg(H)-\deg(R_1)}\\ L_2\in \mathcal{M}_{q,d_2-2\deg(D_2)-\deg(H)-\deg(R_2)}}} \chi_{f}(L_1) \overline{\chi_{f}}(L_2).
\end{align*}
We write $S_{2, \neq \cube} = S_{2, \neq \cube}^{(0)}+S_{2, \neq \cube}^{(1)}+S_{2, \neq \cube}^{(2)},$ according to $\deg(f) \pmod 3$. When $\deg(f) \equiv 1, 2 \pmod 3$, note that the condition that $f$ is not a cube is automatically satisfied. We will bound $S_{2, \neq \cube}^{(1)}$. Bounding the other two terms is similar (see the remark at the end of the proof).
We begin by using  the Poisson summation formula (Proposition \ref{prop-Poisson}) for the sums over $L_1$ and $L_2$ above. Let $\deg(f) =n$. Note that since $|\epsilon(\chi_f)|=1$, we have that
\begin{align}
S_{2, \neq \cube}^{(1)} =&q^{g+2}  \sum_{\substack{d_1 + d_2 = g+1 \\ d_1 + 2 d_2 \equiv 1 \pmod 3}} \sum_{H \in \mathcal{M}_{q,\leq \min \{d_1,d_2\}}} \frac{\mu(H)}{|H|_q^2} \sum_{\substack{R_1 \mid  H \\ \deg(R_1) \leq d_1 - \deg(H)}} \frac{\mu(R_1)}{|R_1|_q}  \sum_{\substack{R_2 \mid H \\ \deg(R_2) \leq d_2 - \deg(H)}}  \frac{\mu(R_2)}{|R_2|_q} \nonumber  \\
& \times \sum_{\substack{D_1 \in \mathcal{M}_{q,\leq \frac{d_1 - \deg(R_1) - \deg(H)}{2}} \\ (D_1, R_1)=1}} \frac{\mu(D_1)}{|D_1|_q^2}  \sum_{\substack{D_2 \in \mathcal{M}_{q,\leq \frac{d_2 - \deg(R_2) - \deg(H)}{2}} \\ (D_2, R_2)=1}} \frac{\mu(D_2)}{|D_2|_q^2} \nonumber\\
& \times  \sum_{\substack{n=0 \\ n \equiv 1 \pmod 3}}^A q^{-3n/2} \sum_{\substack{f \in \mathcal{M}_{q,n} \\ (f,H)=1}} \chi_f( D_1^2 R_1 D_2 R_2^2)\nonumber \\
& \times \sum_{V_1 \in \mathcal{M}_{q,n -d_1 + 2 \deg(D_1) +\deg(R_1)+\deg(H)-1}}  \sum_{V_2 \in \mathcal{M}_{q,n -d_2 + 2\deg(D_2) +\deg(R_2)+\deg(H)-1}} \frac{G_q(V_1, f) \overline{G_q(V_2, f)}}{|f|_q}. \label{s1-non-cube}
\end{align}
Write $f = E^3 B^2 C$, where $B,C$ are square-free polynomials with $(B,C)=1$. Note that $BC^2 \neq 1$ since $f$ is not a cube. Then the sum over $f$ becomes
\begin{align}
& \sum_{\substack{E \in \mathcal{M}_{q,\leq n/3} \\ (E, D_1R_1D_2R_2H)=1}} \sum_{\substack{B \in \mathcal{H}_{q,\leq \frac{n- 3 \deg(E)}{2}} \\ (B,H)=1}} \sum_{\substack{C \in \mathcal{H}_{q,n-3\deg(E)-2 \deg(B)} \\ (C,BH)=1}} \chi_{B^2C} (D_1^2 R_1 D_2 R_2^2) \sum_{V_1 \in \mathcal{M}_{q,n -d_1 + 2\deg(D_1) +\deg(R_1)+\deg(H)-1}} \nonumber \\
& \sum_{V_2 \in \mathcal{M}_{q,n -d_2 + 2\deg(D_2) +\deg(R_2)+\deg(H)-1}} \frac{G_q(V_1, E^3 B^2C) \overline{G_q(V_2, E^3B^2C)}}{|f|_q}. \label{news}
\end{align}
We remark that for fixed $V_1$ and $V_2$, $G_q(V_1,f) \overline{G_q(V_2,f})$ is multiplicative as a function of $f$. 
Indeed, for $(f,h)=1$, we have by Lemma \ref{gauss} and \eqref{prop-GS}
\begin{align*}
G_q(V_1,f h ) \overline{G_q(V_2,f h)} =&\chi_{f}(h)^2 G_q(V_1,f) G_q(V_1,h) \overline{ \chi_{f}(h)^2 G_q(V_2,f) G_q(V_2,h)} \\
=&G_q(V_1,f)  \overline{ G_q(V_2,f) } G_q(V_1,h) \overline{G_q(V_2,h)}.
\end{align*}
Then,
\begin{align*}
G_q(V_1, E^3 B^2C) \overline{G_q(V_2, E^3B^2C)} =&\prod_{\substack{P \mid E \\ P \nmid BC}} G_q(V_1, P^{3 \text{ord}_P(E)}) \overline{ G_q(V_2, P^{3 \text{ord}_P(E)}) } \\
& \times \prod_{P\mid B} G_q(V_1, P^{3 \text{ord}_P(E)+2}) \overline{ G_q(V_2, P^{3 \text{ord}_P(E)+2}) } \\
& \times \prod_{P\mid C} G_q(V_1, P^{3 \text{ord}_P(E)+1}) \overline{ G_q(V_2, P^{3 \text{ord}_P(E)+1}) }.
\end{align*}
We look at each of the three cases above.
\begin{enumerate}
\item If $P\mid E$ and $P \nmid BC$, from Lemma \ref{gauss} we need $P^{3 \text{ord}_P(E)-1} \mid V_1$ in order for the Gauss sum to be nonzero. In this case, we have
$$ G_q(V_1, P^{3 \text{ord}_P(E)})= \left \{\begin{array}{ll}\phi( P^{3 \text{ord}_P(E)}) & \mbox{if }P^{3 \text{ord}_P(E)} \mid V_1,\\-|P|_q^{3 \text{ord}_P(E)-1}& \mbox{if }P^{3 \text{ord}_P(E)-1} || V_1.\end{array}\right .$$
\item If $P\mid B$ (so $P \nmid C$), again from Lemma \ref{gauss}, we need $P^{3 \text{ord}_P(E)+1} || V_1$, and in this case
$$  G_q(V_1, P^{3 \text{ord}_P(E)+2}) = \epsilon( \chi_{P^2}) \omega(\chi_{P^2}) \chi_{P^2}(V_1 P^{-3 \text{ord}_P(E)-1})^2 |P|_q^{3 \text{ord}_P(E)+ \frac{3}{2}}.$$
\item If $P\mid C$ (so $P \nmid B$), we need $P^{3 \text{ord}_P(E)} ||V_1$. Then
$$ G_q(V_1, P^{3 \text{ord}_P(E)+1}) =\epsilon( \chi_{P}) \omega(\chi_P)  \chi_{P}(V_1 P^{-3 \text{ord}_P(E)})^2  |P|_q^{3 \text{ord}_P(E)+ \frac{1}{2}}.$$
\end{enumerate}
Combining all of the above, it follows that in order to have $G_q(V_1, E^3B^2C) \neq 0$, then we must have  \[V_1 = E^3B V_3\prod_{\substack{P\mid E \\ P\nmid BC}} P^{-1},\] with $(V_3,BC)=1$, and we can write
\begin{align}
G_q(V_1,E^3 B^2 C) =&|E|_q^3 |B|_q^{\frac{3}{2}} |C|_q^{\frac{1}{2}} \prod_{\substack{P\mid E\\P\nmid BC\\P\nmid V_3}}(-|P|_q^{-1})\prod_{\substack{P\mid E\\P\mid V_3}}\left (1-\frac{1}{|P|_q}\right )
\chi_{B^2 C} \Big ( V_3\prod_{\substack{P\mid E\\P\nmid BC}}P^{-1} \Big)^2 \chi_C (B)^2 \nonumber \\
& \times \prod_{P\mid B} \chi_{P}(B/P)^2  \prod_{P\mid B} \epsilon(\chi_{P^2})  \omega(\chi_{P^2}) \prod_{P\mid C}\epsilon(\chi_P) \omega(\chi_P). \label{first-GS}
\end{align}

Similarly we can suppose that
$$V_2= E^3 B V_4\prod_{\substack{P\mid E \\ P\nmid BC}}P^{-1},$$ where we have $(V_4,BC)=1$.
Using equation \eqref{first-GS} and the analogous expression for $G_q(V_2, E^3 B^2 C)$, it follows that
\begin{align*}
G_q(V_1,E^3 B^2 C) \overline{ G_q(V_2, E^3B^2C)} =&|E|_q^6 |B|_q^3 |C|_q \prod_{\substack{P \mid E \\ P \nmid BCV_3}} (- |P|_q^{-1}) \prod_{\substack{P \mid E \\ P \nmid BCV_4}} (- |P|_q^{-1}) \\
 & \times  \prod_{P\mid (E,V_3)} \left ( 1 - \frac{1}{|P|_q} \right )  \prod_{P\mid (E,V_4)} \left ( 1 - \frac{1}{|P|_q} \right ) \overline{\chi_{B^2 C}}( V_3) \chi_{B^2 C} ( V_4).
 \end{align*}

 Then, the expression in equation \eqref{news} is
\begin{align}
& \sum_{\substack{E \in \mathcal{M}_{q,\leq n/3} \\ (E, D_1R_1D_2R_2H)=1}} |E|_q^3 \prod_{P\mid E} |P|_q^{-2}  \sum_{\substack{B \in \mathcal{H}_{q,\leq \frac{n- 3 \deg(E)}{2}} \\ (B,H)=1}}  |B|_q \prod_{P\mid (E,B)} |P|_q^2 \sum_{\substack{C \in \mathcal{H}_{q,n-3\deg(E)-2 \deg(B)} \\ (C,BH)=1}} \prod_{P\mid (E,C)} |P|_q^2  \nonumber \\
& \times \chi_{B^2C} (D_1^2 R_1 D_2 R_2^2)  \sum_{V_3 \in \mathcal{M}_{q,e_1}}  {\overline{\chi_{B^2 C}}} ( V_3) \prod_{P\mid (E,V_3)} (1-|P|_q)  
\sum_{V_4 \in \mathcal{M}_{q,e_2}}\chi_{B^2 C} ( V_4) \prod_{P\mid (E,V_4)} (1-|P|_q) ,\label{v3v4}
\end{align}
where 
\begin{equation*} \label{def-e_i}
e_i = \deg(B)+\deg(C) -d_i + 2\deg(D_i) +\deg(R_i)+\deg(H)-1 + \deg \Big( \prod_{\substack{ P\mid E\\ P \nmid BC}}P\Big)
\end{equation*} for $i=1,2$.
Now we look at the generating series for the sum over $V_3$ and get that
\begin{align*}
\sum_{V_3 \in \mathcal{M}_q} u^{\deg(V_3)} \overline{\chi_{B^2 C}}(V_3)  \prod_{P\mid (E,V_3)} (1-|P|_q)  =&\prod_{P \nmid EBC} \left ( 1-  \overline{\chi_{B^2 C}}(P) u^{\deg(P)} \right )^{-1} \\
& \times \prod_{\substack{P \mid E \\ P \nmid BC}} \left ( 1+ (1- |P|_q) \frac{ \overline{\chi_{B^2 C}}(P) u^{\deg(P)}}{1- \overline{\chi_{B^2 C}}(P) u^{\deg(P)}} \right ) \\
=&\mathcal{L}_q(u, \overline{\chi_{B^2 C}}) \prod_{\substack{P \mid E \\ P \nmid BC}} (1- |P|_q \overline{\chi_{B^2 C}}(P) u^{\deg(P)}).
\end{align*}

Using Perron's formula  (Lemma \ref{perron}) for the sums over $V_3$ and $V_4$, we get that
\begin{align*}
 \sum_{V_3 \in \mathcal{M}_{q,e_1}} &  \overline{\chi_{B^2 C}} ( V_3) \prod_{P\mid (E,V_3)} (1-|P|_q)\\
 & = \frac{1}{2 \pi i} \oint_{|u|=q^{-1/2}} \frac{\mathcal{L}_q(u,\overline{\chi_{B^2 C}}) \prod_{\substack{P \mid E \\ P \nmid BC}} (1- |P|_q\overline{\chi_{B^2 C}}(P) u^{\deg(P)}) }{u^{e_1}} \, \frac{du}{u},
 \end{align*} and 
\begin{align*}
 \sum_{V_4 \in \mathcal{M}_{q,e_2}}  & \chi_{B^2 C} ( V_4)  \prod_{P\mid (E,V_4)} (1-|P|_q) \\
 =&\frac{1}{2 \pi i}  \oint_{|u|=q^{-1/2}} \frac{\mathcal{L}_q(u,\chi_{B^2C}) \prod_{\substack{P \mid E \\ P \nmid BC}} (1- |P|_q \chi_{B^2C}(P) u^{\deg(P)}) }{u^{e_2}} \, \frac{du}{u}.
 \end{align*}

Since $B,C$ are square-free and coprime, and $BC \neq 1$ (because $f$ was not a cube), the $L$--functions in the expressions above are primitive of modulus $BC$, and we can use the Lindel\"{o}f bound (Lemma \ref{lindelof}) 
for each of them. We have
$$ | \mathcal{L}_q(u,\overline{\chi_{B^2C}}) | \ll |BC|_q^{\varepsilon}, | \mathcal{L}_q(u,\chi_{B^2C}) | \ll |BC|_q^{\varepsilon},$$
for $|u|=q^{-1/2}$. 
Then,  the double sum over $V_3$ and $V_4$ in \eqref{v3v4} is 
$$  \ll |BC|_q^{1+ \varepsilon} q^{-g/2} |D_1 D_2 H  |_q |R_1 R_2|_q^{\frac{1}{2}} \prod_{\substack{P \mid E \\ P \nmid BC}} |P|_q^2 .$$
Now we use the fact that
$$ \prod_{P\mid E} |P|_q^{-2} \prod_{P\mid  (E,C)} |P|_q^2 \prod_{P\mid (E,B)} |P|_q^2 \prod_{\substack{P \mid E \\ P \nmid BC}} |P|_q^2 = 1,$$ 
and trivially bound the sums over $C, B, E$ to
get that the entire expression in \eqref{v3v4} is bounded by
$$ q^{n(2+\varepsilon) - g/2} |D_1 D_2 H |_q |R_1 R_2|_q^{\frac{1}{2}}.$$
Finally,  trivially bounding the sums over $n, D_{1}, D_2, R_1, R_2, H$ in equation \eqref{s1-non-cube} it follows that 
$$S_{2, \neq \cube}^{(1)} \ll q^{\frac{A+g}{2}+ g \varepsilon}.$$



We remark that bounding $S_{2, \neq \cube}^{(2)}$ is identical to bounding $S_{ \neq \cube}^{(1)}$. When bounding $S_{2, \neq \cube}^{(0)}$, we apply the Poisson summation formula for the sums over $L_1$ and $L_2$ as before, and note that $G_q(0,f)=0$ since $f$ is not a cube. The Poisson summation formula applied to each of the sums over $L_1$ and $L_2$ gives 2 terms in that case, and multiplying through, we will obtain four terms, each of which can be bounded using the same method as before.
In conclusion, we get
$$S_{2, \neq \cube} \ll q^{\frac{A+g}{2}+\varepsilon g}.$$

\subsection{The dual term}

 \label{sec-dual-term}
We now treat the dual term by proving Lemma \ref{lemma-Kummer3}. Recall from equation \eqref{dual-term} that
$$S_{2,\mathrm{dual}} = \sum_{\substack{d_1+d_2 = g+1\\ d_1 + 2 d_2 \equiv 1 \pmod 3}} \sum_{\substack{F_1 \in \mathcal{H}_{q,d_1} \\ F_2 \in \mathcal{H}_{q,d_2} \\ (F_1, F_2) = 1}} \omega(\chi_{F_1}\overline{\chi_{F_2}}) \sum_{f \in \mathcal{M}_{q,\leq g-A-1}} \frac{\overline{\chi_{f}}(F_1) {\chi_{f}}(F_2)}{|f|_q^{1/2}}.$$
Since $d_1 + 2 d_2 \equiv 1 \pmod 3$, by Corollary \ref{sign-GS} and formula \eqref{sign-restriction}, the sign of the functional equation is 
\begin{align*}
\omega(\chi_{F_1}\overline{\chi_{F_2}})
=& \overline{\epsilon(\chi_3)} q^{-(d_1+d_2)/2}G_q(\chi_{F_1}\overline{\chi_{F_2}})\\
=& \overline{\epsilon(\chi_3)} q^{-(d_1+d_2)/2}G_q(1,F_1)\overline{G_q(1,F_2)},
\end{align*}
where $\chi_3$ is defined by \eqref{def-chi3}.
We rewrite the dual sum as
\begin{align}
S_{2,\rm dual} \label{whatever} 
=&  \overline{\epsilon(\chi_3)} q^{-(g+1)/2} \sum_{\substack{d_1+d_2 = g+1\\ d_1 + 2 d_2 \equiv 1 \pmod 3}} \sum_{f \in \mathcal{M}_{q,\leq g-A-1}} \frac{1}{|f|_q^{1/2}}\sum_{\substack{F_1 \in \mathcal{H}_{q,d_1} \\ F_2 \in \mathcal{H}_{q,d_2} \\ (F_1, F_2) = (F_1 F_2, f)=1}} G_q(f,F_1)\overline{G_q(f,F_2)},
\end{align}
where we have used the fact that 
\[\overline{\chi_f}(F_1)G_q(1,F_1)= \begin{cases} \overline{\chi_{F_1}}(f)G_q(1,F_1)=G_q(f,F_1) & (f, F_1)=1, \\
0 & \mbox{otherwise,}
\end{cases} \]
and similarly for $F_2$. 
We first notice that if $F_1$ or $F_2$ are not square-free, then since $(F_1 F_2, f)=1$,  we have by Lemma \ref{gauss} that
$G_q(f,F_1)=0$ or ${G_q(f,F_2)} = 0.$ Therefore, we can write
\begin{eqnarray*}
\sum_{\substack{F_1 \in \mathcal{H}_{q,d_1} \\ F_2 \in \mathcal{H}_{q,d_2} \\ (F_1, F_2) = (F_1 F_2, f)=1}} G_q(f,F_1)\overline{G_q(f,F_2)} &=&
\sum_{\substack{F_1 \in \mathcal{M}_{q,d_1}\\ F_2\in \mathcal{M}_{q,d_2} \\ (F_1, F_2) = (F_1 F_2, f)=1}} G_q(f,F_1)\overline{G_q(f,F_2)} \\
&=&  \sum_{\substack{F_1\in \mathcal{M}_{q,d_1}\\ F_2\in \mathcal{M}_{q,d_2}\\  (F_1 F_2, f)=1}} \sum_{H \mid (F_1, F_2)} \mu(H) G_q(f,F_1)\overline{G_q(f,F_2)} \\
&=&  \sum_{\deg(H) \leq \min{(d_1, d_2)}} \mu(H) \sum_{\substack{F_1\in \mathcal{M}_{q,{d_1 - \deg(H)}} \\ F_2\in \mathcal{M}_{q,{d_2 - \deg(H)}}  \\  (H F_1 F_2, f)=1}} G_q(f,H F_1)\overline{G_q(f,H F_2)}.\\
\end{eqnarray*}
Again, if $(H, F_1) \neq 1$ or $(H, F_2) \neq 1$, then $G_q(f, HF_1)=0$ or $G_q(f, HF_2)=0$. If $(H, F_1 F_2)=1$, we can apply Lemma \ref{gauss} and write 
\begin{eqnarray} \label{sum-over-H}
&& \sum_{\substack{F_1 \in \mathcal{H}_{q,d_1} \\ F_2 \in \mathcal{H}_{q,d_2} \\ (F_1, F_2) = (F_1 F_2, f)=1}} G_q(f,F_1)\overline{G_q(f,F_2)} 
=  \sum_{\substack{\deg(H) \leq \min{(d_1, d_2)}\\ (H,f)=1}}   \mu(H) |H|_q \\
&& \times   \sum_{\substack{F_1\in \mathcal{M}_{q,{d_1 - \deg(H)}}  \\  (F_1, f)=1 \\ (F_1, H)=1}}
G_q(f H, F_1) 
  \sum_{\substack{F_2\in \mathcal{M}_{q,{d_2 - \deg(H)}}  \\  (F_2, f)=1 \\ (F_2, H)=1}}
\overline{G_q(f H, F_2)},\nonumber
\end{eqnarray}
where we have used the fact that $G_q(f,H) \overline{G_q(f,H)} = |H|_q.$
Using equation \eqref{whatever} it follows that
\begin{align}
S_{2,\mathrm{dual}} =&  \overline{\epsilon(\chi_3)} q^{-(g+1)/2} \sum_{\substack{d_1+d_2 = g+1\\ d_1 + 2 d_2 \equiv 1 \pmod 3}} \sum_{f \in \mathcal{M}_{q,\leq g-A-1}} \frac{1}{|f|_q^{1/2}} \sum_{\substack{\deg(H) \leq \min{(d_1, d_2)}\\ (H,f)=1}}   \mu(H) |H|_q \nonumber \\
& \times  \sum_{\substack{F_1\in \mathcal{M}_{q,{d_1 - \deg(H)}}  \\  (F_1, fH)=1 }}
G_q(f H, F_1) 
  \sum_{\substack{F_2\in \mathcal{M}_{q,{d_2 - \deg(H)}}  \\  (F_2, fH)=1 }}
\overline{G_q(f H, F_2)}. \label{dual_bound}
\end{align}
Using Proposition \ref{big-F-tilde-corrected} we have that
\begin{align*}
& \sum_{\substack{F_1\in \mathcal{M}_{q,{d_1 - \deg(H)}}  \\ (F_1,fH)=1}} G_q(fH,F_1) = \delta_{f_2=1} \frac{q^{ \frac{4}{3} (d_1 - \deg(H))-\frac{4}{3} [d_1+\deg(f_1)]_3}}{\zeta_q(2)|f_1H|_q^{\frac{2}{3}}} \overline{G_q(1,f_1H)} \rho(1, [d_1+\deg(f_1)]_3) \\
& \times \prod_{P|fH} \left( 1+ \frac{1}{|P|_q} \right)^{-1} + O \left(  \delta_{f_2=1} \frac{q^{\frac{d_1}{3}-\frac{\deg(H)}{2}+\varepsilon (d_1-\deg(H))}} {|f_1|_q^{\frac{1}{6}}} + q^{\sigma d_1+\left(\frac{3}{4}-\frac{3}{2}\sigma\right) \deg(H)} |f|_q^{\frac{1}{2} ( \frac{3}{2} - \sigma)} \right),
\end{align*}
and a similar formula holds for the sum over $F_2$. Note that the second error term dominates the first error term. Then we have
\begin{align}
& \sum_{\substack{F_1\in \mathcal{M}_{q,{d_1 - \deg(H)}} \\  (F_1, fH)=1 }}
G_q(f H, F_1)   \sum_{\substack{F_2\in \mathcal{M}_{q,{d_2 - \deg(H)}}  \\  (F_2, fH)=1 }}\overline{G_q(f H, F_2)} \nonumber \\
= & \delta_{f_2=1} \frac{ q^{\frac{4(g+1)}{3} -3\deg(H)- \frac{4}{3} \left([d_1+ \deg(f_1)]_3+[d_2+ \deg(f_1)]_3\right)} }{ \zeta_q(2)^2|f_1|_q^{\frac{1}{3}}}  \rho(1, [d_1+ \deg(f_1)]_3)
\overline{\rho(1, [d_2+ \deg(f_1)]_3)} \nonumber
\\
&\times \prod_{P\mid fH} \left ( 1+\frac{1}{|P|_q}\right )^{-2} \nonumber \\ 
& + O\left (\frac{q^{\frac{4d_1}{3}-\frac{3}{2}\deg(H)}}{|f_1|_q^{\frac{1}{6}}} q^{\sigma d_2+\left(\frac{3}{4}-\frac{3}{2}\sigma\right) \deg(H)} |f|_q^{\frac{1}{2} ( \frac{3}{2} - \sigma)} \right)  + O\left (\frac{q^{\frac{4d_2}{3}-\frac{3}{2}\deg(H)}}{|f_1|_q^{\frac{1}{6}}} q^{\sigma d_1+\left(\frac{3}{4}-\frac{3}{2}\sigma\right) \deg(H)} |f|_q^{\frac{1}{2} ( \frac{3}{2} - \sigma)}\right) \label{e21} \\
&+ O \left(    q^{\sigma d_2+\left(\frac{3}{4}-\frac{3}{2}\sigma\right) \deg(H)} |f|_q^{\frac{1}{2} ( \frac{3}{2} - \sigma)}
q^{\sigma d_1+\left(\frac{3}{4}-\frac{3}{2}\sigma\right) \deg(H)} |f|_q^{\frac{1}{2} ( \frac{3}{2} - \sigma)} \right).\label{e31}
\end{align}

\kommentar{Using Proposition \ref{big-F-tilde-corrected}
\begin{align}
& \sum_{\substack{\deg(F_1)={d_1 - \deg(H)} \\  (F_1, fH)=1 }}
G_q(f H, F_1)   \sum_{\substack{\deg(F_2)={d_2 - \deg(H)} \\  (F_2, fH)=1 }}\overline{G_q(f H, F_2)} \nonumber \\
= & \delta_{f_2=1} \frac{ q^{\frac{4(g+1)}{3} -3\deg(H)- \frac{4}{3} \left([d_1+ \deg(f_1)]_3+[d_2+ \deg(f_1)]_3\right)} }{ \zeta_q(2)^2|f_1|_q^{1/3}}  \rho(1, [d_1+ \deg(f_1)]_3)
\overline{\rho(1, [d_2+ \deg(f_1)]_3)} \nonumber
\\
&\times \prod_{P\mid fH} \left ( 1+\frac{1}{|P|_q}\right )^{-2} \nonumber \\ 
& + O\left (\frac{q^{\frac{4d_1}{3}-\frac{3}{2}\deg(H)}}{|f_1|_q^{\frac{1}{6}}}\left( \delta_{f_2=1} \frac{q^{\frac{d_2}{3}-\frac{\deg(H)}{2}+\varepsilon (d_2-\deg(H))}} {|f_1|_q^{\frac{1}{6}}} + q^{\sigma d_2+\left(\frac{3}{4}-\frac{3}{2}\sigma\right) \deg(H)} |f|_q^{\frac{1}{2} ( \frac{3}{2} - \sigma)}\right) \right) \label{e1} \\
& + O\left (\frac{q^{\frac{4d_2}{3}-\frac{3}{2}\deg(H)}}{|f_1|_q^{\frac{1}{6}}}\left( \delta_{f_2=1} \frac{q^{\frac{d_1}{3}-\frac{\deg(H)}{2}+\varepsilon (d_1-\deg(H))}} {|f_1|_q^{\frac{1}{6}}} + q^{\sigma d_1+\left(\frac{3}{4}-\frac{3}{2}\sigma\right) \deg(H)} |f|_q^{\frac{1}{2} ( \frac{3}{2} - \sigma)}\right) \right) \label{e2} \\
&+ O \left(   \left(\delta_{f_2=1} \frac{q^{\frac{d_2}{3}-\frac{\deg(H)}{2}+\varepsilon (d_2-\deg(H))}} {|f_1|_q^{\frac{1}{6}}} + q^{\sigma d_2+\left(\frac{3}{4}-\frac{3}{2}\sigma\right) \deg(H)} |f|_q^{\frac{1}{2} ( \frac{3}{2} - \sigma)} \right)\right. \nonumber \\
& \times \left( \delta_{f_2=1} \frac{q^{\frac{d_1}{3}-\frac{\deg(H)}{2}+\varepsilon (d_1-\deg(H))}} {|f_1|_q^{\frac{1}{6}}} + q^{\sigma d_1+\left(\frac{3}{4}-\frac{3}{2}\sigma\right) \deg(H)} |f|_q^{\frac{1}{2} ( \frac{3}{2} - \sigma)}\right) \right).\label{e3}
\end{align}
\acom{Note the extra error term in the last two lines. }}
Then the main term of $S_{2,\mathrm{dual}}$ is equal to
\begin{align*}
M_\mathrm{dual}=&  \overline{\epsilon(\chi_3)} \frac{q^{\frac{5}{6}(g+1)}}{\zeta_q(2)^2} \sum_{\substack{d_1+d_2 = g+1\\ d_1 + 2 d_2 \equiv 1 \pmod 3}} \sum_{f \in \mathcal{M}_{q,\leq g-A-1}}  \delta_{f_2=1} \frac{ q^{ - \frac{4}{3} \left([d_1+ \deg(f_1)]_3+[d_2+ \deg(f_1)]_3\right)} }{ |f|_q^{1/2}|f_1|_q^{1/3}} \\
  &\times  \rho(1, [d_1+ \deg(f_1)]_3)
\overline{\rho(1, [d_2+ \deg(f_1)]_3)}
\\
& \times \sum_{\substack{\deg(H) \leq \min{(d_1, d_2)}\\ (H,f)=1}}   \frac{\mu(H)}{ |H|_q^2} \nonumber  \prod_{P\mid fH} \left ( 1+\frac{1}{|P|_q}\right )^{-2}.\\ 
 \end{align*}
Notice that the product of the terms involving $\rho$ is nonzero only when $d_1+\deg(f_1)\equiv 1 \pmod{3}$ (and therefore $d_2+\deg(f_1)\equiv 0 \pmod{3}$). By Lemma \ref{lemma-residue},
\begin{align*}
M_\mathrm{dual}=&  \frac{q^{\frac{5}{6}g+1}}{\zeta_q(2)^2} \sum_{\substack{d_1+d_2 = g+1\\ d_1 + 2 d_2 \equiv 1 \pmod 3}} \sum_{f \in \mathcal{M}_{q,\leq g-A-1}}  \delta_{f_2=1} \frac{1}{ |f|_q^{1/2}|f_1|_q^{1/3}} \\
& \times \sum_{\substack{\deg(H) \leq \min{(d_1, d_2)}\\ (H,f)=1}}   \frac{\mu(H)}{ |H|_q^2} \nonumber  \prod_{P\mid fH} \left ( 1+\frac{1}{|P|_q}\right )^{-2}, 
 \end{align*}
where we have also used that $\tau(\chi_3)=\epsilon(\chi_3)\sqrt{q}$.

We look at the generating series of the sum over $H$. We have
\begin{align*}
 \sum_{(H,f)=1} \frac{\mu(H)}{|H|_q^2} \prod_{P\mid H} \left ( 1+\frac{1}{|P|_q}\right )^{-2} w^{\deg(H)} = \prod_{P \nmid f} \left (1- \frac{w^{\deg(P)}}{(|P|_q+1)^2} \right ).
\end{align*}

Let $R_P(w)$ denote the $P$--factor above and let $\mathcal{R}_{\mathrm{K}}(w) = \prod_P R_P(w)$. By Perron's formula, we get that
\begin{align*}
\sum_{\substack{\deg(H) \leq \min\{d_1,d_2\} \\ (H,f)=1}} \frac{\mu(H)}{|H|_q^2} \prod_{P\mid H} \left ( 1+\frac{1}{|P|_q}\right )^{-2} = \frac{1}{2 \pi i} \oint \frac{ \mathcal{R}_{\mathrm{K}}(w) \prod_{P\mid f} R_P(w)^{-1}}{(1-w) w^{ \min\{d_1,d_2\}}} \, \frac{dw}{w}.
\end{align*}

Recall from Section \ref{sec-main} that  $d_1 \equiv a \pmod 3, 2g+1 \equiv a \pmod 3, g \equiv b \pmod 3$ and $A \equiv 0 \pmod 3$. Then we need $\deg(f) \equiv b \pmod 3$. 
Now we look at the sum over $f$. The generating series is
\begin{align*}
 \sum_{f} \delta_{f_2=1}&  \frac{u^{\deg(f)}}{|f|_q^{1/2} |f_1|_q^{1/3} }  \prod_{P\mid f} \left ( 1+ \frac{1}{|P|_q} \right )^{-2} R_P(w)^{-1} \\
 =&\prod_P \left[ 1+ \frac{1}{R_P(w) (1+\frac{1}{|P|_q})^2} \left ( \frac{1}{|P|_q^{1/3}} \sum_{j=0}^{\infty} \frac{u^{(3j+1) \deg(P)}}{|P|_q^{(3j+1)/2}} +  \sum_{j=1}^{\infty} \frac{u^{3j \deg(P)}}{|P|_q^{3j/2}}  \right )\right] \\
 =&\prod_P \left[1+ \frac{1}{R_P(w)(1+\frac{1}{|P|_q})^2 } \frac{u^{\deg(P)} ( 1+ \frac{u^{2 \deg(P)}}{|P|_q^{2/3}})}{|P|_q^{5/6} ( 1- \frac{ u^{3 \deg(P)}}{|P|_q^{3/2}})} \right].
 \end{align*}
 Let 
 $$\mathcal{E}_{\mathrm{K}}(u,w) = \prod_P R_P(w) \left[1+ \frac{1}{R_P(w) (1+\frac{1}{|P|_q})^2 } \frac{u^{\deg(P)} ( 1+ \frac{u^{2 \deg(P)}}{|P|_q^{2/3}})}{|P|_q^{5/6} ( 1- \frac{ u^{3 \deg(P)}}{|P|_q^{3/2}})} \right] = \mathcal{Z}_q \left (\frac{u}{q^{5/6}} \right ) \mathcal{U}_{\mathrm{K}}(u,w).$$
 Write $\deg(f) = 3k+b$. Since $\deg(f) \leq g-A-1$ and $g-A-1 \equiv b-1 \pmod 3$, we have by Perron's formula
 \begin{align}
&\mathcal{R}_{\mathrm{K}}(w) \sum_{\substack{f \in \mathcal{M}_{q,\leq g-A-1} \\ \deg(f) \equiv b \pmod{3}}} \delta_{f_2=1}  \frac{1}{ |f_1|_q^{1/3} |f|_q^{1/2}}  \prod_{P\mid f} \left ( 1+ \frac{1}{|P|_q} \right )^{-2} R_P(w)^{-1}\nonumber \\
=&\frac{1}{2 \pi i} \oint \sum_{k=0}^{(g-A-3-b)/3} \frac{1}{u^{3k+b}} \mathcal{E}_{\mathrm{K}}(u,w) \, \frac{du}{u} \nonumber \\
 =&\frac{1}{2 \pi i} \oint \frac{\mathcal{E}_{\mathrm{K}}(u,w)}{(1-u^3) u^{g-A-3}} \, \frac{du}{u},\label{sum_f}
 \end{align}
 where we are integrating along a small circle around the origin. 
 
 Introducing the sum over $d_1$, we have
 \begin{align} \label{sum-d}
M_{\mathrm{dual}} = \frac{q^{\frac{5}{6}g+1}}{\zeta_q(2)^2} \frac{1}{(2 \pi i)^2} \oint \oint \frac{\mathcal{U}_{\mathrm{K}}(u,w)}{ (1-uq^{1/6})(1-u^3)  u^{g-A-3} (1-w) } \sum_{\substack{d_1+d_2=g+1 \\ d_1+2d_2\equiv 1 \pmod 3}} w^{- \min \{d_1,d_2 \}} \, \frac{dw}{w} \, \frac{du}{u}.
 \end{align}
Note that since $d_1 \equiv a \pmod 3$, we have that $d_2 \equiv a-1 \pmod 3.$ For simplicity of notation, let $\alpha = [a-1]_3$. We rewrite the sum over $d_1, d_2$ as
 \begin{align*}
 \sum_{\substack{d_1+d_2=g+1 \\ d_1+2d_2\equiv 1 \pmod 3}} w^{- \min \{d_1,d_2 \}} = \sum_{k=0}^{ [ (g+1-2a)/6]} \frac{1}{w^{3k+a}} + \sum_{k=0}^{[(g-1-2\alpha)/6]} \frac{1}{w^{3k+\alpha}}.
 \end{align*}
 Assume that $g$ is odd. 
 We have
 $$ [ (g+1-2a)/6]= \frac{g-1-2a}{6}, \,\, \,  [(g-1-2 \alpha)/6]=\frac{g-3-2 \alpha}{6}.$$
 Then using the above in \eqref{sum-d} we get that
 \begin{align*}
 M_{\mathrm{dual}} =&\frac{q^{\frac{5}{6}g+1}}{\zeta_q(2)^2} \frac{1}{(2 \pi i)^2} \oint \oint \frac{\mathcal{U}_{\mathrm{K}}(u,w)(1+w)}{ (1-uq^{1/6}) (1-u^3)  u^{g-A-3}  (1-w) (1-w^3) w^{\frac{g-1}{2}}} \, \frac{dw}{w} \, \frac{du}{u}.
 \end{align*}
%
 Note that we have a pole at $u=q^{-1/6}$. 
\kommentar{ We compute the residue at $u=q^{-1/6}$ while moving the integral just before the second pole at $u=q^{1/6}$ and obtain
 \begin{align*}
 M_{\mathrm{dual}} =& - q^{g-\frac{A}{6}+1} \frac{\zeta_q(1/2)}{\zeta_q(2)^2} \frac{1}{2\pi i}\oint \frac{\mathcal{U}_{\mathrm{K}}(q^{-1/6},w)(1+w)}{(1-w)   (1-w^3) w^{\frac{g-1}{2}} } \, \frac{dw}{w}\\
 &+\frac{q^{\frac{5}{6}g+1}}{\zeta_q(2)^2} \frac{1}{(2 \pi i)^2} \oint_{|u|=q^{\frac{1}{6}-\varepsilon}} \oint_{|w|=q^{-\varepsilon}} \frac{\mathcal{U}_{\mathrm{K}}(u,w)(1+w)}{ (1-uq^{1/6}) (1-u^3)  u^{g-A-3}  (1-w) (1-w^3) w^{\frac{g-1}{2}}} \, \frac{dw}{w} \, \frac{du}{u}\\
=&- q^{g-\frac{A}{6}+1} \frac{\zeta_q(1/2)}{\zeta_q(2)^2} \frac{1}{2\pi i}\oint \frac{\mathcal{U}_{\mathrm{K}}(q^{-1/6},w)(1+w)}{(1-w)   (1-w^3) w^{\frac{g-1}{2}} } \, \frac{dw}{w}+O\left(q^{\left(\frac{2}{3}+\varepsilon\right)g+\frac{A}{6}} \right).
 \end{align*}
 \acom{We also encounter the pole at $u^3=1$ which gives an explicit term of size $q^{5g/6}$. So we can either compute it explicitly (though we won't be able to detect it in the asymptotic formula because of the size of error terms), or write the error term as $O(q^{5g/6+\varepsilon g})$ instead of $q^{2g/3+A/6+\varepsilon g}$.} }

We compute the residue at $u=q^{-1/6}$ while moving the integral just before the poles at $u^3=1$ and obtain
 \begin{align}
 M_{\mathrm{dual}} &= - q^{g-\frac{A}{6}+1} \frac{\zeta_q(1/2)}{\zeta_q(2)^2} \frac{1}{2\pi i}\oint \frac{\mathcal{U}_{\mathrm{K}}(q^{-1/6},w)(1+w)}{(1-w)   (1-w^3) w^{\frac{g-1}{2}} } \, \frac{dw}{w} \nonumber \\
 &+\frac{q^{\frac{5}{6}g+1}}{\zeta_q(2)^2} \frac{1}{(2 \pi i)^2} \oint_{|u|=q^{-\varepsilon}} \oint_{|w|=q^{-\varepsilon}} \frac{\mathcal{U}_{\mathrm{K}}(u,w)(1+w)}{ (1-uq^{1/6}) (1-u^3)  u^{g-A-3}  (1-w) (1-w^3) w^{\frac{g-1}{2}}} \, \frac{dw}{w} \, \frac{du}{u} \nonumber \\
=&- q^{g-\frac{A}{6}+1} \frac{\zeta_q(1/2)}{\zeta_q(2)^2} \frac{1}{2\pi i}\oint \frac{\mathcal{U}_{\mathrm{K}}(q^{-1/6},w)(1+w)}{(1-w)   (1-w^3) w^{\frac{g-1}{2}} } \, \frac{dw}{w}+O\left(q^{\frac{5g}{6}+\varepsilon g} \right). \label{pole_1}
 \end{align}

  In the integral above we have a double pole at $w=1$ and simple poles at $w = \xi_3, w=\xi_3^2$. We have
 \begin{equation*}
\mathcal{U}_{\mathrm{K}} (q^{-1/6},w) = \prod_P \left (1- \frac{1}{|P|_q} \right ) \left ( 1- \frac{w^{\deg(P)}}{(|P|_q+1)^2}+ \frac{1}{(|P|_q+1) (1-\frac{1}{|P|_q^2})}\right ):= \mathcal{H}_{\mathrm{K}}(w).
 \end{equation*}

We compute the residue of the double pole at $w=1$ and get that it is equal to
 \begin{equation*}
 \label{res-1}
 -\frac{g+2}{3} \mathcal{H}_{\mathrm{K}}(1) + \frac{2 \mathcal{H}_{\mathrm{K}}'(1)}{3}. 
 \end{equation*} 
Note that
$$\frac{\mathcal{H}_{\mathrm{K}}(1)}{\zeta_q(2)^2} = \prod_P \frac{(|P|_q^2+2|P|_q-2)(|P|_q-1)^2}{|P|_q^4} = \mathcal{D}_{\mathrm{K}}(1/q,1/q,\sqrt{q}),$$
where recall that $\mathcal{D}_{\mathrm{K}}(x,y,u)$ is defined by \eqref{euler-d}.

Now we compute the residue of the pole at $w=\xi_3^{-1}$ which is equal to
\begin{equation*} \label{res-2}
\frac{\mathcal{H}_{\mathrm{K}}(\xi_3^2) (1+\xi_3^2)}{(1-\xi_3^2)^2 (1-\xi_3)} \xi_3^{\frac{g-1}{2}}= - \frac{\mathcal{H}_{\mathrm{K}}(\xi_3^2)}{3(1-\xi_3^2)} \xi_3^{2g+2}.
\end{equation*}

The residue at $w=\xi_3$ is equal to 
\begin{equation*} \label{res-3}
\frac{\mathcal{H}_{\mathrm{K}}(\xi_3) (1+\xi_3)}{(1-\xi_3)^2(1-\xi_3^2)} \xi_3^{g-1}= - \frac{\mathcal{H}_{\mathrm{K}}(\xi_3)}{3(1-\xi_3)} \xi_3^{g+1}.
\end{equation*} 


Putting everything together, we have
\begin{align*}
 M_{\mathrm{dual}} =& q^{g-\frac{A}{6}+1} \frac{\zeta_q(1/2)}{\zeta_q(2)^2} \left(-\frac{g+2}{3} \mathcal{H}_{\mathrm{K}}(1) + \frac{2 \mathcal{H}_{\mathrm{K}}'(1)}{3}- \frac{\mathcal{H}_{\mathrm{K}}(\xi_3^2)}{3(1-\xi_3^2)} \xi_3^{2g+2}- \frac{\mathcal{H}_{\mathrm{K}}(\xi_3)}{3(1-\xi_3)} \xi_3^{g+1}\right)\\
 &+ q^{g-\frac{A}{6}+1} \frac{\zeta_q(1/2)}{\zeta_q(2)^2} \frac{1}{2\pi i}\oint_{|w|=q^{1-\varepsilon}} \frac{\mathcal{H}_{\mathrm{K}}(w)(1+w)}{(1-w)   (1-w^3) w^{\frac{g-1}{2}} } \, \frac{dw}{w}+O\left(q^{\left(\frac{5}{6}+\varepsilon\right)g} \right)\\
=&q^{g-\frac{A}{6}+1} \frac{\zeta_q(1/2)}{\zeta_q(2)^2} \left(-\frac{g+2}{3} \mathcal{H}_{\mathrm{K}}(1) + \frac{2 \mathcal{H}_{\mathrm{K}}'(1)}{3}- \frac{\mathcal{H}_{\mathrm{K}}(\xi_3^2)}{3(1-\xi_3^2)} \xi_3^{2g+2}- \frac{\mathcal{H}_{\mathrm{K}}(\xi_3)}{3(1-\xi_3)} \xi_3^{g+1}\right)\\
&+O\left(q^{\frac{5g}{6}+\varepsilon g} \right).
\end{align*}

 \begin{rem}\label{remark56b} As in Remark \ref{remark56}, the  error term of size 
$q^{\frac{5g}{6}}$ can be computed explicitly
by evaluating the residue when $u^3=1$ in \eqref{pole_1}. The other error terms will eventually dominate the term of size $q^{\frac{5g}{6}}$, so we do not carry out the computation. However, we believe this term will persist in the asymptotic formula.
 \end{rem} 
 
 
Now assume that $g$ is even. Then 
\[ [ (g+1-2a)/6]= \frac{g-4-2a}{6}, \,\, \,  [(g+1-2 \alpha)/6]=\frac{g-2 \alpha}{6}.\]
Similarly as before, we get that
\begin{align*}
 M_{\mathrm{dual}} =&\frac{q^{\frac{5}{6}g+1}}{\zeta_q(2)^2} \frac{1}{(2 \pi i)^2} \oint \oint \frac{\mathcal{U}_{\mathrm{K}}(u,w)(1+w^2)}{ (1-uq^{1/6}) (1-u^3)  u^{g-A-3}  (1-w) (1-w^3) w^{\frac{g}{2}}} \, \frac{dw}{w} \, \frac{du}{u}\\
=&- q^{g-\frac{A}{6}+1} \frac{\zeta_q(1/2)}{\zeta_q(2)^2} \frac{1}{2\pi i}\oint \frac{\mathcal{U}_{\mathrm{K}}(q^{-1/6},w)(1+w^2)}{(1-w)   (1-w^3) w^{\frac{g}{2}} } \, \frac{dw}{w}+O\left(q^{\frac{5g}{6}+\varepsilon g} \right).
 \end{align*}
Then the residues give
\[\frac{\mathcal{H}_{\mathrm{K}}(\xi_3^2) (1+\xi_3)}{(1-\xi_3^2)^2 (1-\xi_3)} \xi_3^{\frac{g}{2}}= - \frac{\mathcal{H}_{\mathrm{K}}(\xi_3^2)}{3(1-\xi_3^2)} \xi_3^{2g+2},\] and
\[\frac{\mathcal{H}_{\mathrm{K}}(\xi_3) (1+\xi_3^2)}{(1-\xi_3)^2 (1-\xi_3^2)} \xi_3^{g}= - \frac{\mathcal{H}_{\mathrm{K}}(\xi_3)}{3(1-\xi_3)} \xi_3^{g+1},\]
so 
\begin{align*}
 M_{\mathrm{dual}} 
=&q^{g-\frac{A}{6}+1} \frac{\zeta_q(1/2)}{\zeta_q(2)^2} \left(-\frac{g+2}{3} \mathcal{H}_{\mathrm{K}}(1) + \frac{2 \mathcal{H}_{\mathrm{K}}'(1)}{3}- \frac{\mathcal{H}_{\mathrm{K}}(\xi_3^2)}{3(1-\xi_3^2)} \xi_3^{2g+2}- \frac{\mathcal{H}_{\mathrm{K}}(\xi_3)}{3(1-\xi_3)} \xi_3^{g+1}\right)\\
&+O\left(q^{\frac{5g}{6}+\varepsilon g} \right).
\end{align*}

 We remark that assuming $g$ even leads to the same asymptotic formula as before. 

We now bound the mixed terms \eqref{e21} and \eqref{e31} in $S_{2,\mathrm{dual}}$. 
For the terms of the type \eqref{e21} we have
\begin{align*}
\ll & q^{-\frac{g}{2}} \sum_{d_1+d_2 = g+1} q^{\frac{4d_1}{3}+\sigma d_2} \sum_{f \in \mathcal{M}_{q,\leq g-A-1}} \frac{1}{|f|_q^{\frac{\sigma}{2}-\frac{1}{4}}|f_1|_q^{\frac{1}{6}}} \sum_{\substack{\deg(H) \leq \min{(d_1, d_2)}\\ (H,f)=1}}    q^{\left(\frac{1}{4}-\frac{3}{2}\sigma\right) \deg(H)}.
\end{align*}
\kommentar{\acom{If we use the observation on page $55$ then we only need to bound the second term above.}
\mcom{The first line seems to come from some older term that we're ignoring already. 
Don't we need to account for (80)?
\[\ll q^{(\sigma-\frac{1}{2})g} \sum_{d_1+d_2 = g+1} \sum_{f \in \mathcal{M}_{q,\leq g-A-1}} |f|_q^{1-\sigma}\sum_{\substack{\deg(H) \leq \min{(d_1, d_2)}\\ (H,f)=1}}    q^{\left(\frac{5}{2}-3\sigma\right) \deg(H)}
\]
}}
\kommentar{Bounding trivially the sum over $H$, the first term is 
\[\ll  g q^{-\frac{A}{6}}\sum_{d_1+d_2 = g+1}q^{d_1+\varepsilon d_2}\ll q^{(1+\varepsilon)g-\frac{A}{6}}\]}
Setting $\sigma \geq 5/6$, and bounding trivially the sum over $H$, it follows that these terms are bounded by
\begin{equation*}
\ll g q^{\frac{5}{6}g+\left(\frac{13}{12}-\frac{\sigma}{2}\right)(g-A)+\varepsilon g}\ll q^{\left(\frac{23}{12}-\frac{\sigma}{2}+\varepsilon\right)g-\left(\frac{13}{12}-\frac{\sigma}{2} \right)A}. \label{bound2} 
\end{equation*}
\kommentar{\acom{We can choose $\sigma$ more optimally to give an error term of size $q^{10g/11+\varepsilon g}$ overall. If we set 
$$ \frac{A+g}{2} =\Big( \frac{23}{12}-\frac{\sigma}{2}+\varepsilon \Big)g-\left(\frac{13}{12} -\frac{\sigma}{2} \right)A,$$ we get that
$$A= g \Big(1-\frac{2}{19-6\sigma} \Big).$$
We want $$\frac{1}{2} (A+g) = g \Big(1- \frac{1}{19-6\sigma} \Big)$$ to be minimal, which happens when choosing $\sigma= \frac{4}{3}-\epsilon.$ With this choice of $\sigma$ we have $A= g \frac{9+6 \epsilon}{11+6\epsilon}$ and then we get a total error term of size $q^{10g/11+\varepsilon g}$ (after relabeling the $\epsilon$). Let me know if you agree.
}}
We now bound the error term coming from \eqref{e31}. This term will be bounded by 
\begin{align*}
& \ll q^{\sigma g - \frac{g}{2} } \sum_{d_1+d_2=g+1} \sum_{f \in \mathcal{M}_{q,\leq g-A-1}} |f|_q^{1-\sigma} \sum_{\substack{\deg(H) \leq \min{(d_1, d_2)}\\ (H,f)=1}} |H|_q^{1+\frac{3}{2}-3\sigma} \\
& \ll q^{\sigma g - \frac{g}{2} + \varepsilon g + (g-A)(2-\sigma)} \ll q^{\frac{3g}{2}-A(2-\sigma)+\varepsilon g}
\end{align*}
as long as $\sigma \geq 7/6$.

 Then the error from $S_{2,\mathrm{dual}}$ will be bounded by
$$ E_{\mathrm{dual}} \ll q^{\left(\frac{23}{12}-\frac{\sigma}{2}+\varepsilon\right)g-\left(\frac{13}{12}-\frac{\sigma}{2} \right)A} +  q^{\frac{3g}{2}-A(2-\sigma)+\varepsilon g}.$$
This finishes the proof of Lemma \ref{lemma-Kummer3}.

\subsection{The proof of Theorem \ref{first-moment-Kummer}}

Combining Lemmas \ref{lemma-cubes}, \ref{lemma-non-cubes} and \ref{lemma-Kummer3}, it follows that
\begin{align*}
&\sum_{\substack{d_1+d_2 = g+1\\ d_1 + 2 d_2 \equiv 1 \pmod 3}} \sum_{\substack{F_1 \in \mathcal{H}_{q,d_1} \\ F_2 \in \mathcal{H}_{q,d_2} \\ (F_1, F_2) = 1}} L_q \left( \frac{1}{2}, \chi_{F_1 F_2^2} \right)  =C_{\mathrm{K},1} g q^{g+1} + C_{\mathrm{K},2} q^{g+1} + D_{\mathrm{K},1} g q^{g+1-\frac{A}{6}}+D_{\mathrm{K},2} q^{g+1- \frac{A}{6}}\\
&+ O \left (q^{\frac{g}{3}+\varepsilon g}+q^{g- \frac{5A}{6}+\varepsilon g} +q^{ \frac{A+g}{2}+ \varepsilon g}+q^{(1+\varepsilon)g -\frac{A}{6}} + q^{ \left(\frac{23}{12}-\frac{\sigma}{2}+\varepsilon\right)g- \left( \frac{13}{12}-\frac{\sigma}{2}\right)A} + q^{\frac{3g}{2}-A(2-\sigma)+\varepsilon g}\right ),\\ 
\end{align*}
where $7/6 \leq \sigma<4/3$. Picking  $\sigma = \frac{13-2 \sqrt{7}}{6}$ and  
$A = 3\left[\frac{ g( \sqrt{7}-1)}{6}\right]$ (so that $A \equiv 0 \pmod 3$) gives a total upper bound of 
size $q^{g \frac{1+\sqrt{7}}{4}+\varepsilon g}$ and finishes the proof of Theorem \ref{first-moment-Kummer}.

\kommentar{\acom{

If $\sigma=5/4$ and $A=19g/23$ then the total upper bound will be of size $q^{21g/23+\varepsilon g}$ which is better than $q^{11g/12+\varepsilon g}$. 

I think the optimal choice for $\sigma$ is $\sigma = \frac{13-2 \sqrt{7}}{6}g$ and we pick $A = g \frac{2+\sqrt{7}}{3+\sqrt{7}}$. This will give a total upper bound of size $q^{ \frac{ 5+ 2 \sqrt{7}}{6+2 \sqrt{7}} g+\varepsilon g }= q^{g \frac{1+\sqrt{7}}{4}+\varepsilon g}$.
}
\mcom{I also get that the optimal choice is $\sigma=\frac{13-2 \sqrt{7}}{6}$. I get $A=\frac{(\sqrt{7}-1)g}{2}$
and the error term as $q^{\frac{(1+\sqrt{7})g}{4}+\varepsilon g}$. Notice that 
\[\frac{1+\sqrt{7}}{4}=0.91143782776614764762540393840981510643\]
while
\[11/12=0.9166666... \]
}}

 \bibliographystyle{amsalpha}

\bibliography{Bibliography}

\end{document}